\documentclass[a4paper]{article}
% -----------------------------------------------------------
%% Package declarations
\usepackage{amsmath, amsthm, amsfonts}
\usepackage{amssymb}
\usepackage{latexsym}
\usepackage{verbatim}
\usepackage{url}
\usepackage{graphicx}

\usepackage[all,cmtip]{xy}
\usepackage{color}
\usepackage{hyperref}
\usepackage{booktabs}

\usepackage{enumerate}
\usepackage{array}
\usepackage{nicefrac}
\usepackage[section]{placeins}

% -----------------------------------------------------------
\DeclareMathAlphabet{\mathfr}{U}{euf}{m}{n}
% -----------------------------------------------------------
%% Theorem-like environments

\newtheorem{theorem}{Theorem}[section]
\newtheorem{definition}[theorem]{Definition}
\newtheorem{conjecture}[theorem]{Conjecture}
\newtheorem{proposition}[theorem]{Proposition}

\newtheorem{lemma}[theorem]{Lemma}

\newtheorem{remark}[theorem]{Remark}

\numberwithin{equation}{section}

% -----------------------------------------------------------
%% Operator names, consolidated by Kiran

\newcommand{\Aut}{\operatorname{Aut}}
\newcommand{\Conj}{\operatorname{Conj}}
\newcommand{\dens}{\operatorname{dens}}
\newcommand{\End}{\operatorname{End}}
\newcommand{\Exp}{\mathrm{E}}
\newcommand{\Frob}{\operatorname{Frob}}
\newcommand{\Gal}{\mathrm{Gal}}
\newcommand{\Hom}{\operatorname{Hom}}
\newcommand{\Id}{\mathrm{Id}}
\newcommand{\Jac}{\operatorname{Jac}}

\newcommand{\topo}{\operatorname{top}}
\newcommand{\Trace}{\operatorname{Trace}}

% -----------------------------------------------------------
%% Group names, consolidated by Kiran

\newcommand{\AST}{\operatorname{AST}}
\newcommand{\GL}{\mathrm{GL}}
\newcommand{\GSp}{\mathrm{GSp}}
\newcommand{\Hg}{\mathrm{Hg}}
\newcommand{\Lef}{\operatorname{L}}
\newcommand{\MT}{\mathrm{MT}}

\newcommand{\SL}{\mathrm{SL}}
\newcommand{\SO}{\mathrm{SO}}
\newcommand{\Sp}{\mathrm{Sp}}
\newcommand{\SU}{\mathrm{SU}}
\newcommand{\ST}{\mathrm{ST}}
\newcommand{\TL}{\operatorname{TL}}
\newcommand{\Unitary}{\mathrm{U}}
\newcommand{\USp}{\mathrm{USp}}
\newcommand{\Zar}{\mathrm{Zar}}

%% Macros for matrices; could possibly be converted to pmatrix

\newcommand{\smallmat}[4]{\bigl(\begin{smallmatrix}#1&#2\\#3&#4\end{smallmatrix}\bigr)}

%% Drew's macros for isomorphism classes of finite groups
\newcommand{\cyc}[1]{{\mathrm{C}_#1}}
\newcommand{\dih}[1]{{\mathrm{D}_#1}}
\newcommand{\alt}[1]{{\mathrm{A}_#1}}
\newcommand{\sym}[1]{{\mathrm{S}_#1}}

%% macros used in table of a2 moment sequences
\newcommand{\bbn}{\hat{b}_n}%{\sum_k\binom{n}{k}2^{n-k}b_k^2}
\newcommand{\bcn}{\sum_k\binom{n}{k}2^{n-k}b_kc_k}
\newcommand{\ccn}{\hat{c}_n}%{\sum_k\binom{n}{k}2^{n-k}c_k^2}

%%Added by Victor to label the Galois types
\newcommand{\bA}{{\mathbf{A}}}
\newcommand{\bB}{{\mathbf{B}}}
\newcommand{\bC}{{\mathbf{C}}}
\newcommand{\bD}{{\mathbf{D}}}
\newcommand{\bE}{{\mathbf{E}}}
\newcommand{\bF}{{\mathbf{F}}}

%% macro for galois rep from Gal(K/k) to Aut(End(A_K))_\R
\newcommand{\rhoA}{\rho_A}

%% Links to OEIS

\newcommand{\Aseq}[1]{\htmladdnormallink{A#1} {http://oeis.org/A#1}}

%% Special characters

\newcommand{\C}{\mathbb C}
\newcommand{\F}{\mathbb F}
\newcommand{\HH}{\mathbb H}
\newcommand{\M}{\mathrm{M }}

\newcommand{\Q}{\mathbb Q}
\newcommand{\Qbar}{{\overline{\mathbb Q}}}
\newcommand{\R}{\mathbb R}
\newcommand{\Z}{\mathbb Z}

%% Other symbols

\newcommand{\lra}{\longrightarrow}
\newcommand{\ra}{\rightarrow}
\newcommand{\norm}[1]{\left\Vert#1\right\Vert}
\newcommand{\nothing}{{}}

\begin{document}
\title{Sato-Tate distributions and\\Galois endomorphism modules in genus 2}
\author{Francesc Fit\'e, Kiran S. Kedlaya,\\ V\'ictor Rotger, and Andrew V. Sutherland}
\date{January 27, 2012}

\maketitle

%\tableofcontents

\begin{abstract}
For an abelian surface $A$ over a number field $k$, we study the
limiting distribution of the normalized Euler factors of the $L$-function of $A$.
This distribution is expected to correspond to taking characteristic polynomials of a uniform random
matrix in some closed subgroup of $\USp(4)$; this \emph{Sato-Tate group} may be obtained
from the Galois action on any Tate module of $A$. We show that the Sato-Tate group is limited to a particular list of
55 groups up to conjugacy. We then classify $A$ according to the Galois module structure on the
$\R$-algebra generated by endomorphisms of $A_{\Qbar}$ (the \emph{Galois type}),
and establish a matching with the classification of Sato-Tate groups;
this shows that there are at most 52 groups up to conjugacy which occur as Sato-Tate groups for suitable
$A$ and $k$, of which 34 can occur for $k = \Q$.
Finally, we exhibit examples of Jacobians of hyperelliptic curves exhibiting each Galois type (over $\Q$ whenever possible),
and observe numerical agreement with the expected
Sato-Tate distribution by comparing moment statistics.
\end{abstract}

\section{Introduction}\label{intro}
The celebrated \emph{Sato-Tate conjecture} concerns the distribution of
Euler factors of an elliptic curve over a number field. It specifically predicts that this distribution always takes
one of three forms, one occurring whenever the elliptic curve fails to have complex multiplication (the generic case),
and two exceptional cases arising for CM curves (only one of which occurs over $\Q$). Substantial progress has been made on
this conjecture only very recently (see \S~\ref{subsec:tractable cases}).

The purpose of this paper is to formulate a precise analogue of the Sato-Tate conjecture for abelian surfaces.
We present strong theoretical and experimental evidence that the distribution of Euler factors can take as many as
52 forms, 35 of which occur if we limit to fields with a real place and 34 of which occur if we limit to $\Q$.

In the rest of the introduction, we introduce the Sato-Tate problem for a general abelian variety,
describe a conjectural description of the distribution of Euler factors in terms of a certain compact Lie group (the \emph{Sato-Tate group}),
then discuss our analysis of this conjecture from three points of view:
\begin{enumerate}
\item[(a)] a classification of compact Lie groups that are compatible with restrictions on the Sato-Tate group imposed by existing conjectures;
\item[(b)] a classification of Galois module structures on the $\R$-algebra generated by endomorphisms
(which we call the \emph{Galois type});
\item[(c)] numerical computations testing the relationship between these classifications and observed
distributions of Euler factors.
\end{enumerate}
We conclude with some speculation about how much of the Sato-Tate conjecture for abelian surfaces may be tractable
in the near future.

Throughout the introduction, let $k$ denote a number field, let $G_k$ be an absolute Galois group of $k$,
let $\mathfrak{p}$ be a prime ideal
of $k$, and let $q := \norm{\mathfrak{p}}$ denote the absolute norm of $\mathfrak{p}$. When we make statements about averages over prime ideals,
we always sort in increasing order by norm; it will not matter how ties are broken.
For $A$ an abelian variety over $k$ and $k'$ a field containing $k$, write $A_{k'}$ for the base extension of
$A$ from $k$ to $k'$.

\subsection{The Sato-Tate conjecture and random matrices}

Let $E$ be an elliptic curve over $k$.
If $E$ has good reduction at $\mathfrak{p}$ (which excludes only finitely many primes), then Hasse's theorem implies that
the integer
$a_{\mathfrak{p}} = q + 1 - \#E(\mathbb{F}_q)$ has absolute value at most
$2 \sqrt{q}$. One observes in examples\footnote{To observe for yourself, see the animations at
\url{http://math.mit.edu/~drew}.} that the quantities
\[
\theta_{\mathfrak{p}} := \arccos \frac{a_{\mathfrak{p}}}{2 \sqrt{q}},
\]
appear to be equidistributed with respect to a certain measure on $[0, \pi]$.
\begin{itemize}
\item
If $E$ has complex multiplication defined over $k$, one takes the uniform measure.
Note that this case cannot occur if $k = \Q$, or even if $k$ has a real place.
\item
If $E$ has complex multiplication not defined over $k$, one takes half of the uniform measure plus a discrete measure of mass $1/2$ concentrated at $\pi/2$.
The discrete measure occurs because $a_{\mathfrak{p}} = 0$ whenever $\mathfrak{p}$ fails to split
 in the quadratic extension of $k$
over which the complex multiplication is defined.
\item
If $E$ fails to have complex multiplication, one takes the measure
$\frac{2}{\pi} \sin^2 \theta\,d\theta$.
\end{itemize}
In the first two cases, it is easy to prove equidistribution using the explicit description of $a_{\mathfrak{p}}$
in terms of Gr\"ossencharacters. The third case is subtler: it is the \emph{Sato-Tate conjecture},
which has recently been established when $k$ is totally real (see \S\ref{subsec:tractable cases}).

The measures described above admit the following interpretation. If one chooses a matrix uniformly at random from
$\SU(2)$ (with respect to the Haar measure), its eigenvalues have the form $e^{i \theta}, e^{-i \theta}$
for $\theta$ distributed according to the measure $\frac{2}{\pi} \sin^2 \theta\,d\theta$ on $[0, \pi]$.
If one replaces $\SU(2)$ by $\Unitary(1) \simeq \SO(2)$, one obtains the uniform measure;
if one instead takes the normalizer of $\Unitary(1)$ in $\SU(2)$, one gets half of the uniform measure
(from the identity connected component) plus a discrete measure of mass $1/2$ concentrated at $\pi/2$ (from the other
connected component).

\subsection{The Sato-Tate group}

The interpretation in terms of random matrices suggests a good formulation of the \emph{Sato-Tate problem}\footnote{In the
language of \cite[\S 8]{Ser11}, we only consider the \emph{weight $1$} case of the Sato-Tate problem. Some examples
of the weight 2 case are discussed in \cite[\S 8.5.6]{Ser11}. See also Remark~\ref{higher weight}.}
for an abelian variety $A$ of arbitrary dimension $g \geq 1$.
If $A$ has good reduction at $\mathfrak{p}$, then
there exists a polynomial $L_{\mathfrak{p}}(A, T) = \prod_{i=1}^{2g} (1 - T \alpha_i)$
over $\Z$ such that for each positive integer $n$,
\[
\# A(\mathbb{F}_{q^n}) = \prod_{i=1}^{2g} (1 - \alpha_i^n).
\]
For example, if $g=1$, then $L_{\mathfrak{p}}(A, T) = 1 - a_{\mathfrak{p}} T + q T^2$.
One can reinterpret $L_{\mathfrak{p}}(A, T)$ as follows: for any prime $\ell$, let
$V_\ell(A)=\Q\otimes T_\ell(A)$ denote the (rational) $\ell$-adic Tate module of $A$.
If $\mathrm{Frob}_{\mathfrak{p}}$ is an arithmetic Frobenius element of $G_k$ for the prime $\mathfrak{p}$, then
\[
L_{\mathfrak{p}}(A,T) = \det(1 - T \mathrm{Frob}_{\mathfrak{p}}, V_{\ell}(A)).
\]
This implies that $L_{\mathfrak{p}}(A,q^{-s})$ is the Euler factor at $\mathfrak{p}$
in the $L$-function of $A$ evaluated at $s$.
Define the \emph{normalized $L$-polynomial}
$\overline{L}_{\mathfrak{p}}(A, T) := L_{\mathfrak{p}}(A, q^{-1/2} T)$;
by a theorem of Weil, the roots of $\overline{L}_{\mathfrak{p}}(A, T)$
in $\C$ have norm 1 and are stable (as a multiset) under complex conjugation.
The polynomial $\overline{L}_{\mathfrak{p}}(A, T)$ thus corresponds to a unique element of the set $\Conj(\USp(2g))$ of conjugacy classes in the unitary symplectic
group $\USp(2g)$.

For generic $A$, Katz and Sarnak \cite{KS99} predict that
the polynomials $\overline{L}_{\mathfrak{p}}(A, T)$
are equidistributed\footnote{Note the analogy with the Chebotarev density theorem: in a finite Galois extension of number fields,
the distribution of Frobenius conjugacy classes over the Galois group is also governed by the image of the Haar measure.} with respect to the image on $\Conj(\USp(2g))$ of the normalized Haar measure
on $\USp(2g)$.
It is tempting to guess that in general, the $\overline{L}_{\mathfrak{p}}(A, T)$
are equidistributed with respect to the image on $\Conj(\USp(2g))$ of the normalized Haar measure
on a suitable closed subgroup $G$ of $\USp(2g)$ (depending on $A$);
however, to formulate a precise\footnote{It is unclear whether $G$ is even determined up to conjugacy, even though
the conjugacy class of an individual element of $\USp(2g)$ is determined by its characteristic polynomial.}
conjecture, one needs an explicit definition of the group $G$.

We will give in \S\ref{section:STdefinition} a definition of the \emph{Sato-Tate group} $\ST_A$,
using a construction in terms of $\ell$-adic monodromy groups described by Serre in \cite[\S 8.3]{Ser11}.
This construction relates strongly to an earlier description given by Serre in terms of motivic Galois groups
\cite{Ser95}, as well as to the definition of the \emph{Mumford-Tate group} of $A$; see \S\ref{subsec:MT} for more
on the relationship between the Mumford-Tate and Sato-Tate groups.
{}From the construction of $\ST_A$, the map $\mathfrak{p} \mapsto \overline{L}_{\mathfrak{p}}(A,T) \in \Conj(\USp(2g))$
will factor through an assignment $\mathfrak{p} \mapsto s(\mathfrak{p}) \in \Conj(\ST_A)$, enabling us to make the following
conjecture.

\begin{conjecture}[Refined Sato-Tate] \label{Refined Sato-Tate}
For $\ST_A$ the subgroup of $\USp(2g)$ defined in Definition~\ref{Sato-Tate group} and $\mu_{\ST_A}$ the image on $\Conj(\ST_A)$
of the normalized Haar measure on $\ST_A$, the classes $s(\mathfrak{p}) \in \Conj(\ST_A)$
are $\mu_{\ST_A}$-equidistributed.
\end{conjecture}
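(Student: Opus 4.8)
The plan is to follow the strategy by which the classical Sato-Tate conjecture for elliptic curves was eventually established, reducing the equidistribution statement to analytic properties of a family of $L$-functions via Weyl's criterion together with a Tauberian argument. First I would invoke the Peter-Weyl theorem for the compact group $\ST_A$: since matrix coefficients of the irreducible representations span a dense subspace of the continuous class functions, the conjugacy classes $s(\mathfrak{p})$ are $\mu_{\ST_A}$-equidistributed if and only if, for every nontrivial irreducible representation $\rho$ of $\ST_A$, the averages
\[
\frac{1}{\#\{\mathfrak{p} : \norm{\mathfrak{p}} \le x\}} \sum_{\norm{\mathfrak{p}} \le x} \Trace \rho(s(\mathfrak{p}))
\]
tend to $0$ as $x \to \infty$ (the trivial representation contributing the constant $1$ automatically). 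Thus it suffices to control these character sums one irreducible representation at a time.

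Next, to each irreducible representation $\rho$ of $\ST_A$ I would attach an $L$-function by setting, at each prime $\mathfrak{p}$ of good reduction,
\[
L_{\mathfrak{p}}(\rho, s) = \det\!\left(1 - \rho(s(\mathfrak{p}))\, \norm{\mathfrak{p}}^{-s}\right)^{-1},
\]
and forming the product over all such $\mathfrak{p}$. Because the assignment $\mathfrak{p} \mapsto s(\mathfrak{p}) \in \Conj(\ST_A)$ factors through the $\ell$-adic monodromy representation used to define $\ST_A$, this $L$-function agrees, up to finitely many Euler factors, with the $L$-function of the $\ell$-adic Galois representation obtained by composing the Tate-module representation on $V_\ell(A)$ with $\rho$. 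A Wiener-Ikehara Tauberian theorem then converts the behavior of $L(\rho, s)$ near $\mathrm{Re}(s) = 1$ into the desired estimate on the character sums: equidistribution follows provided that, for each nontrivial irreducible $\rho$, the function $L(\rho, s)$ admits analytic continuation to a neighborhood of the line $\mathrm{Re}(s) = 1$ on which it is holomorphic and nonvanishing, the order of any pole at $s = 1$ being equal to the multiplicity of the trivial representation inside $\rho$ (hence $0$ in the nontrivial irreducible case).

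The hard part is establishing exactly these analytic properties. The irreducible representations of $\ST_A$ decompose the tensor and symmetric constructions on $V_\ell(A)$ into pieces, and for the resulting $L$-functions the required holomorphy and nonvanishing on $\mathrm{Re}(s) = 1$ are in general only known to follow from automorphy (or potential automorphy) of the associated Galois representations. For abelian surfaces this lies largely beyond current technology: the pieces corresponding to Hecke Gr\"ossencharacters, which arise in the CM and other special cases, can be handled directly through the classical analytic theory of those $L$-functions, but the generic pieces demand automorphy lifting theorems not yet available in the needed generality. This is precisely why the statement is posed as a conjecture rather than a theorem.

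I would therefore expect the genuine obstacle to be automorphy, not the equidistribution machinery, which is formal once the $L$-functions are in hand. In this light, the classification of the groups $\ST_A$ carried out in the remainder of the paper plays a clarifying role: it pins down exactly which finite list of families of $L$-functions would have to be controlled in order to deduce Conjecture~\ref{Refined Sato-Tate}, thereby reducing the conjecture, at least in principle, to a determinate collection of analytic statements.
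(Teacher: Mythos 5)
Your proposal matches the paper's own treatment: the statement is a conjecture, not a theorem, and the paper (in \S\ref{subsec:tractable cases}, via Theorem~\ref{serre}) outlines exactly the same reduction you describe --- Peter--Weyl to pass to nontrivial irreducible characters, then Serre's emulation of the prime number theorem arguments to convert holomorphy and nonvanishing of $L(A,\rho,s)$ on $\mathrm{Re}(s)=1$ into equidistribution, with automorphy identified as the genuine and currently unavailable obstacle. You correctly recognize that no proof is possible with present technology, which is precisely why the paper states it as Conjecture~\ref{Refined Sato-Tate} rather than proving it.
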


This is somewhat stronger than asserting equidistribution of the $\overline{L}_{\mathfrak{p}}(A,T)$
for the image measure on $\Conj(\USp(2g))$, but it is only this last conclusion that we test numerically.

\subsection{A group-theoretic classification}

For the remainder of this introduction, we assume\footnote{It should be possible to make a similar analysis
for larger values of $g$, but even for $g=3$ we have no idea how many groups to expect!} $g=2$.
We can further clarify the Sato-Tate conjecture
in this case by classifying groups which can occur as $\ST_A$ in
Conjecture~\ref{Refined Sato-Tate}.
\begin{theorem} \label{group classification}
Let $A$ be an abelian surface over $k$. Then $\ST_A$ is conjugate to one of $55$
particular groups (see Theorem~\ref{Sato-Tate axioms groups} for the list).
\end{theorem}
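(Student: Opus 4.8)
The plan is to constrain $\ST_A$ by exploiting the structural axioms that the Sato-Tate group must satisfy — these come from the conjectural framework and from known properties of the $\ell$-adic monodromy and Mumford-Tate groups. The key input is that $\ST_A$ is a closed subgroup $G \subseteq \USp(4)$ whose identity component $G^0$ is a connected compact Lie group, and whose component group $G/G^0$ is finite. I expect there to be an explicit list of axioms (presumably stated in the referenced Theorem~\ref{Sato-Tate axioms groups}) governing $G^0$: it must be reductive, its rank is bounded by the symplectic rank, and — crucially — the Hodge-theoretic constraints force the representation of $G$ on the Tate module (the standard $4$-dimensional symplectic representation) to satisfy conditions on its first moment (no invariant vectors for $g \geq 1$ with no trivial isogeny factor) and on the dimension of the space of invariant endomorphisms. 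So **first I would** enumerate the possibilities for $G^0$.

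For $g=2$ the ambient group is $\USp(4)$, which has rank $2$. The connected reductive subgroups of $\USp(4)$ that can arise as $G^0$ are severely limited: candidates include $\USp(4)$ itself, $\SU(2)\times\SU(2)$ (acting as $\USp(2)\times\USp(2)$), the diagonal $\SU(2)$ embedded via the symmetric-cube or as two copies, the unitary group $\Unitary(1)$ in various embeddings, $\Unitary(2)$, $\USp(2)\times\Unitary(1)$, and the trivial group. **The second step** is to pin down exactly which connected groups occur by matching each against the allowed pairs $(\dim G^0, \text{rank})$ and the requirement that the standard representation decompose in a way compatible with the possible endomorphism algebras of an abelian surface (which by the Albert classification of endomorphism algebras of abelian surfaces is one of a short list of $\R$-algebras). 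I expect this to yield a small number — on the order of half a dozen — of connected possibilities for $G^0$.

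**The third and most laborious step** is to classify, for each admissible $G^0$, the finite groups $G/G^0$ that can occur, i.e. to enumerate the possible component groups together with their action. Here one uses that $G/G^0$ acts on $G^0$ and on the center, that $G$ must normalize $G^0$ inside $\USp(4)$, and that the component group is constrained by the finiteness of the number of connected components of the $\ell$-adic monodromy group together with the requirement that the averaged moments produce integer multiplicities. Concretely, $G$ sits inside the normalizer $N_{\USp(4)}(G^0)$, and $G/G^0$ embeds into the group of components of $N_{\USp(4)}(G^0)/G^0$; one then enumerates subgroups of this (finite or compact) component structure up to conjugacy, discarding those violating the moment/trace axioms. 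The bookkeeping — running through each $G^0$, computing its normalizer in $\USp(4)$, and listing conjugacy classes of admissible finite extensions — is where the count of $55$ is assembled.

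**The hard part will be** the completeness and non-redundancy of the component-group enumeration: ensuring that every admissible finite extension is captured and that distinct items on the list are genuinely non-conjugate in $\USp(4)$ (not merely abstractly non-isomorphic). Conjugacy in $\USp(4)$ is finer than abstract isomorphism, so two groups with the same $G^0$ and isomorphic component groups may still differ by the \emph{way} the components act, and distinguishing these requires invariants such as the induced action on $G^0$, on its center, and on the irreducible constituents of the standard representation. Verifying that the axioms are both necessary (satisfied by any genuine $\ST_A$) and sufficient to cut the list down to exactly these $55$ groups — with no spurious entries and no omissions — is the crux, and I would organize it as a careful case analysis indexed first by $G^0$ and then by the combinatorics of the normalizer quotient.
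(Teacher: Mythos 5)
Your proposed architecture does coincide with the paper's: show that $\ST_A$ satisfies a short list of axioms (the paper's Proposition~\ref{necessity of Sato-Tate}, whose hypotheses hold for $g\le 3$ by Theorem~\ref{mumford-tate genus 2}), classify the possible identity components $G^0$ (Lemma~\ref{genus 2 connected components}), then enumerate the closed subgroups $G$ with $G^0\subseteq G\subseteq N_{\USp(4)}(G^0)$ using an integrality constraint on averaged characters; your ``integer multiplicities'' requirement is exactly the paper's rationality condition (ST3), and it is what restricts cyclic orders to $\{1,2,3,4,6\}$ and kills the icosahedral subgroup of $\SO(3)$ in the case $G^0=\Unitary(1)$, which is where the count of $55$ is assembled. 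Pairwise non-conjugacy is deferred, as you anticipate, to invariants computed later (Galois types and moment sequences).

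The genuine gap is in your second step, the determination of $G^0$. The criteria you propose --- matching $(\dim,\mathrm{rank})$ and requiring the standard representation to decompose ``compatibly'' with Albert's classification --- do not determine the embedding of $G^0$ into $\USp(4)$, and in particular cannot eliminate candidates on your own list. Concretely, the principal $\SU(2)\subset\USp(4)$ (the symmetric-cube embedding, which you mention) has rank $1$, dimension $3$, and scalar commutant, so its associated endomorphism algebra is $\R$, which is Albert-admissible (the generic type $\bA$); nothing in your criteria rules it out, yet it is not among the six connected groups of Lemma~\ref{genus 2 connected components}. What excludes it is the one axiom you never isolate: the Hodge condition (ST2), namely the existence of a homomorphism $\theta\colon\Unitary(1)\to G^0$ such that $\theta(u)$ has eigenvalues $u,u,u^{-1},u^{-1}$ (with multiplicities $h^{1,0}=h^{0,1}=2$). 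The circles inside the principal $\SU(2)$ have eigenvalues $u^{\pm 3},u^{\pm 1}$, so ST2 fails; the same axiom disposes of the trivial group. The paper combines ST1 and ST2 with the exceptional isomorphism $\USp(4)/\{\pm 1\}\simeq\SO(5)$ and the classification of Dynkin diagrams to obtain exactly the six connected groups, and every later step (normalizers, component-group enumeration, the total of $55$) is built on that. The alternative, more arithmetic route you gesture at would be to assert that $G^0$ is the compact form of the Lefschetz group $\Lef_A$, i.e.\ the \emph{connected centralizer} of the endomorphism algebra (a double-centralizer statement, strictly stronger than ``compatibility'' of commutants); but that is precisely the Banaszak--Kedlaya input $\Hg_A=\Lef_A$ of Theorem~\ref{mumford-tate genus 2}, which must be invoked explicitly rather than subsumed under ``known properties of the $\ell$-adic monodromy and Mumford-Tate groups.'' Without one of these two precise statements your connected-part classification does not close.
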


The proof of Theorem~\ref{group classification} is an exercise with Lie groups, given the limitations
placed on $\ST_A$ by properties of its construction.
Most of the cases arise when the connected part of $\ST_A$ is isomorphic to $\Unitary(1)$;
these cases are related to the finite subgroups of $\SO(3)$.

The alert reader will notice that Theorem~\ref{group classification} mentions a list of 55 groups,
whereas we claimed initially that only 52 groups are possible. That is because three cases survive the
group-theoretic analysis, but are ruled out by the comparison to Galois types
(Theorem~\ref{Galois type to ST group}).

\subsection{Galois types}

Our next step towards controlling the Sato-Tate group of an abelian surface is to study the Galois module structure on the
endomorphism algebra of $A$. This data is in general insufficient to control the Sato-Tate group
(this is related to Mumford's exotic examples of Hodge groups for abelian fourfolds \cite{Mu}); however, in dimension $2$ such pathologies do not
arise, and indeed we gain a complete understanding of the Sato-Tate group this way.

Let $\End(A)$ denote the (not necessarily commutative) ring of endomorphisms of $A$.
Note that these are assumed to be defined over $k$; if we mean to take endomorphisms of $A$ defined over a larger
field $k'$, we will write $\End(A_{k'})$ instead. In fact, we will often write $\End(A_k)$ instead of $\End(A)$
to emphasize rationality over $k$. For any field $L$, write $\End(A)_L$ for $\End(A) \otimes_{\Z} L$.

Let $K/k$ denote the minimal extension over which all endomorphisms of $A_{\Qbar}$ are defined; it is a finite Galois extension of $k$
(see Proposition~\ref{teo:SiRi}).

\begin{definition}\label{type-intro}
Consider pairs $[G,E]$ in which $G$ is a finite group and $E$ is a finite-dimensional
$\R$-algebra equipped with an action of $G$ by $\R$-algebra automorphisms.
An \emph{isomorphism} between two such pairs $[G,E]$ and $[G', E']$ consists of an isomorphism
$G \simeq G'$ of groups and an equivariant isomorphism $E \simeq E'$ of $\R$-algebras.
The \emph{Galois type} associated to $A$ is the isomorphism class of the pair
$[\Gal(K/k),\End(A_K)_{\R}]$.
Note that abelian surfaces defined over different number fields may have the same Galois type.
\end{definition}

\begin{theorem} \label{Galois type to ST group}
Let $A$ be an abelian surface over $k$.
Then the conjugacy class of the Sato-Tate group of $A$ is uniquely determined
by the Galois type and vice versa.
\end{theorem}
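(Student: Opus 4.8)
The plan is to build a three-part dictionary between the data comprising the Galois type $[\Gal(K/k),\End(A_K)_\R]$ and the internal structure of $\ST_A\subseteq\USp(4)$, and then to show that each side reconstructs the other. The three structural pieces of $\ST_A$ that I would isolate are: the identity component $\ST_A^0$; the component group $\ST_A/\ST_A^0$; and the conjugation action of the components on $\ST_A^0$, together with the actual $\USp(4)$-conjugacy class of a set of coset representatives. The whole theorem then reduces to matching these against the Galois group, the algebra $\End(A_K)_\R$, and the $\Gal(K/k)$-action on it, respectively.

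First I would match the component group to the Galois group. From the construction of $\ST_A$ (Definition~\ref{Sato-Tate group}), the identity component reflects endomorphisms available geometrically, while the disconnected structure is controlled by the field $K$ over which all of $\End(A_{\Qbar})$ becomes defined; the aim is to show $\ST_A/\ST_A^0\simeq\Gal(K/k)$. Next I would identify $\End(A_K)_\R$ with the commutant of $\ST_A^0$ in the standard $4$-dimensional representation, so that the $\Gal(K/k)$-action on $\End(A_K)_\R$ corresponds, under the previous isomorphism, to the conjugation action of $\ST_A/\ST_A^0$ on that commutant. The crucial input is that in dimension $2$ there are no exotic Mumford-Tate (equivalently Hodge) groups, so that $\ST_A^0$ is the \emph{unique} connected closed subgroup of $\USp(4)$, up to conjugacy, with the prescribed commutant; I would verify this by running through the finite list of $\R$-algebras $\End(A_K)_\R$ that occur for abelian surfaces and exhibiting the matching connected group in each case.

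With this dictionary in place, the reverse direction is immediate: from $\ST_A$ one simply reads off the component group, the commutant of $\ST_A^0$, and the conjugation action, recovering the Galois type. The forward direction is a reconstruction problem. The algebra $\End(A_K)_\R$ pins down $\ST_A^0$ up to conjugacy, and $\ST_A$ must then be the subgroup of $N_{\USp(4)}(\ST_A^0)$ generated by $\ST_A^0$ and one representative of each component, the representatives chosen to realize the prescribed $\Gal(K/k)$-action on the commutant.

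The hard part will be this reconstruction step: knowing how a component acts on the commutant of $\ST_A^0$ does not obviously pin down its $\USp(4)$-conjugacy class, so I must show that $N_{\USp(4)}(\ST_A^0)$ has small enough component structure that the Galois type selects a unique conjugacy class of subgroup. This is precisely where the worry of the earlier footnote — whether the Sato-Tate group is even determined up to conjugacy — must be resolved, and I expect it to require a case-by-case computation of these normalizers, together with a check that each abstractly admissible pair $[G,E]$ either arises from a genuine Galois type or is discarded. That last check is presumably where three of the $55$ candidate groups from Theorem~\ref{group classification} fall away, leaving the claimed bijection between realizable Galois types and Sato-Tate groups.
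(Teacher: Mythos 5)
There is a genuine gap, and it sits at the center of your dictionary: you identify $\End(A_K)_\R$ with the commutant of $\ST_A^0$ in the standard representation. That identification is false. The commutant of $\ST_A^0$ in $\M_4(\C)$ is a $\C$-algebra and equals $\End(A_K)_\C = \End(A_K)_\R \otimes_\R \C$, which has twice the real dimension of $\End(A_K)_\R$ (e.g.\ for $\ST_A^0 = \SU(2)$ the commutant is $\M_2(\C)$, of real dimension $8$, while $\End(A_K)_\R \simeq \M_2(\R)$ has real dimension $4$). The paper defines the Galois type with the $\R$-algebra precisely because the $\C$-algebra is too coarse, and your dictionary inherits that coarseness. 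Concretely, compare $E_2 = \langle \SU(2), \mathrm{diag}(i,i,-i,-i)\rangle$ and $J(E_1) = \langle \SU(2), J\rangle$ from Theorem~\ref{Sato-Tate axioms groups}: both have connected part $\SU(2)$, component group $\cyc{2}$, and commutant $\simeq \M_2(\C)$, and in both cases the nontrivial component acts on the commutant by an inner involution, so the triples (component group, commutant, conjugation action) are isomorphic. Yet the Galois types are $\bE[\cyc{2},\C]$ and $\bE[\cyc{2},\R\times\R]$: the Galois action on the real algebra $\M_2(\R)$ has fixed subalgebra $\C$ in one case and $\R\times\R$ in the other, two real forms that become isomorphic only after tensoring with $\C$. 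The same collision occurs for $J(C_1)$ vs.\ $C_{2,1}$ (fixed algebras $\HH$ vs.\ $\M_2(\R)$), $J(C_2)$ vs.\ $D_{2,1}$, $J(C_3)$ vs.\ $C_{6,1}$, and $J(D_3)$ vs.\ $D_{6,1}$. So your reverse map does not compute the Galois type, and your forward reconstruction is not merely hard but genuinely ambiguous: knowing how components act on the commutant cannot select between $E_2$ and $J(E_1)$, which are non-conjugate in $\USp(4)$.

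The missing idea is the Rosati form. The commutant only gives $\End(A_K)_\C$; one recovers $\End(A_K)_\R$ inside it as the unique $\R$-subspace of half the dimension on which the real part of the Rosati pairing (built from the symplectic form preserved by $\USp(4)$, via $\psi \mapsto \Trace(\Psi S^{-1}\Psi^T S)$) is positive definite. This is exactly Proposition~\ref{Galois type from ST group}, and it is what makes the passage from Sato-Tate group to Galois type work. With that correction your architecture essentially matches the paper's, except that the paper does not attempt your synthetic reconstruction in the other direction: instead it runs the corrected recipe over all $55$ groups of Theorem~\ref{Sato-Tate axioms groups} (this is the content of \S\ref{s1}--\S\ref{s4}), observes that the resulting Galois types are pairwise non-isomorphic and that three groups ($F_c$, $F_{ab,c}$, $F_{a,b,c}$) match no Galois type, and concludes that the Galois type determines the conjugacy class of $\ST_A$. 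Your worry about whether the Sato-Tate group is pinned down up to conjugacy is thus resolved by enumeration-and-distinction rather than by analyzing normalizers, but only after the real-form data that your proposal discards is put back in.
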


In Theorem~\ref{Galois type to ST group}, the passage from the Sato-Tate group to the Galois type is fairly explicit;
see Proposition~\ref{Galois type from ST group}. The reverse implication can be seen by computing the Galois types associated
to the 55 groups named in Theorem~\ref{group classification} and seeing that they are pairwise nonisomorphic. This requires
significantly less data than the full Galois type; for instance,
it is sufficient to keep track of the isomorphism class (as an $\R$-algebra) of the $\R$-subalgebra of $E$
fixed by one subgroup of $\Gal(K/k)$ in each conjugacy class.

We also make a more detailed analysis of the Galois type and its relationship to more apparent arithmetic
of the abelian surface of $A$, such as the shape of the simple factors of $A$.
This leads to the following result; for a more precise statement, see Theorem~\ref{main-section4}.

\begin{theorem} \label{Galois type classification}
There exist exactly $52$ Galois types of abelian surfaces over number fields.
Of these, exactly $35$ can be realized in such a way that
$k$ has a real place, and exactly $34$ can be realized in such a way that $k = \Q$.
\end{theorem}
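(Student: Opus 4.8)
The plan is to use the bijection of Theorem~\ref{Galois type to ST group} to convert the counting of Galois types into an analysis of the $55$ candidate groups of Theorem~\ref{group classification}, and then to settle realizability over fields with a real place and over $\Q$ by combining explicit constructions with an arithmetic obstruction coming from complex conjugation. First I would compute, for each of the $55$ candidate Sato-Tate groups $H$, the associated Galois type $[\Gal(K/k),\End(A_K)_{\R}]$ using the explicit recipe of Proposition~\ref{Galois type from ST group}: the component group $H/H^0$ recovers $\Gal(K/k)$, while $\End(A_K)_{\R}$ together with its $\Gal(K/k)$-action is recovered from the commutant of the connected part $H^0$ in the standard representation and from the way the components of $H$ act on that commutant. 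Since Theorem~\ref{Galois type to ST group} asserts that this assignment is a bijection onto the Galois types that actually occur, the $52$ claimed types should be exactly those candidates whose $\R$-algebra $E=\End(A_{\Qbar})_{\R}$ lies in the short admissible list coming from the Albert classification of endomorphism algebras of abelian surfaces (namely $\R$, $\R\times\R$, $\C$, $M_2(\R)$, $\C\times\R$, $\C\times\C$, $M_2(\C)$ and the like), carrying a compatible $\Gal(K/k)$-action; the three remaining group-theoretic candidates are then eliminated by showing that their putative $E$ or $G$-action is incompatible with this classification. This yields the upper bound of $52$.

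To see that all $52$ in fact occur, I would organize the types by the geometric algebra $E$ and, for each admissible pair $[G,E]$, produce an explicit abelian surface---typically the Jacobian of a hyperelliptic curve---realizing it over the smallest feasible field. This is the substance of the examples section, and the arithmetic difficulty lies in simultaneously pinning down the correct geometric endomorphism algebra and the correct field of definition $K$ carrying the prescribed Galois action.

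The refinement from $52$ to $35$ and then to $34$ is dictated by a complex-conjugation obstruction generalizing the elliptic-curve prototype, where complex multiplication defined over $k$ is impossible as soon as $k$ has a real place. Concretely, a real place of $k$ determines a complex conjugation $c\in\Gal(K/k)$ that acts anti-holomorphically on the tangent space; exactly as in the elliptic case, this forces any endomorphism fixed by $c$ to commute with an anti-linear map, so the fixed algebra $E^{\langle c\rangle}$ cannot contain a purely imaginary field. The $17$ types excluded from the real-place count of $35$ should be precisely those for which no element $c\in\Gal(K/k)$ can play this role consistently---that is, those whose realization would force an imaginary field into $\End(A_k)_{\R}$. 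The further descent to $34$ over $\Q$ isolates a single type that admits a realization over some number field with a real place but not over $\Q$ itself, reflecting a finer ramification- or class-field-theoretic constraint on the extension $K/\Q$.

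The main obstacle is not the group-theoretic bookkeeping but the two realizability statements. On the positive side, exhibiting explicit curves for every one of the $52$ types---over $\Q$ wherever the obstruction allows---requires genuine construction and verification of endomorphism rings and their fields of definition, not merely an abstract existence argument. On the negative side, proving that the $17$ (respectively $18$) excluded types genuinely \emph{cannot} be realized over a field with a real place (respectively over $\Q$) demands a rigorous arithmetic obstruction rather than the failure of an attempted construction; establishing these impossibility results, which I would extract from the detailed structural analysis behind Theorem~\ref{main-section4}, is the delicate heart of the proof.
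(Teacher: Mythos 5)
Your proposal follows the paper's architecture at the top level (compute the Galois type attached to each of the 55 groups via Proposition~\ref{Galois type from ST group}, eliminate three of them, exhibit explicit Jacobians, and impose arithmetic obstructions over fields with a real place and over $\Q$), but the two mechanisms you propose for the delicate steps would fail. First, the spurious groups $F_c$, $F_{ab,c}$, $F_{a,b,c}$ cannot be eliminated by checking that their algebra $E$ falls outside Albert's classification: the commutant of $G^0=\Unitary(1)\times\Unitary(1)$ is $\C\times\C$, which is perfectly admissible (absolute type $\bD$). The actual elimination (see \S\ref{s2}--\S\ref{s3}) is arithmetic: in type $\bD$, either $A_K\sim E_1\times E_2$ with distinct imaginary quadratic CM fields $M_1\not\simeq M_2$, in which case no Galois element can interchange the two factors (a Galois conjugate of $E_1$ still has CM by $M_1$), so every fixed subalgebra of $\C\times\C$ is a product and never the diagonal $\C$; or $A_K$ is simple with CM by a quartic CM field $M$, in which case the fixed algebras $M^H\otimes_\Q\R$ are $\R$, $\R\times\R$, or $\C\times\C$, again never $\C$. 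Since $F_c$ fixes exactly the diagonal $\C$, and $F_{ab,c}$, $F_{a,b,c}$ contain $F_c$, all three are impossible; without this argument the upper bound of $52$ is not proved. (There is also a mild circularity in your setup: Theorem~\ref{Galois type to ST group} is in the paper a consequence of this very classification, so it cannot be taken as an input to it.)

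More seriously, your real-place obstruction is false as stated. You claim that a real place forces the algebra fixed by complex conjugation to contain no purely imaginary field; but the paper's own example $A=\Jac(C)$, $C\colon y^2=x^5-x$ over $\Q$ (Sato-Tate group $J(C_2)$, Galois type $\bF[\dih 2,\cyc 2,\HH]$) has $\End(A_\Q)_\Q\simeq\Q(i)$, so even the algebra fixed by all of $\Gal(K/\Q)$ contains $\C$, while $\Q$ certainly has a real place. The genus-1 prototype does not generalize: in dimension 2 the action of $i$ on the tangent space can have eigenvalues $\{i,-i\}$, which is compatible with a real structure. Accordingly the paper needs two genuinely different arguments for the 17 exclusions. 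For 14 of the types, the structure theory forces $k$ itself to contain an imaginary quadratic field or a quartic CM field (CM endomorphisms are defined only over fields containing the CM or reflex field), so $k$ has no real place. For the remaining three types $\bF[\cyc 2,\cyc 1,\HH]$, $\bF[\cyc 4,\cyc 2]$, $\bF[\cyc 6,\cyc 3,\HH]$ --- whose endomorphism algebras over $k$ are $\HH$ or $\C$, so that nothing forces an imaginary field into $k$ and your criterion is silent --- Proposition~\ref{proposition:no real place} argues differently: extend scalars along the real place to $\R$, realize $A_\R$ as a twist of the square of a CM elliptic curve defined over $\R$, and use the vanishing of $H^1(\Gal(\C/\R),\GL_2(\C))$ (Hilbert 90) to force the conjugation-fixed algebra to be $\M_2(\R)$, contradicting $\HH$ (a rank count handles $\bF[\cyc 4,\cyc 2]$). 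Finally, the drop from 35 to 34 is elementary rather than ramification- or class-field-theoretic: $\bD[\cyc 2,\R\times\R]$ requires distinct imaginary quadratic fields with $kM_1=kM_2$, which is impossible for $k=\Q$ but possible for $k=\Q(\sqrt{d_1d_2})$.
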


It is worth pointing out that the definition of the Galois type had to be chosen rather carefully in order to make
Theorem~\ref{Galois type to ST group} valid. For example, in \cite{KS09} one finds examples of abelian surfaces $A=\Jac(C)/\Q$, $A'=\Jac(C')/\Q$ with the same Sato-Tate group such that $A$ is absolutely simple and $A'$ is isogenous to the product of two elliptic curves. This is because $\End(A_K)_\Q$ is a division quaternion algebra while $\End(A'_K)_\Q\simeq \M_2(\Q)$; this shows that in the definition of the Galois type, considering the $\Q$-algebra $\End(A_K)_\Q$ yields a classification which is
too fine.
On the other hand, the $\C$-algebra $\End(A_K)_\C$ gives a classification which is too coarse: there are examples of abelian surfaces $A=\Jac(C)/\Q$, $A'=\Jac(C')/\Q$ with different Sato-Tate groups such that $\End(A_{\Q})_\R\simeq \R  \times \R$ and $\End(A_{\Q})_\R\simeq \C$, and thus $\End(A_{\Q})_\C\simeq \End(A'_{\Q})_\C\simeq  \C \times \C$.

\subsection{Numerical computations}
\label{subsec:numerical computations}

Our final step is to test Conjecture~\ref{Galois type to ST group} numerically for Jacobians of curves of genus 2.
For a given curve $C$, it is a finite problem to identify the Galois type of its Jacobian.
This determines the Sato-Tate group $\ST_{\Jac(C)}$, yielding a predicted distribution of normalized $L$-polynomials.
To test convergence to the predicted distribution, we follow the methodology introduced in \cite{KS09}, and consider \emph{moment statistics} of the linear and quadratic coefficients $a_1$ and $a_2$ of the normalized $L$-polynomial.
We first compute the corresponding moments for all of the Sato-Tate groups that can arise in genus 2; see \S\ref{subsection:atlas}.
%(One can also interpret these statistics in terms of the Galois type, providing some reassurance that things have been matched up correctly.)
For any given curve~$C$, we may then use the methods of \cite{KS08} to compute a large quantity of $L$-polynomial data for $C$, from which we derive moment statistics for $a_1$ and $a_2$ that may be compared to the corresponding moments for the Sato-Tate group.
We may also compare histograms of the normalized $L$-polynomial coefficients with the corresponding density functions for the Sato-Tate group; see \S\ref{subsection:computations}.
Although here we present computational results only after giving the theoretical description of Sato-Tate groups and Galois types,
the order of discovery was the reverse: it would have been quite impossible to
establish the theoretical results without numerical evidence to lead the way.

It is worth mentioning that one can run the methodology in two different directions. Given an abelian surface in explicit form,
one can in principle compute the Galois type and then obtain a prediction for the Sato-Tate distribution. On the other hand,
in practice, it is often easier to compute the Sato-Tate distribution numerically and then use this to guess the Galois type!
This state of affairs may persist for larger $g$; for instance, it may be possible to identify examples of Mumford's
exotic fourfolds most easily from their Sato-Tate distributions.

\subsection{Sato-Tate for abelian surfaces}

Combining all of our ingredients, we now have a precise Sato-Tate conjecture for abelian surfaces,
including the following features.
\begin{enumerate}
\item[(a)]
The $\overline{L}_{\mathfrak{p}}(A, T)$ in $\Conj(\USp(2g))$  are conjectured to be equidistributed
according to the image of the Haar measure for a specific group $\ST_A$
(Definition~\ref{Sato-Tate group}).
\item[(b)]
The statement in (a) can be refined to an equidistribution conjecture on $\Conj(\ST_A)$
itself (Conjecture~\ref{Refined Sato-Tate}).
\item[(c)]
It is known exactly which groups $\ST_A$ can occur in general (there are 52 of them), which can occur for $k=\Q$ (there are 34 of them), and
how $\ST_A$ is related to the endomorphism ring of $A$
(by Theorem~\ref{Galois type to ST group} and Theorem~\ref{Galois type classification}).
\end{enumerate}

By contrast, an analysis of Sato-Tate groups in dimension 2 over $\Q$ had been previously attempted by the second and fourth
authors in \cite{KS09} based on results of an exhaustive search, but produced fewer groups
than we identify here. One reason is that in \cite{KS09},
curves were classified only using the moments of the first coefficient
of the $L$-polynomial, which turns out to be insufficient: over $\Q$, the 34 observed Sato-Tate groups only give rise
to 26 distinct trace distributions. Another reason is that in \cite{KS09}, the Sato-Tate groups were constructed
in a haphazard fashion, without the benefit either of a group-theoretic classification or an interpretation in terms
of Galois types. It was the combination of these additional ingredients that made it possible to identify some exceptional
cases missed in \cite{KS09}; with the benefit of this hindsight, we then made a larger exhaustive search to identify
examples with small coefficients (see \S\ref{subsection:search}).

\subsection{Tractable cases}
\label{subsec:tractable cases}

Having asserted a precise form of the Sato-Tate conjecture for abelian surfaces,
we conclude this introduction by discussing to what extent it may be possible to prove cases of this conjecture
in the near future (though not in this paper).

We first recall the paradigm for proving equidistribution described by Serre \cite[\S I.A.2]{Se68}.
Conjecture~\ref{Refined Sato-Tate} asserts that for any continuous function $f:\Conj(\ST_A) \to \C$,
\begin{equation}\label{equi}
\mu_{\ST_A}(f)=\lim_{n\ra \infty} \frac{\sum_{\norm{\mathfrak p} \leq n}f(s(\mathfrak p))}{\# \{\mathfrak p: \norm{\mathfrak p}\leq n\}}.
\end{equation}
By the Peter-Weyl theorem, the space of characters of $\ST_A$ is dense for the supremum norm on the space of continuous functions on $\Conj(\ST_A)$, so we need only check \eqref{equi} when $f = \chi$ is an irreducible character;
 we may omit the trivial character since \eqref{equi} is obvious in that case.
By emulating the proofs of the prime number theorem by Hadamard and de la Vall\'ee Poussin, one then obtains the following
result.
\begin{theorem}[Serre] \label{serre}
Suppose that for each nontrivial irreducible linear representation\footnote{It is enough to consider the restrictions of
representations of $\USp(2g)$, but then one finds a pole at $s=1$
of order equal to the multiplicity of the trivial representation in $\rho|_{\ST_A}$.} $\rho$ of $\ST_A$, the Dirichlet series
$$
L(A,\rho,s)=\prod_\mathfrak p \det(1-\rho(s(\mathfrak p))\norm{\mathfrak p}^{-s})^{-1}
$$
(which converges absolutely for $\mathrm{Re}(s) > 1$) extends to a holomorphic function on $\mathrm{Re}(s) \geq 1$
which does not vanish anywhere on the line $\mathrm{Re}(s) = 1$.
 Then Conjecture~\ref{Refined Sato-Tate} holds for $A$.
\end{theorem}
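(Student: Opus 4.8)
The plan is to follow Serre's classical analytic argument (the "Peter--Weyl plus Tauberian" paradigm) that converts non-vanishing and holomorphy of the $L$-functions $L(A,\rho,s)$ on the line $\mathrm{Re}(s)=1$ into the equidistribution statement \eqref{equi}. By the reduction already recorded in the excerpt, it suffices to verify \eqref{equi} when $f=\chi$ is the character of a nontrivial irreducible representation $\rho$ of $\ST_A$, and by linearity we may even take $f=\chi_\rho$ to be such an irreducible character directly. The heart of the matter is to relate the partial sums $\sum_{\norm{\mathfrak p}\le n}\chi_\rho(s(\mathfrak p))$ to the logarithmic derivative of $L(A,\rho,s)$ and then to extract the asymptotic $o(\#\{\mathfrak p:\norm{\mathfrak p}\le n\})$ from the analytic hypotheses.

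First I would expand the Euler product defining $L(A,\rho,s)$ by taking its logarithm, writing
\begin{equation*}
\log L(A,\rho,s)=\sum_{\mathfrak p}\sum_{m\ge 1}\frac{1}{m}\,\Trace\bigl(\rho(s(\mathfrak p))^m\bigr)\norm{\mathfrak p}^{-ms}.
\end{equation*}
The $m\ge 2$ terms converge absolutely for $\mathrm{Re}(s)>1/2$ and so contribute a function holomorphic past the line $\mathrm{Re}(s)=1$; they are therefore harmless. The dominant $m=1$ part is $\sum_{\mathfrak p}\chi_\rho(s(\mathfrak p))\norm{\mathfrak p}^{-s}$, which is exactly the Dirichlet series whose coefficients we wish to control. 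Thus the hypotheses --- that $L(A,\rho,s)$ is holomorphic and nonvanishing on $\mathrm{Re}(s)=1$ --- translate into the statement that $\sum_{\mathfrak p}\chi_\rho(s(\mathfrak p))\norm{\mathfrak p}^{-s}$ extends holomorphically (with no pole) to a neighborhood of $\mathrm{Re}(s)=1$. Since $\rho$ is nontrivial irreducible, $\mu_{\ST_A}(\chi_\rho)=0$, so the expected main term is absent, matching the absence of a pole at $s=1$.

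Next I would invoke a Tauberian theorem in the style of Hadamard--de la Vall\'ee Poussin (equivalently, the Wiener--Ikehara theorem or Serre's Proposition in \cite[\S I.A]{Se68}). The non-vanishing of $L(A,\rho,s)$ on the whole line $\mathrm{Re}(s)=1$, together with its holomorphy there, is precisely the input needed to conclude that the Dirichlet coefficients are equidistributed in the averaged sense, i.e.\ that
\begin{equation*}
\lim_{n\to\infty}\frac{1}{\#\{\mathfrak p:\norm{\mathfrak p}\le n\}}\sum_{\norm{\mathfrak p}\le n}\chi_\rho(s(\mathfrak p))=0=\mu_{\ST_A}(\chi_\rho).
\end{equation*}
Here one uses that $\#\{\mathfrak p:\norm{\mathfrak p}\le n\}\sim n/\log n$ by the prime ideal theorem, which furnishes the normalizing denominator. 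Applying this to every nontrivial irreducible $\chi_\rho$ and combining with the trivial case gives \eqref{equi} for all characters, and then density via Peter--Weyl extends it to all continuous $f$, which is Conjecture~\ref{Refined Sato-Tate}.

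The main obstacle is not in the algebra but in correctly deploying the Tauberian machinery: one must ensure that the holomorphy and non-vanishing on the \emph{closed} line $\mathrm{Re}(s)=1$ (not merely at $s=1$) are available, since a zero of $L(A,\rho,s)$ at some point $1+it_0$ would inject an oscillatory term of the same order as the main term and destroy the averaged limit. This is the analytic analogue of the fact that the prime number theorem fails without non-vanishing of $\zeta(s)$ on $\mathrm{Re}(s)=1$, and it is exactly why the hypothesis is stated for the entire line. I expect that, modulo carefully citing the appropriate Tauberian statement and checking the elementary absolute convergence of the $m\ge 2$ tail, the argument is formal; the genuine difficulty --- establishing the holomorphy and non-vanishing hypothesis for these particular $L$-functions --- lies outside the theorem, as the statement takes it as an assumption.
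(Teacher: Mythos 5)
Your proposal is correct and follows essentially the same route the paper takes (and attributes to Serre \cite[\S I.A.2]{Se68}): reduce via Peter--Weyl to nontrivial irreducible characters, expand $\log L(A,\rho,s)$ to isolate the $m=1$ term (the $m\ge 2$ tail being holomorphic for $\mathrm{Re}(s)>1/2$), and apply a Hadamard--de la Vall\'ee Poussin/Tauberian argument using holomorphy and non-vanishing on the whole line $\mathrm{Re}(s)=1$ to conclude $\sum_{\norm{\mathfrak p}\le n}\chi_\rho(s(\mathfrak p))=o(\#\{\mathfrak p:\norm{\mathfrak p}\le n\})$. The paper itself gives no further detail beyond this paradigm, so your write-up matches its proof in both structure and substance.
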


For the most part, the only known method for establishing meromorphic continuation of $L(A,\rho,s)$
is to show that it arises from an automorphic form. This has been achieved recently in many cases where
$A$ corresponds to an automorphic form of $\mathrm{GL}_2$-type, thanks to the work of
Barnet-Lamb, Clozel, Gee, Geraghty, Harris, Shepherd-Barron, and Taylor. For example,
Conjecture~\ref{Refined Sato-Tate} is known (with $\ST_A = \SU(2)$) whenever
$A$ is an elliptic curve over a totally real field without complex multiplication. (See \cite{BLGG11} for an even stronger result
covering Hilbert modular forms.)

For $g=2$, it is unclear at present how to approach the Sato-Tate problem
in even a single case with $\ST_A = \USp(4)$;
however, one can hope to treat all cases with $\ST_A \neq \USp(4)$ using existing technology.
For instance, it should be straightforward to verify Conjecture~\ref{Refined Sato-Tate} when $A$ is isogenous
to the product of two elliptic curves with complex multiplication. For $k$ totally real, it should also be possible to
use the result of \cite{BLGG11} to handle cases where $A$ is isogenous to the product of two elliptic curves, one of
which has complex multiplication. The case where $A$ is isogenous to the product of two nonisogenous non-CM elliptic
curves is harder, but has been treated by Harris \cite{Har} conditionally on some results on the stable trace formula
which have subsequently been verified; see \cite{BLGHT} for the appropriate references.
(Thanks to Toby Gee and Michael Harris for this explanation.)

\subsection{Acknowledgments}

Thanks to Jean-Pierre Serre for numerous helpful discussions about the general Sato-Tate conjecture, for providing
preliminary drafts of \cite{Ser11}, and for triggering the collaboration among the four authors by alerting the first and third
authors to the existence of \cite{KS08, KS09}. Thanks to Joan-Carles Lario for his help at the first stage of the project.
Thanks to Grzegorz Banaszak for helpful discussions with Kedlaya about the algebraic Sato-Tate group and the Mumford-Tate group,
leading to the paper \cite{BK}. Thanks to Kevin Buzzard and Marco Streng for helpful conversations with Rotger about the formulation of Conjecture \ref{Refined Sato-Tate} (resp. on complex multiplication).
Thanks to Amanda Clemm for the code used to generate Figure \ref{figure:Ta2hist} in Sage \cite{WS11}.
Thanks to Xavier Guitart, Joan-C. Lario, and Marc Masdeu for organizing the workshop ``Sato-Tate in higher dimension''
at the Centro de Ciencias de Benasque Pedro Pascual (CCBPP) in July-August 2011, which provided the first opportunity for the four authors
to collaborate in person. Thanks also to the CCBPP for its hospitality, and to Josep Gonz\'alez for his lecture at the workshop providing examples
of modular abelian surfaces leading to the correct definition of the Galois type.

During this work, Kedlaya received financial support from NSF CAREER grant DMS-0545904,
NSF grant DMS-1101343, DARPA grant HR0011-09-1-0048, MIT (NEC Fund,
Cecil and Ida Green professorship), and UC San Diego
(Stefan E. Warschawski professorship). Sutherland received financial support from NSF
grant DMS-1115455. Fit\'e and Rotger received financial support from DGICYT Grant MTM2009-13060-C02-01. Fit\'e also received financial support from Fundaci\'o Ferran Sunyer i Balaguer.

\section{Construction of the Sato-Tate group}
\label{section:STdefinition}

Let $A$ be an abelian variety over a number field $k$ of dimension $g \geq 1$.
In this section, we give an explicit definition of the Sato-Tate group $\ST_A$ that appears in our refined
form of the Sato-Tate conjecture (Conjecture~\ref{Refined Sato-Tate}).
This loosely follows the presentation given by Serre in \cite[\S 8.3]{Ser11}.
We then relate this group to the Mumford-Tate
group; for abelian varieties of dimension at most 3, this gives a precise description of the Sato-Tate group in terms of the endomorphisms
of~$A$ and the Galois action on them. This depends crucially on work of the second author with Grzegorz
Banaszak \cite{BK}.

\subsection{\texorpdfstring{$\ell$}{l}-adic monodromy and the Sato-Tate group}

Choose a polarization on $A$ and embeddings $k \hookrightarrow \Qbar \hookrightarrow \C$.
Choose a symplectic basis for the singular homology group $H_1(A^{\topo}_{\C}, \Q)$ and use it to equip this space with
an action of $\GSp_{2g}(\Q)$, where
$\GSp_{2g}/\Q$ denotes the reductive algebraic group over $\Q$ such that for any field $F$,
$$
\GSp_{2g}(F)=\{ \gamma \in \GL_{2g}(F): \gamma^t \begin{smallmat}0{-\mathrm{Id}}{\mathrm{Id}}0\end{smallmat} \gamma = \lambda_\gamma \cdot  \begin{smallmat}0{-\mathrm{Id}}{\mathrm{Id}}0\end{smallmat}, \,\lambda_\gamma \in F^\times \}.
$$
Now fix a prime $\ell$, let $V_{\ell}(A)$ denote the rational $\ell$-adic Tate module of $A$, and make the identifications
\[
V_{\ell}(A)
\simeq
H_{1,\operatorname{et}}(A_{\Qbar}, \Q_\ell) \simeq
H_{1,\operatorname{et}}(A_{\C}, \Q_\ell) \simeq
H_1(A^{\topo}_{\C}, \Q_\ell) \simeq H_1(A^{\topo}_{\C}, \Q) \otimes_{\Q} \Q_\ell.
\]
Under these identifications, the Weil pairing on the Tate module is identified with the cup product pairings in
\'etale and singular homology, so our chosen symplectic basis for $H_1(A^{\topo}_{\C}, \Q)$ maps to a symplectic basis of
$V_{\ell}(A)$. The action of $G_k$ on $V_{\ell}(A)$ thus defines a continuous homomorphism
\begin{equation}\label{rho-ell}
\varrho_{A,\ell}: G_k \lra \GSp_{2g}(\Q_\ell).
\end{equation}
Write $G_\ell = G_\ell(A)$ for the image of $G_k$ under $\varrho_{A,\ell}$.

\begin{definition}
Let $G_\ell^{\Zar}=G_\ell^{\Zar}(A)$ denote the Zariski closure of $G_\ell$ inside $\GSp_{2g}(\Q_\ell)$;
this is sometimes called the \emph{$\ell$-adic monodromy group} of $A$.
\end{definition}

A result of Bogomolov \cite{Bo80} (plus a bit of $p$-adic Hodge theory; see \cite[\S 8.3.2]{Ser11} for references) ensures that $G_\ell$ is open in $G_\ell^{\Zar}$,
and this construction behaves reasonably under enlargement of the field $k$. See
\cite[\S 3]{BK} for similar arguments.
\begin{remark} \label{R:STextendfield}
Let $k'$ be a finite extension of $k$.
Since $G_{\ell}(A_{k'})$ is an open subgroup of the compact group $G_{\ell}$ by Bogomolov,
$G_{\ell}^{\Zar}(A_{k'})$ is a subgroup of $G_{\ell}^{\Zar}$ of finite index;
in particular, the two groups have the same connected part.
On the other hand, by making $k'$ large enough, we can force $G_{\ell}(A_{k'})$ to lie within a neighborhood
of the identity in $\GSp_{2g}(\Q_\ell)$ that is small enough to miss all of the nonidentity connected components of $G_{\ell}^{\Zar}$.
Consequently, for any sufficiently large $k'$, $G_{\ell}^{\Zar}(A_{k'})$ equals the connected part of $G_{\ell}^{\Zar}$.
This means that on one hand, the connected part of $G_{\ell}^{\Zar}$ is an invariant of $A_{\Qbar}$;
on the other hand, the component group $\pi_0(G_{\ell}^{\Zar})$ receives a surjective continuous homomorphism from $G_k$, and so may be identified with  $\Gal(K/k)$ for some\footnote{It is not clear from this construction that $K$ is independent of
$\ell$, but this has been shown by Serre \cite{Ser81}.} finite extension $K$ of $k$.
\end{remark}

\begin{remark}
By a celebrated theorem of Faltings \cite{Fa}, one has
\begin{equation}\label{monodromy}
\End(A)_{\Q_\ell}= \End(V_{\ell}(A))^{G_\ell}.
\end{equation}
By \eqref{monodromy},
the $\ell$-adic monodromy groups of $A_{k'}$ for all finite extensions $k'$ of $k$
determine the $G_k$-module $\End(A_{\Qbar})_{\Q_{\ell}}$.
The converse is not true in general: there are examples due to Mumford \cite{Mu}
of simple abelian fourfolds $A/k$ such that $\End(A_{\Qbar})=\Z$ while $G_\ell^{\Zar}\subsetneq \GSp_{2g}(\Q_\ell)$.
Fortunately, since we only consider the case $g=2$ in this paper, we will avoid such pathologies; see
Theorem~\ref{mumford-tate genus 2}.
\end{remark}

To get the Sato-Tate group, we must modify the above construction slightly.
\begin{definition}
Let $G_k^1$ denote the kernel of the cyclotomic character $\chi_\ell: G_k \ra \Q_\ell^\times$, and let
$G_\ell^{1,\Zar} = G_\ell^{1,\Zar}(A)$ be the Zariski closure of $\varrho_{A,\ell}(G_k^1)$ in $\GSp_{2g}/\Q_\ell$.
By Bogomolov's theorem, the natural inclusion of $G_\ell^{1,\Zar}$ into the kernel of the composition $G_{\ell}^{\Zar} \to \GSp_{2g} \to \mathbb{G}_m$ is an isomorphism.
\end{definition}

\begin{remark} \label{R:STextendfield analogue}
We have an analogue of Remark~\ref{R:STextendfield}:
for $k'$ a finite extension of $k$, $G_{\ell}^{1,\Zar}(A_{k'})$ is a subgroup of $G_{\ell}^{1,\Zar}(A)$ with the same
connected part, and is itself connected when $k'$ is sufficiently large.
\end{remark}

\begin{definition} \label{Sato-Tate group}
Choose an embedding $\iota: \Q_\ell\hookrightarrow \C$ and use it to view $\varrho_{A,\ell}$ as having target $\GSp_{2g}(\C)$.
Put $G^1:=G_\ell^{1,\Zar} \otimes_\iota \C$. The {\em Sato-Tate group} $\ST_A$ of $A$ (for the prime $\ell$ and the embedding
$\iota$) is a maximal compact Lie subgroup of $G^1$ contained in $\USp(2g)$
(which exists because the latter is a maximal subgroup of $\Sp_{2g}(\C)$).
\end{definition}

\begin{remark}
For example, if $A$ is an elliptic curve, then by Serre's open image theorem \cite{Se72},
$\ST_A = \SU(2)$ if and only if $A$ has no complex multiplication.
\end{remark}

\begin{lemma}\label{l2.7}
The groups of connected components of the groups
\[
G_{\ell}^{\Zar}, G_{\ell}^{1,\Zar}, G^1, \ST_A
\]
are all canonically isomorphic.
\end{lemma}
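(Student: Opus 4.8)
The plan is to break the four-term chain into the three adjacent comparisons
\[
G_\ell^{1,\Zar}\ \hookrightarrow\ G_\ell^{\Zar}, \qquad G_\ell^{1,\Zar}\ \rightsquigarrow\ G_\ell^{1,\Zar}\otimes_\iota\C=G^1, \qquad \ST_A\ \hookrightarrow\ G^1,
\]
and to show that each induces an isomorphism on groups of connected components; composing these yields the asserted canonical identifications. Two of the three comparisons are routine. For $\ST_A\hookrightarrow G^1$ I would invoke the standard structure theory of a maximal compact subgroup $K$ of a complex linear algebraic group with finitely many components: $K$ meets every connected component, and $K\cap (G^1)^\circ$ is a (connected) maximal compact subgroup of $(G^1)^\circ$, so $\pi_0(K)\to\pi_0(G^1)$ is an isomorphism. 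For the base change $G_\ell^{1,\Zar}\rightsquigarrow G^1$ I would use that the formation of the component group commutes with the field extension $\iota\colon\Q_\ell\hookrightarrow\C$, so $\pi_0(G^1)$ is canonically identified with $\pi_0(G_\ell^{1,\Zar})$.

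The substance is the comparison $G_\ell^{1,\Zar}\hookrightarrow G_\ell^{\Zar}$. Recall from Definition~\ref{Sato-Tate group} that $G_\ell^{1,\Zar}$ is exactly the kernel of the similitude character $\lambda\colon G_\ell^{\Zar}\to\mathbb{G}_m$. For surjectivity of $\pi_0(G_\ell^{1,\Zar})\to\pi_0(G_\ell^{\Zar})$ I would first note that $\lambda(G_\ell^{\Zar})=\mathbb{G}_m$, since $\lambda\circ\varrho_{A,\ell}$ is the cyclotomic character, whose image is an infinite subgroup of $\Q_\ell^\times$ and hence Zariski dense in $\mathbb{G}_m$. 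As $\mathbb{G}_m$ is connected and admits no proper closed subgroup of finite index, the restriction $\lambda\colon (G_\ell^{\Zar})^\circ\to\mathbb{G}_m$ is already surjective; therefore $G_\ell^{\Zar}=(G_\ell^{\Zar})^\circ\cdot\ker\lambda=(G_\ell^{\Zar})^\circ\cdot G_\ell^{1,\Zar}$, which is precisely the statement that $G_\ell^{1,\Zar}$ meets every connected component of $G_\ell^{\Zar}$.

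Injectivity amounts to the equality $G_\ell^{1,\Zar}\cap (G_\ell^{\Zar})^\circ=(G_\ell^{1,\Zar})^\circ$, i.e.\ to ruling out components of the kernel $G_\ell^{1,\Zar}$ lying inside the identity component of $G_\ell^{\Zar}$. This is the main obstacle, and it is where the purely group-theoretic picture is insufficient: the component group of the kernel of a surjection onto $\mathbb{G}_m$ need not inject into that of the total group (as already the squaring isogeny $\mathbb{G}_m\to\mathbb{G}_m$ shows). Here I would bring in the arithmetic input of Remarks~\ref{R:STextendfield} and~\ref{R:STextendfield analogue}: for a sufficiently large finite extension $k'/k$ one has $G_\ell^{\Zar}(A_{k'})=(G_\ell^{\Zar})^\circ$ and $G_\ell^{1,\Zar}(A_{k'})=(G_\ell^{1,\Zar})^\circ$, the latter connected. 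Applying the kernel description of Definition~\ref{Sato-Tate group} to $A_{k'}$ then gives $G_\ell^{1,\Zar}(A_{k'})=\ker\!\big(\lambda\colon G_\ell^{\Zar}(A_{k'})\to\mathbb{G}_m\big)=G_\ell^{1,\Zar}\cap (G_\ell^{\Zar})^\circ$, and comparing the two expressions for $G_\ell^{1,\Zar}(A_{k'})$ yields $(G_\ell^{1,\Zar})^\circ=G_\ell^{1,\Zar}\cap (G_\ell^{\Zar})^\circ$, the desired injectivity. Finally I would record that all three isomorphisms are induced by the evident structural maps, so the identifications are canonical and compatible with the identification of each component group with $\Gal(K/k)$ from Remark~\ref{R:STextendfield}.
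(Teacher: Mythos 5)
Your proposal is correct and takes essentially the same route as the paper: the paper packages your separate surjectivity/injectivity arguments for $\pi_0(G_{\ell}^{1,\Zar}) \to \pi_0(G_{\ell}^{\Zar})$ into a single snake-lemma-style diagram chase built from exactly the same ingredients (passage to a finite extension $k'$ over which both groups become connected, via Remarks~\ref{R:STextendfield} and~\ref{R:STextendfield analogue}, together with Bogomolov's identification of $G_{\ell}^{1,\Zar}$ as the kernel of the similitude character). The remaining two comparisons are likewise handled as you do, by Zariski density of the $\Q_\ell$-points for the base change to $\C$ and by the structure theory of maximal compact subgroups for $\ST_A \subseteq G^1$.
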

As a corollary, it follows that for $k'$ a finite extension of $k$, $G_{\ell}^{\Zar}(A_{k'})$ is connected
if and only if $G_{\ell}^{1,\Zar}(A_{k'})$ is connected.
\begin{proof}
(from \cite[Theorem~3.4]{BK})
Apply Remark~\ref{R:STextendfield} and its analogue (Remark~\ref{R:STextendfield analogue})
to produce a finite extension $k'$ of $k$ for which $G_{\ell}^{1,\Zar}(A_{k'})$ and $G_{\ell}^{\Zar}(A_{k'})$ are both connected.
Then the diagram
\[
\xymatrix{
  & 1 \ar[d] & 1 \ar[d] & 1 \ar[d] &  \\
1 \ar[r] & G_{\ell}^{1,\Zar}(A_{k'}) \ar[r] \ar[d] & G_{\ell}^{\Zar}(A_{k'}) \ar[r] \ar[d] & \mathbb{G}_m \ar[r]\ar[d]  & 1  \\
1 \ar[r] & G_{\ell}^{1,\Zar}(A_{k}) \ar[r] \ar[d] & G_{\ell}^{\Zar}(A_{k}) \ar[r] \ar[d] & \mathbb{G}_m \ar[r] \ar[d] & 1  \\
  & \pi_0(G_{\ell}^{1,\Zar}) \ar[r] \ar[d] & \pi_0(G_{\ell}^{\Zar}) \ar[d]  & 1  \\
 & 1 & 1
}
\]
has exact rows and columns, and a diagram chase (as in the proof of the snake lemma)
shows that $\pi_0(G_{\ell}^{1,\Zar}) \to \pi_0(G_{\ell}^{\Zar})$ is an isomorphism.
On the other hand,
we have $\pi_0(G_{\ell}^{1,\Zar}) = \pi_0(G^1)$, because the $\Q_\ell$-rational points
of $G_{\ell}^{1,\Zar}$ are Zariski dense,
and $\pi_0(G^1) = \pi_0(\ST_A)$, because any maximal
compact subgroup of a connected complex Lie group is a connected real Lie group.
\end{proof}

We now have the group $\ST_A$ appearing in the refined Sato-Tate conjecture (Conjecture~\ref{Refined Sato-Tate}),
but we still need conjugacy classes corresponding to prime ideals.
This construction will imply that $\overline{L}_{\mathfrak{p}}(A,T)$ belongs to the image of
$\Conj(\ST_A) \to \Conj(\USp(2g))$.
See \cite[\S 8.3.3]{Ser11} for further discussion.
\begin{definition}
For $G =G_\ell^{\Zar} \otimes_\iota \C$,
we may identify $G/G^1$ with $\C^\times$ compatibly with the cyclotomic character. The image of
$g_{\mathfrak{p}} := \varrho_{A,\ell}(\Frob_{\mathfrak{p}}) \in G$ in $\C^\times$ is $q$, so $g'_{\mathfrak{p}} := q^{-1/2} g_{\mathfrak{p}}$ belongs to $G^1$.
The semisimple component of $g'_{\mathfrak{p}}$ for the Jordan decomposition is an element of $G^1$ with eigenvalues of norm $1$, and therefore belongs to a conjugate of $\ST_A$. We thus associate to $\mathfrak{p}$ a class $s(\mathfrak{p}) \in \Conj(\ST_A)$.
\end{definition}

\begin{remark}
It is generally expected that $g_{\mathfrak{p}}$ is already semisimple, in which case
$s(\mathfrak{p})$ would be the class of $g'_{\mathfrak{p}}$ itself.
\end{remark}

\subsection{Mumford-Tate groups and Sato-Tate groups}
\label{subsec:MT}

{}From the above definition of the Sato-Tate group, it is difficult to recover much information relating
the Sato-Tate group to the arithmetic of $A$. To go further, we must control the Sato-Tate group in terms of
the endomorphisms of $A$. This is impossible in general, as shown for instance
by Mumford's examples in dimension 4 \cite{Mu}; however, no such pathologies occur for $g \leq 3$.

\begin{definition}
Fix an embedding $k \hookrightarrow \C$.
\begin{enumerate}
\item[(i)]
The Mumford-Tate group $\MT_A$ of $A$ is the smallest algebraic subgroup $G$ of $\GL(H_1(A_\C^{\topo},\Q))$ {\em over $\Q$} such that $G(\R)$ contains $h(\C^\times)$, where $$h: \C \lra \End_{\R}(H_1(A_\C^{\topo},\R))$$ is the complex structure on the $2g$-dimensional real vector space $H_1(A_\C^{\topo},\R)$ obtained by identifying it with the tangent space of $A$ at the identity.

\item[(ii)] The Hodge group of $A$ is $\Hg_A :=(\MT_A \cap \SL_{2g})^0$.
%In the framework of reductive groups, $\Hg_A$ is the derived subgroup of $\MT_A$; it is semisimple and its Lie algebra is %$\Lie(\Hg_A) = [\Lie(\MT_A),\Lie(\MT_A)]$.

\end{enumerate}
\end{definition}
These can be viewed as archimedean analogues of the groups $G_\ell^{\Zar}$ and $G_{\ell}^{1,\Zar}$.
By a theorem of Deligne \cite[I, Proposition~6.2]{Del82}, for $G_\ell^{\Zar, 0}$ the connected component of the identity of  $G_\ell^{\Zar}$,
we have $G_\ell^{\Zar,0} \subseteq \MT_A \otimes_{\Q} \Q_\ell$.

\begin{conjecture}[Mumford-Tate]\label{MTconj}
The inclusion $G_\ell^{\Zar,0} \subseteq \MT_A \otimes_{\Q} \Q_\ell$ is always an equality.
Equivalently, the induced inclusion $G_\ell^{1,\Zar,0} \subseteq \Hg_A \otimes_{\Q} \Q_\ell$ is also an equality.
\end{conjecture}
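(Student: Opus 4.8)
The only provable content here is the word \emph{equivalently}: the equality $G_\ell^{\Zar,0} = \MT_A \otimes_\Q \Q_\ell$ holds if and only if the equality $G_\ell^{1,\Zar,0} = \Hg_A \otimes_\Q \Q_\ell$ does. (The inclusions themselves are Deligne's theorem, and their being equalities is the open Mumford-Tate conjecture, which one does not prove.) The plan is to realize both inclusions as compatible with the similitude character $\lambda\colon \GSp_{2g} \to \mathbb{G}_m$, so that on each side the ambient group is, up to dimension, an extension of $\mathbb{G}_m$ by its ``degree-$1$ part,'' and then conclude by a dimension count.

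First I would record the two fibrations over $\mathbb{G}_m$. On the motivic side, the chosen polarization places $\MT_A$ inside $\GSp_{2g}$, so $\lambda$ restricts to $\MT_A$; since $\MT_A$ is connected and $\lambda\circ h\colon \C^\times \to \R^\times$ is the nontrivial norm $z \mapsto z\bar z$, the restriction $\lambda|_{\MT_A}$ is surjective onto $\mathbb{G}_m$. Using the relation $\det = \lambda^{g}$ on the identity component of $\GSp_{2g}$, the identity component of $\ker(\lambda|_{\MT_A})$ coincides with $\Hg_A = (\MT_A \cap \SL_{2g})^0$; hence $\dim \MT_A = \dim \Hg_A + 1$. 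On the $\ell$-adic side, the definition of $G_\ell^{1,\Zar}$ together with Bogomolov's theorem (recalled before Definition~\ref{Sato-Tate group}) identifies $G_\ell^{1,\Zar}$ with $\ker(\chi_\ell) = G_\ell^{\Zar} \cap \Sp_{2g}$, where the cyclotomic character $\chi_\ell$ is exactly the restriction of $\lambda$. Checking that $\chi_\ell$ is surjective onto $\mathbb{G}_m$ already on the identity component (its image is open, hence Zariski dense, in $\mathbb{G}_m$, while the component group is finite) gives likewise $\dim G_\ell^{\Zar,0} = \dim G_\ell^{1,\Zar,0} + 1$.

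Next I would compare. Under Deligne's inclusion $G_\ell^{\Zar,0} \subseteq \MT_A \otimes_\Q \Q_\ell$, the two similitude characters match: both are carried to the single $\lambda\colon \GSp_{2g} \to \mathbb{G}_m$. Intersecting with $\Sp_{2g}$ and passing to identity components then sends the inclusion $G_\ell^{\Zar,0} \subseteq \MT_A \otimes_\Q \Q_\ell$ to the inclusion $G_\ell^{1,\Zar,0} \subseteq \Hg_A \otimes_\Q \Q_\ell$. Since on each side the ambient group exceeds its degree-$1$ part by exactly one in dimension, we obtain $\dim G_\ell^{\Zar,0} = \dim (\MT_A \otimes_\Q \Q_\ell)$ if and only if $\dim G_\ell^{1,\Zar,0} = \dim (\Hg_A \otimes_\Q \Q_\ell)$; as each relation is a containment of connected groups, equality of dimensions is equality of groups, and the asserted equivalence follows.

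The main obstacle is not the dimension bookkeeping but justifying the compatibility invoked above: that under Deligne's inclusion the cyclotomic character on $G_\ell^{\Zar}$ really is carried to the similitude character on $\MT_A \otimes_\Q \Q_\ell$, with both surjecting onto the \emph{full} torus $\mathbb{G}_m$ rather than a proper subtorus. This rests on tracing the comparison isomorphisms fixed in \S\ref{section:STdefinition}, under which the Weil pairing (scaled by $\chi_\ell$ under Galois) is identified with the symplectic form (scaled by $\lambda$ under $\MT_A$); surjectivity onto $\mathbb{G}_m$ comes on the motivic side from the explicit weight of $h$ and on the $\ell$-adic side from the openness of the cyclotomic image. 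Once this single compatibility is in hand, the remainder is formal.
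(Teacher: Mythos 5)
Your reading of the logical status is exactly right, and this is the first thing to say: the statement is a conjecture, and the paper offers no proof of it. The only provable content is the word \emph{equivalently}, which the paper asserts without any justification; your proposal supplies that missing argument, and it is correct. Both fibration statements hold: $\lambda|_{\MT_A}$ is surjective onto $\mathbb{G}_m$ since $\lambda(h(z))=z\bar z$ forces the image (a closed subgroup of $\mathbb{G}_m$) to be infinite, and the relation $\det=\lambda^g$ on $\GSp_{2g}$ gives $(\ker\lambda|_{\MT_A})^0=(\MT_A\cap\SL_{2g})^0=\Hg_A$, whence $\dim\MT_A=\dim\Hg_A+1$. On the $\ell$-adic side, the identification $G_\ell^{1,\Zar}=\ker(\lambda|_{G_\ell^{\Zar}})$ is not something you need to argue: the paper states it explicitly (via Bogomolov) in the definition of $G_\ell^{1,\Zar}$; surjectivity of $\lambda$ on $G_\ell^{\Zar,0}$ follows as you say, because the image of $\chi_\ell$ is open in $\Z_\ell^\times$, hence Zariski dense, and a closed finite-index subgroup of $\mathbb{G}_m$ is all of $\mathbb{G}_m$. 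The compatibility you single out as the main obstacle, namely $\lambda\circ\varrho_{A,\ell}=\chi_\ell$, is precisely the Galois equivariance of the Weil pairing, which the paper builds into its setup in \S\ref{section:STdefinition} when it identifies the Weil pairing with the cup product pairing and fixes a common symplectic basis; so both groups sit inside the same $\GSp_{2g}$ with the same similitude character, and your dimension count closes the argument, a containment of connected groups of equal dimension in characteristic zero being an equality.

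Two small points of hygiene. First, $\ker(\chi_\ell)$ is a subgroup of $G_k$, not of $G_\ell^{\Zar}$; what you mean is $\ker(\lambda|_{G_\ell^{\Zar}})=G_\ell^{\Zar}\cap\Sp_{2g}$. Second, before comparing dimensions in the forward direction you should record why the ``induced inclusion'' exists at all: $G_\ell^{1,\Zar,0}$ is connected and contained in $(\MT_A\otimes_{\Q}\Q_\ell)\cap\Sp_{2g}$, hence lies in the identity component of that intersection, which is $\Hg_A\otimes_{\Q}\Q_\ell$. With those repairs your write-up is a complete proof of the equivalence the paper leaves implicit.
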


One cannot hope to describe $G_{\ell}^{\Zar}$ or the algebraic Sato-Tate group
any more closely than this using the Mumford-Tate group, because only the connected parts of these groups are
determined by $A_{\C}$.
This suggests the following refinement of the Mumford-Tate conjecture \cite[Conjecture~2.3]{BK}.

\begin{conjecture} \label{algebraic ST conjecture}
There exists an algebraic subgroup $\AST_A$ of $\GSp_{2g}/\Q$,
called the \emph{algebraic Sato-Tate group} of $A$,
such that for each prime $\ell$,
$G_{\ell}^{1,\Zar} = \AST_A \otimes_{\Q} \Q_\ell$.
\end{conjecture}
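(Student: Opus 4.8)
The plan is to exhibit $\AST_A$ explicitly and then verify the defining equality prime by prime. The natural candidate is a \emph{twisted Lefschetz group}: acting on $H_1(A^{\topo}_{\C},\Q)$, consider the subgroup $\TL(A)\subseteq\Sp_{2g}/\Q$ consisting of those $g$ whose conjugation action on $\End(A_\Qbar)_\Q$ agrees with the action of some element of $\Gal(K/k)$. This is an algebraic group over $\Q$ because $\End(A_\Qbar)_\Q$ acts $\Q$-rationally on $H_1$ and the Galois action on it factors through the finite group $\Gal(K/k)$; its identity component is the \emph{Lefschetz group} $\Lef(A)$, the centralizer of $\End(A_\Qbar)_\Q$ in $\Sp_{2g}$, and its component group maps onto $\Gal(K/k)$ with kernel $\pi_0(\Lef(A))$. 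Crucially, by Serre's theorem that $K$ is independent of $\ell$, this candidate does not depend on the auxiliary prime, so it is a legitimate single $\Q$-group to test against all the $G_\ell^{1,\Zar}$.

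First I would establish the inclusion $G_\ell^{1,\Zar}\subseteq\TL(A)\otimes_\Q\Q_\ell$. By Faltings' theorem \eqref{monodromy}, $G_\ell$ acts on $\End(V_\ell(A))=\End(A_\Qbar)_{\Q_\ell}$ through $\Gal(K/k)$; since $G_k^1$ also maps into $\Sp_{2g}(\Q_\ell)$, every element of $\varrho_{A,\ell}(G_k^1)$ conjugates the endomorphism algebra by a Galois element and hence lies in $\TL(A)(\Q_\ell)$. Taking Zariski closures yields the inclusion. Passing to identity components, Deligne's theorem places $G_\ell^{1,\Zar,0}$ inside $\Hg_A\otimes_\Q\Q_\ell$; since $\Hg_A\subseteq\Lef(A)$ always holds, the connected parts are nested as $G_\ell^{1,\Zar,0}\subseteq\Hg_A\otimes\Q_\ell\subseteq\Lef(A)\otimes\Q_\ell$.

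Second I would prove the reverse inclusion, and hence equality, in two layers. On identity components the needed statement $G_\ell^{1,\Zar,0}=\Hg_A\otimes\Q_\ell$ is precisely the Mumford-Tate conjecture (Conjecture~\ref{MTconj}), after which the further input $\Hg_A=\Lef(A)$ upgrades this to $G_\ell^{1,\Zar,0}=\Lef(A)\otimes\Q_\ell$. On component groups, the analogue of Remark~\ref{R:STextendfield} for $G_\ell^{1,\Zar}$ identifies $\pi_0(G_\ell^{1,\Zar})$ with $\Gal(K/k)$, the same group that $\pi_0(\TL(A))$ surjects onto; a Frobenius-density argument shows that the images $g'_{\mathfrak p}$ already realize every class in $\Gal(K/k)$, so the component groups coincide. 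Combining the two layers with the inclusion of the previous paragraph forces $G_\ell^{1,\Zar}=\TL(A)\otimes_\Q\Q_\ell$, so one may set $\AST_A=\TL(A)$.

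The main obstacle is transparent: the equality of identity components is exactly the Mumford-Tate conjecture, which is open in general, so the statement as phrased remains conjectural at this level of generality. What can be made unconditional is precisely the range relevant to this paper: for $g\le 3$ one has both $G_\ell^{1,\Zar,0}=\Hg_A\otimes\Q_\ell$ and the classical identity $\Hg_A=\Lef(A)$ (the latter failing only from dimension $4$ onward, as in Mumford's examples \cite{Mu}), so the program above goes through and produces $\AST_A=\TL(A)$ outright. Thus the genuinely hard part is neither the $\Q$-descent nor the $\ell$-independence of the component group---both are handled by the twisted Lefschetz construction together with Serre's theorem---but the identification of the connected $\ell$-adic monodromy with the Hodge group.
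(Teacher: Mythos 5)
Your proposal is correct and takes essentially the same route as the paper: the paper also leaves the general statement as a conjecture (a refinement of Conjecture~\ref{MTconj}, following Banaszak--Kedlaya) and resolves it exactly as you do, by taking $\AST_A = \TL_A$ under the hypotheses $\Hg_A = \Lef_A$ and the Mumford--Tate conjecture, which hold for $g \leq 3$ (Theorem~\ref{mumford-tate genus 2}, quoted from \cite{BK}). One caution on your component-group step: in the paper the identification of $\pi_0(G_{\ell}^{1,\Zar})$ with $\Gal(K/k)$ for $K$ the endomorphism field is a \emph{consequence} of this theorem (Proposition~\ref{p2.17}), not an input --- Remark~\ref{R:STextendfield} only identifies $\pi_0(G_{\ell}^{\Zar})$ with $\Gal(K'/k)$ for some finite extension $K'$ (the connectedness field), which a priori could be strictly larger than $K$ --- so you should lean entirely on your Frobenius-density argument, using Lemma~\ref{l2.7} and the fact that Bogomolov's theorem puts the homotheties in $G_{\ell}^{\Zar}$ (so that the normalized Frobenius elements genuinely land in $G_{\ell}^{1,\Zar}(\Qbar_\ell)$), rather than on that remark, to avoid circularity.
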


When Conjecture~\ref{algebraic ST conjecture} holds, we may interpret $\ST_A$ as a maximal compact
subgroup of $\AST_A \otimes_{\Q} \C$. In many cases, including all cases with $g \leq 3$,
one can resolve Conjecture~\ref{algebraic ST conjecture} by giving a full description of both the
Mumford-Tate group and the algebraic Sato-Tate group in terms of the complex structure on $H_1(A^{\topo}_{\C}, \R)$.
This has been carried out by the second author and Grzegorz Banaszak
in \cite{BK}, building on much existing literature on Mumford-Tate groups; we summarize the relevant results here.

\begin{definition}
The \emph{Lefschetz group} $\Lef_A$ is defined as
\[
\Lef_A := \{\gamma \in \Sp_{2g}: \gamma^{-1} \alpha \gamma = \alpha \mbox{ for all $\alpha \in \End(A_{\Qbar})_{\Q}$}\}^0.
\]
Here we view $\alpha$ as an endomorphism of $H_1(A_{\C}^{\topo}, \Q)$; consequently, $\Lef_A$ is an algebraic subgroup of
$\GSp_{2g}/\Q$. There is an obvious inclusion $\Hg_A \subseteq \Lef_A$.
\end{definition}

\begin{definition}
For each $\tau \in G_k$, define
\[
\Lef_A(\tau) := \{\gamma \in \Sp_{2g}: \gamma^{-1} \alpha \gamma = \tau(\alpha) \mbox{ for all $\alpha \in \End(A_{\Qbar})_{\Q}$}\}.
\]
The \emph{twisted Lefschetz group} $\TL_A$ is defined as
\[
\TL_A := \bigcup_{\tau \in G_k} \Lef_A(\tau).
\]
It is an algebraic group over $\Q$ with connected part $\Lef_A$.
\end{definition}

\begin{theorem} \label{mumford-tate genus 2}
\begin{enumerate}
\item[(a)]
Suppose that $\Hg_A = \Lef_A$ and that Conjecture~\ref{MTconj} holds for $A$. Then
Conjecture~\ref{algebraic ST conjecture} holds with $\AST_A = \TL_A$.
Consequently, $\AST_A$ is reductive and $\ST_A$ is a maximal compact subgroup of $\TL_A \otimes_{\Q} \C$.
\item[(b)]
The hypotheses of (a) hold when $g \leq 3$.
\end{enumerate}
\end{theorem}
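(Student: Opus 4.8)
The plan is to verify Conjecture~\ref{algebraic ST conjecture} directly with $\AST_A = \TL_A$, that is, to prove $G_{\ell}^{1,\Zar} = \TL_A \otimes_{\Q} \Q_\ell$ for every prime $\ell$; since $\TL_A$ is an algebraic group over $\Q$ independent of $\ell$, this is exactly what is required, and the remaining assertions follow formally. I would first match identity components: Conjecture~\ref{MTconj} gives $G_{\ell}^{1,\Zar,0} = \Hg_A \otimes_{\Q} \Q_\ell$, the hypothesis gives $\Hg_A = \Lef_A$, and $\Lef_A$ is by definition the identity component of $\TL_A$, so $G_{\ell}^{1,\Zar,0} = (\TL_A \otimes_{\Q} \Q_\ell)^0$. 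Since $\Lef_A$ is the identity component of the commutant in $\Sp_{2g}$ of the semisimple $\Q$-algebra $\End(A_{\Qbar})_\Q$ it is reductive, and as $\TL_A$ has $\Lef_A$ as a finite-index identity component, $\TL_A$ is reductive, giving the asserted reductivity of $\AST_A$.

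\textbf{Inclusion and component groups.} Next I would show $G_{\ell}^{1,\Zar} \subseteq \TL_A \otimes_{\Q} \Q_\ell$. For $\sigma \in G_k^1$ the element $\gamma = \varrho_{A,\ell}(\sigma)$ lies in $\Sp_{2g}(\Q_\ell)$, since its symplectic multiplier is $\chi_\ell(\sigma) = 1$; and the $G_k$-equivariance of Faltings's identification~\eqref{monodromy} shows that $\gamma$ conjugates each $\alpha \in \End(A_{\Qbar})_\Q$ to its Galois twist $\tau(\alpha)$, where $\tau \in \Gal(K/k)$ is determined by $\sigma$. Hence $\gamma \in \Lef_A(\tau)(\Q_\ell) \subseteq \TL_A(\Q_\ell)$, and passing to Zariski closures gives the inclusion. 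With equal identity components in hand, equality $G_{\ell}^{1,\Zar} = \TL_A \otimes_{\Q} \Q_\ell$ is equivalent to equality of component groups. Here Lemma~\ref{l2.7} identifies $\pi_0(G_{\ell}^{1,\Zar})$ with $\Gal(K/k)$, while the map $\gamma \mapsto (\alpha \mapsto \gamma^{-1}\alpha\gamma)$ identifies $\pi_0(\TL_A)$ with its image in $\Gal(K/k)$; the calculation above shows these two maps to $\Gal(K/k)$ agree on the dense subset $\varrho_{A,\ell}(G_k^1)$, so the injection $\pi_0(G_{\ell}^{1,\Zar}) \hookrightarrow \pi_0(\TL_A \otimes_{\Q} \Q_\ell)$ is onto precisely when $\pi_0(\TL_A) = \Gal(K/k)$, i.e. when the full commutant of $\End(A_{\Qbar})_\Q$ in $\Sp_{2g}$ is connected. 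Granting this, $G_{\ell}^{1,\Zar} = \TL_A \otimes_{\Q} \Q_\ell$; and since $\ST_A$ is by Definition~\ref{Sato-Tate group} a maximal compact subgroup of $G^1 = G_{\ell}^{1,\Zar} \otimes_\iota \C = \TL_A \otimes_{\Q} \C$, the last assertion of (a) follows.

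\textbf{Part (b) and the main obstacle.} For (b) I would simply invoke the results of the second author and Banaszak \cite{BK}: for $g \leq 3$ the Mumford-Tate conjecture is known, one has $\Hg_A = \Lef_A$ because no exotic Hodge classes occur below dimension $4$ (in contrast with Mumford's fourfolds \cite{Mu}), and the commutant of $\End(A_{\Qbar})_\Q$ in $\Sp_{2g}$ is connected because the disconnected (orthogonal) case arises only from totally definite quaternionic multiplication, which cannot occur for $g \leq 3$. The main obstacle in (a) is the component-group matching: the delicate points are that Faltings's identification must be used $G_k$-equivariantly so that the two independently defined maps to $\Gal(K/k)$ are seen to coincide, and that the connectedness of the commutant must be available so that $\pi_0(\TL_A) = \Gal(K/k)$. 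It is exactly this connectedness, together with $\Hg_A = \Lef_A$, that can break down in dimension $4$ and is guaranteed in the range $g \leq 3$ by \cite{BK}; the identity-component and inclusion steps, by contrast, are formal once Conjecture~\ref{MTconj} and the hypothesis are granted.
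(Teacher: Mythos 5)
The paper itself offers no argument for this theorem: its proof is the citation to \cite[Theorems~6.1 and~6.10]{BK}. You instead attempt a direct proof of (a), citing \cite{BK} only for (b). Your skeleton (match identity components via Conjecture~\ref{MTconj} and $\Hg_A=\Lef_A$; get the inclusion $G_\ell^{1,\Zar}\subseteq\TL_A\otimes_\Q\Q_\ell$ from Galois equivariance of conjugation on Tate modules; then compare component groups) is the right one, but it contains a circularity. You assert that Lemma~\ref{l2.7} identifies $\pi_0(G_\ell^{1,\Zar})$ with $\Gal(K/k)$, where $K$ is the minimal field of definition of the endomorphisms. Lemma~\ref{l2.7} identifies $\pi_0(G_\ell^{1,\Zar})$ only with $\pi_0(G_\ell^{\Zar})$; the identification of the latter with $\Gal(K/k)$ for \emph{this} field $K$ is Proposition~\ref{p2.17}, which the paper deduces \emph{from} Theorem~\ref{mumford-tate genus 2}, the statement being proved (Remark~\ref{R:STextendfield} only produces some finite extension defined by that very property). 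This particular gap is repairable: conjugation does give, unconditionally, a \emph{surjection} $\pi_0(G_\ell^{\Zar})\twoheadrightarrow\Gal(K/k)$, since each component of $G_\ell^{\Zar}$ acts on $\End(A_{\Qbar})_\Q$ through a single element of $\Gal(K/k)$ and on $\varrho_{A,\ell}(G_k)$ this action is the Galois action; transporting this surjection through Lemma~\ref{l2.7} and combining it with your injection $\pi_0(G_\ell^{1,\Zar})\hookrightarrow\pi_0(\TL_A\otimes_\Q\Q_\ell)$, a counting argument closes the loop --- but only given the additional input discussed next.

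The genuine gap is that your proof of (a) is conditional on $\pi_0(\TL_A)=\Gal(K/k)$, i.e.\ on connectedness of the \emph{full} centralizer of $\End(A_{\Qbar})_\Q$ in $\Sp_{2g}$ (together with nonemptiness of each $\Lef_A(\tau)$, which at least follows by rescaling $\varrho_{A,\ell}(\sigma)$ by a square root of its symplectic multiplier). That connectedness is \emph{not} among the hypotheses of (a): the paper's $\Lef_A$ is by definition the identity component of the centralizer, so $\Hg_A=\Lef_A$ says nothing about other components, and you neither derive connectedness from the stated hypotheses nor could you in general --- it fails precisely in the disconnected orthogonal (type III, totally definite quaternion) situation you yourself mention. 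You write ``granting this'' and justify it only for $g\le 3$ inside (b). The net effect is that you have proved a weakened form of (a) carrying an extra, unstated hypothesis, together with (b); but (a) as stated is a conditional assertion for arbitrary $g$, and establishing it with only the two stated hypotheses is exactly what the citation to \cite[Theorem~6.1]{BK} covers and what remains unproved in your proposal. To close the gap you must either show that $\Hg_A=\Lef_A$ and Conjecture~\ref{MTconj} force the full centralizer to be connected, or build that connectedness into the hypothesis (for instance by formulating the Lefschetz condition for the full centralizer rather than for its identity component), and then verify the strengthened hypothesis for $g\le 3$.
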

\begin{proof}
See \cite[Theorem~6.1, Theorem~6.10]{BK}.
\end{proof}

\subsection{Extracting data from the Sato-Tate group}

Using Theorem~\ref{mumford-tate genus 2}, we can recover from $\ST_A$ much data about the endomorphisms of $A$,
starting with the minimal field of definition of endomorphisms.
\begin{proposition}\label{p2.17}
For $g\leq 3$, the component groups of $G_{\ell}^{\Zar}$, $G_{\ell}^{1,\Zar}$, and $\ST_A$
may be identified with $\Gal(K/k)$
for $K/k$ the minimal extension over which all endomorphisms of $A_{\Qbar}$ are defined.
Moreover, for $k'$ a finite extension of $k$, $\ST_{A_{k'}}$ is the inverse image of
$\Gal(Kk'/k')$ under the map $\ST_A \to \Gal(K/k)$.
\end{proposition}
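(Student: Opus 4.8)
The plan is to prove the first assertion for $G_\ell^{\Zar}$ alone and then transport it to $G_\ell^{1,\Zar}$ and $\ST_A$ via the canonical isomorphisms of component groups furnished by Lemma~\ref{l2.7}. By Remark~\ref{R:STextendfield}, the surjection $G_k \to \pi_0(G_\ell^{\Zar})$ already identifies $\pi_0(G_\ell^{\Zar})$ with $\Gal(\tilde K/k)$ for $\tilde K$ the fixed field of $N := \varrho_{A,\ell}^{-1}(G_\ell^{\Zar,0})$, where $G_\ell^{\Zar,0}$ is the identity component. The whole content is therefore the equality $\tilde K = K$, which I would establish by two inclusions. Throughout I would use that, since the invariants of a linear algebraic action agree with the invariants of the Zariski closure of the acting group, Faltings' isomorphism \eqref{monodromy} upgrades to $\End(A_{k'})_{\Q_\ell} = \End(V_\ell(A))^{G_\ell^{\Zar}(A_{k'})}$ for every finite extension $k'/k$.

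For the inclusion $\tilde K \supseteq K$, observe first that $G_\ell^{\Zar}(A_{\tilde K})$ is contained in $G_\ell^{\Zar,0}$ by the definition of $N$, yet has the same connected part as $G_\ell^{\Zar}$ by Remark~\ref{R:STextendfield}; hence $G_\ell^{\Zar}(A_{\tilde K}) = G_\ell^{\Zar,0}$ is connected. The upgraded Faltings isomorphism then gives $\End(A_{\tilde K})_{\Q_\ell} = \End(V_\ell(A))^{G_\ell^{\Zar,0}}$, and the same reasoning applied to any sufficiently large finite extension shows $\End(A_{\Qbar})_{\Q_\ell} = \End(V_\ell(A))^{G_\ell^{\Zar,0}}$ as well. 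Comparing, $\End(A_{\tilde K})_{\Q_\ell} = \End(A_{\Qbar})_{\Q_\ell}$; since endomorphism rings are torsion-free, this forces $\End(A_{\tilde K}) = \End(A_{\Qbar})$, so every endomorphism of $A_{\Qbar}$ is already defined over $\tilde K$ and $K \subseteq \tilde K$. This inclusion is soft and uses nothing about $g$.

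The reverse inclusion $\tilde K \subseteq K$ is the main obstacle, and is exactly where $g \leq 3$ enters. It suffices to show $G_\ell^{\Zar}(A_K)$ is connected, for then $\varrho_{A,\ell}(G_K) \subseteq G_\ell^{\Zar,0}$, i.e.\ $G_K \subseteq N$ and $\tilde K \subseteq K$. By Theorem~\ref{mumford-tate genus 2}, for $g \leq 3$ we have $\AST_A = \TL_A$, so $G_\ell^{1,\Zar}(A_K) = \TL_{A_K} \otimes_\Q \Q_\ell$. Over $K$ every $\tau \in G_K$ fixes $\End(A_{\Qbar})_\Q$, so each $\Lef_{A_K}(\tau)$ is the full centralizer $C$ of $\End(A_{\Qbar})_\Q$ in $\Sp_{2g}$ and $\TL_{A_K} = C$. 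The crux is then the connectedness of $C$: the algebra $\End(A_{\Qbar})_\Q$ is semisimple with a positive (Rosati) involution, and for $g \leq 3$ Albert's classification excludes the one case (a totally definite quaternion factor) whose centralizer in $\Sp_{2g}$ would be a disconnected orthogonal group. Granting this, which is part of the structural analysis underlying Theorem~\ref{mumford-tate genus 2}, we get $C = C^0 = \Lef_A$, so $\TL_{A_K}$ is connected; hence $G_\ell^{1,\Zar}(A_K)$ is connected, which by the corollary to Lemma~\ref{l2.7} is equivalent to $G_\ell^{\Zar}(A_K)$ being connected.

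For the final assertion I would apply the first part to $A_{k'}$, noting that the minimal field over which all endomorphisms of $A_{\Qbar}$ are defined, relative to $k'$, is the compositum $Kk'$ (a field containing $k'$ defines all endomorphisms iff it contains $K$); thus $\pi_0(\ST_{A_{k'}})$ is canonically $\Gal(Kk'/k')$. By Remark~\ref{R:STextendfield analogue}, transported to the compact side by Lemma~\ref{l2.7}, $\ST_{A_{k'}}$ is a closed subgroup of $\ST_A$ with the same identity component, hence a union of connected components, namely the preimage of the image of $G_{k'}$ in $\pi_0(\ST_A) = \Gal(K/k)$. Since the surjection $G_k \to \Gal(K/k)$ is canonical, its restriction to $G_{k'}$ identifies this image with the image of the restriction map $\Gal(Kk'/k') \to \Gal(K/k)$, which is $\Gal(K/K \cap k')$. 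Therefore $\ST_{A_{k'}}$ is exactly the inverse image of $\Gal(Kk'/k')$ under $\ST_A \to \Gal(K/k)$. This part is formal once the first is in hand; the only point to verify is the compatibility of the canonical identifications with base extension, which is immediate from the construction.
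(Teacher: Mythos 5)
Your proposal is correct and takes essentially the same route as the paper, whose entire proof reads ``Immediate from Theorem~\ref{mumford-tate genus 2} and Lemma~\ref{l2.7}'': you rest the argument on exactly those two pillars (together with Remark~\ref{R:STextendfield} and Faltings, which the paper's construction already supplies), merely unpacking the citation into the two inclusions $\tilde K \supseteq K$ and $\tilde K \subseteq K$. The one sub-claim you grant rather than prove --- connectedness of the full centralizer of $\End(A_{\Qbar})_{\Q}$ in $\Sp_{2g}$ for $g \leq 3$, via the absence of totally definite quaternion (Type III) factors --- is precisely what the paper also leaves implicit in its citation of \cite{BK}, so you are at (in fact somewhat above) the paper's own level of detail.
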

\begin{proof}
Immediate from Theorem~\ref{mumford-tate genus 2} and Lemma~\ref{l2.7}.
\end{proof}

To extract more information, we use the following construction.
\begin{definition}
Recall that we have fixed a polarization on $A$, which defines an isogeny $\phi$ from $A$ to its dual variety $\widehat{A}$. The \emph{Rosati involution}
takes $\psi \in \End(A_{\C})_{\Q}$ to $\psi' = \phi^{-1} \circ \widehat{\psi} \circ \phi$.
It has the following positivity property: the function
\[
\psi \mapsto \Trace(\psi \circ \psi', H_1(A_\C^{\topo}, \Q))
\]
is a positive definite quadratic form on $\End(A_{\C})$ \cite[\S 21, Theorem~1]{Mu}.
We will call this form the \emph{Rosati form}.
\end{definition}

\begin{proposition} \label{Galois type from ST group}
For $g \leq 3$, the subgroup $\ST_A$ of $\USp(2g)$ uniquely determines the $\R$-algebra
$\End(A_K)_{\R}$ and its action by $\Gal(K/k)$.
\end{proposition}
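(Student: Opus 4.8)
The plan is to read the endomorphism data off $\ST_A$ in two stages: first recover the \emph{complex} algebra $\End(A_{\Qbar})_{\C}$ as a commutant, then descend to the real form using the extra structure carried by the inclusion $\ST_A\subseteq\USp(2g)$, and finally recover the $\Gal(K/k)$-action from the component group. Throughout, recall that $K$ is the minimal field of definition of the endomorphisms, so $\End(A_K)=\End(A_{\Qbar})$ and we may set $E:=\End(A_K)_{\R}=\End(A_{\Qbar})_{\Q}\otimes_{\Q}\R$. First I would fix the group theory: since $g\le 3$, Theorem~\ref{mumford-tate genus 2} applies, so $\Hg_A=\Lef_A$, Conjecture~\ref{MTconj} holds, and $\ST_A$ is a maximal compact subgroup of $\TL_A\otimes_{\Q}\C$. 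By Lemma~\ref{l2.7} the identity component $\ST_A^{0}$ is then a maximal compact subgroup of $\Lef_A\otimes_{\Q}\C=\Hg_A\otimes_{\Q}\C$, and by Proposition~\ref{p2.17} the component group $\pi_0(\ST_A)$ is canonically $\Gal(K/k)$.

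Next I would recover the complexified endomorphism algebra as a commutant. A maximal compact subgroup of a connected complex reductive group is Zariski dense in it, so the commutant of $\ST_A^{0}$ inside $\M_{2g}(\C)$ coincides with the commutant of $\Lef_A\otimes_{\Q}\C$. Because $\Lef_A=\Hg_A$, the classical fact from Mumford--Tate theory that the endomorphism algebra is exactly the commutant of the Hodge group yields
\[
B:=Z_{\M_{2g}(\C)}(\ST_A^{0})=Z_{\M_{2g}(\C)}(\Hg_A\otimes_{\Q}\C)=\End(A_{\Qbar})_{\Q}\otimes_{\Q}\C=E\otimes_{\R}\C .
\]
Thus $\ST_A$ already determines the complex algebra $E\otimes_{\R}\C$ as a subalgebra of $\M_{2g}(\C)$.

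The delicate step is to pin down the real form $E$ inside $B$, and this is where I expect the main obstacle to lie. Here I would use that $\ST_A$ lies in $\USp(2g)=\Unitary(2g)\cap\Sp_{2g}(\C)$, so $B$ is stable under both the conjugate-transpose involution $*$ (from $\Unitary(2g)$) and the symplectic adjoint involution $\dagger$ (from $\Sp_{2g}(\C)$); the latter is the Rosati involution. Since $*$ and $\dagger$ commute and each has order two, their composite $c:=*\circ\dagger$ is an antilinear algebra automorphism of $B$ with $c^{2}=\mathrm{id}$, and I would take $E$ to be its algebra of fixed points, a real form of $B$, with the Rosati involution recovered as $\dagger|_{E}$. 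The genuine difficulty is to confirm that this fixed-point algebra is the \emph{correct} real form: the complex algebra $B$ alone cannot distinguish, for instance, $\M_2(\R)$ from $\HH$ when $B\cong\M_2(\C)$. This is exactly where the symplectic (rather than orthogonal) Albert type of $\dagger$ on each simple factor, together with the positivity of the Rosati form $\psi\mapsto\Trace(\psi\psi^{\dagger})$ and Albert's classification of semisimple $\R$-algebras with positive involution, forces the right answer.

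Finally I would recover the Galois action. The description $\TL_A=\bigcup_{\tau\in G_k}\Lef_A(\tau)$ shows that the component of $\ST_A$ lying over a given $\tau\in\Gal(K/k)=\pi_0(\ST_A)$ consists of elements $\gamma$ with $\gamma^{-1}\alpha\gamma=\tau(\alpha)$ for all $\alpha\in E$. Hence conjugation by any such $\gamma$ preserves $E$ and induces $\tau$ on it; two choices of $\gamma$ in the same component differ by an element of $\ST_A^{0}$, which centralizes $E$, so the induced automorphism is well defined and equals $\tau$. This exhibits the $\Gal(K/k)$-action on $E$ intrinsically in terms of $\ST_A$, and together with the previous step shows that $\ST_A$ determines the $\R$-algebra $\End(A_K)_{\R}$ together with its $\Gal(K/k)$-action. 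The commutant identification is clean once one invokes the equality of the endomorphism algebra with the commutant of the Hodge group, available because $\Hg_A=\Lef_A$ for $g\le 3$; the real-form descent is the only step requiring real care.
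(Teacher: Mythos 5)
Your proposal has the same overall architecture as the paper's proof: recover $\End(A_K)_{\C}$ as the commutant of the connected part (the paper phrases this as taking $(\End(H_1(A_\C^{\topo},\Q)) \otimes_\Q \C)^{\Lef_A}$, using $\Hg_A=\Lef_A$ for $g\leq 3$), read off the $\Gal(K/k)$-action from the nonidentity components via the twisted Lefschetz description (your final stage is exactly the paper's ``action of $\TL_A/\Lef_A$''), and then descend from $\C$ to $\R$ using the Rosati structure. The one genuine divergence is the descent step. The paper characterizes $\End(A_K)_{\R}$ as the \emph{unique} half-dimensional $\R$-subspace of $\End(A_K)_{\C}$ that is positive definite for the real part of the Rosati form; you instead \emph{construct} a canonical candidate, namely the fixed algebra $B^{c}$ of the antilinear involution $c=*\circ\dagger$, concretely $c(X)=S\overline{X}S^{-1}$, the quaternionic structure whose unitary group is $\USp(2g)$. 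This is a legitimate and rather clean alternative: $B$ is $c$-stable because $\ST_A^{0}\subseteq\USp(2g)$ consists of $c$-fixed elements; $B^{c}$ is automatically a real form and a $\dagger$-stable subalgebra; and on $B^{c}$ one has $\dagger=*$, so the Rosati form $\Trace(X\dagger(X))=\Trace(XX^{*})$ is positive definite there. Thus your recipe produces a positive-definite real form outright, is manifestly well defined, and is covariant under $\USp(2g)$-conjugation, whereas the paper only asserts that such a subspace exists and is unique.

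That said, both your argument and the paper's ultimately hinge on the same identification claim, which neither actually proves: that there is only \emph{one} admissible real form, so that the object produced (your $B^{c}$) or characterized (the paper's positive subspace) must coincide with the actual $\End(A_K)_{\R}$. You flag this honestly as ``the genuine difficulty,'' so it is not a gap relative to the paper's own standard of rigor, but be aware that the paper's literal statement (uniqueness among half-dimensional \emph{subspaces}) is not true as phrased: for instance when $B\simeq\C\times\C$ with trivial Rosati involution, every $\mathrm{O}_2(\C)$-translate of $\R\times\R$ inside $B$ is half-dimensional with real positive trace form, so the subalgebra or real-form structure must genuinely enter the uniqueness argument. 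One correction to your sketch of how to close this step: you attribute the rigidity to ``the symplectic (rather than orthogonal) Albert type of $\dagger$.'' It is the other way around. In the critical case $B\simeq\M_2(\C)$ with $\End(A_K)_{\R}\simeq\M_2(\R)$ (type $\bE$), the Rosati involution is of \emph{orthogonal} type, and it is precisely orthogonality together with positivity that excludes the competing real form $\HH$, on which every orthogonal-type involution has indefinite trace form; a symplectic-type involution would instead force the answer $\HH$, which never occurs as $\End(A_K)_{\R}$ for an abelian surface. With that point repaired, your route is a correct variant of the paper's descent step, and arguably a more constructive one.
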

\begin{proof}
Note that
\[
\End(A_K)_{\Q} = \End(A_{\C})_{\Q} = \End(H_1(A_\C^{\topo},\Q))^{\Hg_A}= \End(H_1(A_\C^{\topo},\Q))^{\MT_A}
\]
(see \cite{Mu}). By Theorem~\ref{mumford-tate genus 2}, for $g \leq 3$ we can recover from $\ST_A$ the action of $\Gal(K/k)$ on
$\End(A_K)_{\C}$ by taking the action of $\TL_A/\Lef_A$ on $$(\End(H_1(A_\C^{\topo},\Q)) \otimes_\Q \C)^{\Lef_A}.$$
We can then identify $\End(A_K)_{\R}$ as the unique $\R$-subspace of $\End(A_K)_{\C}$ of half the dimension
which is positive definite for the real part of the Rosati form.
\end{proof}

It will be important later to have an explicit description of the effect of twists on the Sato-Tate group.
\begin{definition}
Let $f: \Gal(L/k) \to \Aut(A_K)$ be a continuous $1$-cocycle, i.e., a function satisfying
\[
f(\tau_1 \tau_2) = f(\tau_1) \tau_1(f(\tau_2)) \qquad (\tau_1, \tau_2 \in G_k)
\]
and factoring through $\Gal(L/k)$ for some finite Galois extension $L$ of $k$ containing $K$.
Then there exists an abelian variety $A^f$ over $k$ equipped with an isomorphism $A^f_L \simeq A_L$ such that the action of
$\tau \in G_k$ on $A^f(\Qbar) \simeq A^f_L(\Qbar)$ corresponds to the action of $f(\tau) \tau$ on $A(\Qbar) \simeq A_L(\Qbar)$.
We call $A^f$ the \emph{twist} of $A$ by $f$.
In case $A$ is the Jacobian of a genus $2$ curve $C$ over $k$ and $f$ factors through a $1$-cocycle
$f_C: \Gal(L/k) \to \Aut(C_K)$, we can identify $A^f$ with the Jacobian of a twist $C^f$ of $C$.

The isomorphism $A^f_L \simeq A_L$ induces an isomorphism $\End(A^f_L) \simeq \End(A_L)$
in which corresponding
$\alpha \in \End(A^f_L)$ and $\beta \in \End(A_L)$ satisfy the relation
$\tau(\alpha) = f(\tau) \tau(\beta) f(\tau)^{-1}$.
Consequently, for $\tau \in G_k$, we have
\begin{align*}
\Lef_{A^f}(\tau) &= \{\gamma \in \Sp_{2g}: \gamma^{-1} \beta \gamma = f(\tau) \tau(\beta) f(\tau)^{-1} \mbox{ for all $\beta \in \End(A_{L})_{\Q}$}\} \\
&= \{\gamma \in \Sp_{2g}: (\gamma f(\tau))^{-1} \beta \gamma f(\tau) = \tau(\beta) \mbox{ for all $\beta \in \End(A_{L})_{\Q}$}\} \\
&= \Lef_A(\tau) f(\tau)^{-1}.
\end{align*}
\end{definition}

\section{A group-theoretic classification}\label{section:STgroups}

In this section, we record some necessary properties of the Sato-Tate group $\ST_A$ of an abelian surface $A$ over a
number field $k$, then classify all closed subgroups of $\USp(4)$ exhibiting these properties; there are 55 such groups
up to conjugacy.
We give explicit representations for each of these groups
that will be used to match the groups with Galois
types (\S\ref{section:Galois}), and to compute invariants that characterize
the distribution of characteristic polynomials (\S\ref{subsection:atlas}).
Note that the classification includes 3 spurious groups; these will be ruled out in \S\ref{s3}.

\subsection{Axioms for Sato-Tate groups}

We first record some necessary conditions for a closed subgroup of $\USp(2g)$ to occur as a Sato-Tate group.
Although we are only interested in the weight 1 case, for future reference we formulate these conditions in a manner
suitable for considering self-dual motives of arbitrary positive weight.
(One can get slightly stronger conditions by accounting for the base field $k$; see Remark~\ref{real place}.)

\begin{definition}
Fix a positive integer $w$ and some nonnegative integers $h^{p,q}$ for all $p,q\geq 0$ with $p+q = w$,
and put $d = \sum_{p,q} h^{p,q}$. If $w$ is odd, assume also that $h^{p,q} = h^{q,p}$ for all $p,q$.
For a group $G$ with identity connected component $G^0$, the
\emph{Sato-Tate axioms} are as follows.
\begin{enumerate}
\item[(ST1)]
The group $G$ is a closed subgroup of $\USp(d)$ (if $w$ is odd) or $\operatorname{O}(d)$ (if $w$ is even).
\item[(ST2)] (Hodge condition)
There exists a homomorphism $\theta: \Unitary(1) \to G^0$
such that $\theta(u)$ has eigenvalues $u^{p-q}$ with multiplicity $h^{p,q}$.
The image of $\theta$ is called a \emph{Hodge circle}.
\item[(ST3)] (Rationality condition)
For each component
$H$ of $G$ and each irreducible character $\chi$ of $\GL_{d}(\C)$,
the expected value (under the Haar measure) of $\chi(\gamma)$ over $\gamma\in H$ is an integer.
In particular, for any positive integers $m$ and $n$, the quantity $\Exp[\Trace(\gamma, \wedge^m \C^{d})^n: \gamma \in H]$ lies in $\Z$.
\end{enumerate}
\end{definition}

The numbers $h^{p,q}$ are meant to be the Hodge numbers of the motive for which one is investigating the Sato-Tate
conjecture. In the case of an abelian variety of dimension $g$, we should thus take $w=1$ and $h^{0,1} = h^{1,0} = g$.

\begin{proposition} \label{necessity of Sato-Tate}
Let $A$ be an abelian variety over $k$ of dimension $g$ satisfying the Mumford-Tate conjecture
(Conjecture~\ref{MTconj})
and the algebraic Sato-Tate conjecture (Conjecture~\ref{algebraic ST conjecture}).
Then $G = \ST_A$ satisfies the Sato-Tate axioms
for $w=1, h^{0,1} = h^{1,0} = g$.
\end{proposition}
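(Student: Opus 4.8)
The plan is to verify the three Sato--Tate axioms in turn for $G = \ST_A$, taking $d = 2g$, $w = 1$, and $h^{1,0} = h^{0,1} = g$. Axiom (ST1) is immediate: by Definition~\ref{Sato-Tate group}, $\ST_A$ is by construction a closed compact subgroup of $\USp(2g) = \USp(d)$. For (ST2) I would take the Hodge circle to be the image of $\Unitary(1) \subset \C^\times$ under the complex-structure cocharacter $h$ that defines $\MT_A$. On the weight-one Hodge decomposition $H_1(A^{\topo}_{\C}, \C) = H^{1,0} \oplus H^{0,1}$, the element $h(u)$ acts with eigenvalue $u^{p-q}$ on $H^{p,q}$, hence with eigenvalues $u$ and $u^{-1}$ each of multiplicity $g$; its determinant and similitude factor are therefore trivial, so $h(\Unitary(1)) \subseteq \Hg_A(\R)$. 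Invoking the Mumford--Tate conjecture (Conjecture~\ref{MTconj}), the identity component of $G^1$ is $\Hg_A \otimes_{\Q} \C$, of which $\ST_A^0$ is a maximal compact subgroup; since $h(\Unitary(1))$ is compact and connected it lies in some maximal compact, and all such are conjugate, so after conjugating we obtain a homomorphism $\theta : \Unitary(1) \to \ST_A^0 = G^0$ with the prescribed eigenvalues.

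The substance of the proposition is (ST3). Fix a component $H = g_0 \ST_A^0$ and write $\sigma$ for its image in $\pi_0(\ST_A)$, which by Remark~\ref{R:STextendfield} and Lemma~\ref{l2.7} is identified with $\Gal(K/k)$. Let $W_\chi$ be the irreducible $\GL_d(\C)$-representation affording $\chi$, with action $\rho$; as $\ST_A \subseteq \SL_d(\C)$ we may take $W_\chi$ polynomial and realize it inside $(\C^d)^{\otimes n}$. Averaging $\rho$ over $H$ with respect to the normalized Haar measure gives $\rho(g_0)\,P$, where $P$ is the projector onto the space of $\ST_A^0$-invariants $W_\chi^{\ST_A^0}$; since $g_0$ normalizes $\ST_A^0$ it preserves this space and acts there only through its class $\sigma$, so taking traces yields
\[
\Exp[\chi(\gamma) : \gamma \in H] = \Trace\bigl(\sigma \mid W_\chi^{\ST_A^0}\bigr).
\]
It therefore suffices to prove this trace is a rational integer; the special case $\Exp[\Trace(\gamma, \wedge^m \C^{d})^n : \gamma \in H]$ follows since the relevant characters are polynomial.

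To produce the integrality I would route through the two hypotheses. Set $V = H_1(A^{\topo}_{\C},\Q)$ and realize $W_\chi$ over $\Q$ as a subspace $W_{\chi,\Q} \subseteq V^{\otimes n}$. By the algebraic Sato--Tate conjecture (Conjecture~\ref{algebraic ST conjecture}) there is an algebraic group $\AST_A$ over $\Q$ with $G_\ell^{1,\Zar} = \AST_A \otimes_{\Q} \Q_\ell$ acting $\Q$-rationally on $V$, and by the Mumford--Tate conjecture its identity component is $\Hg_A$. Since a reductive group and a maximal compact subgroup have the same invariants, $W_\chi^{\ST_A^0}$ descends to the $\Q$-subspace $W_{\chi,\Q}^{\Hg_A}$, on which the finite component group $\Gal(K/k)$ acts $\Q$-linearly. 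Thus $\sigma$ acts on a finite-dimensional $\Q$-vector space by a transformation of finite order: its eigenvalues are roots of unity, so the trace is an algebraic integer, while rationality of the action places it in $\Q$, and an algebraic integer in $\Q$ lies in $\Z$. I expect the main obstacle to be precisely this last point, namely exhibiting a $\Q$-structure on $W_\chi^{\ST_A^0}$ that carries a genuinely $\Q$-linear (not merely $\Q_\ell$-linear) action of $\Gal(K/k)$; this is invisible from the analytic definition of $\ST_A$ and is exactly what the algebraic Sato--Tate and Mumford--Tate conjectures, together with Faltings' theorem \eqref{monodromy}, are needed to supply.
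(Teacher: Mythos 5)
Your verification of ST1 and ST2, and the first half of your ST3 argument, track the paper's proof closely: your averaging identity $\Exp[\chi(\gamma):\gamma\in H]=\Trace\bigl(\sigma\mid W_\chi^{\ST_A^0}\bigr)$ and the descent of the invariant space to $W_{\chi,\Q}^{\Hg_A}$ are exactly the paper's decomposition $V=V_1\oplus V_2$, in which the average over $H$ kills $V_{2,\C}$ and the trace on $V_1$ is constant on the component. The gap is at the step you yourself flag as the main obstacle: you assert that ``the finite component group $\Gal(K/k)$ acts $\Q$-linearly'' on $W_{\chi,\Q}^{\Hg_A}$ and that the two conjectures supply this, but they do not. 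Conjecture~\ref{algebraic ST conjecture} only provides a $\Q$-form $\AST_A$ of $G_\ell^{1,\Zar}$. The action of $\AST_A$ on $W_{\chi,\Q}^{\Hg_A}$ indeed factors through $\pi_0(\AST_A)$, but a priori this is merely a finite \'etale group scheme over $\Q$, not a constant one: the identification of its geometric points with $\Gal(K/k)$ comes from the $\ell$-adic side and only shows constancy after base change to $\Q_\ell$. If the component containing $H$ corresponded to a non-rational point of $\pi_0(\AST_A)$, then $\sigma$ would not preserve the $\Q$-structure on the invariants, and its trace would a priori be only an algebraic number whose Galois conjugates come from the conjugate components. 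So ``rationality of the action'' is precisely what must be proved, and nothing in your argument proves it.

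The paper's proof is built to get around exactly this point, and that workaround is where the arithmetic content lies. It first notes that $t=\Trace\bigl(\sigma\mid W_\chi^{\ST_A^0}\bigr)$ is an algebraic integer, since $G/G^0$ is finite and hence the eigenvalues are roots of unity. Then, for \emph{every} prime $\ell$ and every isomorphism $\Qbar_\ell\simeq\C$, it uses the fact that $\varrho_{A,\ell}(G_k^1)$ consists of $\Q_\ell$-points, is Zariski dense in $G_\ell^{1,\Zar}=\AST_A\otimes_\Q\Q_\ell$, and meets every geometric component; hence the relevant component $H_\Q$ has a $\Q_\ell$-point $\gamma_0$, giving $t=\Trace(\gamma_0,V_{1,\Q_\ell})\in\Q_\ell$. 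Since $\ell$ and the embedding were arbitrary, $t$ lies in an everywhere unramified number field, and Minkowski's theorem forces $t\in\Q$, hence $t\in\Z$. You could alternatively repair your route by proving that $\pi_0(\AST_A)$ is a constant group scheme over $\Q$, but the natural proof of that constancy uses the same two ingredients ($\Q_\ell$-points on every component for all $\ell$, then Minkowski or Chebotarev), so the global input is unavoidable; as written, your proposal omits it entirely and therefore does not establish the integrality in ST3.
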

\begin{proof}
Condition ST1 is clear from the construction.
Condition ST2 follows from the definition of the Mumford-Tate group;
it can also be derived using $p$-adic Hodge theory \cite[\S 8.2.3.6, \S 8.3.4]{Ser11}.

To check ST3, let $\rho: \GL_{2g}(\C) \rightarrow V_{\C}$ be the representation corresponding to $\chi$.
Since the algebraic Sato-Tate conjecture has been assumed, we may write
$V_{\C} = V \otimes_{\Q} \C$ for $V$ a representation of $\GL_{2g}/\Q$.
Since $\AST_A^0$ is reductive, we may split $V = V_1 \oplus V_2$ so that $\AST_A$ acts on $V_1$ and $V_2$,
$\AST_A^0$ acts trivially on $V_1$, and $V_2$ has no nonzero subquotient on which $\AST_A^0$ acts trivially.
Let $H_{\Q}$ be the component of $\AST_A$ for which $H \subseteq H_{\Q}(\C)$.

Put $t = \Exp[\Trace(\gamma, V_{\C}): \gamma \in H]$.
Since $G^0$ is a maximal compact subgroup of $\AST_A^0$, $V_{2,\C}$ also has no
$G^0$-trivial subrepresentations;
hence for $v_2 \in V_{2,\C}$, the $G^0$-invariant element $\Exp[\rho(\gamma)(v_2): \gamma \in H]$ must be zero.
It follows that $\Exp[\Trace(\gamma, V_{2,\C}): \gamma \in H]=0$.
On the other hand, since $V_1$ is a
trivial $\AST_A^0$-representation, the function $\gamma \mapsto \Trace(\gamma, V_{1,\C})$ is constant on $H_{\Q}(\C)$.
We deduce that $t = \Trace(\gamma_0, V_{1,\C})$ for any single
$\gamma_0 \in H_{\Q}(\C)$.
Since $G/G^0$ is finite, $t$ is a sum of roots of unity and therefore an algebraic integer.

Let $\ell$ be an arbitrary prime and choose an algebraic isomorphism $\Qbar_{\ell} \simeq \C$.
By our hypotheses, $G_{\ell}^{1,\Zar}$ is a compact open subgroup of $\AST_A \otimes_{\Q} \Q_{\ell}$;
let $H_{\ell}$ be the coset of $G_{\ell}^{1,\Zar,0}$ contained in $H_{\Q}(\Q_{\ell})$.
Put $t_{\ell} = \Exp[\Trace(\gamma, V_{\Q_{\ell}}): \gamma \in H_{\ell}]$.
Again, $\Exp[\Trace(\gamma, V_{2,\Q_{\ell}}): \gamma \in H_{\ell}]=0$, so
$t_{\ell} = \Trace(\gamma_0, V_{1,\C})$ for any single
$\gamma_0 \in H_{\Q}(\C)$. In other words, $t_{\ell} = t$.

However, by taking $\gamma_0 \in G_{\ell}^{1,\Zar}$,
we find that $t_{\ell} \in \Q_{\ell}$.
Since $\ell$ and the isomorphism $\Qbar_{\ell} \simeq \C$ were arbitrary and $t$ is an algebraic integer, $t$ must belong to an everywhere unramified number field.
By Minkowski's theorem, we have $t \in \Z$.
\end{proof}

\begin{remark} \label{higher weight}
For any fixed Hodge numbers, the Sato-Tate axioms limit the group $G$
to one of finitely many subgroups of the ambient group $H = \USp(d)$ (if $w$ is odd)
or $H = \operatorname{O}(d)$ (if $w$ is even) up to conjugacy, as follows.

Conditions ST1 and ST2 already suffice to limit the connected part $G^0$ of $G$ to one of finitely many
subgroups of $H$ up to conjugacy, as in the proof of Lemma~\ref{genus 2 connected components};
we may thus fix a choice of $G^0$ hereafter.
Let $N$ denote the normalizer of $G^0$ in $H$;
note that $N$ contains $G$.

Using Tannaka-Krein duality, we may find a representation $\rho:
\GL_d(\C) \to V_{\C}$ whose restriction to $N$ contains a factor on which $G^0$ is trivial
and $N/G^0$ acts faithfully.
Condition ST3 then implies that the exponent of $G/G^0$ may be bounded independently of $G$.
By this fact plus Jordan's theorem on finite linear groups, we may also bound the order of $G/G^0$
independently of $G$.

To finish the argument, it is enough to verify that
any compact Lie group $K$ only contains finitely many conjugacy classes with any given finite order
(and then apply this to $K = N/G^0$).
To see this, note first that if $K'$ is a subgroup of $K$ of finite index, then the claims for $K$ and $K'$
are equivalent.
We may thus reduce to the case where $K$ is the product of a torus with a semisimple Lie group,
in which case one can even compute the exact number of conjugacy classes of a given finite order.
This is trivial for the torus factor; for the semisimple factor, see \cite{Dj}.
\end{remark}

\subsection{Classification in dimension 2: overview}

Our next goal is to establish the following theorem, which will imply Theorem~\ref{group classification}
thanks to Proposition~\ref{necessity of Sato-Tate} and Theorem~\ref{mumford-tate genus 2}.
\begin{theorem} \label{Sato-Tate axioms groups}
Let $G$ be a group which satisfies the Sato-Tate axioms with $w=1$, $h^{0,1} = h^{1,0} = 2$.
Let $G^0$ be the connected part of $G$. Then $G$ is conjugate to one of the groups listed in
Table~\ref{ST-axioms-groups}. (The notation in this table is defined within the proof.)
\begin{table}[ht]
\caption{Groups satisfying the Sato-Tate axioms with $w=1$, $h^{0,1} = h^{1,0} = 2$.}\label{ST-axioms-groups}
\[
\begin{array}{c|c}
G^0 & G \\
\hline\\
\Unitary(1) &
\left\{ \begin{minipage}{8cm}
\begin{center}
$C_{1}, C_{2}, C_{3}, C_{4}, C_{6}, D_{2}, D_{3}, D_{4}, D_{6}, T, O$,
$J(C_1), J(C_2), J(C_3), J(C_4), J(C_6)$, $J(D_2), J(D_3), J(D_4), J(D_6), J(T), J(O)$,
$C_{2,1}, C_{4,1}, C_{6,1}, D_{2,1}, D_{3,2}, D_{4,1}, D_{4,2}, D_{6,1}, D_{6,2}, O_1$
\end{center}
\end{minipage} \right.\\&\\
\SU(2) & E_1, E_2, E_3, E_4, E_6, J(E_1), J(E_2), J(E_3), J(E_4), J(E_6)\\&\\
\Unitary(1) \times \Unitary(1) & F_\nothing, F_a, F_c,
F_{a,b}, F_{ab}, F_{ac}, F_{ab,c}, F_{a,b,c}\\&\\
\Unitary(1) \times \SU(2) & \Unitary(1) \times \SU(2),\medspace N(\Unitary(1) \times \SU(2)) \\&\\
\SU(2) \times \SU(2) & \SU(2) \times \SU(2),\medspace N(\SU(2) \times SU(2))\\&\\
\USp(4) & \USp(4)\\\hline
\end{array}
\]
\end{table}
\end{theorem}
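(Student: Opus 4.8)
The plan is to classify $G$ in two stages. First I would use conditions ST1 and ST2 to pin down the finitely many possibilities for the identity component $G^0$. Then, for each fixed $G^0$, I would observe that since $G^0$ is normal in $G$, the group $G$ lies in the normalizer $N := N_{\USp(4)}(G^0)$, so that $\bar G := G/G^0$ is a finite subgroup of the component group $N/G^0$; the rationality condition ST3 then cuts the possibilities for $\bar G$ down to a finite list, and a conjugacy analysis inside $\USp(4)$ yields the table.

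For the first stage, ST2 requires $G^0$ to contain a Hodge circle, namely the image of some $\theta\colon\Unitary(1)\to G^0$ with $\theta(u)$ having eigenvalues $u,u,u^{-1},u^{-1}$, while ST1 makes $G^0$ a connected compact subgroup of $\USp(4)$. Enumerating the connected compact subgroups of $\USp(4)$ (which has rank $2$) that contain such a circle --- the content of Lemma~\ref{genus 2 connected components} --- produces exactly the six groups in the left column of Table~\ref{ST-axioms-groups}, and I would fix an explicit embedding of each realizing the prescribed eigenvalues.

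The heart of the matter is the row $G^0 = \Unitary(1)$, the Hodge circle itself. Its two eigenspaces $W$ (weight $+1$) and $W'$ (weight $-1$) are maximal isotropic and paired by the symplectic form, so the centralizer of $G^0$ in $\USp(4)$ is $\Unitary(2)$ (acting on $W$, with the inverse-adjoint action on $W'$), and $N$ is obtained by adjoining one element $J$ interchanging $W$ and $W'$; thus $N = \Unitary(2)\cup J\cdot\Unitary(2)$. Since $G^0$ is exactly the center of $\Unitary(2)$ and $\Unitary(2)/G^0\simeq\SO(3)$, the component group $N/G^0$ is an extension of $C_2$ by $\SO(3)$. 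A finite $\bar G\subseteq N/G^0$ is therefore determined by its image $\bar G_0$ in $\SO(3)$ together with the way the $J$-coset is attached, and this is precisely what generates the three families in that row: the subgroups contained in $\SO(3)$ (the groups $C_n, D_n, T, O$, named for the finite subgroups of $\SO(3)$), the subgroups obtained by adjoining the entire $J$-coset (the groups $J(\cdot)$), and the remaining ``mixed'' extensions $C_{n,1}, D_{n,1}, D_{n,2}, O_1$, in which $J$ is attached only to a coset of an index-$2$ subgroup of $\bar G_0$. The other rows are analogous but shorter: for $G^0 = \SU(2)$ (embedded as two copies of the standard representation) the multiplicity space $\C^2$ carries a symmetric form, its isometry group $\operatorname{O}(2)$ supplies the normalizer data, and one again gets a cyclic parameter with a reflection, yielding $E_n$ and $J(E_n)$; for $G^0 = \Unitary(1)\times\Unitary(1)$ (a maximal torus with the Hodge circle on its diagonal) one analyzes the torus normalizer and its Weyl group, producing the eight $F$-type groups; and for $G^0\in\{\Unitary(1)\times\SU(2),\ \SU(2)\times\SU(2)\}$ the component group is just the $C_2$ normalizing or swapping factors, while $G^0 = \USp(4)$ is self-normalizing and connected.

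The finiteness of each list, and the exclusion of the exotic symmetries, comes from ST3. Concretely, for $G^0 = \Unitary(1)$ and $\bar G_0$ containing a rotation of order $n$, the correct lift to $\Unitary(2)$ has eigenvalues $e^{\pm i\pi/n}$ on $W$, so averaging the trace of the second exterior power over the corresponding component $H$ gives
\[
\Exp[\Trace(\gamma,\wedge^2\C^4):\gamma\in H] = 2 + 2\cos(2\pi/n).
\]
ST3 forces this to be an integer, whence $2\cos(2\pi/n)\in\Z$; by Niven's theorem $n\in\{1,2,3,4,6\}$. This both bounds the cyclic and dihedral parameters and eliminates the icosahedral group $A_5$ (which contains elements of order $5$), leaving only $C_n, D_n\ (n\in\{2,3,4,6\})$, $T = A_4$, and $O = S_4$ among subgroups of $\SO(3)$; entirely parallel averages dispose of the $E$- and $F$-rows. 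It remains to organize the $J$-extensions of each surviving $\bar G_0$ up to $\USp(4)$-conjugacy without over- or under-counting, and to confirm that every resulting candidate genuinely satisfies ST3 on each of its components. I expect this last bookkeeping --- distinguishing $C_n$, $J(C_n)$, and the mixed $C_{n,1}$-type groups (and their $D$- and $O$-analogues) as honestly inequivalent extensions --- to be the main obstacle, since the connected-part classification and the higher-dimensional connected cases are comparatively routine.
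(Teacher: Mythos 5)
Your proposal follows essentially the same route as the paper's proof: first pin down $G^0$ via ST1--ST2 (the paper's Lemma~\ref{genus 2 connected components}), then for each $G^0$ work inside the normalizer --- for $G^0 = \Unitary(1)$ identifying the centralizer with $\Unitary(2)$, the normalizer as $\Unitary(2) \cup J\,\Unitary(2)$ with $N/G^0 \simeq \SO(3) \times \Z/2\Z$, and the three families of subgroups via the projection to $\SO(3)$ --- while using an ST3 averaging argument to force $n \in \{1,2,3,4,6\}$ and exclude the icosahedral group, exactly as in \S\ref{subsec:unitary1}--\ref{subsec:group odd cases}. The only cosmetic difference is that you average $\Trace(\gamma, \wedge^2 \C^4)$ over a component (getting $2 + 2\cos(2\pi/n)$) where the paper averages the square of the trace; both yield the same integrality constraint, and the bookkeeping you defer (enumerating the homomorphisms $H \to \Z/2\Z$ up to conjugacy to get $C_{n,1}$, $D_{n,1}$, $D_{n,2}$, $O_1$) is precisely the finite verification the paper carries out.
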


During the course of the classification, we will obtain explicit presentations\footnote{The need for these presentations is the main
reason we do not make more use of the exceptional isomorphism $\USp(4)/\{\pm 1\} \simeq \SO(5)$ in our classification; it leads quickly
to the list of groups but not to these particular presentations.} for each group;
these will be used later to compute Galois types
(see \S\ref{section:Galois}) and moments
(see \S\ref{subsection:atlas}). These computations will imply that no two of the groups listed
in Theorem~\ref{Sato-Tate axioms groups} are conjugate to each other.

\begin{remark} \label{real place}
If the number field $k$ has a real place, one can strengthen the Hodge condition
(see \cite[\S 8.3.4]{Ser11}): there must exist $\gamma \in G$ such that $\gamma^2 = -1$,
$\Trace(\gamma)$ is equal\footnote{This must be modified if one considers weights greater than 1.
See \cite[\S 8.2.3.4]{Ser11}.} to $0$, and $\gamma \theta(u) \gamma^{-1} = \theta(u^{-1})$ for all $u \in \Unitary(1)$.
This extra condition implies that the groups
\[
C_{1}, C_{2}, C_{3}, C_{4}, C_{6}, D_{2}, D_{3}, D_{4}, D_{6}, T, O,
F_\nothing, F_a, F_c, \Unitary(1) \times \SU(2)
\]
cannot occur when $k$ has a real place. We will recover and refine this statement
in \S\ref{section:realizability over Q}
\end{remark}

\begin{remark} \label{shape of symplectic form}
For the purposes of this computation, we will assume that the symplectic form preserved by $\USp(2g)$ is defined by the block matrix
\[
S := \begin{pmatrix} 0 & \Id_2 \\
-\Id_2 & 0
\end{pmatrix}
\]
unless otherwise specified (as is the case in \S\ref{subsec:group odd cases}).
\end{remark}

\subsection{The identity connected component}

The remainder of this section  will be taken up with
the proof of Theorem~\ref{Sato-Tate axioms groups};
we thus assume until the end of \S \ref{section:STgroups} that $w=1$, $g = h^{1,0} = h^{0,1} = 2$.
We begin by enumerating the options for the identity component $G^0$;
this classification is well-known in the context of Mumford-Tate groups.
\begin{lemma} \label{genus 2 connected components}
If $G$ satisfies the Sato-Tate axioms for $w=1$, $g = h^{1,0} = h^{0,1} = 2$, then $G^0$ is conjugate to one of
\[
\Unitary(1),\medspace \SU(2),\medspace \Unitary(1) \times \Unitary(1),\medspace \Unitary(1) \times \SU(2),\medspace \SU(2) \times \SU(2),\medspace \USp(4).
\]
\end{lemma}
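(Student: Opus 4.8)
The plan is to classify, up to conjugacy in $\USp(4)$, the connected compact Lie subgroups $G^0$ whose standard $4$-dimensional symplectic representation admits a Hodge circle as in (ST2): a homomorphism $\theta\colon \Unitary(1)\to G^0$ with $\theta(u)$ having eigenvalues $u,u,u^{-1},u^{-1}$. Since $G^0$ is compact and connected it is reductive, so I would first complexify and work with the corresponding connected reductive subgroup $H$ of $\Sp_4(\C)$, recovering $G^0$ at the end as a maximal compact subgroup. The existence of $\theta$ forces $1\le \operatorname{rank} H\le \operatorname{rank}\Sp_4 = 2$. I would then enumerate the possibilities for $H$ by its rank and by its derived (semisimple) subgroup, which amounts to listing the ways $\C^4$ can decompose as a representation of $H$, and test each candidate against (ST2) by comparing its torus weights on $\C^4$ to the target multiset $\{u,u,u^{-1},u^{-1}\}$.

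The enumeration proceeds case by case. In rank $1$ the derived group is trivial or $\SU(2)$: a one-dimensional torus must be the Hodge circle itself, giving $\Unitary(1)$ acting as $\operatorname{diag}(u,u,u^{-1},u^{-1})$; an $\SU(2)$ embeds either irreducibly (via $\operatorname{Sym}^3$, with torus weights $u^{\pm3},u^{\pm1}$), as $\operatorname{std}\oplus\operatorname{std}$ (torus weights $u,u,u^{-1},u^{-1}$), or as $\operatorname{std}$ on one symplectic plane while fixing a complementary plane (torus weights $u,u^{-1},1,1$). Here (ST2) does the real work: only the middle embedding carries a circle of the required eigenvalue type, so the $\operatorname{Sym}^3$ and plane-fixing embeddings are discarded, leaving $\SU(2)$. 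In rank $2$ with trivial derived group one gets the full maximal torus $\Unitary(1)\times\Unitary(1)$, which contains the Hodge circle $\operatorname{diag}(u,u,u^{-1},u^{-1})$. With derived group of rank $2$ the only symplectic possibilities are the block-diagonal $\SU(2)\times\SU(2)$ (the tensor-product embedding preserves a symmetric form and lands in $\operatorname{O}(4)\not\subset\Sp_4$, so it is excluded at once) and $\USp(4)$ itself, both of which visibly satisfy (ST2).

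The delicate case, and the one I expect to be the main obstacle, is rank $2$ with derived group $\SU(2)$, i.e.\ groups isomorphic to $\Unitary(1)\times\SU(2)$. There are genuinely two non-conjugate symplectic embeddings: the \emph{standard} one, in which $\SU(2)$ acts on a symplectic plane and the central $\Unitary(1)$ on a complementary plane (so $\C^4=\operatorname{std}\oplus\chi\oplus\chi^{-1}$), and a \emph{tensor} one, in which $\SU(2)$ acts as $\operatorname{std}\oplus\operatorname{std}$ and the central circle is the $\SO(2)$ commutant of that action (so $\C^4=\operatorname{std}\otimes(\chi\oplus\chi^{-1})$). Both admit a Hodge circle of the correct type and so are \emph{not} separated by (ST2) alone; recognizing that only the standard embedding should appear is exactly the subtle point. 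I would resolve it using the structural input underlying the whole classification (the sense in which it is ``well known in the context of Mumford--Tate groups''): the relevant $G^0$ is a Hodge group, hence generated by the Hodge circle together with its $G^0(\R)$-conjugates. In the tensor embedding the Hodge circle lies entirely in the $\SU(2)$ factor and its conjugates generate only that $\SU(2)$, so the tensor $\Unitary(1)\times\SU(2)$ never arises as such a $G^0$; equivalently, it is not the commutant in $\Sp_4$ of any endomorphism algebra (a Lefschetz group $\Lef_A$), whereas the standard embedding is.

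Finally I would assemble the surviving cases $\Unitary(1)$, $\SU(2)$, $\Unitary(1)\times\Unitary(1)$, $\Unitary(1)\times\SU(2)$, $\SU(2)\times\SU(2)$, $\USp(4)$ and confirm that the weight bookkeeping leaves no further possibilities, so that every $G^0$ is conjugate to exactly one of the six. The only computational content is the per-case tabulation of torus weights against $\{u,u,u^{-1},u^{-1}\}$, which is routine; the conceptual work is the structure-theoretic enumeration of reductive subgroups of $\Sp_4(\C)$ and the careful handling of the two $\Unitary(1)\times\SU(2)$ embeddings.
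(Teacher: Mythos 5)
Your route is genuinely different from the paper's, and in one respect more careful. The paper's proof passes to $\SO(5)$ via the exceptional isomorphism $\USp(4)/\{\pm 1\}\simeq\SO(5)$, bounds the rank of $G^0$ by $2$, lists the possible complexified Lie algebras via the classification of Dynkin diagrams, and then eliminates $\mathfrak{g}_2$ and $\mathfrak{sl}_3$ because the resulting $5$-dimensional representation is self-dual and orthogonal ($\mathfrak{g}_2$ has no nontrivial representation of dimension less than $7$; the only nontrivial representation of $\mathfrak{sl}_3$ of dimension at most $5$ is the standard one, which is not self-dual). That argument never invokes (ST2) and stops at the level of Lie algebras: the passage from the six surviving Lie algebra types to six specific subgroups up to conjugacy is left implicit. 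Your weight-by-weight use of (ST2) to pin down embeddings (discarding $\operatorname{Sym}^3$ and $\mathrm{std}\oplus\mathrm{triv}^{\oplus 2}$ for $\SU(2)$, forcing a one-dimensional torus to be a Hodge circle) supplies exactly that missing group-level bookkeeping. On the other hand, you pass over $\mathfrak{su}(3)$ and $\mathfrak{g}_2$ in silence; the check that neither admits a faithful four-dimensional symplectic representation is routine, but it must be said, and the paper does say it.

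The real gap is in your resolution of the $\mathfrak{t}_1\oplus\mathfrak{sl}_2$ case, which you correctly identify as the crux. You are right that there are two non-conjugate connected subgroups with this Lie algebra admitting Hodge circles; note that your ``tensor'' group is precisely the centralizer $Z\simeq\Unitary(2)$ of the Hodge circle introduced in \S\ref{subsec:unitary1}, and it is the group $\Unitary(2)$ (connected center), not $\Unitary(1)\times\SU(2)$, which only strengthens your point that it is not conjugate to the standard copy. But the principle you invoke to exclude it --- that a Hodge group is generated by a Hodge circle together with its conjugates --- is not among the axioms (ST1)--(ST3), and worse, it is false for Sato--Tate groups: the connected Sato--Tate group $\Unitary(1)\times\Unitary(1)$ of a CM abelian surface (the paper's $F_\nothing$, realized for instance by $\Jac(C)$ for $C\colon y^2=x^5+1$ over $\Q(\zeta_5)$) is abelian of dimension $2$, hence is not generated by any single circle and its conjugates, yet it must be (and is) on the list. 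Applied uniformly, your criterion would wrongly delete $\Unitary(1)\times\Unitary(1)$; applied only in the delicate case, it carries no force. A smaller slip in the same passage: the central circle $1\otimes\mathrm{diag}(u,u^{-1})$ of the tensor group has eigenvalues $u,u,u^{-1},u^{-1}$, so it too is a Hodge circle; it is not true that every Hodge circle there lies in the $\SU(2)$ factor --- what is true is that no single Hodge circle together with its conjugates generates the whole group.

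In fairness, this gap cannot be repaired from (ST1)--(ST3) alone: the group $\Unitary(2)$ honestly satisfies all three axioms, since (ST3) is vacuous for a connected group (the average of any character over a connected compact group is the dimension of its invariants, an integer). So no argument from the lemma's stated hypotheses can exclude it, and the paper's own proof, which never descends from Lie algebras to conjugacy classes of subgroups, does not exclude it either. What actually rules it out is arithmetic: running the recipe of Proposition~\ref{Galois type from ST group}, its commutant together with Rosati positivity would force $\End(A_K)_\R\simeq\C$, which is absent from the Albert--Shimura list in \S\ref{view}. So your proposal has the merit of locating the genuine difficulty that the paper glosses over, but your fix proves a corrected statement (with extra hypotheses on $G^0$) rather than the lemma as stated, and the justification you give for the fix is itself untenable; the honest exclusion of the $\Unitary(2)$ case belongs to the endomorphism-algebra analysis of \S\ref{section:Galois}, or to a suitably strengthened Hodge condition, not to the axioms as given.
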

\begin{proof}
We may exploit the exceptional isomorphism $\USp(4)/\{\pm 1\} \simeq \SO(5)$.\footnote{Serre kindly pointed out to us that
given an element of $\USp(4)$ with characteristic polynomial $T^4 + a_1 T^3 + a_2 T^2 + a_1 T + 1$,
if $u$ and $v$ are the angles defining the corresponding element of $\SO(5)$, then $a_1^2=4(1+\cos u)(1+\cos v)$ and $a_2 = 2(1 + \cos u + \cos v)$.}
Let $G^0$ be a closed connected subgroup of $\SO(5)$.
Let $T$ be a maximal torus of~$G^0$; it is contained in a maximal torus of $\SO(5)$,
which is 2-dimensional. Let~$\mathfrak{h}$ denote the Lie algebra of $G^0$.
By the classification of Dynkin diagrams, the complexification $\mathfrak{h}_\C$ of
$\mathfrak{h}$ must be isomorphic to one of
\begin{center}
\begin{tabular}{ll}
%\begin{gather*}
$\mathfrak{t}_1,\medspace
\mathfrak{sl}_2 = \mathfrak{so}_3$&$ (\dim(T) = 1),$ \\
$\mathfrak{t}_2,\medspace
\mathfrak{t}_1 \times \mathfrak{sl}_2,\medspace
\mathfrak{sl}_2 \times \mathfrak{sl}_2 = \mathfrak{so}_4,\medspace
\mathfrak{sl}_3,\medspace
\mathfrak{so}_5,\medspace
\mathfrak{g}_2$ & $ (\dim(T) = 2)$.
\end{tabular}
\end{center}
The standard representation of $G$ gives rise to 5-dimensional self-dual orthogonal representations of
$\mathfrak{h}$ and $\mathfrak{h}_\C$.
This immediately rules out $\mathfrak{g}_2$, because the smallest dimension of a nontrivial
representation of $\mathfrak{g}_2$ is $7 > 5$.
It also rules out $\mathfrak{sl}_3$, because its only nontrivial representation of dimension at most 5 is
the standard 3-dimensional representation, which is not self-dual.
\end{proof}

\subsection{The case \texorpdfstring{$G^0 = \Unitary(1)$}{G0 = U(1)}}
\label{subsec:unitary1}

We now treat the case $G^0 = \Unitary(1)=\{u\in \C^\times: |u|=1\}$. In this case, $G^0$
must be equal to a Hodge circle, which we may take to be the image of
$\Unitary(1)$ under the homomorphism given in block form by
\[
u \mapsto \smallmat{\mathrm{diag}(u)}00{\mathrm{diag}(u^{-1})}.
\]
Note that the centralizer of $G^0$ within $\GL(4, \C)$ consists of block diagonal matrices
$\begin{pmatrix} A & 0 \\ 0 & D \end{pmatrix}$. For such a matrix to be unitary, we must have
$A,D \in \Unitary(2)$. For such a matrix to also be symplectic, we must have $D = \overline{A}$
(where the bar denotes complex conjugation).
We thus conclude that the centralizer $Z$ of $G^0$ in $\USp(4)$ is isomorphic to $\Unitary(2)$ via the map
\begin{equation} \label{U2 to USp4}
A \mapsto \begin{pmatrix} A & 0 \\ 0 & \overline{A} \end{pmatrix}.
\end{equation}
The normalizer $N$ of $G^0$ in $\USp(4)$ has the form
\[
N = Z \cup JZ, \qquad J := \begin{pmatrix} 0 & J_2 \\ -J_2 & 0 \end{pmatrix}, \qquad
J_2 := \begin{pmatrix} 0 & 1 \\ -1 & 0 \end{pmatrix}.
\]
Note that $J$ centralizes the copy of $\SU(2)$ inside our embedded $\Unitary(2)$: for any $A  = \begin{pmatrix} a & b \\ c & d \end{pmatrix} \in \SU(2)$ we have
\[
A= (\overline{A}^T)^{-1} = (\overline{A}^{-1})^T =
\begin{pmatrix} \overline{d} & -\overline{b} \\
-\overline{c} & \overline{a} \end{pmatrix}^T =
\begin{pmatrix} \overline{d} & -\overline{c} \\
-\overline{b} & \overline{a} \end{pmatrix},
\]
and therefore
\[
J \begin{pmatrix} A & 0 \\ 0 & \overline{A} \end{pmatrix} J^{-1} =
\begin{pmatrix}
J_2 \overline{A} J_2 & 0 \\
0 & J_2 A J_2
\end{pmatrix} = \begin{pmatrix} A & 0 \\ 0 & \overline{A}
\end{pmatrix}.
\]
Consequently, we have
\begin{equation} \label{centralizer of U1}
N/G^0\medspace \simeq \medspace \SU(2) / (\pm 1) \times \Z/2\Z\medspace \simeq\medspace \SO(3)
\times \Z/2\Z.
\end{equation}

We first enumerate the options for $G$ assuming that $G \subseteq Z$;
given \eqref{centralizer of U1}, this constitutes an enumeration over the
familiar list of finite subgroups of $\SO(3)$ up to conjugacy.
It will be convenient to identify $\SU(2)$
with the group of unit quaternions via the isomorphism
\[
a + b \mathbf{i} + c \mathbf{j} + d \mathbf{k} \mapsto \begin{pmatrix}
a + bi & c +di \\ -c+di & a- bi
\end{pmatrix},
\]
and to then use \eqref{U2 to USp4} to view the unit quaternions as a subgroup of $\USp(4)$.

\vspace{6pt}
\textbf{Cyclic groups:} A cyclic group of order $n$ within $\SO(3)$ lifts to a cyclic group of order
$2n$ in $\SU(2)$, which can be represented as $\langle \zeta_{2n} \rangle$, where
$\zeta_{2n} = \cos(\pi/n) + \sin(\pi/n) \mathbf{i}$.
By the rationality condition, the average over $r \in [0, 1]$ of the square of the trace of the matrix
\[
B = \begin{pmatrix} A & 0 \\ 0 & \overline{A} \end{pmatrix}, \qquad A =
e^{2 \pi ir} \zeta_{2n} = \begin{pmatrix} e^{2\pi ir + \pi i/n} & 0 \\ 0 & e^{2\pi ir -\pi i/n} \end{pmatrix}
\]
is an integer. However, this average equals $\left|\Trace(A)\right|^2 = (2 \cos (\pi/n))^2$,
so we must have $2 \cos (\pi/n) \in \{0,\pm 1, \pm \sqrt{2}, \pm \sqrt{3}, \pm 2\}$,
or $n \in \{1, 2, 3, 4, 6\}$. For these values of $n$, let $C_{n}$ denote the resulting subgroup of $Z$.

\vspace{6pt}
\textbf{Dihedral groups:}
A dihedral group of order $2n$ within $\SO(3)$ lifts to the group $\langle \zeta_{2n}, \mathbf{j} \rangle$ in $\SU(2)$.
For $n=2,3,4,6$, let $D_{n}$ denote the resulting subgroup of $Z$. We omit $D_{1}$ since it is conjugate to $C_{2}$.

\vspace{6pt}
\textbf{Tetrahedral group:}
In this case, $G/G^0$ is the tetrahedral group. Lifting to $\SU(2)$ yields the binary tetrahedral group, which has the standard presentation
\[
\{ \pm 1, \pm \mathbf{i}, \pm \mathbf{j}, \pm \mathbf{k},
\frac{1}{2}(\pm 1 \pm \mathbf{i} \pm \mathbf{j} \pm \mathbf{k}) \}.
\]
Let $T$ denote the resulting subgroup of $Z$.

\vspace{6pt}
\textbf{Octahedral group:}
In this case, $G/G^0$ is the octahedral group. Lifting to $\SU(2)$ yields the binary octahedral group, a standard presentation of
which consists of the given presentation of the binary tetrahedral group together with
\[
\frac{\sqrt{2}}{2}(\pm 1 \pm \mathbf{i}),
\frac{\sqrt{2}}{2}(\pm 1 \pm \mathbf{j}),
\frac{\sqrt{2}}{2}(\pm 1 \pm \mathbf{k}),
\frac{\sqrt{2}}{2}(\pm \mathbf{i} \pm \mathbf{j}),
\frac{\sqrt{2}}{2}(\pm \mathbf{i} \pm \mathbf{k}),
\frac{\sqrt{2}}{2}(\pm \mathbf{j} \pm \mathbf{k}).
\]
Let $O$ denote the resulting subgroup of $Z$.

\vspace{6pt}
\textbf{Icosahedral group:} This group cannot occur because it contains a cyclic group of order 5, but $C_5$
does not satisfy the rationality condition.

\vspace{6pt}
We thus obtain the following groups with $G^0 = \Unitary(1)$ and
$G \subseteq Z$:
\begin{equation} \label{eq:so2 groups1}
C_{1}, C_{2}, C_{3}, C_{4}, C_{6}, D_{2}, D_{3}, D_{4}, D_{6}, T, O.
\end{equation}

We now determine the options for $G \not\subseteq Z$.
Since $N/G^0 \simeq \SO(3) \times \Z/2\Z$,
the projection of a subgroup of $N/G^0$ to $\SO(3)$ is either two-to-one or one-to-one onto its image $H$.
In the former case, we get a subgroup of the form $H \times \Z/2\Z$;
these cases correspond to the groups
\[
J(C_1), J(C_2), J(C_3), J(C_4), J(C_6), J(D_2), J(D_3), J(D_4), J(D_6), J(T), J(O),
\]
obtained by adjoining $J$ to each group in \eqref{eq:so2 groups1}.
In the latter case, the subgroup forms the graph of a homomorphism $H \to \Z/2\Z$, which must be nontrivial because
$G \not\subseteq Z$. We need only consider these homomorphisms up to conjugation within
$\SO(3)$; this gives some additional groups as follows.

\vspace{6pt}
\textbf{Cyclic groups:}
For $n=2,4,6$, the nontrivial homomorphism $H \to \Z/2\Z$ gives
\[
C_{n,1} := \langle \Unitary(1), J (\cos(\pi/n) + \sin(\pi/n) \mathbf{i}) \rangle.
\]
Beware: $C_{6,1}$ contains $C_{2,1}$ but $C_{4,1}$ does not (its subgroup of order 2 is $C_2$).

\vspace{6pt}
\textbf{Dihedral groups:}
For $n = 2,4,6$, there are two nontrivial homomorphisms $H \to \Z/2\Z$ not killing the cyclic subgroup,
which are interchanged by an outer automorphism of $H$. These give rise to the group
\[
D_{n,1} := \langle \Unitary(1), J (\cos(\pi/n) + \sin(\pi/n) \mathbf{i}), \mathbf{j} \rangle
\]
containing $C_{n,1}$ with index 2. For $n = 3,4,6$, we may also use the nontrivial homomorphism
killing the cyclic subgroup to obtain
\[
D_{n,2} := \langle \Unitary(1), \cos(\pi/n) + \sin(\pi/n) \mathbf{i}, J\mathbf{j} \rangle;
\]
this is redundant for $n=2$ because all three of the
nontrivial homomorphisms $H \to \Z/2\Z$ are conjugated transitively by $\SO(3)$.

\vspace{6pt}
\textbf{Tetrahedral group:}
In this case, we have $H \simeq \alt 4$, which has no nontrivial homomorphisms to $\Z/2\Z$.

\vspace{6pt}
\textbf{Octahedral group:}
In this case, we have $H \simeq \sym 4$, so there is one nontrivial homomorphism to $\Z/2\Z$
with the tetrahedral group in its kernel. We obtain a new group $O_1$ by multiplying
each of the elements of $O \setminus T	$ by $J$.

\vspace{6pt}
This analysis thus adds the additional groups
\[
C_{2,1}, C_{4,1}, C_{6,1}, D_{2,1}, D_{3,2}, D_{4,1}, D_{4,2}, D_{6,1}, D_{6,2}, O_1.
\]

\subsection{The case \texorpdfstring{$G^0 = \SU(2)$}{G0 = SU(2)}}
\label{subsec:su2}

After the case $G^0 = \Unitary(1)$,
the next most complicated case is $G^0 = \SU(2)$.
Fortunately, we can reuse some of the analysis from \S\ref{subsec:unitary1} as follows.

Embed $G^0$ into $\USp(4)$ as in \eqref{U2 to USp4}.
Since $\SU(2)$ is centralized by $\Unitary(1)$, the normalizer is again $N = Z \cup JZ$.
This time, we see that $Z/G^0 \simeq \Unitary(2)/\SU(2)
\simeq \Unitary(1)/(\pm 1)$, and that conjugation by $J$ acts on $Z/G^0$ by inversion.
We thus need only list the finite subgroups of $\mathrm{O}(2)$, which is straightforward: for each positive
integer $n$, the cyclic subgroup of $\Unitary(1)$ of order $2n$ gives rise to the groups
\begin{align*}
E_n &:= \langle \SU(2), e^{\pi i/n} \rangle \\
J(E_n) &:= \langle \SU(2), e^{\pi i/n}, J \rangle.
\end{align*}
It is easy to check that these groups satisfy the
rationality condition if and only if $n = 1,2,3,4,6$. We thus have the groups
\[
E_1, E_2, E_3, E_4, E_6, J(E_1), J(E_2), J(E_3), J(E_4), J(E_6).
\]

\subsection{The remaining cases for \texorpdfstring{$G^0$}{G0}}
\label{subsec:group odd cases}

In order to complete the proof of Theorem~\ref{Sato-Tate axioms groups},
by Lemma~\ref{genus 2 connected components} it remains to consider the cases
$G^0 = \Unitary(1) \times \Unitary(1), \Unitary(1) \times \SU(2),
\SU(2) \times \SU(2), \USp(4)$.
The case $G^0 = \USp(4)$ is trivial because we must have $G = G^0$,
so we focus on the other three cases. For these cases, it is convenient to change basis to account for the
product structure of $G^0$, by interchanging the second and third
rows and columns; we are thus working with the new symplectic
form
\[
\begin{pmatrix} 0 & 1 & 0 & 0 \\ -1 & 0 & 0 & 0 \\ 0 & 0 & 0 & 1 \\ 0 & 0 & -1 & 0 \end{pmatrix}.
\]
In these coordinates, we may take $G^0$ to be embedded into $\USp(4)$ in block form. We take $\Unitary(1)$
to be embedded into $\SU(2)$ via the map
\[
u \mapsto \begin{pmatrix} u & 0 \\ 0 & \overline{u} \end{pmatrix}.
\]
For $G^0 = \Unitary(1) \times \Unitary(1)$,
the normalizer in $\USp(4)$ contains $\Unitary(1) \times \Unitary(1)$ with index~8, with the quotient
(isomorphic to a dihedral group) generated by
matrices
\[
a := \begin{pmatrix} J_2 & 0 \\ 0 & \Id_2 \end{pmatrix}, \qquad
b := \begin{pmatrix} \Id_2 & 0 \\ 0 & J_2 \end{pmatrix}, \qquad
c := \begin{pmatrix} 0 & \Id_2 \\ -\Id_2 & 0 \end{pmatrix},
\]
each of which defines an involution on the component group. We write $F_{*}$
for the group generated by $G^0$ and a list $*$ of matrices generated by $a,b,c$. In this notation,
up to conjugacy, we have the groups
\[
F_\nothing, F_a, F_c,
F_{a,b}, F_{ab}, F_{ac}, F_{ab,c}, F_{a,b,c}.
\]

For $G^0 = \Unitary(1) \times \SU(2)$, the normalizer in $\USp(4)$ equals $N(\Unitary(1)) \times \SU(2)$.
Thus $G^0$ and its normalizer are the only possible groups.

For $G^0 = \SU(2) \times \SU(2)$, the normalizer in $\USp(4)$ consists of $\SU(2) \times \SU(2)$
plus the coset generated by $J$. In this case, $G^0$ and its normalizer are the only possible groups.
This completes the proof of Theorem~\ref{Sato-Tate axioms groups}.

\section{Galois structures of abelian surfaces}\label{section:Galois}

In this section, we give the classification of \emph{Galois types} of abelian surfaces
(as introduced in Definition~\ref{type-intro})
and the relation of these to Sato-Tate groups. Our main result is Theorem~\ref{main-section4},
which implies both Theorem~\ref{Galois type to ST group} and Theorem~\ref{Galois type classification}.
It gives an alternate description of the Galois type in terms of arithmetic properties of the
abelian surface.  Strictly speaking, only a small part of this description is needed in order to obtain
Theorem~\ref{Galois type to ST group} and Theorem~\ref{Galois type classification}
(namely the analysis of cases corresponding to Sato-Tate groups with connected part $\Unitary(1) \times \Unitary(1)$).
However, we have chosen to provide the complete analysis in order to make it easier to recognize Galois types
and Sato-Tate groups of abelian surfaces occurring in nature.

Before stating Theorem~\ref{main-section4}, we recall the definition of the Galois type and set some associated notation.
For the moment, we take $A$ to be any abelian variety over a number field $k$.
\begin{proposition}\cite{Si}
\label{teo:SiRi}
There is a {\em unique minimal extension} $K/k$ over which all endomorphisms of $A_{\Qbar}$ are defined.
The extension $K/k$ is normal and unramified at the prime ideals
of $k$ at which $A$ has good or semistable reduction.
\end{proposition}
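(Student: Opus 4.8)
The plan is to realize $K$ as the fixed field of the kernel of the natural Galois action on the endomorphism ring, and then to control ramification by passing to the conjugation action of inertia on a Tate module.

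First I would set up the algebraic structure. Since $\End(A_{\Qbar})$ is a finitely generated free $\Z$-module (of rank at most $(2g)^2$) and every geometric endomorphism is defined over some finite extension of $k$, the natural action $\varrho\colon G_k \to \Aut(\End(A_{\Qbar}))$ has open kernel $H$: if $\alpha_1,\dots,\alpha_r$ generate $\End(A_{\Qbar})$ and each $\alpha_i$ is defined over a finite extension $L_i$, then $G_L \subseteq H$ for $L$ the compositum of the $L_i$. Being the kernel of a homomorphism, $H$ is also normal in $G_k$, so $K := \Qbar^H$ is a finite Galois (in particular normal) extension of $k$. An endomorphism is defined over a subfield $L' \supseteq k$ exactly when $G_{L'}$ fixes it; hence whenever all endomorphisms are defined over $L'$ we have $G_{L'} \subseteq H$, i.e. $L' \supseteq K$. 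This gives both existence and the minimality, hence uniqueness, of $K$.

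The key point is to identify the Galois action with conjugation on a Tate module and exploit it at primes of good or semistable reduction. For a fixed prime $\ell$, the Tate-module functor embeds $\End(A_{\Qbar})$ into $\End(V_\ell(A))$ in a $G_k$-equivariant way via $\alpha \mapsto T_\ell(\alpha)$, where $G_k$ acts on the target by conjugation through $\varrho_{A,\ell}$; concretely $T_\ell(\sigma(\alpha)) = \varrho_{A,\ell}(\sigma)\, T_\ell(\alpha)\, \varrho_{A,\ell}(\sigma)^{-1}$. Now let $\mathfrak p$ be a prime of residue characteristic $p$ at which $A$ has good or semistable reduction, fix $\ell \neq p$, and let $I_{\mathfrak p} \subseteq G_k$ be an inertia subgroup at $\mathfrak p$. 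In the good reduction case, N\'eron--Ogg--Shafarevich gives $\varrho_{A,\ell}(I_{\mathfrak p}) = 1$, so $I_{\mathfrak p}$ acts trivially on $\End(A_{\Qbar})$; in the semistable case, Grothendieck's semistable reduction criterion gives that $\varrho_{A,\ell}(I_{\mathfrak p})$ is unipotent. Since conjugation by a unipotent element is a unipotent automorphism of $\End(V_\ell(A))$, the action of any $\sigma \in I_{\mathfrak p}$ on $\End(A_{\Qbar}) \otimes_{\Z} \Q_\ell$ is simultaneously unipotent and of finite order, the latter because it factors through the finite group $\Gal(K/k)$. Over a field of characteristic $0$ a finite-order unipotent automorphism is the identity, so $\sigma$ acts trivially, i.e. $I_{\mathfrak p} \subseteq H$, and thus $\mathfrak p$ is unramified in $K = \Qbar^H$.

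The main obstacle is the semistable case: the good-reduction case follows at once from the triviality of inertia on $V_\ell(A)$, whereas the semistable case requires invoking Grothendieck's semistable reduction theorem to secure unipotence of $\varrho_{A,\ell}(I_{\mathfrak p})$, after which the elementary characteristic-$0$ fact that finite-order unipotents are trivial closes the argument. One subtlety to verify along the way is that the conjugation action of a unipotent $u$ on the matrix algebra $\End(V_\ell(A))$ is itself unipotent; this follows by writing $\mathrm{Ad}(u)$ as the product of the commuting unipotent operators of left multiplication by $u$ and right multiplication by $u^{-1}$.
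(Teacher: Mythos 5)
The paper offers no proof of this proposition at all---it simply cites Silverberg \cite{Si}---so the only comparison available is against the cited argument, which yours essentially reproduces. Your proof is correct and complete modulo standard citations: existence, minimality, and normality of $K$ follow from your identification of $K$ as the fixed field of the open normal kernel of the $G_k$-action on $\End(A_{\Qbar})$, and unramifiedness follows from N\'eron--Ogg--Shafarevich in the good case and, in the semistable case, from Grothendieck's unipotence of inertia on $V_\ell(A)$ together with the observation that an automorphism which is both unipotent and of finite order (the latter because the action factors through the finite group $\Gal(K/k)$) must be trivial in characteristic zero---which is precisely the structure of Silverberg's proof.
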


Taking $K$ as in Proposition~\ref{teo:SiRi}, $\End(A_K)_\Q$ is a semisimple algebra of finite rank over $\Q$ and thus decomposes as a product $\End(A_K)_\Q=\prod_i \M_{n_i}(D_i)$ of matrix algebras over division algebras, parallel to the decomposition of $A$ as a product $A\sim \prod_i A_i^{n_i}$ of simple varieties over $K$.
%Let $Z=\prod Z_i$ denote the center of $\End(A_K)_\Q$, which is a product of number fields. For each $i$, let us also denote by $M_i\subseteq %Z_i$ the maximal totally imaginary subfield of $Z_i$; we simply put $M_i=\Q$ if $Z_i$ contains no such fields.

Note that the Galois group $\Gal (K/k)$ acts in a natural way on the $\Q$-algebra $\End(A_K)_\Q$ of endomorphisms of $A$ and induces
a Galois representation
$$
\rhoA: \Gal (K/k)\hookrightarrow \Aut_{\R-\mbox{alg}} (\End(A_K)_\R),
$$
which is faithful precisely because $K/k$ is the minimal extension over which the endomorphisms of $A_{\Qbar}$
are defined.

\begin{definition}\label{Galois-type} The {\em Galois type} of $A$ is the equivalence class of the representation $\rhoA$.
\end{definition}

As noted in Definition~\ref{type-intro},
two abelian varieties $A/k$ and $A'/k'$ defined over different number fields may have the same Galois type; the equivalence relation on representations is meant to see $\Gal(K/k)$ only as an abstract group, not as a quotient of $G_k$.

\subsection{Classification of Galois types: overview}\label{view}

We now restrict to the case where $A$ is an abelian surface over $k$, and formulate the classification theorem for Galois types.
In the process, we introduce alternate names for the Galois types corresponding more closely to their arithmetic.

To begin with, recall that Albert's classification of division algebras with involution (see \cite{Mum70}), together with the work of Shimura \cite{Shi63}, show that the $\R$-algebra $\End(A_K)_\R$ is isomorphic to one of:

\begin{enumerate}

\item[($\bA$)] $\R$, which is the generic case;

\item[($\bB$)] $\R\times \R$, which occurs when either
\begin{list}{\labelitemi}{\leftmargin=1em}
\item $A_K$ is isogenous to a product of nonisogenous elliptic curves without CM, or
\item $A_K$ is simple and $\End(A_K)$ is an order in a real quadratic field;
\end{list}

\item[($\bC$)] $\C\times \R$, which occurs when $A_K$ is isogenous to a product of (necessarily nonisogenous) elliptic curves, one with CM and the other without CM;

\item[($\bD$)]  $\C\times \C $, which occurs when either
\begin{list}{\labelitemi}{\leftmargin=1em}
\item $A_K$ is isogenous to a product of nonisogenous elliptic curves with CM, or
\item $A_K$ is simple and $\End(A_K)$ is an order in a quartic CM-field;
\end{list}

\item[($\bE$)] $\M_2(\R)$, which occurs when either
\begin{list}{\labelitemi}{\leftmargin=1em}
\item $A_K$ is isogenous to the square of an elliptic curve without CM, or
\item $A_K$ is simple and $\End(A_K)$ is an order in a division quaternion algebra over $\Q$;
\end{list}

\item[($\bF$)] $\M_2(\C)$,  which occurs when $A_K$ is isogenous to the square of an elliptic curve with CM.

\end{enumerate}

In case $\bD$, when $\End(A_K)$ is an order in a quartic CM-field $M$ we shall assume that a choice of an isomorphism $\iota: M \overset{\sim}{\lra} \End(A_K)_\Q$ has been made; this singles out a (primitive) CM-type $\Phi$ on $A$, to which we can associate the {\em reflex field} of the pair $(A,\Phi)$, which we denote as usual by $M^*$. Different choices of $\Phi$ give rise to conjugate reflex fields in the Galois closure of $M$, and our results depend only on the conjugacy class of $M^*$.

We shall refer to $\bA$, $\bB$, $\bC$, $\bD$, $\bE$, $\bF$ as the \emph{absolute type}, or simply the \emph{type}, of $A$. Note that the Galois type of $A$ is a much finer invariant.

The six absolute types are in one-to-one correspondence with the six {\em connected} Lie subgroups of $\USp(4)$ appearing in Lemma \ref{genus 2 connected components}, as indicated below.

\begin{center}
\begin{tabular}{llllllll}
$\bA$: & $\USp(4)$            & $\quad$ & $\bB$: &  $\SU(2)\times\SU(2)$     & $\quad$ & $\bC$: & $\Unitary(1)\times\SU(2)$\vspace{2pt}\\
$\bD$: & $\Unitary(1)\times\Unitary(1)$& $\quad$ & $\bE$: &  $\SU(2)$ & $\quad$ & $\bF$: & $\Unitary(1)$\\
\end{tabular}
\end{center}

There at least two ways of proving this.
One method, which we do not make explicit here but surely follows from existing results in the literature, uses Definition \ref{Sato-Tate group} to compute $\ST_A^0=\ST_{A_K}$ for any abelian surface $A$ of given absolute type.
Alternatively, we may work in the reverse direction: for each of the six possible connected Sato-Tate groups, use Proposition \ref{Galois type from ST group} to determine the corresponding Galois type.
These computations are made explicit in sections \S \ref{s1} through \S \ref{s4}.

\begin{theorem}\label{main-section4}
There are exactly $52$ different Galois types of abelian surfaces, and these correspond to $52$ of the $55$ Sato-Tate groups listed in Theorem~\ref{Sato-Tate axioms groups}, as indicated below (using notation defined in the proof).
Of the $52$ Galois types, exactly $34$ can (and do) arise from abelian surfaces defined over $\Q$; these are decorated with the symbol $\star$.

\begin{itemize}
\item $\bA[\cyc 1]^\star $, matching $\USp(4)$.
\item $\bB[\cyc 1]^\star$ and $\bB[\cyc 2]^\star $, matching $\SU(2)\times\SU(2)$ and $N(\SU(2)\times\SU(2))$.
\item $\bC[\cyc 1]$ and $\bC[\cyc 2]^\star$, matching $\Unitary(1)\times\SU(2)$ and $N(\Unitary(1)\times\SU(2))$.
\item $\bD[\cyc 1]$, $\bD[\cyc 2,\R\times \C]$, $\bD[\cyc 2,\R\times \R]$, $\bD[\cyc 4]^\star$, and $\bD[\dih 2]^\star$, matching $F_\nothing$, $F_a$, $F_{ab}$, $F_{ac}$, and $F_{a,b}$.
\item $\bE[\cyc 1]^\star$, $\bE[\cyc 2,\C]^\star$, $\bE[\cyc 3]^\star$, $\bE[\cyc 4]^\star$, and $\bE[\cyc 6]^\star$, matching $E_1$, $E_2$, $E_3$, $E_4$, and $E_6$.
\item $\bE[\cyc 2,\R\times \R]^\star$, $\bE[\dih 2]^\star$, $\bE[\dih 3]^\star$, $\bE[\dih 4]^\star$, and $\bE[\dih {6}]^\star$, matching $J(E_1)$, $J(E_2)$, $J(E_3)$, $J(E_4)$, and $J(E_6)$.
\item $\bF[\cyc 1]$, $\bF[\cyc 2]$, $\bF[\cyc 3]$, $\bF[\cyc 4]$, $\bF[\cyc 6]$, $\bF[\dih 2]$, $\bF[\dih 3]$, $\bF[\dih 4]$, $\bF[\dih 6]$, $\bF[\alt 4]$, and $\bF[\sym 4]$,
      matching $C_1$, $C_2$, $C_3$, $C_4$, $C_6$, $D_2$, $D_3$, $D_4$, $D_6$, $T$, and $O$.
\item $\bF[\cyc 2,\cyc 1,\mathbb H]$, $\bF[\dih 2,\cyc 2,\mathbb H]^\star$, $\bF[\cyc 6,\cyc 3,\mathbb H]$, $\bF[\cyc 4\times\cyc 2, \cyc 4]^\star$, $\bF[\cyc 6\times \cyc 2, \cyc 6]^\star$,
      $\bF[\dih 2\times \cyc 2,\dih 2]^\star$, $\bF[\dih 6,\dih 3,\mathbb H]^\star$, $\bF[\dih 4\times \cyc 2,\dih 4]^\star$, $\bF[\dih 6\times \cyc 2,\dih 6]^\star$,\newline $\bF[\alt 4\times \cyc 2,\alt 4]^\star$, and $\bF[\sym 4\times \cyc 2,\sym 4]^\star$, matching $J(C_1)$, $J(C_2)$, $J(C_3)$, $J(C_4)$, $J(C_6)$, $J(D_2)$, $J(D_3)$, $J(D_4)$, $J(D_6)$, $J(T)$, and $J(O)$.
\item $\bF[\cyc 2,\cyc 1, \M_2(\R)]^\star$, $\bF[\cyc 4,\cyc 2]$, $\bF[\cyc 6,\cyc 3,\M_2(\R)]^\star$, $\bF[\dih 2,\cyc 2,\M_2(\R)]^\star$,\newline $\bF[\dih 4,\dih 2]^\star$, $\bF[\dih 6,\dih 3,\M_2(\R)]^\star$,
	  $\bF[\dih 3,\cyc 3]^\star$, $\bF[\dih 4,\cyc 4]^\star$, $\bF[\dih 6,\cyc 6]^\star$, and $\bF[\sym 4,\alt 4]^\star$, matching $C_{2,1}$, $C_{4,1}$, $C_{6,1}$, $D_{2,1}$, $D_{4,1}$, $D_{6,1}$,
	  $D_{3,2}$, $D_{4,2}$, $D_{6,2}$, and $O_1$.
\end{itemize}

Moreover, for any abelian surface $A$ defined over a number field $k$, each of the following three invariants uniquely determines the other two:
\begin{enumerate}
\item[$(a)$] the conjugacy class of $\ST_A$ within $\USp(4)$;
\item[$(b)$] the Galois type of $A$;
\item[$(c)$] the isomorphism class of $\Gal(K/k)$ plus the function on the subgroup lattice of $\Gal(K/k)$
taking the subgroup $H$ to the isomorphism class of the $\R$-algebra $\End(A_K)_\R^H$ fixed by $H$.
\end{enumerate}
\end{theorem}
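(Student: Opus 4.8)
The plan is to organize everything around the six absolute types $\bA,\dots,\bF$, which by Theorem~\ref{mumford-tate genus 2} and Proposition~\ref{p2.17} are in bijection with the six connected groups $G^0$ of Lemma~\ref{genus 2 connected components}. First I would dispose of the equivalence of the three invariants. The implication $(a)\Rightarrow(b)$ is exactly Proposition~\ref{Galois type from ST group}: $\ST_A$ recovers $\End(A_K)_\R$ together with its $\Gal(K/k)$-action. The implication $(b)\Rightarrow(c)$ is formal, since $\End(A_K)_\R^H=\End(A_{K^H})_\R$ for each subgroup $H$ by Galois descent of endomorphisms (Proposition~\ref{teo:SiRi} applied to the intermediate fields), so (c) is literally a function of the pair $[\Gal(K/k),\End(A_K)_\R]$. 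The remaining implication $(c)\Rightarrow(a)$ I would reduce to a finite computation: since every $\ST_A$ is one of the groups in Theorem~\ref{Sato-Tate axioms groups}, it suffices to compute the invariant (c) for each of these and check that the resulting values are pairwise distinct; then the maps $(a)\to(b)\to(c)$ are all injective on the relevant finite set and hence mutually inverse bijections onto a common image.

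Next I would run the enumeration type by type. For each absolute type the algebra $E=\End(A_K)_\R$ is fixed ($\R$, $\R\times\R$, $\C\times\R$, $\C\times\C$, $\M_2(\R)$, $\M_2(\C)$), and the Galois action gives a faithful homomorphism $\Gal(K/k)\hookrightarrow\Aut_{\R\text{-alg}}(E)$; so a Galois type amounts to a finite subgroup of $\Aut_{\R\text{-alg}}(E)$ up to the equivalence of Definition~\ref{type-intro}, subject to a rationality constraint. Computing $\Aut_{\R\text{-alg}}(E)$ in each case --- trivial for $\bA$; $\Z/2\Z$ for $\bB$ and $\bC$; $\dih 4$ for $\bD$; $\PGL_2(\R)$ for $\bE$; and $\PGL_2(\C)\rtimes\Z/2\Z$ for $\bF$ --- and matching the admissible subgroups against the component groups $G/G^0$ of the $55$ groups in Theorem~\ref{Sato-Tate axioms groups} produces the claimed correspondence; the bulk of the bookkeeping is in type $\bF$, where the subgroup structure reproduces the $\SO(3)\times\Z/2\Z$ analysis of \S\ref{subsec:unitary1}.

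The crux is type $\bD$, where $G^0=\Unitary(1)\times\Unitary(1)$ and the abstract enumeration of subgroups of $\Aut_{\R\text{-alg}}(\C\times\C)\cong\dih 4=\langle a,b,c\rangle$ yields eight candidates $F_\nothing,F_a,F_c,F_{a,b},F_{ab},F_{ac},F_{ab,c},F_{a,b,c}$, whereas only five are realized. I expect this to be the main obstacle, since it is precisely where the group-theoretic count exceeds the arithmetic. The three spurious groups $F_c,F_{ab,c},F_{a,b,c}$ are exactly those whose image in $\dih 4$ contains a bare transposition (an order-$2$ element swapping the two factors), and I would rule them out using complex multiplication theory: if $A_K$ is simple then $\End(A_K)_\Q$ is a \emph{primitive} quartic CM field, hence cyclic or non-Galois rather than biquadratic, so its $\Q$-automorphism group lands in the cyclic group $\langle ac\rangle$ and contains no bare transposition; if instead $A_K\sim E_1\times E_2$ with $E_1,E_2$ nonisogenous elliptic curves with CM, then $E_1$ and $E_2$ have \emph{distinct} CM fields (two elliptic curves with the same imaginary quadratic multiplication are isogenous over $\Qbar$), so no element of $\Gal(K/k)$ can interchange the factors and the image lies in $\langle a,b\rangle$. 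In both cases a factor-swapping Galois element, when it exists, must square to complex conjugation, so only the order-$4$ swap $ac$ occurs; this leaves exactly $F_\nothing,F_a,F_{ab},F_{ac},F_{a,b}$ and drops the total from $55$ to $52$.

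Finally I would address existence and the field-of-definition count. That the $52$ surviving types are actually attained I would establish by exhibiting abelian surfaces (Jacobians of explicit genus-$2$ curves and their twists) realizing each one, as carried out in the later sections; and the mutual-distinctness computation from the first step shows that no two of these coincide. For the count over $\Q$, I would use the strengthened Hodge condition of Remark~\ref{real place} (and its refinement) to obstruct the types that cannot occur when $k$ has a real place, and then produce explicit curves over $\Q$ for the $34$ types decorated with $\star$; matching the obstruction with the constructions pins the count at exactly $34$.
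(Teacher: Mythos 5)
Your proposal is correct in its overall architecture, and in fact it follows what the paper itself singles out as the minimal route: the paper remarks at the start of \S\ref{section:Galois} that only the analysis of the case $G^0=\Unitary(1)\times\Unitary(1)$ is strictly needed for Theorem~\ref{main-section4}, the full arithmetic description of \S\ref{s1}--\S\ref{s4} being included to make Galois types recognizable in practice. Your handling of (a)$\Rightarrow$(b) (Proposition~\ref{Galois type from ST group}), (b)$\Rightarrow$(c) (Galois descent of endomorphisms), and (c)$\Rightarrow$(a) (a finite distinctness check) coincides with \S\ref{subsection:correspondence}; just note that the distinctness check genuinely requires the full subgroup-lattice function, since the paper shows the coarser data $(\Gal(K/k),\End(A_K)_\R,\End(A_k)_\R)$ fails to separate the pairs $(J(C_2),D_2)$, $(J(C_3),C_{6,1})$, $(J(D_3),D_{6,1})$. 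Where you differ is at the crux. The paper rules out $F_c$, $F_{ab,c}$, $F_{a,b,c}$ by classifying \emph{all} case-$\bD$ Galois types arithmetically --- Shimura's reflex-field proposition in \S\ref{s2} when $A_K$ is simple (forcing $\Gal(K/k)$ cyclic of order $1$, $2$, or $4$ with prescribed fixed algebras) and Proposition~\ref{M1M2} plus Ribet's theorem in \S\ref{s3} in the split case --- and then observing that the Lie-side data of the three leftover groups matches none of the resulting five types. You instead characterize the spurious groups as exactly those containing a factor-swapping involution and kill such involutions directly. Your two inputs (a primitive quartic CM field is cyclic or non-Galois, so its automorphisms act on $\C\times\C$ through the order-$4$ cyclic group $\langle ac\rangle$, whose swaps square to complex conjugation; nonisogenous CM elliptic factors have nonisomorphic CM fields, so no swap exists at all) are the same CM facts underlying the paper's argument, so the two proofs are close in substance; yours is leaner, while the paper's yields the full arithmetic dictionary (types $\bE$ and $\bF$, the $\HH$ versus $\M_2(\R)$ refinement, Tables~\ref{table:matchingU1} and~\ref{table:matchingU1exc}) as a bonus.

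The one concrete gap is the count over $\Q$. Remark~\ref{real place} excludes only $14$ of the $18$ undecorated Galois types, and your appeal to ``the strengthened Hodge condition \dots\ and its refinement'' does not supply the remaining four exclusions, which require arguments of a different nature. The types $\bF[\cyc 2,\cyc 1,\HH]$, $\bF[\cyc 4,\cyc 2]$, and $\bF[\cyc 6,\cyc 3,\HH]$ (groups $J(C_1)$, $C_{4,1}$, $J(C_3)$) are \emph{not} obstructed by the Hodge condition; the paper excludes them over any field with a real place in Proposition~\ref{proposition:no real place} by base-changing to $\R$, writing $A_\R$ as a twist of the square of a CM elliptic curve defined over $\R$, and applying Hilbert's Theorem 90 for $\GL_2(\C)$ to show the relevant fixed algebra would have to be $\M_2(\R)$ rather than $\HH$ (resp.\ would have rank too large in the $C_{4,1}$ case). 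Moreover, $\bD[\cyc 2,\R\times\R]$ (the group $F_{ab}$) \emph{does} occur over fields with a real place, so no real-place obstruction can exclude it over $\Q$; the paper's argument is that this type forces $kM_1=kM_2$ for nonisomorphic imaginary quadratic fields $M_1$, $M_2$, which is impossible for $k=\Q$ (though possible, e.g., for $k=\Q(\sqrt{d_1d_2})$). Without these four arguments, your proof establishes that the number of Galois types realizable over $\Q$ lies between $34$ and $38$, but not that it equals $34$.
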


We devote the remainder of this section to proving Theorem~\ref{main-section4}. From \S \ref{s1} through \S \ref{s4}, we prove that there exist at most $52$ different Galois types over number fields.
Along the way, we describe the passage from (a) to (b) in Theorem~\ref{main-section4};
this amounts to making the proof of Proposition~\ref{Galois type from ST group} explicit
for each of the 55 groups named in Theorem~\ref{Sato-Tate axioms groups}.
{}From this computation, we see that the Galois types correspond to 52 of the 55 Lie groups listed in Theorem~\ref{Sato-Tate axioms groups}.
Since the Lie groups
$F_c$, $F_{ab,c}$, $F_{a,b,c}$ remain unmatched, they cannot occur as the Sato-Tate group of any abelian surface.

As a byproduct of this computation, we explicitly describe the passage from (a) to (c) in Theorem~\ref{main-section4}.
To do this, it suffices to compute $\End(A_K)^{\Gal(K/k)}_{\R} = \End(A_k)_{\R}$ for each Sato-Tate group; this data appears in Table~\ref{table:STgroups}.
With this data, it is also easy to go from (c) back to (a); see \S \ref{subsection:correspondence}.

In \S \ref{section:realizability over Q}, we verify that the $18$ Galois types that are {\em not} decorated with a $\star$ in Theorem~\ref{main-section4} {\em cannot} arise from an abelian surface defined over $\Q$.
Among these 18 Galois types, 14 correspond to Sato-Tate groups that were already ruled out over a field with a real place in Remark \ref{real place}.
Additional arguments are provided to show that the 3 Galois types $\bF[\cyc 2,\cyc 1,\mathbb H]$, $\bF[\cyc 4,\cyc 2]$ and $\bF[\cyc 6,\cyc 3,\mathbb H]$ cannot occur over a field with a real place  (Proposition~\ref{proposition:no real place}). Finally, we show that the Galois type $\bD[\cyc 2, \R\times\R]$, admissible over a field admitting a real place, cannot arise from an abelian surface defined over $\Q$, as a result of the discussion in \S \ref{s3}.

In \S \ref{section:examples}, we exhibit one proven example of an abelian surface for each of the $52$ Galois types.
These examples arise as Jacobians of curves of genus 2 over number fields;
for those $34$ Galois types decorated with a $\star$, the curve that we exhibit is defined over $\Q$. For the Galois type $\bD[\cyc 2, \R\times\R]$, the curve we present is defined over a totally real field.

We now proceed with the proof of Theorem 4.3 in \S \ref{s1}--\S \ref{subsection:correspondence}.

\subsection{Cases \texorpdfstring{$\bA$}{A} and \texorpdfstring{$\bB$}{B}}\label{s1}

In case $\bA$, the group $\Aut_{\Q} (\End(A_K)_\Q)$ is trivial and $\End(A_K)_\R=\R$.
Therefore $\Gal(K/k)=\cyc 1$ and $\rhoA=\chi_1$ is the trivial representation.
In case $\bB$, we have $\Aut_{\Q} (\End(A_K)_\R)\simeq \cyc 2$, $\End(A_K)_\R\simeq \R \times \R$, and $\End(A_K)_\R^{\cyc 2}\simeq \R$.
This yields three distinct Galois types: $\bA[\cyc 1]$, $\bB[\cyc 1]$, and $\bB[\cyc 2]$.

{}From the Lie group side, it is clear that if $\ST_A = \USp(4)$, then $\End(A_K)_{\R}$ is $\R$.
If $\ST_A^0 = \SU(2) \times \SU(2)$, then $\End(A_K)_{\R} \simeq \R \times \R$ and the normalizer of $\SU(2) \times \SU(2)$ interchanges the two factors, fixing $\R$.

Thus the Galois types of abelian surfaces with Sato-Tate groups $\USp(4)$, $ \SU(2) \times \SU(2)$, and $N(\SU(2) \times \SU(2))$ are $\bA[\cyc 1]$, $\bB[\cyc 1]$, and $\bB[\cyc 2]$, respectively.

\medskip

\subsection{Complex multiplication by a quartic CM-field}\label{s2}

Let $M$ be  a quartic CM-field, that is, a totally imaginary quadratic extension of a real quadratic field. Assume $A$ has complex multiplication by $M$ and fix an isomorphism $\iota: M \lra \End(A_K)_\Q$, which in turn induces an isomorphism
$$
M\otimes_\Q \R\simeq \C\times \C \simeq \End(A_K)_\R.
$$

Let $\Phi=\{\phi, \phi'\}$ denote the CM-type of the pair $(A,\iota)$, which is necessarily primitive as otherwise $\iota$ would only be a (nonsurjective) monomorphism. Let $M^*$ be the reflex field of $(M,\Phi)$. The extension $M/\Q$ is either

\begin{enumerate}

\item[($\cyc 4$)] normal, with $\Gal(M/\Q)\simeq \cyc 4$ and $M^*= M$, or

\item[($\dih 4$)] not normal, with the Galois group of the normal closure $\tilde M$ isomorphic to $\dih 4$ and $M^*$ a subfield of $\tilde M$ of degree $4$ over $\Q$, different from $M$ and not normal over $\Q$; see \cite[page 64]{Shi98} or \cite[Ch.\,I, \S 7]{Str}, for example.

\end{enumerate}

\begin{proposition} The field $K$ is the compositum of $k$ and $M^*$.
\end{proposition}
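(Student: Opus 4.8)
The plan is to realize $\Gal(K/k)$ as a subgroup of $\Aut(M/\Q)$ and then pin down exactly which subgroup it is. Since $K/k$ is the minimal field of definition of the endomorphisms and $\End(A_K)_\Q\simeq M$, every $\tau\in G_k$ sends $\iota(m)$ to another endomorphism of $A_{\Qbar}$; because $A$ is defined over $k$ (so $A^\tau\simeq A$ canonically), this endomorphism again lies in $\iota(M)$, and $\tau$ induces a field automorphism $\sigma_\tau\in\Aut(M/\Q)$ characterized by $\tau\circ\iota=\iota\circ\sigma_\tau$. By minimality of $K$ one has $\tau\in G_K$ if and only if $\sigma_\tau=\mathrm{id}$. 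Thus it suffices to prove that, for $\tau\in G_k$, $\sigma_\tau=\mathrm{id}$ if and only if $\tau$ fixes $M^*$; this yields $G_K=G_k\cap\Gal(\Qbar/M^*)=G_{kM^*}$ and hence $K=kM^*$.

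The key step is to compare two descriptions of the CM-type of the pair $(A,\tau\iota)$, where $\tau\iota(m):=\tau(\iota(m))$. Writing $\Phi=\{\phi,\phi'\}$ as embeddings $M\hookrightarrow\C$ (via the fixed $\Qbar\hookrightarrow\C$), the element $\tau\iota(m)=\iota(\sigma_\tau(m))$ acts on $\mathrm{Lie}(A_\C)$ with eigenvalues $\phi(\sigma_\tau(m))$, so $(A,\tau\iota)$ has CM-type $\Phi\circ\sigma_\tau$. On the other hand, the canonical isomorphism $A^\tau\simeq A$ identifies $(A,\tau\iota)$ with the Galois conjugate $(A^\tau,\iota^\tau)$, whose CM-type is $\tau\circ\Phi=\{\tau\circ\phi:\phi\in\Phi\}$ by the standard behaviour of CM-types under $\mathrm{Aut}(\C)$ (applying $\tau$ multiplies the eigenvalue $\phi(m)$ by passing to $\tau(\phi(m))$). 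Equating these gives the identity $\tau\circ\Phi=\Phi\circ\sigma_\tau$ of CM-types, valid for every $\tau\in G_k$.

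From here the conclusion follows by combining the definition of the reflex field with primitivity of $\Phi$. By definition $\Gal(\Qbar/M^*)$ is the stabilizer of $\Phi$ under $\tau\mapsto\tau\circ\Phi$. If $\sigma_\tau=\mathrm{id}$, then $\tau\circ\Phi=\Phi\circ\sigma_\tau=\Phi$, so $\tau$ fixes $M^*$. Conversely, if $\tau$ fixes $M^*$, then $\tau\circ\Phi=\Phi$, whence $\Phi\circ\sigma_\tau=\Phi$; lifting $\Phi$ to a subset of $\Gal(\tilde M/\Q)$, the primitivity of $\Phi$ says precisely that its right stabilizer is $\Gal(\tilde M/M)$, so no nontrivial $\sigma\in\Aut(M/\Q)$ satisfies $\Phi\circ\sigma=\Phi$, forcing $\sigma_\tau=\mathrm{id}$. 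This establishes the desired equivalence $\sigma_\tau=\mathrm{id}\iff\tau|_{M^*}=\mathrm{id}$ for $\tau\in G_k$, and therefore $K=kM^*$.

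I expect the main obstacle to be the variance bookkeeping in the key step: one must keep straight that Galois conjugation acts on the CM-type by \emph{post}-composition $\phi\mapsto\tau\circ\phi$, whereas the induced automorphism $\sigma_\tau$ acts by \emph{pre}-composition $\phi\mapsto\phi\circ\sigma_\tau$. It is exactly this asymmetry that makes the reflex field $M^*$ (defined via post-composition) appear, rather than $M$ itself or a different quartic field; in the $C_4$ case both happen to coincide, but the $D_4$ case shows the distinction is genuine. The other delicate point is the converse implication, where primitivity of $\Phi$ is essential — the analogous statement fails for imprimitive types. Both facts are classical in the theory of complex multiplication (cf. \cite{Shi98} and \cite[Ch.\,I, \S 7]{Str}), so the argument reduces to assembling them in the normalization fixed here.
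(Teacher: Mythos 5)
Your proof is correct, but it takes a genuinely different route from the paper in the trivial sense that the paper gives no argument at all: its entire proof is the citation ``This is Proposition 3 in \cite[page 515]{Shi71}.'' What you have done is reconstruct, in the paper's own normalization, the content of that cited result. Your three ingredients are sound and are assembled correctly: (i) since $\End(A_{\Qbar})_\Q=\iota(M)$ is stable under $G_k$, each $\tau\in G_k$ induces $\sigma_\tau\in\Aut(M/\Q)$, and the kernel of $\tau\mapsto\sigma_\tau$ is exactly $G_K$ by minimality of $K$; (ii) the identity $\tau\circ\Phi=\Phi\circ\sigma_\tau$, which follows from the standard fact that conjugation carries a CM pair of type $\Phi$ to one of type $\tau\circ\Phi$ (the eigenvalue computation on $H^0(A_\C,\Omega^1)$ under base change along $\tau$ confirms the post-/pre-composition asymmetry you flagged as the main danger); and (iii) the reflex field is by definition the fixed field of the post-composition stabilizer of $\Phi$, while primitivity of $\Phi$ --- which the paper has already noted is automatic here, since an imprimitive type would make $\iota$ non-surjective --- forces the pre-composition stabilizer of $\Phi$ in $\Aut(M/\Q)$ to be trivial, via the standard criterion that the right stabilizer of the lifted type in $\Gal(\tilde M/\Q)$ equals $\Gal(\tilde M/M)$. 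This yields $\sigma_\tau=\mathrm{id}\iff\tau|_{M^*}=\mathrm{id}$ for $\tau\in G_k$, hence $G_K=G_{kM^*}$ and $K=kM^*$. What the paper's approach buys is brevity and a pointer to the literature; what yours buys is a self-contained verification that the convention for $M^*$ used in this paper (stabilizer under post-composition) is precisely the one for which the proposition holds --- the exact spot where a variance error would produce $M$ or a wrong conjugate field in the $\dih 4$ case.
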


\begin{proof} This is Proposition 3 in \cite[page 515]{Shi71}.
\end{proof}

In case ($\dih 4$) we must have $|\Gal(K/k)|\leq 2$ because otherwise $\Aut_{\Q}(M)\supseteq \Gal(K/k)$ would have order at least $4$, implying that $M/\Q$ is Galois. Note that the condition $[kM^*:k]\leq 2$ implies that this case can not occur for $k=\Q$. In case ($\cyc 4$), we have $\Gal(K/k)=\Gal(kM^*/k)\subseteq \Gal(M^*/\Q)\simeq \cyc 4$.
In any case, $\Gal(K/k)\simeq \cyc n$ for $n=1$, $2$, or $4$, which gives rise to the following alternatives:

\begin{itemize}

\item $\Gal(K/k)= \cyc 1$ and $\rhoA$ is the trivial representation; this yields Galois type $\bD[\cyc 1]$.

\item $\Gal(K/k)= \cyc 2$ and $(\End(A_K)_\R)^{\cyc 2}\simeq\R\times \R$, as  $(\End(A_K)_\R)^{\cyc 2}\simeq M^{\cyc 2}\otimes_\Q \R$ is by Artin's Lemma an $\R$-vector space of dimension 2 and  $M$ contains a single quadratic subfield, which is real; this is Galois type $\bD[\cyc 2,\R\times \R]$.

\item $\Gal(K/k)= \cyc 4$, $(\End(A_K)_\R)^{\cyc 2} \simeq \R\times \R$ and $(\End(A_K)_\R)^{\cyc 4} \simeq \R$; this is Galois type $\bD[C_4]$, which is the only case that can occur when $k=\Q$.

\end{itemize}

Thus for case $\bD$ we have found three of the five Galois types listed in Theorem~\ref{main-section4}; we shall find the remaining two in the next section, arising from abelian surfaces that are isogenous to the product of two elliptic curves with CM by distinct imaginary quadratic fields.

Let us now analyze which Galois types correspond to an abelian surface $A$ such that $\ST_A^0=\Unitary(1) \times \Unitary(1)$.
Recall that in this case, we take the symplectic form to be in split form rather than block form.
With this in mind, the matrices in $\M_4(\C)$ commuting with $\Unitary(1) \times \Unitary(1)$ are
\[
\left\{ \begin{pmatrix}
a & 0 & 0 & 0 \\
0 & b & 0 & 0 \\
0 & 0 & c & 0 \\
0 & 0 & 0 & d
\end{pmatrix}: a,b,c,d \in \C\right\}
\]
and the Rosati form is a scalar multiple of $2ab  + 2cd  = \frac{1}{2}((a+b)^2 - (a-b)^2 +(c+d)^2 - (c-d)^2)$. Consequently,
\[
\End(A_K)_{\R} =
\left\{ \begin{pmatrix}
a+bi & 0 & 0 & 0 \\
0 & a-bi & 0 & 0 \\
0 & 0 & c+di & 0 \\
0 & 0 & 0 & c-di
\end{pmatrix}: a,b,c,d \in \R \right\} \simeq \C \times \C.
\]
The action of a generator of the component group of $F_a$ is
\[
\begin{pmatrix}
a+bi & 0 & 0 & 0 \\
0 & a-bi & 0 & 0 \\
0 & 0 & c+di & 0 \\
0 & 0 & 0 & c-di
\end{pmatrix} \mapsto
\begin{pmatrix}
a-bi & 0 & 0 & 0 \\
0 & a+bi & 0 & 0 \\
0 & 0 & c+di & 0 \\
0 & 0 & 0 & c-di
\end{pmatrix},
\]
so the fixed ring has $b=0$ and thus is $\R \times \C$. For $F_c$, we get
\[
\begin{pmatrix}
a+bi & 0 & 0 & 0 \\
0 & a-bi & 0 & 0 \\
0 & 0 & c+di & 0 \\
0 & 0 & 0 & c-di
\end{pmatrix} \mapsto
\begin{pmatrix}
c+di & 0 & 0 & 0 \\
0 & c-di & 0 & 0 \\
0 & 0 & a+bi & 0 \\
0 & 0 & 0 & a-bi
\end{pmatrix},
\]
so the fixed ring has $a=c, b=d$ and thus is $\C$. For both $F_{ab}$ and $F_{a,b}$, the fixed ring has $b=d=0$
and thus is $\R \times \R$. For $F_{ac}$, the fixed ring has $a=c$ and $b=d=0$ and thus is $\R$, and similarly
for the larger group $F_{a,b,c}$. For $F_{ab,c}$, the fixed ring is contained in both $\R \times \R$ (the fixed ring
of $F_{ab}$) and $\C$ (the fixed ring of $F_c$) and thus is $\R$.

Comparing this analysis with the Galois structure of the above three Galois types, we conclude that if the Sato-Tate group of $A$ is $F_\nothing$, $F_{ab}$, or $F_{ac}$, then the Galois type of $A$ is $\bD[\cyc 1]$, $\bD[\cyc 2,\R\times \R]$, or $\bD[\cyc 4]$, respectively.

\subsection{Products of nonisogenous elliptic curves}\label{s3}

\begin{proposition}\label{M1M2} For $i=1,2$, let $M_i$ be either $\Q$ or an imaginary quadratic field. Assume that at least one $M_i$ is quadratic and that, if both $M_1$ and $M_2$ are quadratic, then $M_1\not \simeq M_2$.
Let $A/k$ be an abelian surface such that $\End(A_{\Qbar})_{\Q} \simeq M_1\times M_2$.  The following hold:
\begin{enumerate}[$(i)$]
\item the minimal extension of $k$ over which the endomorphisms of $A_{\Qbar}$ are defined is $K=k M_1 M_2$;
\item there exist elliptic curves $\tilde E_1$, $\tilde E_2$ over $k$ for which $A\sim_k \tilde E_1\times \tilde E_2$.
\end{enumerate}
\end{proposition}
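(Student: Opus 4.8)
The plan is to deduce both statements from the observation that, under our hypotheses, the two factors of $\End(A_{\Qbar})_\Q \simeq M_1 \times M_2$ cannot be interchanged by Galois, so that the splitting of $A$ is already visible over $k$. First I would record that any $\Q$-algebra automorphism of $M_1 \times M_2$ permutes the two primitive idempotents $e_1 = (1,0)$ and $e_2 = (0,1)$, and that such a permutation is nontrivial only when $M_1 \simeq M_2$. Since the hypotheses force $M_1 \not\simeq M_2$ (either one factor is $\Q$ and the other quadratic, or both are quadratic and nonisomorphic), we obtain $\Aut_\Q(M_1 \times M_2) = \Aut_\Q(M_1) \times \Aut_\Q(M_2)$, and in particular $G_k$ fixes $e_1$ and $e_2$. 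Hence (after clearing denominators to get genuine endomorphisms) $e_1$ and $e_2$ are Galois-invariant, and therefore descend to $k$.

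Next, Poincar\'e complete reducibility applied to these $k$-rational idempotents yields a $k$-isogeny $A \sim_k \tilde E_1 \times \tilde E_2$, where $\tilde E_i$ are nonzero abelian subvarieties defined over $k$ with $\End(\tilde E_{i,\Qbar})_\Q = e_i \End(A_{\Qbar})_\Q e_i = M_i$. Since $\dim \tilde E_1, \dim \tilde E_2 \ge 1$ and $\dim \tilde E_1 + \dim \tilde E_2 = 2$, each $\tilde E_i$ is an elliptic curve; this proves $(ii)$.

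For $(i)$, I would compute the minimal field of definition $F_i$ of $\End(\tilde E_{i,\Qbar})$ one factor at a time. If $M_i = \Q$ then every endomorphism is a multiplication-by-integer map, so $F_i = k = kM_i$. If $M_i$ is imaginary quadratic, the standard theory of CM elliptic curves gives $F_i \subseteq kM_i$, so it remains only to rule out $F_i = k$ when $M_i \not\subseteq k$. For this I would fix $k \hookrightarrow \C$ and use the action on invariant differentials: an endomorphism defined over $k$ scales the one-dimensional space $H^0(\tilde E_i, \Omega^1_{\tilde E_i/k})$ by a scalar in $k$, and the resulting map $M_i \to \C$ is one of the two complex embeddings; if all of $M_i$ were defined over $k$ this embedding would land in $k$, forcing $M_i \subseteq k$. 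Since $[kM_i : k] \le 2$, combining the two bounds gives $F_i = kM_i$ in every case.

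Finally, because $\End(A_{\Qbar})_\Q \simeq M_1 \times M_2$ is a product of fields, $\Hom(\tilde E_{1,\Qbar}, \tilde E_{2,\Qbar})_\Q = 0$, so an endomorphism $(\phi_1, \phi_2)$ of $A_{\Qbar}$ is defined over an extension $L/k$ exactly when each $\phi_i$ is, i.e.\ when $L \supseteq F_1 F_2$. Therefore $K = F_1 F_2 = kM_1 \cdot kM_2 = kM_1 M_2$, proving $(i)$. I expect the only genuinely delicate point to be the \emph{minimality} $F_i = kM_i$ (rather than merely $F_i \subseteq kM_i$) for a quadratic $M_i$; everything else is formal once the idempotents are known to be $k$-rational.
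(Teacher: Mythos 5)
Your proof is correct, but it takes a genuinely different route from the paper's. Your key step is to descend the splitting directly: because $G_k$ acts on $\End(A_{\Qbar})_\Q \simeq M_1\times M_2$ by $\Q$-algebra automorphisms and $M_1\not\simeq M_2$, Galois cannot interchange the two primitive idempotents, so they lie in $\End(A_k)_\Q$; Poincar\'e reducibility then gives $(ii)$ at once, and $(i)$ follows factor by factor from the classical CM facts for elliptic curves (endomorphisms are defined over $kM_i$, and over no smaller field unless $M_i\subseteq k$, by the action on differentials), plus the remark that in the absence of cross-homomorphisms the endomorphism field of a product is the compositum $F_1F_2$. The paper argues in the opposite order: it first proves $(i)$, invoking Ribet's theorem \cite[Thm.~8.2]{Ri} on fields of definition of models up to isogeny to produce elliptic curves $E_1',E_2'$ over $k$ with $A$ isogenous to $E_1'\times E_2'$ over an auxiliary field $\Omega$, computes the endomorphism field of $E_1'\times E_2'$ by CM theory, and transfers the result to $A$ by a cocycle/twisting argument resting on the commutativity of $M_1\times M_2$; it then deduces $(ii)$ by decomposing $\Hom(A_K,E'_{1,K})_\Q$ into characters of the exponent-$2$ group $\Gal(K/k)$ and twisting $E_1'$ by one of them. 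Both proofs use the same CM input \cite{Sil} and exploit the hypothesis $M_1\not\simeq M_2$ in essentially the same way (Galois cannot relate the two factors), but yours is shorter and makes the role of that hypothesis completely transparent, while the paper's heavier machinery (Ribet descent, twisting cocycles) is precisely what continues to function in the neighboring case of type $\bF$ (Proposition~\ref{Minclos}), where the two elliptic factors are isogenous, Galois can genuinely permute the corresponding idempotents, and your descent step fails. Two cosmetic points: ``if all of $M_i$ were defined over $k$'' should read ``if all endomorphisms of $\tilde E_{i,\Qbar}$ were defined over $k$''; and passing from an abstract embedding $M_i\hookrightarrow k$ to the containment $M_i\subseteq k$ uses that a quadratic field is Galois over $\Q$, so its image in $\Qbar$ does not depend on the chosen embedding.
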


\begin{proof} $(i)$ Let us prove the statement under the assumption that both $M_1$ and $M_2$ are quadratic; when $M_1$ is quadratic and $M_2=\Q$, the proof is simpler and we leave the details to the reader.

Let $K\subset \overline\Q$ denote the minimal extension of $k$ over which all endomorphisms of $A_{\Qbar}$ are defined, and let $\Omega/k$ denote a minimal subextension of $K/k$ over which there exists an isogeny
\[
\psi: A \overset{\sim }\lra E_1\times E_2,
\]
defined over $\Omega$ onto a product of two elliptic curves $E_1/\Omega$ and $E_2/\Omega$.
Note that $\Omega$ might be properly contained in $K$, as we do not require that $\End(E_{i,\Omega})_{\Q} \simeq M_i$.

For $i=1,2$, we claim that there exists an elliptic curve $E'_i$ over $k$ such that $E_i$ and $E_i'$ are isogenous over $\Omega$.
Indeed, let $k\subseteq k_i \subseteq \Omega$ be a minimal subextension of $\Omega/k$ over which   such an $E_i'$ exists.
The abelian surface $A':=E'_1\times E'_2$ is thus defined over $k_1 k_2$. As an application of \cite[Thm.~8.2]{Ri}, it follows from the minimality of $k_i$ that $k_1 k_2$ is also a minimal subextension of $\Omega/k$ over which $A'$ admits a model up to isogenies over $\Omega$.
Indeed, if there were a proper subextension $k_0\subsetneq k_1 k_2$ over which $A'$ admits a model, there would exist a collection of isogenies $\{ \varphi_\sigma: (A')^\sigma \lra A'\}_{\sigma \in \Gal(k_1k_2/k_0)}$ defined over $\Omega$ such that $\varphi_\sigma {}^\sigma \varphi_\tau = \varphi_{\sigma \tau}$.
Since there are no isogenies between $E_1$ and any of the Galois conjugates of $E_2$, we would have $\varphi_\sigma = (\varphi^1_\sigma,\varphi^2_\sigma)$ where $\varphi^i_\sigma: E^{\sigma}_i \lra E_i$ are isogenies such that $\varphi^i_\sigma {}^\sigma \varphi^i_\tau = \varphi^i_{\sigma \tau}$.
Ribet's theorem would then imply that both $E_1$ and $E_2$ admit a model over $k_0$, contradicting the minimality of $k_1$ and $k_2$.

Since $A$ is one such model, we deduce that $k=k_1 k_2$ and thus $k=k_1=k_2$. Hence $A$ and $A'$ are abelian surfaces over $k$ that are isogenous over $\Omega$.

By the theory of complex multiplication on elliptic curves (see for example \cite[Thm.~2.2]{Sil}), the minimal extension of $k$ over which all endomorphisms of $A'_{\Qbar}$ are defined is $k M_1 M_2$.
Since $\End(A'_{\Qbar})_\Q=M_1\times M_2$ is commutative and $A$ is a twist of $A'$ in the category of abelian varieties up to isogenies, the isogeny class of $A$ over $k$ corresponds to a cocycle $c_A\in H^1(\Gal(\overline\Q/k), \End(A'_{\Qbar})_\Q^\times)=H^1(\Gal(\Qbar/k), M_1^\times \times M_2^\times)$; it follows that for any number field $k\subseteq F\subseteq \overline\Q$,
\begin{align*}
\End(A_F)_\Q &= \{ \alpha \in \End(A'_{\Qbar})_\Q: \alpha^\sigma c_A(\sigma ) = c_A(\sigma )\alpha, \,\, \forall \sigma \in G_F\}
\\
&=  \End(A'_F)_\Q \quad \text{(because $M_1\times M_2$ is commutative).}
\end{align*}
Hence the minimal extension of $k$ over which all endomorphisms of $A_{\Qbar}$ are defined is also $K=k M_1 M_2$.

$(ii)$ In the proof of $(i)$, we have seen that $A\sim_K E_1'\times E_2'$ for elliptic curves $E_1'$, $E_2'$ over $k$. Since $\Gal(K/k)=\cyc 1,\cyc 2, \dih 2$, the representation $\Hom(A_K,E_{1,K}')_\Q$ decomposes as a sum of characters of order at most $2$. Let $\chi$ denote any of these characters. Then $\Hom(A_K,(E_{1}'\otimes\chi)_K)_\Q$ contains the trivial representation and thus $E_{1}'\otimes\chi$ is a $k$-factor of $A$. This induces a decomposition $A\sim_k (E_1'\otimes\chi)\times \tilde E_2$, for some elliptic curve $\tilde E_2/k$. We may then take $\tilde E_1=E_1'\otimes\chi$.
%Suppose that $M_i\not = \Q$. Denote by $\chi_i$ the quadratic character of $\Gal(K/k)$ %attached to $M_i$. Since $E_i'$ has CM by $M_i$,
%$$E_i'\sim_k E_i'\otimes %\chi_i\qquad\text{and}\qquad\Hom(A_K,E_{i,K}')_\Q\simeq\Hom(A_K,E_{i,K}')_\Q\otimes %\chi_i\,.$$
%It follows that $\Trace \Hom(A_K,E_{i,K}')_\Q=\chi_{\mathrm{triv}}+\chi_i\,.$ Since %$\Hom(A_K,E_{i,K}')_\Q$ contains the trivial representation, we have $A\sim_k %E_1'\times E_2'$.
\end{proof}

In case $\bC$, let $A\sim_K E_1\times E_2$, where $E_1$ and $E_2$ are elliptic curves defined over $k$ with CM by $M$ and without CM, respectively. Two cases arise:
\begin{enumerate}[(i)]
\item $M\subseteq k$. Then $\Gal(K/k)=\cyc 1$ and $\rhoA$ is the trivial representation. This is Galois type $\bC[\cyc 1]$, and it cannot occur when $k=\Q$.

\item $M$ is not contained in $k$. Then $\Gal(K/k)=\cyc 2$ and $\Trace \rhoA = 2 \chi_1+\chi_2$. This is Galois type $\bC[\cyc 2]$, which can occur when $k=\Q$.

\end{enumerate}

{}From the Lie group side, it is easy to check that if $\ST_A^0= \Unitary(1) \times \SU(2)$, then $\End(A_K)_{\R} = \R \times \C$ and the normalizer acts nontrivially on $\C$,
fixing $\R \times \R$. This shows that the Galois types corresponding to the Lie groups $\Unitary(1) \times \SU(2)$ and $N(\Unitary(1) \times \SU(2))$ are respectively $\bC[\cyc 1]$ and $\bC[\cyc 2]$.

\vspace{0.3cm}

In case $\bD$, let $A\sim_K E_1\times E_2$, where $E_1$ and $E_2$ are  nonisogenous elliptic curves defined over $k$ with CM by two different imaginary quadratic fields $M_1$ and $M_2$, respectively. Then four cases arise:

\begin{enumerate}[(i)]

\item $M_1,M_2\subseteq k$. Then $\Gal(K/k)=\cyc 1$ and $\rhoA$ is the trivial representation; this is Galois type $\bD[\cyc 1]$, which we already encountered in \S \ref{s2}.

\item $M_1\subseteq k$ and $M_2$ is not contained in $k$. Then $\Gal(K/k)=\cyc 2$ and $(\End(A_K)_\Q)^{\cyc 2}=M_1\times \Q$, thus $(\End(A_K)_\R)^{\cyc 2} \simeq \R\times\C$.
This is Galois type $\bD[\cyc 2,\R\times\C]$.

\item $M_1$ and $M_2$ are not contained in $k$ and $kM_1=kM_2$. Then $\Gal(K/k)=\cyc 2$ and $(\End(A_K)_\Q)^{\cyc 2}=\Q\times\Q$; we thus have $(\End(A_K)_\R)^{\cyc 2} \simeq \R\times \R$, yielding the Galois type $\bD[\cyc 2,\R\times \R]$ that we already met in \S \ref{s2}.

\item $M_1$ and $M_2$ are not contained in $k$ and $kM_1\not=kM_2$. Then $\Gal(K/k)\simeq \dih 2$ and the three subalgebras of $\End(A_K)_\Q$ fixed by each of the subgroups of order $2$ are $M_1\times\Q$, $M_2\times\Q$ and $\Q\times\Q$. This is Galois type $\bD[\dih 2]$.

\end{enumerate}

Among the four Galois types listed above, only the last can occur when $k=\Q$. The analysis in \S \ref{s2} implies that the Galois types corresponding to the Lie groups $F_a$ and $F_{a,b}$ are respectively $\bD[\cyc 2,\R\times\C]$ and $\bD[\dih 2]$.

As a byproduct, since we have now classified all the possible Galois types of an abelian surface $A$ for which $\ST_A^0=\Unitary(1) \times \Unitary(1)$, we deduce that the Lie groups $F_c$, $F_{ab,c}$, $F_{a,b,c}$ cannot occur as the Sato-Tate group of an abelian surface.

\subsection{Products of isogenous elliptic curves}\label{s4}

In case $\bE$ or $\bF$, the endomorphism ring $\End(A_K)_\Q$ is a quaternion algebra $B$ over $C=\Q$ or $M$, respectively. Write $B\ra B$, $\alpha \mapsto \alpha'$ for the canonical anti-involution on $B$, and $n(\alpha)=\alpha\alpha'\in C$.

\begin{proposition}\label{Minclos}
Assume that $A/k$ is of type $\bF$. Then $K$ contains $M$ and $\Gal(K/kM)$ acts trivially on the center of $\End(A_K)_\Q\simeq \M_2(M)$.
\end{proposition}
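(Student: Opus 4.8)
The plan is to read both assertions off the action of the centre of $\End(A_K)_\Q$ on the tangent space of $A_K$, after using the matrix structure of the endomorphism algebra to split $A_K$. First I would note that $K/k$ is Galois (Proposition~\ref{teo:SiRi}), so $\Gal(K/k)$ acts on $\End(A_K)_\Q\simeq\M_2(M)$ and preserves its centre $M$. The idempotents of $\M_2(M)$ lie in $\End(A_K)_\Q$, hence are defined over $K$, so they split $A_K$ up to isogeny over $K$ as $A_K\sim_K E^2$ with $E/K$ an elliptic curve satisfying $\End(E_K)_\Q=M$; under this splitting the centre $\lambda\mapsto\lambda\cdot\mathrm{Id}$ of $\M_2(M)$ is identified with $M=\End(E_K)_\Q$ acting diagonally. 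Because $E$ and all its endomorphisms are defined over $K$, the algebra $M$ acts $K$-linearly on the one-dimensional space $\operatorname{Lie}(E_K)$, producing a ring homomorphism $\varphi\colon M\to K$ — the CM type — which is injective since a nonzero isogeny has nonzero differential in characteristic $0$. This already gives the first assertion: $M\cong\varphi(M)\subseteq K$, so $K$ contains $M$.

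For the second assertion I would first observe that, since $A$ is of type $\bF$, both factors of $A_K\sim_K E^2$ are copies of the same $E$ and therefore carry the same CM type $\varphi$; hence the centre acts on $\operatorname{Lie}(A_K)\simeq\operatorname{Lie}(E_K)^2$ by the single scalar $\varphi(\lambda)\in K$. Now take $\sigma\in\Gal(K/k)$ acting $\sigma$-semilinearly on $\operatorname{Lie}(A_K)=\operatorname{Lie}(A)\otimes_k K$. Functoriality of $\operatorname{Lie}$ under the semilinear Galois action gives that ${}^\sigma\lambda$ acts on $\operatorname{Lie}(A_K)$ as $\sigma\circ(\varphi(\lambda)\cdot)\circ\sigma^{-1}$, which for the scalar $\varphi(\lambda)$ equals multiplication by $\sigma(\varphi(\lambda))$. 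Comparing with the fact that ${}^\sigma\lambda$ acts as the scalar $\varphi({}^\sigma\lambda)$, I obtain $\varphi({}^\sigma\lambda)=\sigma(\varphi(\lambda))$. Thus $\varphi$ intertwines the Galois action on the centre with the restriction of $\sigma$ to the subfield $\varphi(M)\subseteq K$, so $\sigma$ fixes the centre pointwise precisely when it fixes $\varphi(M)$. In particular every $\sigma\in\Gal(K/kM)$ acts trivially on the centre, which is the claim (and in fact $\Gal(K/kM)$ is exactly the kernel).

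The only delicate point — and the sole place where type $\bF$ rather than type $\bD$ is used — is the assertion that the centre acts on $\operatorname{Lie}(A_K)$ by scalars. This rests on both simple factors sharing the single CM type $\varphi$; a complex-conjugate pair $\{\varphi,\overline{\varphi}\}$, which would arise if $A_K$ were isogenous to a product of two non-isogenous CM elliptic curves (the type-$\bD$ case $\End(A_K)_\Q\simeq M\times M$), would instead make the relevant scalars $\mathrm{diag}(\varphi(\lambda),\overline{\varphi}(\lambda))$ and break the argument. Once the scalar action is secured, the semilinearity computation is routine and delivers both assertions at once; the identification of the abstract centre $M$ with the subfield $\varphi(M)\subseteq K$ is exactly what makes the phrase ``$\Gal(K/kM)$ acts trivially on the centre'' meaningful.
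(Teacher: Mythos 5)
Your proof is correct, and it reaches the conclusion by a genuinely different route than the paper, although both turn on the same key fact: central elements of $\End(A_K)_\Q$ act as $K$-scalars on the (co)tangent space of $A_K$, because the two elliptic factors of $A_K\sim_K E^2$ are linked by $K$-rational isogenies and hence share a single CM character $\varphi$. The paper exploits this fact after transferring everything to $E^2$: it fixes the normalized isomorphism $[\,\,]_{E^2}\colon \M_2(M)\to\End(E^2_K)_\Q$ given by the action on invariant differentials, invokes Ribet's theorem to produce a cocycle of isogenies $\phi_\sigma\colon({}^\sigma E)^2\to E^2$, identifies the Galois action on $\End(A_K)_\Q$ with the twisted action $\sigma\cdot[\alpha]=\phi_\sigma[{}^\sigma\alpha]\phi_\sigma^{-1}$, and then notes that this twist is invisible on the center because central elements act on differentials as scalars, which commute with the $K$-linear maps $\phi_\sigma^*$. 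You instead work intrinsically on $A$: since $A$ is defined over $k$, the group $\Gal(K/k)$ acts semilinearly on $\operatorname{Lie}(A_K)=\operatorname{Lie}(A)\otimes_k K$, and conjugating the scalar action of the center by this semilinear action immediately yields the intertwining $\varphi({}^\sigma\lambda)=\sigma(\varphi(\lambda))$, from which both assertions (and injectivity of $\varphi$, which reproves rather than cites the fact that any field of definition of the endomorphisms of $E$ contains $M$) follow at once. Your version needs no appeal to Ribet's theorem or to any choice of cocycle, and it gives slightly more, namely that $\Gal(K/kM)$ is \emph{exactly} the kernel of the action on the center; what the paper's heavier setup buys is an explicit description of the whole twisted Galois module $\End(A_K)_\Q$, not just its restriction to the center, which is in the spirit of the subsequent analysis of case $\bF$. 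You also correctly isolated the two points that are easy to miss: the type-$\bF$ hypothesis is what forces a single character $\varphi$ (in case $\bD$ the center acts through a pair of distinct characters and the argument breaks), and the identification of the abstract center with the subfield $\varphi(M)\subseteq K$ is what gives $kM$, and hence the statement itself, its meaning---this is the same embedding the paper pins down via $\iota$ and its normalized isomorphism.
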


\begin{proof} Let $E/K$ be an elliptic curve such that $\End(E_K)_{\Q}\simeq M$ and $A\sim_K E^2$. Since any field of definition of the endomorphisms of $E$ contains $M$ (cf.\,e.g.\,\cite[Ch. II, Thm. 2.2]{Sil}), we have $M\subseteq K$. Fix embeddings $\iota: M\subseteq K \subset \C$.

As shown in \cite[Ch.~II, Prop.~1.1]{Sil}, there exists a unique isomorphism $[\,\,]_E: M \lra \End(E_K)_{\Q}$ such that for any invariant differential $\omega \in \Omega_E^1$, we have $[\alpha]_E^*(\omega) = \alpha \omega$ for all $\alpha \in M\subset \C$.
There is therefore an unique isomorphism $[\,\,]_{E^2}: \M_2(M) \lra \End(E_K^2)_{\Q}$ such that for any invariant differential $\omega = (\omega_1,\omega_2) \in \Omega_{E^2}^1$, we have $[\alpha]_{E^2}^*(\omega) = \alpha \omega$ for all $\alpha \in \M_2(M)\subset \M_2(\C)$. It follows as in the proof of \cite[Ch.~II, Thm.~2.2]{Sil} that ${}^\sigma[\alpha]_{E^2} = [{}^\sigma \alpha]_{({}^{\sigma}E)^2}$, hence
\begin{equation}\label{eq1}
{}^\sigma[\alpha]_{E^2} = [\alpha]_{({}^{\sigma}E)^2} \qquad \text{for $\sigma \in \Gal(K/k M)$}.
\end{equation}

Endomorphisms of $E^2$ induce $K$-linear endomorphisms of the space of regular differentials $\Omega^1_{E^2}$; any choice of a $K$-basis of $\Omega^1_{E^2}$ gives rise to a monomorphism $t:\M_2(M)\simeq \End(E^2_K)_{\Q}\hookrightarrow \M_2(K)\hookrightarrow \M_2(\C)$, $\alpha \mapsto [\alpha]^*_{E^2}$ whose restriction to the centers is $\iota $.

That $E^2$ admits the model $A$ over $k$ up to isogenies over $K$ implies, thanks again to Ribet's theorem \cite[Thm. 8.2]{Ri}, that there exists a collection of isogenies $\{\Phi_\sigma: ({}^{\sigma}E)^2 \lra E^2 \}_{\sigma \in \Gal(K/k)}$  such that $\phi_\sigma {}^\sigma \phi_\tau = \phi_{\sigma \tau}$.
The $\Gal(K/k)$-module $\End(A_K)_\Q$ is then isomorphic to the module $\End(E^2_K)_{\Q}$ equipped with the following action of the group $\Gal(K/k)$: an element $\sigma \in \Gal(K/k)$ acts on an endomorphism $[\alpha ]\in \End(E^2_K)_{\Q}$ by the rule $\sigma \cdot [\alpha ] = \phi_\sigma [{}^\sigma \alpha] \phi_\sigma^{-1}$.

Similarly, isogenies $\phi_\sigma$ induce $K$-linear isomorphisms $\pi_\sigma^*: \Omega^1_{({}^{\sigma}E)^2}\simeq \Omega^1_{E^2}$.
If $[\alpha]$ lies in the center of $\End(E^2_K)_{\Q}$, then $\sigma \cdot [\alpha ]^* = [{}^\sigma \alpha]^*$; if in addition $\sigma \in \Gal(K/kM)$, it follows from \eqref{eq1} that  $\sigma \cdot [\alpha ]^* = [\alpha]^*$. Since $t$ is a monomorphism, we deduce that $\sigma \cdot [\alpha ] = [\alpha]$; thus $\Gal(K/kM)$ acts trivially on the center of $\End(A_K)_\Q\simeq \M_2(M)$, as claimed.
\end{proof}

By the Skolem-Noether theorem, all automorphisms of $\End(A_K)_\Q$ that are the identity on $C$ are inner. Set $\mathbb P(\End(A_K)_\Q^\times)=\End(A_K)_\Q^\times/C^\times$.
A homothety class $[\alpha]\in \mathbb P(\End(A_K)_\Q^\times)$ induces the automorphism $c_\alpha$ of $\End(A_K)_\Q$ given by the rule $\gamma \mapsto \alpha \gamma \alpha^{-1}$, which is the identity on $C$.
This induces an isomorphism $\Aut_{C}(\End(A_K)_\Q)\simeq \mathbb P(\End(A_K)_\Q^\times)$.

By the previous proposition, $\Gal(K/k C)$ is isomorphic to a subgroup of $\Aut_{C}(\End(A_K)_\Q)$.
The list of finite subgroups of $\mathbb P(\End(A_K)_\Q^\times)$ is well-known (see \cite{Be,ChFr}, for example), and this allows us to conclude that $\Gal(K/k C)$ is isomorphic to one of $\cyc n$, $\dih {n}$, $\alt 4$ or $\sym 4$, where $n\in \{ 1,2,3,4,6\}$.
The groups $\alt 4$ or $\sym 4$ arise only when $-1$ can be written as a sum of two squares in $C$, and thus only occur in case $\bF$.

We make now a first analysis of the action of $\Gal(K/k C)$ on $\End(A_K)_\Q$, and therefore also on $\End(A_K)_\R$.
This will address all the Galois types in case $\bE$ and is a first step towards classifying the Galois types in case $\bF$, which we will conclude in \S \ref{secF}.

\begin{proposition}
\label{ChFr}

Let $A/k$ be an abelian surface that is isogenous over $K$ to the square of an elliptic curve.
\begin{enumerate}[$(i)$]

\item Assume $\Gal(K/k C)\simeq \cyc n$ for $n=1$, $2$, $3$, $4$ or $6$.

If $n=1$, then $\Gal(K/k C)$ acts trivially on $\End(A_K)_\Q$; if $C=\Q$ we denote the resulting Galois type $\bE[\cyc 1]$.

If $n=2$, then $\End(A_K)_\Q^{\Gal(K/k C)}$ is a quadratic extension of $C$. If $C=\Q$, this gives rise to two Galois types, according to whether the extension is real or imaginary; we label them $\bE[\cyc 2,\R\times \R]$ and $\bE[\cyc 2,\C]$, respectively.

If $n> 2$, then $\End(A_K)_\Q^{\Gal(K/k C)}=C$ and for any nontrivial subgroup $H\subseteq \Gal(K/k C)$ we have
$$
\End(A_K)_\Q^{H} = C+C\cdot (1+\zeta_n), \quad \zeta_n^n=1, \zeta_n^{n_0}\ne 1 \mbox{ for } n_0<n.
$$

If $C=\Q$, for any $k\subseteq k'\subsetneq K$ we have $\End(A_{k'})_{\Q}=\Q(\zeta_n)$, which is an imaginary quadratic extension of $\Q$. This gives rise to a single Galois type, which we denote $\bE[\cyc n]$.

\item Assume that $\Gal(K/k C)\simeq \dih 2=\cyc 2\times \cyc 2$ and $\End(A_K)_\Q^{\Gal(K/k C)}=C$, and that the three algebras fixed by each of the subgroups of $\Gal(K/k C)$ of order $2$ are quadratic extensions of $C$.

If $C=\Q$, then two of these quadratic extensions are real and one is imaginary; hence a single Galois type arises, which we denote $\bE[\dih 2]$.

\item Assume that $\Gal(K/k C)\simeq \dih {n}$ for $n=3$, $4$ or $6$. Write $\Gal(K/k C) = \langle r, s\rangle $, with $r^n=1$, $s^2=1$, and $s r s = r^{-1}$.

Then $\End(A_K)_\Q^{\langle s\rangle } = C+C\cdot \sqrt{m}$ for some $m\in C$ and, for any nontrivial subgroup $H\subseteq \langle r\rangle \subset \Gal(K/k C)$, we have $\End(A_K)_\Q^{H} = C+C\cdot (1+\zeta_n)$.

The algebra fixed by any other nontrivial subgroup of $\Gal(K/k C)$ is $C$.

If $C=\Q$, then $\Q(\sqrt{m})$ is real quadratic and $\Q(\zeta_n)$ is imaginary quadratic; we thus obtain the single Galois type $\bE[\dih n]$.

\end{enumerate}
\end{proposition}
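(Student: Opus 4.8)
The plan is to reduce every assertion to the computation of centralizers inside the quaternion algebra $B=\End(A_K)_\Q$ over its center $C$, using the embedding $\Gal(K/kC)\hookrightarrow \mathbb P(B^\times)=\Aut_C(B)$ already furnished by Proposition~\ref{Minclos} together with Skolem--Noether. If $H\subseteq\Gal(K/kC)$ has lifts $\alpha_1,\dots,\alpha_r\in B^\times$ of a generating set of its image, then $\End(A_K)_\Q^H=B^H=Z_B(\alpha_1,\dots,\alpha_r)$. The structural input I would rely on is the double centralizer theorem: for a single non-central $\alpha$, the centralizer $Z_B(\alpha)=C+C\alpha=C(\alpha)$ is a $2$-dimensional étale $C$-algebra. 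Thus each cyclic subgroup fixes a single quadratic subalgebra, and a general $H$ fixes the intersection of the quadratic subalgebras attached to its generators.

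First I would dispatch the cyclic case~(i). Writing the image of $\Gal(K/kC)=\langle\sigma\rangle$ as $\langle[\alpha]\rangle$ with $[\alpha]$ of order $n$, I would note that $\alpha\in Z_B(\alpha^d)$ for every $d$, so all nontrivial subgroups of $\langle\sigma\rangle$ share the same fixed subalgebra $C(\alpha)$. The two eigenvalues of $\alpha$ lie in $C(\alpha)$ and their ratio is a primitive $n$-th root of unity, so $C(\alpha)$ contains $\zeta_n$ and hence equals $C+C(1+\zeta_n)$. When $C=\Q$ and $n=3,4,6$ this is the imaginary quadratic field $\Q(\zeta_n)$, yielding $\End(A_{k'})_\Q=\Q(\zeta_n)$ for every $k'$ with $k\subseteq k'\subsetneq K$ and the single type $\bE[\cyc n]$; the cases $n=1,2$ are immediate, with $n=2$ producing one quadratic étale algebra whose real or imaginary type separates $\bE[\cyc 2,\R\times\R]$ from $\bE[\cyc 2,\C]$.

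Next I would treat the dihedral cases~(ii) and~(iii) by applying the same recipe to each conjugacy class of subgroups and then fixing the archimedean signs. A rotation subgroup contributes the imaginary subfield $C(\zeta_n)$ and each reflection contributes a quadratic subfield $C(\sqrt m)$; any subgroup generated by a rotation \emph{and} a reflection (in particular $\dih 2=\cyc 2\times\cyc 2$ in full, and every full $\dih n$) fixes the intersection of two distinct quadratic subfields, namely $C$. To decide reality over $\Q$ I would use the decisive fact that in case $\bE$ the algebra $B$ is \emph{indefinite}, $B\otimes_\Q\R=\End(A_K)_\R=\M_2(\R)$. Choosing anticommuting lifts of two of the three involutions of $\dih 2$ presents $B$ as the quaternion algebra $(a,b)_{\Q}$, whose three quadratic subfields have discriminants $a,b,-ab$; indefiniteness ($a,b$ not both negative) forces exactly one discriminant to be negative, giving two real subfields and one imaginary, as in~(ii). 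For $\dih n$ the rotation subfield $\Q(\zeta_n)$ is imaginary, so indefiniteness forces the reflection subfield $\Q(\sqrt m)$ to be real, as in~(iii).

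The step I expect to be the main obstacle is precisely this reality bookkeeping: converting the abstract conjugation action of $\Gal(K/kC)$ into the arithmetic of the Rosati form, equivalently the splitting behavior of $B$ at the real place, so as to separate real multiplication from complex multiplication among the fixed subalgebras. One must simultaneously keep track of the two centers: the generic $C=\Q$ of case $\bE$, where the indefiniteness argument applies verbatim, and $C=M$ of case $\bF$, where $B\cong\M_2(M)$ is automatically split and the formula $C+C(1+\zeta_n)$ degenerates exactly when $\zeta_n\in M$; analyzing that degeneration is what is postponed to \S\ref{secF}. Once the signs are settled, reading off the isomorphism classes of the fixed $\R$-algebras and counting the resulting Galois types (recording which persist over $\Q$) is routine.
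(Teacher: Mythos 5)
Your proposal is correct and follows essentially the same route as the paper's proof: the Skolem--Noether conjugation action from Proposition~\ref{Minclos}, fixed subalgebras computed as centralizers $C(\alpha)$ of lifts in $B^\times$ (hence quadratic over $C$, with all nontrivial subgroups of a cyclic group sharing one), and indefiniteness of $B\otimes_\Q\R\simeq \M_2(\R)$ forcing exactly one imaginary field among the quadratic subfields of discriminants $d$, $m$, $-dm$ in the dihedral cases. The only real difference is that where you re-derive the needed normal forms by hand (anticommuting lifts of commuting involutions, the eigenvalue-ratio argument placing $\zeta_n$ in $C(\alpha)$), the paper simply quotes the classification of cyclic and dihedral subgroups of $B^\times/C^\times$ from \cite[Lemma 2.3]{ChFr}.
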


We have relegated the study of the case $C=M$ and $\Gal(K/k M)\simeq \alt 4$ or $\sym 4$ to \S \ref{secF}: see Proposition \ref{cesc} and the discussion following it.

\begin{proof} Write $G=\Gal(K/k C)$. Let us first consider case ($i$), in which $\rhoA$ induces an isomorphism between $G$ and a cyclic subgroup of $B^\times/C^\times$ of order $n$.
More precisely, $G=\langle c_\alpha \rangle$ for some $\alpha\in B^\times \setminus C^\times$ such that $\alpha^2=d\in C^\times$ if $n=2$, or $\alpha=1+\zeta_n$ if $n>2$, where $\zeta_n$ is an element in $B^\times \setminus C^\times$ of order $n$.
One checks that $\alpha^{n_0}\not \in C$ for $n_0<n$, thus the subalgebra of $B$ fixed by any of the nontrivial subgroups of $G$ is precisely $C(\alpha)$, which is quadratic over $C$.
%By \cite[Ch. I]{Vig80}, we can decompose $B$ as $B=C(\alpha)+C(\alpha) \beta$, where $\alpha \beta = \beta\alpha'$. The group $G$ acts trivially on $C(\alpha)$, and $c_\alpha(\beta) = \zeta_n %\beta $ in cases $C$, $D$, $F$, and $c_\alpha(\beta) = -\beta $ in case $B$. It thus follows that $\Tr \rhoA = m\tau$ where $\tau = 2 \chi_1 + \chi + \overline\chi$.

For ($ii$), we now assume that $G\simeq \dih {2}$.
By \cite[Lemma 2.3]{ChFr}, any subgroup of $B^\times/C^\times$ isomorphic to $\dih 2$ is of
the form $\langle [\alpha], [\beta]\rangle \subset B^\times/C^\times$ with $\alpha, \beta \in B^\times\setminus C^\times$ satisfying $\alpha^2=d$, $\beta^2=m\in C^\times$ and $\alpha \beta = -\beta \alpha$.
In particular, $(\alpha \beta)^2= -dm$. It follows that the subalgebra of $B$ fixed by $G$ (resp.\ by each of the three subgroups of $G$ of order $2$) is the center $C$ (resp.\ the quadratic extension $C(\alpha)$, $C(\beta)$, $C(\alpha \beta)$ of $C$). If $C=\Q$, we know that $B$ is indefinite, that is to say
$$
B\otimes_\Q \R = \left(\frac{d,m}{\R}\right)\simeq \M_2(\R),
$$
which amounts to saying that at least one of $d$, $m$ is positive. This implies that exactly two of $d$, $m$, $-dm$ are positive and one is negative.

Similarly, in case ($iii$), by \cite[Lemma 2.3]{ChFr} all subgroups of $B^\times/C^\times$ isomorphic to $\dih {n}$
are of the form $\langle [\alpha], [\beta ]\rangle$, where $\alpha=1+\zeta _n$ and $\beta \in B^\times\setminus C^\times$ satisfy $\beta ^2=m\in C^\times$ and $\zeta_n \beta = \beta\overline{\zeta}_n$. The subalgebra of $B$ fixed by any of the nontrivial subgroups of $\langle [1+\zeta _n]\rangle$ is $C(\zeta_n)$; the subalgebra fixed by $\langle [\beta ]\rangle$ is $C(\beta)$.
If $C=\Q$, then $\Q(\zeta_n)$ is either $\Q(\sqrt{-1})$ or $\Q(\sqrt{-3})$, which are both imaginary, and we necessarily have $m>0$ because $B$ is indefinite.
All the claims in ($iii$) follow.
%We thus have $ B=\langle 1,\zeta_n, \beta, \zeta_n \beta \rangle_C$. The group $G$ acts on $C(\zeta_n)$, and this gives a decomposition $B=C+ C(\zeta_n)+C( \beta, \zeta_n \beta)$. The representation of $G$ on $C(\zeta_n)$ has %character $\psi$. The representation of $G$ on the irreducible space $C( \beta, \zeta_n \beta)$, given by  $c_\alpha (\beta) = \zeta_n \beta$, $c_\beta(\beta) = \beta $, $c_\beta (\zeta_n \beta ) = \beta \zeta_n = \overline{\zeta}_n \beta $, %has character $\rhoA$. Hence $\Tr \rhoA = m\tau$ where $\tau = \chi_1+ \psi + \rhoA$.
\end{proof}

\begin{remark}
{}From the proof above, one also deduces that in case $\bE$ the $\Gal(K/k)$-module structure of $\End(A_K)_\R$ is given by the rule
$$\Trace \rho_A(\sigma)=2+\zeta_r+\overline \zeta_r\,,$$
where $r$ is the order of $\sigma \in \Gal(K/k)$ and $\zeta_r$ denotes a primitive $r$th root of unity.
Thus in case $\bE$, the $\Gal(K/k)$-module structure of  $\End(A_K)_\R$ is completely determined by $\Gal(K/k)$ (compare this result with Proposition \ref{cesc}).
\end{remark}

\subsubsection{Galois types and Sato-Tate groups in case \texorpdfstring{$\bE$}{E}}\label{secE}

For case $\bE$ we have found a total of ten Galois types: $\bE[\cyc n]$ for $n=1$, $3$, $4$, $6$; $\bE[\cyc 2,\R \times \R]$ and $\bE[\cyc 2,\C]$; and $\bE[\dih n]$ for $n=2$, $3$, $4$, $6$.

Let us recover this classification from the Lie group side, matching ten of the groups named in Theorem \ref{Sato-Tate axioms groups} with these Galois types.
Assume now that $A/k$ is an abelian surface such that $\ST_A^0 = \SU(2)$. In this case, the matrices in $\M_4(\C)$ commuting with $\SU(2)$ are
\[
\left\{ \begin{pmatrix}
a \Id_2& bJ_2 \\
cJ_2 & d \Id_2
\end{pmatrix}: a,b,c,d \in \C\right\}.
\]
The Rosati form is given up to a scalar multiple by
\begin{align*}
\psi \mapsto & \Trace(\Psi S^T \Psi^T S)  \\
&=
\Trace
\left(
\begin{pmatrix}
a \Id_2& bJ_2 \\
cJ_2 & d \Id_2
\end{pmatrix}
\begin{pmatrix}
0 & -\Id_2 \\
\Id_2 & 0
\end{pmatrix}
\begin{pmatrix}
a \Id_2& -cJ_2 \\
-bJ_2 & d \Id_2
\end{pmatrix}
\begin{pmatrix}
0 & \Id_2 \\
-\Id_2 & 0
\end{pmatrix}
\right) \\
&=
\Trace \left( \begin{pmatrix}
b J_2& -a\Id_2 \\
d\Id_2 & -c J_2
\end{pmatrix}
\begin{pmatrix}
c J_2& a\Id_2 \\
-d\Id_2 & -b J_2
\end{pmatrix} \right) \\
&= 2(ad - bc) = \frac{1}{2}((a+d)^2 - (a-d)^2 - (b+c)^2 + (b-c)^2).
\end{align*}
The positive definite subspace is defined by the conditions $a+d,b-c \in \R$ and $a-d,b+c \in i\R$, so
\[
\End(A_K)_{\R} = \left\{ \begin{pmatrix}
(a+bi) \Id_2& (c+di) J_2 \\
(-c+di) J_2 & (a-bi) \Id_2
\end{pmatrix}: a,b,c,d \in \R \right\} \simeq \M_2(\R).
\]
For $n>1$, the action of a generator of the component group of $E_n$ is
\[
\begin{pmatrix}
(a+bi) \Id_2& (c+di) J_2 \\
(-c+di) J_2 & (a-bi) \Id_2
\end{pmatrix} \mapsto
\begin{pmatrix}
(a+bi) \Id_2& e^{2 \pi i/n} (c+di) J_2 \\
e^{-2 \pi i/n} (-c+di) J_2 & (a-bi) \Id_2
\end{pmatrix},
\]
so the fixed ring consists of matrices with $c=d=0$ and is isomorphic to $\C$. The action of $J$ is
\[
\begin{pmatrix}
(a+bi) \Id_2& (c+di) J_2 \\
(-c+di) J_2 & (a-bi) \Id_2
\end{pmatrix} \mapsto
\begin{pmatrix}
(a-bi)\Id_2 & (c-di)J_2\\
(-c-di)J_2 & (a+bi)\Id_2
\end{pmatrix},
\]
so the fixed ring of $J(E_1)$ consists of matrices with $b=d=0$ and is isomorphic to $\R \times \R$.
For $n>1$, the fixed ring of $J(E_n)$ is isomorphic to $\R$ because we must have both $c=d=0$ and $b=d=0$.

It follows from the previous discussion that the correspondence between Galois types in case $\bE$ and Sato-Tate groups with connected component $\SU(2)$ is as indicated in the statement of Theorem~\ref{main-section4}.

\subsubsection{Galois types in case \texorpdfstring{$\bF$}{F}}\label{secF}

In this section we assume that $A/k$ is of type $\bF$, so that\footnote{This was indicated without proof in \S\ref{view},
but it is now a formal consequence of the above: Theorem \ref{mumford-tate genus 2}, Proposition \ref{necessity of Sato-Tate}, and Lemma \ref{genus 2 connected components} imply that $\ST_A^0$ is one of the six connected Lie groups listed in Lemma \ref{genus 2 connected components}.
If $\ST_A^0$ were not conjugate to $\Unitary(1)$, the computations of \S \ref{s1}--\ref{secE} would imply that the type of $A$ is one of $\bA$, $\bB$, $\bC$, $\bD$, or $\bE$, rather than $\bF$.}  $\ST_A^0\simeq \Unitary(1)$.

\begin{proposition}\label{cesc}
The $\Gal(K/k)$-module structure of $\End(A_K)_\R$ is determined by the pair $(\Gal(K/k),\Gal(K/kM))$. More precisely, it is given by the following rule: for $\sigma \in \Gal(K/k)$,
$$
\Trace \rhoA(\sigma)=
\begin{cases}
2(2+\zeta_r+\overline \zeta_r) & \text{if $\sigma \in \Gal(K/k M)$,}\\
0 & \text{otherwise.}
\end{cases}
%\begin{cases}
%8 & \text{ if $\ord{\sigma}=1$,}\\
%0 & \text{ if $\ord{\sigma}=2$,}\\
%2 & \text{ if $\ord{\sigma}=3$,}\\
%4 & \text{ if $\ord{\sigma}=4$,}\\
%6 & \text{ if $\ord{\sigma}=6$,}\\
%\end{cases}
$$

Here $r$ is the order of $\sigma$ and $\zeta_r$ stands for a primitive $r$th root of unity.

\end{proposition}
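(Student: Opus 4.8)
The plan is to read the character of $\rhoA$ directly off the $\R$-algebra $\End(A_K)_\R \simeq \M_2(\C)$, exploiting the complex structure supplied by the center. First I would record the structural input. In case $\bF$ we have $\End(A_K)_\Q \simeq \M_2(M)$ with center $M$, so $\End(A_K)_\R \simeq \M_2(M)\otimes_\Q \R \simeq \M_2(\C)$, and the center becomes $M\otimes_\Q\R \simeq \C$. This makes $\End(A_K)_\R$ a $4$-dimensional $\C$-vector space. By Proposition~\ref{Minclos}, $M\subseteq K$ and $\Gal(K/kM)$ acts trivially on this center, while any $\sigma\in \Gal(K/k)\setminus \Gal(K/kM)$ restricts to the nontrivial automorphism (complex conjugation) of $M$. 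The computation then splits according to this dichotomy, and the whole point is that $\sigma$ acts $\C$-linearly in the first case and conjugate-$\C$-linearly in the second.

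For $\sigma\in\Gal(K/kM)$, since $\sigma$ fixes the center, $\rhoA(\sigma)$ is $\C$-linear, and by the Skolem--Noether theorem it is inner, say $\rhoA(\sigma)=c_\beta\colon X\mapsto \beta X\beta^{-1}$ for some $\beta\in\GL_2(\C)$. As $\rhoA$ is faithful and factors through the embedding $\Gal(K/kM)\hookrightarrow \mathbb P(\End(A_K)_\Q^\times)\simeq \PGL_2(M)$ used before Proposition~\ref{ChFr}, the class $[\beta]$ has order $r$ equal to the order of $\sigma$; since $[\beta]$ has finite order it is semisimple, so writing $\beta=\mathrm{diag}(\lambda_1,\lambda_2)$ forces $\zeta_r:=\lambda_1/\lambda_2$ to be a primitive $r$th root of unity. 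Acting on matrix units $E_{ij}$, the map $c_\beta$ has $\C$-eigenvalues $1,1,\zeta_r,\zeta_r^{-1}$, so its $\C$-trace is $2+\zeta_r+\overline{\zeta_r}$ (using $\zeta_r^{-1}=\overline{\zeta_r}$). Because a $\C$-linear endomorphism has real trace equal to twice the real part of its complex trace, and $2+\zeta_r+\overline{\zeta_r}$ is already real, we obtain $\Trace\rhoA(\sigma)=2(2+\zeta_r+\overline{\zeta_r})$. For the admissible orders $r\in\{1,2,3,4,6\}$ (the only ones occurring for $\Gal(K/kM)$ by the classification preceding Proposition~\ref{ChFr}), the two primitive $r$th roots are complex conjugate, so this value is unambiguous.

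For $\sigma\notin\Gal(K/kM)$, the element $\sigma$ restricts to complex conjugation on the center, hence $\rhoA(\sigma)$ is conjugate-$\C$-linear (antilinear) on the $4$-dimensional complex space $\End(A_K)_\R$. The key observation is that any antilinear endomorphism $T$ of a complex vector space has vanishing real trace: choosing a $\C$-basis $e_1,\dots,e_n$ and the associated $\R$-basis $e_1,ie_1,\dots,e_n,ie_n$, the relation $T(iv)=-iT(v)$ forces the $e_j$-coefficient of $T(e_j)$ and the $ie_j$-coefficient of $T(ie_j)$ to be negatives of one another, so all diagonal contributions cancel. Thus $\Trace\rhoA(\sigma)=0$, which completes the proof.

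The main obstacle here is conceptual rather than computational: one must correctly identify the relevant complex structure on $\End(A_K)_\R$ as the one coming from the center $M\otimes_\Q\R$, and then recognize the antilinear-trace-vanishing phenomenon in the second case. Once the two cases are framed as $\C$-linear versus conjugate-$\C$-linear actions, the eigenvalue bookkeeping is routine; the only points demanding care are the passage between the complex and real traces and checking that $\zeta_r+\overline{\zeta_r}$ is well defined for the orders that actually arise.
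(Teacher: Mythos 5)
Your proof is correct, but it follows a genuinely different route from the paper's. The paper argues arithmetically: for $\sigma \in \Gal(K/kM)$ it picks (via Chebotarev) a non-supersingular prime $\mathfrak{P}$ of $kM$ whose Frobenius lies in the class of $\sigma$, invokes the fact that the eigenvalues of $\rhoA(\sigma)$ are quotients of the roots of $L_{\mathfrak P}(A_{kM},T)$ (citing \cite{F10}), and then counts which of the $16$ quotients can be roots of unity --- exactly eight, namely $1,1,1,1,\zeta_r,\zeta_r,\overline\zeta_r,\overline\zeta_r$, because $\alpha/\overline{\alpha}$ is not a root of unity at a non-supersingular prime; for $\sigma \notin \Gal(K/kM)$ it uses that every prime not split in $kM$ is supersingular to deduce $A\sim_k A\otimes\chi$, hence $\End(A_K)_\R\simeq\End(A_K)_\R\otimes\chi$ as Galois modules, which forces the character to vanish off $\Gal(K/kM)$. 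You replace all of this with algebra: Skolem--Noether plus the eigenvalue computation $1,1,\zeta_r,\zeta_r^{-1}$ on matrix units in the $\C$-linear case, and the vanishing of the real trace of a conjugate-linear operator in the other case. Your route is more elementary and self-contained (no density or Chebotarev input, no appeal to \cite{F10}), and it makes the structural source of the dichotomy transparent; the paper's route, on the other hand, ties the formula directly to Frobenius and supersingular reduction, which is the guise in which the statement gets used elsewhere in the paper.

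One step needs tightening. You attribute to Proposition~\ref{Minclos} the assertion that any $\sigma\notin\Gal(K/kM)$ acts on the center $M\otimes_\Q\R\simeq\C$ by complex conjugation; the proposition as stated gives only the other direction (elements of $\Gal(K/kM)$ act trivially). The fact you need is true, but it requires an argument: it is contained in the \emph{proof} of Proposition~\ref{Minclos}, where the action of an arbitrary $\sigma\in\Gal(K/k)$ on a central element $[\alpha]$ is identified with $[{}^\sigma\alpha]$, i.e., the action on the center is the natural Galois action on $M\subseteq K$, whose kernel is exactly $\Gal(K/kM)$. Alternatively, argue by contradiction: if all of $\Gal(K/k)$ fixed the center pointwise, then $M$ would embed centrally in $\End(A_k)_\Q$; since the central $M$ acts on $\mathrm{Lie}(A_{\overline{k}})$ as the scalar $\iota(\alpha)$ with multiplicity $2$, the characteristic polynomial $(T-\iota(\alpha))^2$ has coefficients in $k$, forcing $\iota(M)\subseteq k$ and contradicting $M\not\subseteq k$. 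With either justification inserted, your proof is complete.
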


\begin{proof}
Suppose first that $\sigma\in\Gal(K/kM)$.
Recall that, except for a set of density zero, a prime $\mathfrak p$ of $k$ is supersingular if and only if $\mathfrak p$ is inert in $k M$. Let~$\mathfrak p$ be a split (i.e., not supersingular) prime in $k M$  of good reduction for~$A$.
Let~$\mathfrak P$ be a prime of $k M$ over $\mathfrak p$.
We first show that if $\Frob_{\mathfrak P}$ lies in the conjugacy class of $\sigma$ in $\Gal(K/kM)$, then the roots of $L_{\mathfrak P}(A_{k M},T)$ are $\alpha$, $\overline\alpha$, $\zeta_r\alpha$, and $\overline\zeta_r\overline\alpha$, for a certain $\alpha \in \C$.
Indeed, let $\alpha$ be a root of $L_{\mathfrak P}(A_{k M},T)$.
Since $\mathfrak P$ is not supersingular, $\alpha/\overline\alpha$ is a root of unity. Observe that the eigenvalues of $\rhoA(\sigma)$ are quotients of roots of $L_{\mathfrak P}(A_{k M},T)$ (see for example \cite{F10}). Suppose that~$\sigma$ is not the trivial element (as otherwise the proposition is trivially true).
Then $\rhoA(\sigma)$ has an eigenvalue $\omega\ne 1$, and the roots of $L_{\mathfrak p}(A/ k M,T)$ are $\alpha$, $\overline\alpha$, $\omega\alpha$, and $\overline\omega\overline\alpha$.
It follows that the set of eigenvalues of $\rhoA(\sigma)$ is $\{1,\,\omega,\,\overline\omega\}$.
Finally, one observes that $\omega$ must be a primitive $r$th root of unity since the order of $\sigma$ is~$r$.

But again the eigenvalues of $\rhoA(\sigma)$ are quotients of the roots $\alpha$, $\overline\alpha$, $\zeta_r\alpha$, $\overline\zeta_r\overline\alpha$ of $L_{\mathfrak P}(A_{k M},T)$ and, since $\mathfrak P$ is not supersingular, $\alpha/\overline\alpha$ is not a root of unity.
Thus, among the $16$ possible quotients between the roots of $L_{\mathfrak P}(A_{k M},T)$, only the following $8$ are roots of unity: $1$, $1$, $1$, $1$, $\zeta_r$, $\zeta_r$, $\overline \zeta_r$, $\overline\zeta_r$.
Since $\rhoA$ has dimension $8$, we have $\Trace\rhoA(\sigma)=2(2+\zeta_r+\overline\zeta_r)$.

Suppose now that $k\not= kM$ and $\sigma\not\in\Gal(K/kM)$.
Let $\chi$ denote the quadratic character of $\Gal(K/k)$ associated to the extension $kM/k$. Let $\mathfrak p$ be any prime of $k$ which does not split completely in $kM$.
Since $\mathfrak p$ is supersingular, we must have $\Trace V_\ell(A)(\Frob_{\mathfrak p})=0$.
It follows that $V_\ell(A)\simeq V_\ell(A)\otimes \chi$, that is, $A\sim_k A \otimes \chi$. Therefore $\End(A_K)_\R=\End(A_K)_\R\otimes \chi$, which implies the claim.
\end{proof}

\begin{remark}\label{rem}
It is not true that the pair $(\Gal(K/k),\Gal(K/kM))$ determines the Galois type of $A$: there exist examples of abelian surfaces $A/k$, $A'/k'$ for which there is an isomorphism of abstract groups
\[
(\Gal(K/k),\Gal(K/kM))\overset{\varphi}{\simeq }(\Gal(K'/k'),\Gal(K'/k'M')),
\]
but for a certain subgroup $H\subseteq \Gal(K/k)$ of order~$2$, the rings $\End(A_K)_\R^H$
and $\End(A'_K)_\R^{\varphi(H)}$ are not isomorphic as $\R$-algebras.
%are isomorphic as $\R$-vector spaces but not as $\R$-algebras.
The list of Galois types that are ambiguous in this way can be found in Table \ref{table:matchingU1exc}.
\end{remark}

The computations performed in \S \ref{subsec:unitary1} allow us to determine the possible isomorphism classes for the pair of groups $(\Gal(K/k),\Gal(K/kM))$.
Indeed, observe that $\ST_A$ not only determines $\Gal(K/k)\simeq \ST_A/\ST_A^0$ but also the subgroup $\Gal(K/kM)$: since a prime $\mathfrak p$ of $k$ is supersingular if and only if it does not split completely in $kM$, the group $\Gal(K/kM)$ is isomorphic to the component group of $\ST_A^{ns}$, where $\ST_A^{ns}$ denotes the index 2 subgroup of $\ST_A$ obtained by removing from $\ST_A$ those components for which all elements have the same characteristic polynomial. For each Lie group $G$ with $G^0=\Unitary(1)$ appearing in the list of Theorem \ref{Sato-Tate axioms groups}, the pair $(G/G^0,G^{ns}/G^{ns,0})$ is shown in Table \ref{table:matchingU1}.

\begin{table}
\begin{center}
\small
\setlength{\extrarowheight}{0.5pt}
\caption{Pairs $(\Gal(K/k), \Gal(K/kM))$.}\label{table:matchingU1}
\vspace{6pt}
\begin{tabular}{@{\extracolsep{-4pt}}llllllll}
$\ST_A$&      &$\qquad\quad$& $\ST_A$  &                  &$\qquad\quad$& $\ST_A$   & \\\hline\vspace{-8pt}\\
$C_1$  & $(\cyc 1,\cyc 1)$ && $J(C_1)$ & $(\cyc 2,\cyc 1)$             && $C_{2,1}$ & $(\cyc 2,\cyc 1)$\\
$C_2$  & $(\cyc 2,\cyc 2)$ && $J(C_2)$ & $(\dih 2,\cyc 2)$             && $C_{4,1}$ & $(\cyc 4,\cyc 2)$ \\
$C_3$  & $(\cyc 3,\cyc 3)$ && $J(C_3)$ & $(\cyc 6,\cyc 3)$             && $C_{6,1}$ & $(\cyc 6,\cyc 3)$\\
$C_4$  & $(\cyc 4,\cyc 4)$ && $J(C_4)$ & $(\cyc 4\times\cyc 2,\cyc 4)$ && $D_{2,1}$ & $(\dih 2,\cyc 2)$\\
$C_6$  & $(\cyc 6,\cyc 6)$ && $J(C_6)$ & $(\cyc 6\times\cyc 2,\cyc 6)$ && $D_{4,1}$ & $(\dih 4,\dih 2)$\\
$D_2$  & $(\dih 2,\dih 2)$ && $J(D_2)$ & $(\dih 2\times\cyc 2,\dih 2)$ && $D_{6,1}$ & $(\dih 6,\dih 3)$\\
$D_3$  & $(\dih 3,\dih 3)$ && $J(D_3)$ & $(\dih 6,\dih 3)$             && $D_{3,2}$ & $(\dih 3,\cyc 3)$\\
$D_4$  & $(\dih 4,\dih 4)$ && $J(D_4)$ & $(\dih 4\times\cyc 2,\dih 4)$ && $D_{4,2}$ & $(\dih 4,\cyc 4)$\\
$D_6$  & $(\dih 6,\dih 6)$ && $J(D_6)$ & $(\dih 6\times\cyc 2,\dih 6)$ && $D_{6,2}$ & $(\dih 6,\cyc 6)$\\
$T$    & $(\alt 4,\alt 4)$ && $O_1$    & $(\sym 4,\alt 4)$             && $J(T)$    & $(\alt 4\times\cyc 2,\alt 4)$\\\vspace{2pt}
$O$    & $(\sym 4,\sym 4)$ && $J(O)$   & $(\sym 4\times\cyc 2,\sym 4)$ && &\\\hline
\end{tabular}
\end{center}
\end{table}

By Propositions \ref{Minclos} and  \ref{cesc}, there are eleven Galois types for the case $\bF$ in which $M\subseteq k$, and thus $\Gal(K/k)=\Gal(K/kM)$.
Indeed, note first that for any subgroup $H\subseteq \Gal(K/k M)$, the isomorphism class of the $\R$-algebra $\End(A_K)_\R^H$ depends only on its dimension: by Proposition \ref{Minclos} we know that $\End(A_K)_\Q^H$ is either $M$, a quadratic extension of $M$, or $\M_2(M)$, and so upon tensoring with $\R$ becomes $\C$, $\C\times \C$, or $\M_2(\C)$, respectively.
In addition, Proposition \ref{cesc} shows that the $\Gal(K/kM)$-module structure of $\End(A_K)_\Q$ is uniquely determined by the isomorphism class of the group $\Gal(K/k M)$.
This yields the Galois types $\bF[\cyc n]$ for $n=1$, $2$, $3$, $4$, $6$; $\bF[\dih n]$ for $n=2$, $3$, $4$, $6$; $\bF[\alt 4]$; and $\bF[\sym 4]$.
{}From Table \ref{table:matchingU1}, it follows that these correspond to the Sato-Tate groups $C_n$, $D_n$, $T$, and $O$, respectively.

If $M\not \subseteq k$, then, by Proposition \ref{cesc}, the pair $(\Gal(K/k), \Gal(K/kM))$ still determines the $\Gal(K/k)$-module structure of $\End(A_K)_\R$, but, as we warned in Remark \ref{rem}, more data is needed to determine the Galois type. We now describe these data. A glance at Table \ref{table:matchingU1} shows that each pair $(\Gal(K/k), \Gal(K/kM))$ gives rise to exactly one Galois type, which we denote $\bF[\Gal(K/k),\Gal(K/k M)]$, \emph{except} for the four pairs:
\begin{equation}\label{exc-pairs}
(\cyc 2,\cyc 1), (\dih 2,\cyc 2), (\cyc 6,\cyc 3), (\dih 6, \dih 3).
\end{equation}

In each of these cases we have $\Gal(K/k)\simeq \Gal(K/kM)\times \cyc 2$. Choose such an isomorphism and write $\sigma$ for the nontrivial involution of $\Gal(K/k)$ generating that cyclic subgroup of order $2$. By the Skolem-Noether theorem, $\sigma$  acts on $\End(A_K)_\R \simeq \M_2(\C)$ by
\begin{equation}\label{action-of-s}
x\in \M_2(\C) \mapsto \sigma(x) = \gamma \overline x \gamma^{-1},
\end{equation}
for some $\gamma \in \GL_2(\C)$ satisfying $\gamma \overline{\gamma}\in \C^\times \mathrm{Id}$.
It is obvious from this description that $\End(A_K)_\R^{\langle \sigma \rangle} \cap \C\cdot  \mathrm{Id} = \R\cdot  \mathrm{Id}$.
A further inspection (by writing down the linear equation resulting from \eqref{action-of-s}) shows that $\End(A_K)_\R^{\langle \sigma \rangle}$ is an $\R$-algebra of rank $4$. It cannot be commutative because if it were, it would be a maximal commutative subalgebra of $\M_2(\C)$ and should thus contain the center.
Hence $\End(A_K)_\R^{\langle \sigma \rangle}$ is a quaternion algebra over $\R$, which must be isomorphic to either $\M_2(\R)$ or Hamilton's division quaternion algebra $\mathbb H:= (\frac{-1,-1}{\R})$.

For each pair in \eqref{exc-pairs}, we label these two Galois types as $$\bF[\Gal(K/k),\Gal(K/k M),\M_2(\R)] \mbox{ and } \bF[\Gal(K/k),\Gal(K/k M),\mathbb H].$$
We next apply Proposition \ref{Galois type from ST group} to determine $\End(A)_\R$ for each Sato-Tate group $G$ with $G^0=\Unitary(1)$. This computation shows, in particular, that for the four preceding pairs, the one-to-one correspondence with Sato-Tate groups is as indicated in Table \ref{table:matchingU1exc}.

\begin{table}
\begin{center}
\setlength{\extrarowheight}{0.5pt}
\caption{Galois types for the exceptional pairs.}\label{table:matchingU1exc}
\vspace{6pt}
\begin{tabular}{@{\extracolsep{-3pt}}rrrrr}
ST group & Galois type & & ST group & Galois type  \\\hline\vspace{-8pt}\\
$J(C_1)$ & $\bF[\cyc 2,\cyc 1,\mathbb H]$ && $C_{2,1} $ & $\bF[\cyc 2,\cyc 1,\M_2(\R)]$ \\
$J(C_2) $ & $\bF[\dih 2,\cyc 2,\mathbb H]$ &&  $D_{2,1} $ & $\bF[\dih 2,\cyc 2,\M_2(\R)]$  \\
$J(C_3)$ & $\bF[\cyc 6,\cyc 3,\mathbb H]$ &&$C_{6,1} $ & $\bF[\cyc 6,\cyc 3,\M_2(\R)]$ \\\vspace{2pt}
$J(D_3) $ & $\bF[\dih 6,\dih 3,\mathbb H]$ && $D_{6,1} $ & $\bF[\dih 6,\dih 3,\M_2(\R)]$ \\\hline
\end{tabular}
\end{center}
\end{table}

Any $\psi \in \End(A_K)_{\R}$ acts via a block diagonal matrix
\[
\Psi = \begin{pmatrix} A & 0 \\ 0 & B
\end{pmatrix} \qquad (A,B \in \M_2(\C)).
\]
Since $\psi'$ acts via the matrix $S^{-1} \Psi^T S$, the Rosati form is given up to scalars by
\[
\psi \mapsto \Trace(\Psi S^T \Psi^T S) =
2 \Trace(AB^T).
\]
By positivity of the Rosati form, we must have $B = \overline{A}$, whence
\[
\End(A_K)_{\R} = \left\{ \begin{pmatrix} A & 0 \\ 0 & \overline{A}
\end{pmatrix}: A \in \M_2(\C) \right\} \simeq \M_2(\C).
\]

For $n>1$, the action of a generator of the component group of $C_n$ is
\[
\begin{pmatrix} a & b \\ c & d \end{pmatrix} \mapsto
\begin{pmatrix} a & e^{2\pi i/n} b \\ e^{-2 \pi i/n} c & d \end{pmatrix}.
\]
The fixed ring consists of those matrices with $b=c=0$, and is thus isomorphic to $\C \times \C$.
If we reinterpret $C_2$ as $D_1$, the action of the generator becomes
\[
\begin{pmatrix} a & b \\ c & d \end{pmatrix} \mapsto
\begin{pmatrix} d & -c \\ -b & a \end{pmatrix}.
\]
For $n>1$, the fixed ring under $D_n$ consists of matrices with $a=d$ and $b=c=0$, and is isomorphic to $\C$.
The same is true for $T$ and $O$.

The action of $J$ is
\[
\begin{pmatrix} a & b \\ c & d \end{pmatrix} \mapsto
\begin{pmatrix} \overline{d} & -\overline{c} \\ -\overline{b} & \overline{a} \end{pmatrix},
\]
so the fixed ring under $J(C_1)$ is isomorphic to the Hamilton quaternion ring $\HH$. For $n>1$,
the fixed ring under $J(C_n)$ consists of matrices with $b=c=0$ and $d = \overline{a}$,
and is isomorphic to $\C$. The fixed ring under $J(D_n)$ has the additional condition $d = a$, and is isomorphic to $\R$;
the same is true for $J(T)$ and $J(O)$.

The action of a generator of $C_{2,1}$ is
\[
\begin{pmatrix} a & b \\ c & d \end{pmatrix} \mapsto
\begin{pmatrix} \overline{d} & \overline{c} \\ \overline{b} & \overline{a} \end{pmatrix},
\]
so the fixed ring is $\M_2(\R)$. The fixed ring under $D_{2,1}$ consists of matrices with
$a = d = \overline{d}$ and $b = -c = \overline{c}$, and is isomorphic to $\R \times \R$.

The action of a generator of $C_{4,1}$ is
\[
\begin{pmatrix} a & b \\ c & d \end{pmatrix} \mapsto
\begin{pmatrix} \overline{d} & i \overline{c} \\ -i \overline{b} & \overline{a} \end{pmatrix},
\]
so the fixed ring consists of matrices with $b=c=0$ and $d = \overline{a}$, which is isomorphic to $\C$,
and similarly for $C_{6,1}$. The fixed ring under $D_{4,1}$ adds the condition $d = a$, hence it is isomorphic to $\R$,
and similarly for $D_{6,1}$ and $O_1$ (which contains $D_{4,1}$).

Since $D_{3,2}$ contains $C_3$,
its fixed ring only contains matrices with $b=c=0$.
Since $D_{3,2}$ also contains $J(D_1)$, we must also have $a = \overline{a}$ and $d = \overline{d}$,
so the fixed ring is isomorphic to $\R \times \R$. The same holds for $D_{4,2}$ and $D_{6,2}$.

\subsection{Correspondence with Sato-Tate groups} \label{subsection:correspondence}

Having completed the description of the 52 Galois types,
let us now address the correspondence with Sato-Tate groups included in Theorem \ref{main-section4},
that is, the equivalence between the three sets of data (a), (b), and (c) named in the theorem.
Note that Proposition~\ref{Galois type from ST group} implies that (a) determines (b),
and this has been made explicit in the preceding sections.
Since it is clear that (b) determines (c), it remains only to show that (c) determines (a).

We first note that the six choices for $\ST_A^0$ give rise to six distinct isomorphism classes $\bA$, $\bB$, $\bC$, $\bD$, $\bE$, $\bF$ for the $\R$-algebra
$\End(A_K)_{\R}$, so $\ST_A^0$ and $\End(A_K)_\R$ determine each other. Thus to prove that (c) determines (a),
it is sufficient to distinguish Sato-Tate groups with the same connected part.

To finish the argument, let us inspect Table~\ref{table:STgroups} more closely.
We find that the data given by $(\Gal(K/k), \End(A_K)_{\R}, \End(A_k)_{\R})$ alone is sufficient to distinguish Sato-Tate groups but for three exceptional pairs of groups where an ambiguity arises.
The first ambiguous pair is $J(C_2)$ and $D_2$; these may be distinguished by considering $\End(A_L)_{\R}$, where $L/k$ runs over the three quadratic subextensions of $K/k$.
For $J(C_2)$ one obtains $\HH$, $\HH$, and $\C\times\C$, since the index 2 subgroups of $J(C_2)$ are conjugate to $J(C_1)$, $J(C_1)$ and $C_2$, whereas for $D_2$ one obtains $\C\times \C$ in all cases, since all index 2 subgroups of $D_2$ are conjugate to $C_2$.
The second ambiguous pair is $J(C_3)$ and $C_{6,1}$; these may be distinguished by
passing from the cyclic group $\Gal(K/k)$ of order 6 to its unique subgroup of order 2,
thus reducing to the distinction between $J(C_1)$ and $C_{2,1}$.
The third ambiguous pair is $J(D_3)$ and $D_{6,1}$; these may be distinguished by
passing from the dihedral group $\Gal(K/k)$ of order 12 to its unique cyclic subgroup of order 6,
thus reducing to the distinction between $J(C_3)$ and $C_{6,1}$.

\subsection{Realizability over \texorpdfstring{$\Q$}{Q}}\label{section:realizability over Q}

We now show that certain Galois types cannot occur over $\Q$, or more generally over a field with a real place.

As determined at the end of \S\ref{s3}, the group $F_c$ does not occur as a Sato-Tate group of an abelian surface over any number field.  The remaining 14 Galois types corresponding to Sato-Tate groups ruled out by Remark \ref{real place} over a field with a real place are
\begin{gather*}
\bF[\cyc n]\text{ with $n\in\{1,2,3,4,6\}$, }\bF[\dih n] \text{ with $n\in\{2,3,4,6\}$, } \\
\bF[\alt 4],\, \bF[\sym 4],\,\bD[\cyc 1],\,\bD[\cyc 2,\C],\,\bC[\cyc 1]\,.
\end{gather*}
This list can be recovered immediately from the discussion from \S \ref{s1} to \S \ref{s4}: for all of these Galois types, $k$ must contain either a quadratic imaginary field or a quartic CM field.

The Galois type $\bD[\cyc 2,\R\times\R]$, corresponding to $F_{ab}$, cannot occur over $\Q$ since it corresponds to an abelian surface $A$ over $k$ such that $A\sim_k E_1\times E_2$, where $E_i$ is an elliptic curve over $k$ with CM by a quadratic imaginary field $M_i=\Q(\sqrt{-d_i})$, $i=1,2$, such that $M_1\not\simeq M_2$ and $kM_1=kM_2$. Of course, this last condition does not hold if $k=\Q$, but it can hold over a totally real field (e.g., $k=\Q(\sqrt{d_1d_2})$).

In order to complete the proof of Theorem \ref{main-section4}, we still need to prove that three other Galois types do not occur over $\Q$. In fact, we show that they cannot arise over a field with a real place.

\begin{proposition}\label{proposition:no real place}
The Galois types  $\bF[\cyc 2,\cyc 1,\mathbb H]$, $\bF[\cyc 4,\cyc 2]$, and $\bF[\cyc 6,\cyc 3,\mathbb H]$ cannot occur over a field with a real place.
\end{proposition}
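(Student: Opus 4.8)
The plan is to handle the three types by a single arithmetic reduction centered on complex conjugation, disposing of $\bF[\cyc 4,\cyc 2]$ by an order count and the two remaining types by a structural lemma on real endomorphism algebras. In every case $A$ is of type $\bF$, so by Proposition~\ref{Minclos} we have $A_K\sim E^2$ for an elliptic curve $E$ with CM by an imaginary quadratic field $M$, and $kM\subseteq K$. Suppose $k$ has a real place $v$, and fix an extension $\bar v\colon K\hookrightarrow\C$; complex conjugation on $\C$ then determines an element $c\in\Gal(K/k)$ of order dividing $2$. Since $M$ is imaginary, its image under $\bar v$ is not real, so $c$ acts nontrivially on $M$; that is, $c\in\Gal(K/k)\setminus\Gal(K/kM)$, and in particular $c$ has order exactly $2$.

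First I would dispose of $\bF[\cyc 4,\cyc 2]$. Here $\Gal(K/k)\simeq\cyc 4$ and $\Gal(K/kM)\simeq\cyc 2$ is its \emph{unique} subgroup of order $2$, so every involution of $\Gal(K/k)$ lies in $\Gal(K/kM)$. This contradicts the existence of the order-$2$ element $c\notin\Gal(K/kM)$ produced above, so $\bF[\cyc 4,\cyc 2]$ cannot occur over a field with a real place. For the remaining types $\bF[\cyc 2,\cyc 1,\mathbb{H}]$ and $\bF[\cyc 6,\cyc 3,\mathbb{H}]$, the group $\Gal(K/k)$ (namely $\cyc 2$, resp.\ $\cyc 6$) has a \emph{unique} element of order $2$, which is exactly the distinguished involution $\sigma$ generating the direct factor $\cyc 2$ in $\Gal(K/k)=\Gal(K/kM)\times\cyc 2$. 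Hence $c=\sigma$, and by the very definition of these two Galois types (the computation of $\End(A_K)_\R^{\langle\sigma\rangle}$ in \S\ref{secF} and Table~\ref{table:matchingU1exc}) the fixed algebra satisfies $\End(A_K)_\R^{\langle c\rangle}\simeq\mathbb{H}$.

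Now set $F:=K^{\langle c\rangle}$. Because $c$ fixes $F$ and is complex conjugation for $\bar v$, the embedding $\bar v|_F$ lands in $\R$, so $F$ has a real place; and by Galois descent $\End(A_F)_\R=\End(A_K)_\R^{\langle c\rangle}\simeq\mathbb{H}$. The proposition thus reduces to the key claim that \emph{an abelian surface over a field with a real place cannot have real endomorphism algebra $\mathbb{H}$}. To prove this, I would base change $A_F$ along the real place to a real abelian surface and consider the complex-conjugation involution $F_\infty$ on $H:=H_1(A_\C^{\topo},\R)\simeq\R^4$: it is $\R$-linear with $F_\infty^2=1$ and anticommutes with the complex structure $J$ (so $F_\infty J=-J F_\infty$, $J^2=-1$). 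Endomorphisms defined over the real field $F$ are $F_\infty$-equivariant, so $F_\infty$ centralizes $\End(A_F)_\R\simeq\mathbb{H}$ inside $\End_\R(H)\simeq\M_4(\R)$. Since $\mathbb{H}$ is a division algebra, $H$ is free of rank $1$ over it, and the centralizer of $\mathbb{H}$ in $\M_4(\R)$ is again isomorphic to $\mathbb{H}$; thus $F_\infty$ lies in a copy of $\mathbb{H}$ with $F_\infty^2=1$, forcing $F_\infty=\pm 1$. But then $F_\infty J=-JF_\infty$ gives $J=0$, contradicting $J^2=-1$.

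The only substantive point—and the step I expect to require the most care—is this last lemma excluding $\mathbb{H}$ over a real place; everything else is bookkeeping with $\Gal(K/k)$, $\Gal(K/kM)$, and the fixed-algebra computations already carried out in \S\ref{secF}. It is worth noting that the parallel types $\bF[\cyc 2,\cyc 1,\M_2(\R)]$ and $\bF[\cyc 6,\cyc 3,\M_2(\R)]$ are \emph{not} excluded, precisely because the lemma's argument breaks down for $\M_2(\R)$: having zero divisors, it admits involutions $F_\infty\neq\pm 1$, so no contradiction arises.
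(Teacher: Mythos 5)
Your proof is correct, but it takes a genuinely different route from the paper's. The paper also makes complex conjugation the central actor, but cohomologically: after base change along the real place it writes $A_{\R}$ as a twist of $(E')^2$, where $E'$ is a CM elliptic curve defined over $\R$, and invokes Hilbert 90 (triviality of $H^1(\Gal(\C/\R),\GL_2(\C))$) to conclude that the conjugation-fixed subalgebra of $\End(A_\C)_\R$ is $\M_2(\R)$, never $\HH$; the type $\bF[\cyc 4,\cyc 2]$ is then excluded by a rank count ($\End(A_\R)_\R$ would have rank $4$, while no real intermediate field can have endomorphism algebra of rank exceeding $2$), and $\bF[\cyc 6,\cyc 3,\HH]$ is reduced to $\bF[\cyc 2,\cyc 1,\HH]$ via the cubic subextension, which is exactly your field $F=K^{\langle c\rangle}$. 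Your two substitutions are sound and arguably cleaner. First, you dispose of $\bF[\cyc 4,\cyc 2]$ by pure group theory: conjugation gives an involution $c\in\Gal(K/k)\setminus\Gal(K/kM)$, whereas the unique involution of $\cyc 4$ lies in its unique index-two subgroup $\Gal(K/kM)$; no endomorphism computation is needed. Second, your key lemma (no abelian surface over a real-placed field has real endomorphism algebra $\HH$) replaces the twisting-plus-Hilbert-90 computation by the real Frobenius: $F_\infty$ anticommutes with the complex structure $J$, centralizes $\End(A_F)_\R$, and the centralizer of $\HH$ acting on $\R^4$ is again a division algebra, forcing $F_\infty=\pm 1$ and hence $J=0$, absurd. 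What the paper's route buys is the positive identification of the fixed ring as $\M_2(\R)$, matching the types that do occur over $\Q$; what yours buys is a self-contained linear-algebra argument, independent of the twisting formalism, which generalizes at once to exclude any division-algebra (in particular definite quaternion) endomorphism algebra acting with the full rank of $H_1$ over a field with a real place. The only step worth flagging in your write-up is the descent identity $\End(A_F)_\R=\End(A_K)_\R^{\langle c\rangle}$: it rests on all endomorphisms of $A_{\Qbar}$ being defined over $K$ and on invariants commuting with $\otimes_\Q\R$, both immediate but essential.
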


\begin{proof}
Let $A$ be an abelian surface over $k$ with Galois type $\bF[\cyc 2,\cyc 1,\mathbb H]$ (or equivalently, with $\ST_A=J(C_1)$).
Note that $K=kM/k$ is a quadratic extension and write $\Gal(K/k)=\{e,\tau\}$, where $e$ and $\tau$ denote the identity and complex conjugation, respectively.
Extend scalars from $k$ to $\R$. Thus, $K\otimes \R = \C$ and $A_\C$ is isogenous to the square of some CM elliptic curve $E$ over $\C$.
Let $\mathcal O$ be the endomorphism ring of $E$, and take the curve $E'$ corresponding to the lattice $\mathcal O$ in $\C$; this is defined over $\R$ because the lattice is stable under complex conjugation.
Then $E$ and $E'$ are isogenous over an algebraic closure of $\C$, which is again $\C$.

Thus $A_\R$ is a twist of $(E')^2$ by some 1-cocycle $f$. If we normalize $f(e) = 1$, then $f$ sends the complex conjugation $\tau$ to some endomorphism $\alpha$ of $(E')^2$ for which $\alpha \tau(\alpha) = 1$.
If we translate the action of $\tau$ on $\End(A_\C)$ into an action on $\End((E'_\C)^2)_\R \simeq M_2(\C)$, then it is described by conjugating the complex conjugation on $\M_2(\C)$ by~$\alpha$.
By Hilbert-Speiser Theorem 90 (for $H^1(\Gal(\C/\R), \GL_2(\C))$, we can factor~$\alpha$ as $\beta  \tau(\beta)^{-1}$ for some $\beta$ in $\GL_2(\C)$. Using this, we can then calculate that the fixed subring of $\End(A_\C)_\R$ under $\tau$ is isomorphic to $\M_2(\R)$, just as for $(E'_\C)^2$.
However, for $J(C_1)$ this $\R$-algebra must be isomorphic to $\HH$.

Suppose now that the Galois type of $A$ is $\bF[\cyc 4,\cyc 2]$  (equivalently, that $\ST_A=C_{4,1}$). Note that $kM/k$ is the unique quadratic extension of $K/k$. Again extend scalars from $k$ to $\R$, and note that $M\otimes \R = \C$. By the argument of the previous paragraph, $\End(A_\R)_\R$ has rank 4 as an $\R$-algebra. However, in case $C_{4,1}$, no extension $L$ of $k$ contained in $\R$ can have endomorphism algebra of rank greater than 2 (namely, if $L$ doesn't contain $M$, then $\End(A_L)_\R = \End(A_k)_\R = \C)$.

We can reduce the case of an abelian surface $A$ with Galois type $\bF[\cyc 6,\cyc 3,\mathbb H]$ to the case of an abelian surface with Galois type $\bF[\cyc 2,\cyc 1,\mathbb H]$ by considering $A_L$, where $L/k$ is the only cubic subextension of $K/k$ (note that $L$ preserves the property of having a real place).
\end{proof}

\subsection{Examples}\label{section:examples}

{}In \S \ref{s1} through \S \ref{s4} we have shown that there are at most $52$ Galois types that can arise for an abelian surface over a number field.
In \S \ref{section:realizability over Q} we showed that $18$ (resp. 17) of these cannot occur over $\Q$ (resp. over a  field with a real place).
To complete the proof of Theorem \ref{main-section4}, it remains to show that each of the $52$ Galois types is actually realized by an abelian surface $A/k$, and that for the $34$ Galois types admissible over $\Q$, this can be achieved with $k=\Q$.

Here we accomplish this goal by exhibiting explicit examples that are Jacobians of genus 2 curves;
these curves have been chosen so that the Sato-Tate groups can be explained entirely using automorphisms of the curves themselves, without having to study endomorphisms of the Jacobians.
The 52 curves are listed in Table~\ref{table:curves} together with a corresponding Sato-Tate group, which, as shown in \S\ref{subsection:correspondence}, uniquely determines a Galois type.
In this section we prove that each of these curves has the Sato-Tate group listed in Table~\ref{table:curves}, and thus realizes the corresponding Galois type.
We note that the curves corresponding to Galois types admissible over $\Q$ are all defined over $\Q$, and that the curve given for $\bD[\cyc 2,\R\times\R]$ is defined over a totally real field.

We begin with the Galois types in cases $\bE$ and $\bF$.
The first step is to compute the field $K$ for each curve.
Let $\alpha$ and $\gamma$ be automorphisms of a genus $2$ curve~$C$, such that $\alpha$ is a nonhyperelliptic involution, and $\alpha$ and $\gamma$ do not commute.
Let $L/k$ denote the minimal field extension over which $\alpha$ and $\gamma$ are defined.
The quotient $E:=C/\langle\alpha\rangle$ is an elliptic curve over $L$.
Recall that $M$ denotes $\Q$ if $E$ does not have CM and the CM field of $E$ otherwise.
We claim that $K=LM$. Indeed, there exists an elliptic curve $E'$ over $L$ such that $\Jac(C)\sim_{L} E\times E'$.
Since $\Aut(C_{L})$ is nonabelian and injects into $\End(\Jac(C)_{L})_{\Q}$, we must have $\End(\Jac(C)_{L})_{\Q} \simeq \M_2(\End(E_{L})_{\Q})$ and $E\sim_{L} E'$.
It follows that $K=LM$.

In Table \ref{table:automorphisms}, we list $\alpha$, $\gamma$, and $M$ for each of the $42$ curves in cases $\bE$ and $\bF$.
{}From this data one can immediately compute the fields $K$, which are listed in Table~\ref{table:curves}; note that when writing the coefficients of $\alpha$ and $\gamma$ in Table~\ref{table:automorphisms}, there is an implicit reference to the generators of $K$ given in Table~\ref{table:curves}.

Except for $J(E_1)$ and $E_2$, the Sato-Tate groups in case $\bE$ are uniquely determined by $\Gal(K/k)$, and in each such case this implies that the claimed Sato-Tate group is correct.
One finds that the curve $C\colon y^2=x^5 + x^3 + x$ (resp.\ $C: x^6 + x^5 + 3x^4 + 3x^2 - x + 1$) listed for $J(E_1)$ (resp.\ $E_2$) has $\End(\Jac(C)_\Q)_\Q\simeq \Q\times\Q$ (resp.\ $\Q(\sqrt{-2})$), from which it follows that $\End(\Jac(C)_\Q)_\R\simeq \R\times \R$ (resp.\ $\C$), as desired.

Except for the pairs $J(C_1)$ and $C_{2,1}$, $J(C_2)$ and $D_{2,1}$, $J(C_3)$ and $C_{6,1}$, and $J(D_3)$ and $D_{6,1}$, the Sato-Tate groups in case $\bF$ are uniquely determined by $\Gal(K/k)$ and $\Gal(K/kM)$; these Galois groups can be directly computed from the data in Tables~\ref{table:curves} and Table~\ref{table:automorphisms}, and in each case one finds that the claimed Sato-Tate group is correct.
We now address the eight ambiguous cases.

\begin{enumerate}

\item $J(C_1)$: The curve $C\colon y^2=x^5-x$ over $k=\Q(i)$ has an automorphism $\beta(x,y)= (\frac{1}{x},\frac{iy}{x^3})$ with $\beta^2=-1$.
Since we also have $\gamma^2=-1$ and $\gamma\circ\beta=-\beta\circ\gamma$, we conclude that $\End(\Jac(C)_k)_\R\simeq\mathbb H$.

\item $C_{2,1}$: We observe that the curve $C\colon y^2=x^6+1$ over $k=\Q$ has $\alpha^2=\gamma^2=1$ and $\alpha\circ\gamma=-\gamma\circ\alpha$.
It follows that $\End(\Jac(C)_k)_\R\simeq\mathbb M_2(\R)$.

\item $J(C_2)$: The curve $C\colon y^2=x^5-x$ over $k=\Q$ has an automorphism $\beta(x,y)= (-\frac{1}{x},\frac{y}{x^3})$ of order $4$, from which we deduce that $\End(\Jac(C)_k)_\Q\simeq\Q(i)$, and therefore $\End(\Jac(C)_k)_\R\simeq\C$.

\item $D_{2,1}$: Since the nonhyperelliptic involution $\alpha$ of $C\colon y^2=x^5+x$ over $k=\Q$ is defined over $\Q$, we have $\End(\Jac(C)_\Q)_\Q\simeq \Q\times\Q$, and therefore $\End(\Jac(C)_k)_\R\simeq \R\times\R$.

\item $J(C_3)$: It is enough to show that the curve $C\colon y^2=x^6 + 10x^3 - 2$ over $k=\Q(\sqrt{-3})$ has $\End(\Jac(C)_L)_\R\simeq \mathbb H$, where $L=\Q(a,\sqrt{-3})$ is the unique subfield of $K$ with index 2 (here $a=\sqrt[3]{-2}$).
It suffices to find automorphisms $\beta$ and $\delta$ defined over $L$ that satisfy $\beta^2=\delta^2=-1$ and $\beta\circ\delta=-\delta\circ\beta$.
These automorphisms are:

{\small
\begin{gather*}
\beta(x,y)=\left(\frac{(-\sqrt{-3}+1)a^2x+2(\sqrt{-3}+1)a}{4x+(\sqrt{-3}-1)a^2},\frac {-3\cdot 2^5\sqrt{-3}y}{(4x+(\sqrt{-3}-1)a^2)^3}\right)\,, \\
\delta(x,y)=\left(\frac{-a^2x-2a}{2x+a^2},\frac{12\sqrt{-3}y}{(2x+a^2)^3}\right)\,.
\end{gather*}
}

\item $C_{6,1}$: For $C\colon y^2 = x^6 + 6x^5 - 30x^4 + 20x^3 + 15x^2 - 12x + 1$ over $k=\Q$ it suffices to show that $\End(\Jac(C)_{\Q(a)})_\R=\M_2(\R)$, where $\Q(a)$ is the unique subfield of $K$ with index 2.
But this is clear: the nonhyperelliptic involution $\alpha$ is defined over $\Q(a)$ and the noncommuting element $\gamma$ is also defined over $\Q(a)$ (in fact over $\Q$), and thus $\End(\Jac(C)_{\Q(a)})_\Q=\M_2(\Q)$.

\item $J(D_3)$: The same argument used for $J(C_3)$ shows that the Jacobian of the curve $y^2 = x^6 + 10x^3 - 2$ over $\Q$ has Sato-Tate group $J(D_3)$.

\item $D_{6,1}$: For $C\colon y^2 = x^6 + 6x^5 - 30x^4 - 40x^3 + 60x^2 + 24x - 8$ over $k=\Q$, it suffices to show $\End(\Jac(C)_L)_\R=\M_2(\R)$, where $L$ is determined in the following way: it is the field fixed by the unique subgroup of order 2 in $\Gal(K/k)$ contained in the unique cyclic subgroup of order 6 in $\Gal(K/k)$. More explicitly, $L=\Q(a,\sqrt 6)$. It is enough to observe that

{
\footnotesize
\begin{gather*}
\beta(x,y)=\left(\frac{\left(-\frac{1}{2}a\sqrt 6+\frac{1}{2}(a^2+a-2)\right)x+2}{x+\frac{1}{2}a\sqrt6+\frac{1}{2}(-a^2-a+2)},\frac{((9a^2+6a-8)\sqrt 6-9a^2-63a+54)y}{(x+\frac{1}{2}a\sqrt6+\frac{1}{2}(-a^2-a+2))^3}\right), \\
\delta(x,y)=\left(\frac{(-a^2-a+8)x+2}{x+a^2+a-8},\frac{(18a^2+12a-154)\sqrt 6 y}{(x+a^2+a-8)^3}\right)
\end{gather*}
}
do not commute and satisfy $\beta^2=\delta^2=1$.

\end{enumerate}

We now consider the five Sato-Tate groups corresponding to Galois types in case $\bD$, which are addressed in Table~\ref{table:curves} using just two curve equations (over various number fields $k$).
It is enough to observe the following.
First, the Jacobian of the curve $C\colon y^2=x^6+3x^4+x^2-1$ splits over $\Q$ as the product of elliptic curves with CM by $\Q(i)$ and $\Q(\sqrt{-2})$, respectively.
Indeed, using an algorithm of Gaudry and Schost \cite{GS01}, one proves that the the $j$-invariants of the elliptic quotients of this curve are $1728$ and $2000$, which correspond to (nonisogenous) elliptic curves $E_1$ and $E_2$ defined over $\Q$ with CM by $\Q(i)$ and $\Q(\sqrt{-2})$, respectively.
{}From Proposition \ref{M1M2}, it follows that $K=\Q(i,\sqrt 2)$ and that $\Jac(C)$ is isogenous over $\Q$ to $E_1\times E_2$. Second, for $C\colon y^2=x^5+1$, it is well-known that $\End(\Jac(C)_K)_{\Q} \simeq\Q(\zeta_5)$, where $K=\Q(\zeta_5)$.

For the two Sato-Tate groups in case $\bC$, which are both addressed using the same curve equation, the above procedure shows that the Jacobian of the curve $y^2=x^6+3x^4-2$ splits over $\Q$ as the product of an elliptic curve without CM and an elliptic curve with CM by $\Q(i)$ (now the $j$-invariants of the elliptic quotients are $3456$ and $1728$, respectively).

For the two Sato-Tate groups in case $\bB$, it is enough to check that the Jacobian of the curve $C_1\colon y^2 = x^6 + x^2 + 1$ splits over $\Q$
as the product of two nonisogenous curves without CM, while the Jacobian of the curve $C_2\colon y^2 = x^6 + x^5 + x - 1$
splits over $\Q(i)$ as the product of two nonisogenous curves without CM that are Galois conjugates. Indeed, the nonhyperelliptic involution $\alpha(x,y)=(-x,y)$ guarantees that $\Jac(C_1)$ splits over $\Q$ as the product of two elliptic curves $E$ and $E'$ over $\Q$, which do not have CM since their $j$-invariants are $-256/31$ and $6912/31$. Moreover, by Lemma \ref{lemma:nonisofac} below, $E$ and $E'$ are not $\Qbar$-isogenous, because
$$L_5(\Jac(C_1),T)=(1 - T + 5T^2)  (1 + 3T + 5T^2)\,.$$
The curve $C_2$ has a nonhyperelliptic involution $\alpha(x,y)=\left(\frac{-1}{x},\frac{-iy}{x^3}\right)$, which shows that $\Jac(C_2)$ splits over $\Q(i)$ as the product of two elliptic curves $E$ and $E'$ over $\Q(i)$. These two elliptic curves do not have CM, since their $j$-invariants are the roots of the polynomial
$$j^2 - \frac{5328000}{107}j + \frac{9826000000000}{11449}\,.$$
Moreover, they are not $\Qbar$-isogenous, because
$$L_{13}(\Jac(C_2),T)=(1-T+T^2)(1+5T+T^2)\,.$$

\begin{lemma}\label{lemma:nonisofac}
Let $A$ be an abelian surface over $k$ for which there exists a field extension $L/k$ such that $A_L\sim_L E\times E'$, where $E$ and $E'$ are elliptic curves defined over $L$ without complex multiplication.
Suppose there exists a prime $\mathfrak p$ of $k$, of good reduction for $A$ and of residue degree $1$ in $L$, such that $L_{\mathfrak p}(A,T)$ is not of the form $P(T)\cdot P(\pm T)$ for any degree $2$ polynomial $P(T)\in\Q[T]$.
Then $E$ and $E'$ are not $\Qbar$-isogenous.
\end{lemma}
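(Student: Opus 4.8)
The plan is to prove the contrapositive: assuming $E$ and $E'$ \emph{are} $\Qbar$-isogenous, I will show that $L_{\mathfrak p}(A,T)$ does have the shape $P(T)\cdot P(\pm T)$, contradicting the hypothesis. First I would fix a prime $\mathfrak P$ of $L$ above $\mathfrak p$; since $\mathfrak p$ has residue degree $1$ in $L$, the residue field at $\mathfrak P$ is again $\F_q$ with $q=\norm{\mathfrak p}$, so reducing $A$ modulo $\mathfrak p$ agrees with reducing $A_L$ modulo $\mathfrak P$ and $L_{\mathfrak p}(A,T)=L_{\mathfrak P}(A_L,T)$. As $A_L\sim_L E\times E'$ and good reduction is an isogeny invariant, $E$ and $E'$ acquire good reduction at $\mathfrak P$ and $L_{\mathfrak p}(A,T)=L_{\mathfrak P}(E,T)\,L_{\mathfrak P}(E',T)=(1-aT+qT^2)(1-a'T+qT^2)$, where $a=a_{\mathfrak P}(E)$ and $a'=a_{\mathfrak P}(E')$. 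A short computation (using that each quadratic factor is irreducible over $\Q$ by the Hasse bound) shows that this quartic equals $P(T)P(T)$ for some quadratic $P\in\Q[T]$ exactly when $a=a'$, and equals $P(T)P(-T)$ (the only way to cancel the odd-degree coefficients) exactly when $a'=-a$. Hence the whole lemma reduces to proving $a'=\pm a$, i.e. $a'^2=a^2$.

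Next I would exploit that $E$ has no complex multiplication. Since $E$ and $E'$ are isogenous over $\Qbar$ and $\End_{\Qbar}(E)=\Z$, the group $\Hom_{\Qbar}(E,E')$ is free of rank $1$; pick a generator $\phi_0$. Galois conjugation preserves degrees, so for each $\sigma\in G_L$ one has $\phi_0^{\sigma}=\chi(\sigma)\phi_0$ with $\chi(\sigma)\in\{\pm1\}$, and $\chi\colon G_L\to\{\pm1\}$ is a (possibly trivial) quadratic character. On the kernel of $\chi$ the isogeny $\phi_0$ is Galois-stable, hence defined over the corresponding at-most-quadratic extension, which exhibits $E'$ as isogenous over $L$ to the quadratic twist $E\otimes\chi$. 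Because both $E$ and $E'$ have good reduction at $\mathfrak P$, whereas twisting a good-reduction curve by a character ramified at $\mathfrak P$ would force additive reduction, $\chi$ must be unramified at $\mathfrak P$.

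Finally, the standard behaviour of quadratic twists at a prime of good reduction gives $a'=a_{\mathfrak P}(E\otimes\chi)=\chi(\Frob_{\mathfrak P})\,a=\pm a$, so $a'^2=a^2$ and $L_{\mathfrak p}(A,T)$ is of the prohibited form, completing the contrapositive. I expect the main obstacle to be the second paragraph: pinning down that two $\Qbar$-isogenous non-CM curves over $L$ differ by a \emph{quadratic} twist over $L$ (rather than a higher-order or non-abelian obstruction), and verifying that $\chi$ is unramified at $\mathfrak P$ so the twist formula applies cleanly. The non-CM hypothesis is exactly what forces $\Hom_{\Qbar}(E,E')$ to have rank one, and hence $\chi$ to be quadratic; the reduction in the first paragraph and the conclusion in the third are then routine.
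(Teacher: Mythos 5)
Your proof is correct and follows essentially the same route as the paper's: the paper likewise assumes $E$ and $E'$ are $\Qbar$-isogenous, uses the non-CM hypothesis to conclude that $E'\sim_L E\otimes\chi$ for a trivial-or-quadratic character $\chi$, and deduces $L_{\mathfrak p}(A,T)=L_{\mathfrak P}(E,T)\cdot L_{\mathfrak P}(E,\pm T)$, contradicting the hypothesis. The only difference is that you fill in details the paper leaves implicit (the rank-one $\Hom_{\Qbar}(E,E')$ argument and the unramifiedness of $\chi$ at $\mathfrak P$); your aside about irreducibility of the quadratic factors via the Hasse bound is unnecessary (and not quite valid when $q$ is a square), but harmless, since only the trivial direction of your ``exactly when'' is actually used.
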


\begin{proof} Suppose that $E$ and $E'$ are $\Qbar$-isogenous. Then there exists a quadratic extension $L'/L$ such that $E'\sim_L E\otimes \chi$, where $\chi$ is a character (either trivial or quadratic) of $\Gal(L'/L)$. Let $\mathfrak P$ be a prime of $L$ lying over $\mathfrak p$. It follows that
$$L_{\mathfrak P}(E',T)=L_{\mathfrak P}(E,\pm T)\,.$$
Since $\mathfrak p$ has residue degree 1 in $L$, we have
$$L_{\mathfrak p}(A,T)=L_{\mathfrak P}(A_L,T)=L_{\mathfrak P}(E,T)\cdot L_{\mathfrak P}(E,\pm T),$$
which proves the claim.
\end{proof}

For case $\bA$, we note that the Galois group of $x^5-x+1$ is the symmetric group $S_5$.  It follows from a theorem of Zarhin \cite{Zar00} that for the curve $C$ defined by $y^2=x^5-x+1$ over $\Q$ we have $\End(\Jac(C)_K)_\Q=\Q$.

\subsubsection{Sato-Tate groups over field extensions}

The curves in Table~\ref{table:curves} were chosen to minimize the degree of their fields of definition; this makes it necessary to use 34 distinct curves (some considered over multiple number fields). However, if one relaxes this restriction
on the field of definition, one can reduce the number of curves by using the fact that
every Sato-Tate group that can arise in genus 2 is conjugate (in $\USp(4)$) to a subgroup of one the groups
\begin{equation*}
J(D_6), J(O), J(E_6), J(E_4), F_{ac}, F_{a,b}, N(\Unitary(1)\times\SU(2)), N(\SU(2)\times\SU(2)), \USp(4).
\end{equation*}
One can thus realize all 52 Sato-Tate groups by considering just the 9 curves of Table~\ref{table:curves} corresponding to these groups over the appropriate number field.
To justify this, we recall that for an abelian surface $A$ defined over $k$, Proposition~\ref{p2.17} asserts that there is a bijection between the conjugacy classes of subgroups of $\Gal(K/k)$ and the conjugacy classes of subgroups of $\ST_A$ containing $\ST_A^0$.
This bijection is given by sending the class of the subgroup $H$ to the conjugacy class of the group $\ST_{A_{L}}$, where $L$ is the fixed field $K^H$.

In Table \ref{table:lattice}, we illustrate how the Jacobian $A=\Jac(C)$ of the curve $C$ of Table~\ref{table:curves} corresponding to $J(O)$ realizes 24 other Sato-Tate groups when considering subextensions of $K/k$. We note that in this example there are 33 conjugacy classes of subgroups of $\Gal(K/k)$, and thus 33 conjugacy classes of subgroups of $J(O)$ containing $\Unitary(1)$.
However, considering conjugation in $\USp(4)$ yields exactly the 25 nonconjugate subgroups listed in Table \ref{table:lattice}.  We leave the corresponding exercise for the other groups listed above to the reader.

\begin{table}
\begin{center}
\setlength{\extrarowheight}{2pt}
\caption{Sato-Tate groups realized by the Jacobian $A$ of the curve defined by $y^2=x^6 - 5x^4 + 10x^3 - 5x^2 + 2x - 1$ over extensions of $\Q$, where $a^3-4a+4=0$, $b^4+22b+22=0$, and $c^2+a+4=0$.}\label{table:lattice}
\vspace{6pt}
\begin{tabular}{llll}
$\ST_{A_L}$ & $L$ & $\ST_{A_L}$ & $L$ \\\hline
$J(O)$    & $\Q$                       & $J(C_3)$  & $\Q(b,\sqrt{-11})$            \\
$O$       & $\Q(\sqrt{-2})$            & $D_{3,2}$  & $\Q(b,\sqrt{22})$             \\
$J(T)$    & $\Q(\sqrt{-11})$           & $C_4$     & $\Q(c,\sqrt{-2})$             \\
$O_1$     & $\Q(\sqrt{22})$            & $J(C_2)$  & $\Q(c,\sqrt{-11})$            \\
$J(D_4)$  & $\Q(a)$                    & $C_{4,1}$ & $\Q(c,\sqrt{22})$             \\
$T$       & $\Q(\sqrt{-2},\sqrt{-11})$ & $D_{2,1}$ & $\Q(c\sqrt{-2},\sqrt{-11})$   \\
$J(D_3)$  & $\Q(b)$                    & $D_2$     & $\Q(c\sqrt{-11},\sqrt{-2})$   \\
$D_4$     & $\Q(a,\sqrt{-2})$          & $C_3$     & $\Q(b,\sqrt{-2},\sqrt{-11})$  \\
$J(D_2)$  & $\Q(a,\sqrt{-11})$         & $C_2$     & $\Q(a,b,\sqrt{-2})$           \\
$D_{4,1}$ & $\Q(a,\sqrt{22})$          & $J(C_1)$  & $\Q(a,b,\sqrt{-11})$          \\
$J(C_4)$  & $\Q(c)$                    & $C_{2,1}$ & $\Q(a,b,\sqrt{22})$           \\
$D_{4,2}$ & $\Q(c\sqrt{-2})$           & $C_1$     & $\Q(a,b,\sqrt{-2},\sqrt{-11})$\\\vspace{2pt}
$D_3$     & $\Q(b,\sqrt{-2})$          & \\\hline
\end{tabular}
\end{center}
\end{table}

\section{Numerical verification}

In this section, we describe some numerical verifications
of the refined Sato-Tate conjecture for abelian surfaces.

\subsection{Densities and moments}\label{subsection:atlas}

Using numerical computations, one can both provisionally identify the Sato-Tate group associated to a particular
abelian surface (which can then be confirmed through analysis of the Galois type), and then test the equidistribution
property predicted by the Sato-Tate conjecture. In order to do this, however, we need a way to numerically compare
the observed distribution of normalized $L$-polynomials to the Sato-Tate prediction.

To facilitate this, we compute the distributions of the first and second coefficients
of the characteristic polynomial of a random conjugacy class in each of the 55 groups named in Theorem \ref{Sato-Tate axioms groups}
(including the three groups excluded by the comparison to Galois types in Section \ref{s2}), under the image of the Haar measure. These distributions can be described in two
equivalent ways, via their \emph{density functions} or their \emph{moment sequences}. By computing these for the
55 groups, we see that the separate distributions of the first and second coefficients are already sufficient to
distinguish the groups.  Thus no joint statistics are needed, but as a matter of interest we give joint density functions for the 6 connected cases.

\begin{remark}
The moment statistics we associate to a closed subgroup $G$ of $\USp(2g)$ are integer symmetric polynomials in the eigenvalues of a matrix chosen uniformly over $G$.
They are thus forced to be integers \cite[Proposition~2]{KS09}.
\end{remark}

Table \ref{table:STgroups} lists the real dimension $d$ and the number of connected components $c=|G/G^0|$ for each group $G$, along with the associated endomorphism algebra $\End(A)_{\R}$.
We also list invariants $z_1=z_{1,0}$ and $z_2=[z_{2,-2},z_{2,-1},z_{2,0},z_{2,1},z_{2,2}]$ defined by
\[
\Pr[a_i=j] = z_{i,j}/c,
\]
where the random variables $a_1$ and $a_2$ denote the linear and quadratic coefficients, respectively, of the characteristic polynomial of a random conjugacy class in~$G$.
Additionally, we give the first three nontrivial moments $\Exp[a_1^2]$, $\Exp[a_1^4]$, $\Exp[a_1^6]$ and $\Exp[a_2], \Exp[a_2^2], \Exp[a_2^3]$ of $a_1$ and $a_2$.
We note that the invariants $d$, $c$, $z_1$, $z_2$, and $\Exp[a_2]$ already suffice to uniquely distinguish each Sato-Tate group in genus 2.

To save space, we use the notations $G_1$, $G_3$, $G_{1,1}$, $G_{1,3}$, and $G_{3,3}$ to identify the connected subgroups $G^0=\Unitary(1)$, $\SU(2)$, $\Unitary(1)\times\Unitary(1)$, $\Unitary(1)\times\SU(2)$, and $\SU(2)\times\SU(2)$ of $\USp(4)$, respectively.

\subsubsection{Computing the distributions of \texorpdfstring{$a_1$}{a1} and \texorpdfstring{$a_2$}{a2}}\label{subsection:moments}
Tables \ref{table:a1moments} and \ref{table:a2moments} give explicit formulas for the moments of $a_1$ and~$a_2$.
Here we describe the derivation of these formulas, as well as the computation of probability density functions for $a_1$ and $a_2$.

For $G=\USp(4)$, the moments of $a_1$ and~$a_2$ may be directly computed using the Weyl integration formula, as in \cite{KS08}.
A bit of calculus shows that the joint density function of $a_1$ and $a_2$ is given by $\frac{1}{4\pi^2}\sqrt{\max\{\rho(a_1,a_2),0\}}$, where
\begin{equation}\label{eq:rho_a1a2}
\rho(a_1, a_2) = (a_1^2 - 4a_2 + 8)(a_2 - 2a_1 + 2)(a_2 + 2a_1 + 2).
\end{equation}
The support of the joint density function is the region\footnote{Serre points out that the parabolic arc bounding the top of this
region corresponds precisely to the Hodge circle, and that the factorization of $\rho$ is a special case of a general formula
giving the product of the differentials of fundamental characters \cite[Lemma~8.2]{St}.} where $\rho$ is nonnegative:
\[
S=\{(a_1, a_2) \in \R^2: a_2 \geq 2a_1 - 2, a_2 \geq -2a_1 - 2, a_2 \leq \frac{1}{4} a_1^2 + 2\}.
\]
One recovers the density function for $a_1$ by integrating with respect to $a_2$, and vice versa; the results can be expressed
in terms of complete elliptic integrals, because $\rho(a_1, a_2)$ is a polynomial of degree $4$ in $a_1$ and of degree $3$ in $a_2$.
A plot of the joint density function for $\USp(4)$ can be found in Figure~\ref{figure:USp4joint}.
A similar analysis can be applied to the groups $\Unitary(1)\times\Unitary(1)$, $\Unitary\times\SU(2)$, and $\SU(2)\times\SU(2)$, using products of the appropriate measures; the results are tabulated in Table~\ref{table:joint}.
In each of these cases the support of the joint density function is the two-dimensional region $S$.

\begin{table}
\begin{center}
\small
\setlength{\extrarowheight}{4pt}
\caption{Joint density functions $c\sqrt{\max\{\rho(a_1,a_2),0\}}$ for the Sato-Tate groups $G_{1,1}=\Unitary(1)\times\Unitary(1)$, $G_{1,3}=\Unitary(1)\times\SU(2)$, $G_{3,3}=\SU(2)\times\SU(2)$, and $\USp(4)$.}\label{table:joint}
\vspace{6pt}
\begin{tabular}{lcc}
$G$&$c$&$\rho(a_1,a_2)$\\\hline
$G_{1,1}$ & $\frac{2}{\pi^2}$ & $1\ /\ \bigl((a_1^2-4a_2+8)(a_2-2a_1+2)(a_2+2a_1+2)\bigr)$\\
$G_{1,3}$ & $\frac{1}{2\pi^2}$ & $(4+2a_2-a_1^2)^2\ /\  \bigl((a_1^2-4a_2+8)(a_2-2a_1+2)(a_2+2a_1+2)\bigr)$\\
$G_{3,3}$ & $\frac{1}{2\pi^2}$ & $(a_2-2a_1+2)(a_2+2a_1+2)\ /\ (a_1^2-4a_2+8)$\\\vspace{2pt}
$\USp(4)$ & $\frac{1}{4\pi^2}$ & $(a_2-2a_1+2)(a_2+2a_1+2)(a_1^2-4a_2+8)$\\\hline
\end{tabular}
\end{center}
\end{table}

In all other cases the support is one-dimensional, as may be seen in Table~\ref{table:components}, which lists all the component density functions for $a_1$ and $a_2$ that can arise in genus 2.  With the exception of $\USp(4)$, these distributions are all derived from the distributions $a_{1,\Unitary(1)}$ and $a_{1,\SU(2)}$ that arise for the two connected Sato-Tate groups $\Unitary(1)$ and $\SU(2)$ in genus 1 (where $\Unitary(1)$ is embedded in $\SU(2)=\USp(2)$).  We recall that the even moments are given by
\begin{align}
\Exp[a_{1,\Unitary(1)}^{2n}] &= \binom{2n}{n},\\
\Exp[a_{1,\SU(2)}^{2n}] &=\frac{1}{n+1}\binom{2n}{n},
\end{align}
while the odd moments are zero, and we have the density functions
\begin{align}
\dens(a_{1,\Unitary(1)}=t) &= \frac{1}{\pi\sqrt{4-t^2}}\qquad |t| < 2,\\
\dens(a_{1,\SU(2)}=t) &= \frac{\sqrt{4-t^2}}{2\pi}\qquad\medspace\medspace\medspace |t| <2.
\end{align}
For convenience, we define the sequences
\[
b_n = [X^n](X^2+1)^n,\qquad\text{and}\qquad c_n = b_n/(\nicefrac{n}{2}+1),
\]
where $[X^n](X^2+1)^n$ denotes the coefficient of $X^n$ in the expansion of $(X^2+1)^n$, so that $\Exp[a_{1,\Unitary(1)}^n]=b_n$ and $\Exp[a_{1,\SU(2)}^n]=c_n$.  These are sequences \Aseq{126869} and \Aseq{126120}, respectively, in the On-line Encyclopedia of Integer Sequences \cite{OEIS}.

Each group $G\subsetneq\USp(4)$ appearing in Theorem \ref{Sato-Tate axioms groups} may be expressed in the form $\langle G^0, H \rangle$, where $H$ is a finite subgroup of $\USp(2g)$ whose intersection with $G^0$ is $\{\pm 1\}$, so that $G/G^0\simeq H/\{\pm 1\}$ (in most cases $H$ may be constructed by simply omitting $G^0$ from the list of generators given for $G$ in \S \ref{section:STgroups}).
After picking a representative $h\in H$ for each coset of $H/\{\pm 1\}$, the distributions of the coefficients of the characteristic polynomial $\sum a_iT^i$ of a random matrix $gh$ may then be computed in terms of $a_{1,\Unitary(1)}$ and $a_{1,\SU(2)}$, where $g\in G^0$ is distributed according to the Haar measure.
This allows the moments and density functions of $a_1$ and $a_2$ to be computed for the component $hG^0$; averaging over the components yields results for $G$.

The derivation of the component distributions in the split cases, where $G^0$ is $\Unitary(1)\times\Unitary(1)$, $\Unitary(1)\times\SU(2)$, or $\SU(2)\times\SU(2)$,
is straightforward.  The results are tabulated in Table~\ref{table:components} (see Table~\ref{table:joint} for the joint distribution on the identity components).
We now focus on the cases $G^0=\Unitary(1),\SU(2)$, which account for 42 of the 55 Sato-Tate groups in genus 2, including the most complicated cases.
There are 76 distinct components contained in these groups, but only 20 different component distributions that arise, each of which is determined by a triple $(G^0,s,r)$ that we now define.
Let $\varphi:G/G^0\to\{\pm 1\}$ be the (possibly trivial) homomorphism with kernel $G\cap Z$.  For each component $hG^0$ of $G$, let $k=k(h)\in \{1,2,3,4,6\}$ be the order of $hG^0$ in $G/G^0$ when $s=s(h)=\varphi(hG^0)=1$, and let $k$ be the order of $JhG^0$ in $\langle G,J \rangle/G^0$ when $s=-1$ (see \S \ref{subsec:unitary1} for the definitions of~$Z$ and~$J$).
Finally, we define $r=r(h)=\zeta_{2k}+\zeta_{2k}^{-1}\in\{-2,0,1,\sqrt{2},\sqrt{3}\}$.
Table \ref{table:components} gives the component distributions of $a_1$ and $a_2$ for each triple $(G^0,s,r)$, as well as the 10 component distributions that arise among the 13 remaining groups (30~component distributions in total).
The notation $a_{1,\Unitary(1)}'$ denotes an independent random variable with the same distribution as $a_{1,\Unitary(1)}$, and similarly for $a_{1,\SU(2)}'$.

\begin{table}
\begin{center}
\setlength{\extrarowheight}{2pt}
\caption{Component distributions of $a_1$ and $a_2$.  Here $G_1=\Unitary(1)$, $G_3=\SU(2)$,  $G_{1,1}=\Unitary(1)\times\Unitary(1)$, $G_{1,3}=\Unitary(1)\times\SU(2)$, and $G_{3,3}=\SU(2)\times\SU(2)$.  See \S \ref{subsec:group odd cases} for defintions of the matrices $a$, $b$, $c$, and $J$.}\label{table:components}
\vspace{6pt}
\begin{tabular}{lcccc}
component &$\quad$& $a_1$ &$\quad$& $a_2$\\\hline
$(G_1,1,r)$ && $ra_{1,\Unitary(1)}$ && $a_{1,\Unitary(1)}^2+r^2-2$\\
$(G_1,-1,r)$ && $0$ && $2-r^2$\\
$(G_3,1,r)$ && $ra_{1,\SU(2)}$ && $a_{1,\SU(2)}+r^2-2$\\
$(G_3,-1,r)$ && $0$ && $2-a_{1,\SU(2)}^2$\\
$G_{1,1}$ && $a_{1,\Unitary(1)}+a_{1,\Unitary(1)}'$ && $a_{1,\Unitary(1)}a_{1,\Unitary(1)}'+2$\\
$aG_{1,1},bG_{1,1},(ac)^2G_{1,1}$ && $a_{1,\Unitary(1)}$ && 2\\
$abG_{1,1}$ && $0$ && $2$\\
$cG_{1,1}$ && $0$ && $a_{1,\Unitary(1)}$\\
$acG_{1,1},(ac)^3G_{1,1}$ && $0$ && $0$\\
$G_{1,3}$  && $a_{1,\Unitary(1)}+a_{1,\SU(2)}$ && $a_{1,\Unitary(1)}a_{1,\SU(2)}+2$\\
$aG_{1,3}$ && $a_{1,\Unitary(1)}$ && $2$\\
$G_{3,3}$ && $a_{1,\SU(2)}+a_{1,\SU(2)}'$ && $a_{1,\SU(2)}a_{1,\SU(2)}'+2$\\
$JG_{3,3}$ && $0$ && $a_{1,\SU(2)}$\\\vspace{2pt}
$\USp(4)$ && $a_{1,\USp(4)}$ && $a_{2,\USp(4)}$\\\hline
\end{tabular}
\end{center}
\end{table}

Using Table \ref{table:components}, one can determine the moments and probability density functions of $a_1$ and $a_2$ on each component,
and then compute moments and density functions for any particular group $G$.  The invariants $z_1$ and $z_2$, which simply count
components on which $a_1$ or $a_2$ has a particular fixed value, are also easily derived from Table \ref{table:components}.

To simplify the moment formulas, for $i=0,1,2,3,4$ we define the sequences
\[
b_{i,n} = [X^n](X^2+iX+1)^n,
\]
which for $i=0,1,2,3,4$ correspond to sequences \Aseq{126869}, \Aseq{0002426}, \Aseq{000984}, \Aseq{026375}, \Aseq{081671} in the OEIS (respectively),
and we note $b_n=b_{0,n}$.
We also define the sequences
\[
d_{i,n} = [X^n](X^2+iX+1)^n - [X^{n+1}](X^2+iX+1)^n,
\]
which for $i=0,1,2,3,4$ correspond to sequences \Aseq{126930}, \Aseq{005043}, \Aseq{000108}, \Aseq{007317}, \Aseq{064613}, and we may write $d_n$ for $d_{0,n}$.
Additionally, we let
\[
\bbn = \sum_k\binom{n}{k}2^{n-k}b_k^2 \qquad\text{and}\qquad \ccn = \sum_k\binom{n}{k}2^{n-k}c_k^2.
\]

\subsubsection{Component distributions in the case \texorpdfstring{$G^0=\Unitary(1)$}{G0=U1}}

We now consider the component distributions of $a_1$ and $a_2$ when $G^0=\Unitary(1)$.
{}From Table \ref{table:components} we see that $a_1=ra_{1,\Unitary(1)}$ when $s=1$, and $a_1=0$ otherwise.
On a component $hG^0$ with $s=1$, the moments of $a_1$ are given by $\Exp_h[a_1^n]=r^nb_n$, where the subscript $h$ identifies the component and thus determines $r$ and $s$.  The density function for $a_1$ on $hG^0$ is
\begin{equation}
\dens_h(a_1=t) =
\begin{cases}
\left(\pi\sqrt{4r^2-t^2}\right)^{-1} & \quad\text{if } t^2<4r^2 \text{ and } s=1,\\
\delta_0 & \quad\text{if } r = 0 \text{ or } s=-1,\\
0 & \quad\text{otherwise.}
\end{cases}
\end{equation}
where $\delta_k$ denotes the Dirac delta function centered at $k$.

For $a_2$ we have $a_2=a_{1,\Unitary(1)}^2+r^2-2$ when $s=1$ and $a_2=2-r^2$ otherwise.
The moments of $a_2$ on $hG^0$ are given by
\begin{equation}
\Exp_h[a_2^n]=
\begin{cases}
\sum_{k=0}^n\binom{n}{k}b_{2k}(r^2-2)^{n-k} = b_{r^2,n} &\qquad\text{if } s=1,\\
(2-r^2)^n & \qquad\text{if } s=-1,
\end{cases}
\end{equation}
and its density function is
\begin{equation}
\dens_h(a_2=t) =
\begin{cases}
\left(\pi\sqrt{4-(r^2-t)^2}\right)^{-1} & \quad\text{if } (r^2-t)^2 < 4 \text{ and } s = 1,\\
\delta_{2-r^2} & \quad\text{if } s = -1,\\
0 & \quad\text{otherwise.}
\end{cases}
\end{equation}

For any particular Sato-Tate group $G$ the moment sequences and density functions of $a_1$ and $a_2$ are then computed by averaging over the components.
Taking $G=T$ as an example, averaging over the 12 components of $G$ yields
\[
\Exp[a_2^n] = \frac{1}{12}\left(b_{4,n}+3b_n+8b_{1,n}\right),
\]
as listed in table \ref{table:a2moments}.  A plot of the $a_2$ density function appears in Figure \ref{figure:Ta2hist}.

\subsubsection{Component distributions in the case \texorpdfstring{$G^0=\SU(2)$}{G0=SU2}}

In this case we have $a_1 = ra_{1,\SU(2)}$ when $s=1$, and $a_1=0$ otherwise.
The $n$th moment of $a_1$ on $hG^0$ is $\Exp_h[a_1^n]=r^nc_n$, and its density function is
\begin{equation}
\dens_h(a_1=t) =
\begin{cases}
\frac{1}{2\pi r^2}\sqrt{4r^2-t^2} & \quad\text{if } t^2 < 4r^2\text{ and } s = 1,\\
\delta_0 & \quad\text{if } r = 0 \text{ or } s = -1,\\
0 & \quad\text{otherwise.}
\end{cases}
\end{equation}

For $a_2$ we have $a_2=a_{1,\SU(2)}^2+r^2-2$ when $s=1$ and $a_2=2-a_{1,\SU(2)}^2$ otherwise.
The moments of $a_2$ on $hG^0$ are given by
\begin{equation}
\Exp_h[a_2^n]=
\begin{cases}
\sum_{k=0}^n\binom{n}{k}c_{2k}(r^2-2)^{n-k} = d_{r^2,n} &\qquad\text{if } s=1,\\
\sum_{k=0}^n\binom{n}{k}(-1)^kc_{2k}2^{n-k} = (-1)^nd_n &\qquad\text{if } s=-1,\\
\end{cases}
\end{equation}
and its density function is
\begin{equation}
\dens_h(a_2=t) =
\begin{cases}
\frac{1}{2\pi}\sqrt{4/(t-r^2+2) - 1} & \quad\text{if }|t-r^2| < 2\text{ and } s =1,\\
\frac{1}{2\pi}\sqrt{4/(2-t) - 1} & \quad\text{if }|t| < 2\text{ and } s =-1,\\
0 & \quad\text{otherwise.}
\end{cases}
\end{equation}

\subsection{Testing the refined conjecture for genus 2 curves}\label{subsection:computations}

Let us now see how the densities and moments computed in \S\ref{subsection:atlas} can be used to numerically test the refined Sato-Tate conjecture; this will also provide some indication of how we assembled the numerical
evidence needed to formulate the conjecture in the first place.
We limit ourselves to testing the equidistribution of normalized
$L$-polynomials (rather than the equidistribution of classes in $\Conj(\ST_A)$ described
in Conjecture~\ref{Refined Sato-Tate}), and we consider only Jacobians of curves of genus 2.

We first discuss how to test the conjecture for a single genus 2 curve $C/k$.
For a given bound $N$, we consider the primes $\mathfrak{p}$ of norm $q=\norm{\mathfrak{p}}\le N$ at which $C$ has good reduction.
For each $\mathfrak{p}$ we compute normalized $L$-polynomial coefficients
\begin{align}\label{eq:a1a2curvecount}
a_1(\mathfrak{p}) &= q^{-1/2}\left(\#C(\F_q) - q - 1\right),\\
a_2(\mathfrak{p}) &= q^{-1}\left(\#C(\F_{q^2}) + (\#C(\F_q)-q-1)^2 - q^2 - 1\right)/2,\notag
\end{align}
and then calculate \emph{moment statistics} $M_n(a_i)$ as the mean value of $a_i(\mathfrak{p})^n$ over
$\norm{\mathfrak{p}}\le N$.  The refined Sato-Tate conjecture implies that, as $N\to\infty$, each moment statistic $M_n(a_i)$
must converge to the moment $\Exp[a_i^n]$ for the Sato-Tate group $\ST_{\Jac(C)}\subseteq \USp(4)$.
We can test this numerically for the first several values of $n$ by comparing $M_n(a_i)$ to $\Exp[a_i^n]$ as we increase the bound $N$.
In addition, we may plot histograms of the $a_i(\mathfrak{p})$ and compare the results to the density function for $a_i$ in $\ST_{\Jac(C)}$.

For each of the example curves listed in Table \ref{table:curves}, we have prepared animated histograms of
the $a_1$ and $a_2$ distributions demonstrating the convergence to the conjectured Sato-Tate
distribution, at the level of both densities and moments.  These can be found online at
\begin{center}
\url{http://math.mit.edu/~drew}.
\end{center}
We give one representative example here, using the curve $C/\Q(\sqrt{-2})$ with $\ST_{\Jac(C)}=T$ listed in Table~\ref{table:curves}.
We computed moment statistics $M_n(a_i)$ for this curve using $N=2^k$, with $k$ ranging from 10 to 30, the first several of which are listed in Table~\ref{table:moment statistics}.
For comparison, the last line of the table lists the corresponding moments for the Sato-Tate group $T$.

\begin{table}
\begin{center}
\caption{Moment statistics for the curve $y^2 = x^6 + 6x^5 - 20x^4 + 20x^3 - 20x^2 - 8x + 8$ over the field $\Q(\sqrt{-2})$.}\label{table:moment statistics}
\begin{tabular}{@{}llllllllllll@{}}
&&\multicolumn{4}{c}{$a_1$}&&\multicolumn{5}{c}{$a_2$}\\
\cmidrule(r){3-6}\cmidrule(r){8-12}
$N$ && $M_2$ & $M_4$ & $M_6$ & $M_8$ && $M_1$ & $M_2$ & $M_3$ & $M_4$ & $M_5$\\
\midrule
10 && 1.79 & 11.74 & 134.59 & 1894.71 && 0.81 & 3.55 & 10.82 & 51.55 & 256.29\\
12 && 1.90 & 11.87 & 127.24 & 1714.90 && 0.91 & 3.81 & 11.46 & 52.13 & 246.15\\
14 && 2.01 & 12.54 & 130.43 & 1701.09 && 0.97 & 4.02 & 12.23 & 54.66 & 253.10\\
16 && 1.94 & 10.93 & 102.75 & 1266.03 && 0.97 & 3.86 & 11.05 & 46.56 & 203.16\\
18 && 1.96 & 11.45 & 111.89 & 1417.51 && 0.99 & 3.92 & 11.54 & 49.33 & 220.74\\
20 && 1.99 & 11.87 & 118.12 & 1513.12 && 1.00 & 3.98 & 11.88 & 51.37 & 232.49\\
22 && 1.99 & 11.89 & 118.64 & 1522.75 && 1.00 & 3.98 & 11.90 & 51.51 & 233.49\\
24 && 2.00 & 11.95 & 119.20 & 1528.43 && 1.00 & 3.99 & 11.96 & 51.74 & 234.53\\
26 && 2.00 & 11.99 & 119.80 & 1537.06 && 1.00 & 4.00 & 11.99 & 51.93 & 235.62\\
28 && 2.00 & 12.00 & 119.91 & 1538.51 && 1.00 & 4.00 & 12.00 & 51.97 & 235.83\\
30 && 2.00 & 12.00 & 120.02 & 1540.29 && 1.00 & 4.00 & 12.00 & 52.00 & 236.03\\
\midrule
&& 2 & 12 & 120 & 1540 && 1 & 4 & 12 & 52 & 236\\
\end{tabular}
\end{center}
\end{table}

The evident convergence of the moment statistics of $C$ to the moments of $\ST_{\Jac(C)}$ extends to moments well beyond the range of the table.
With $N=2^{30}$ the first 20 moment statistics of $a_1$ and $a_2$ for $C$ agree with the corresponding moments of $\ST_{\Jac(C)}$ with a relative error of approximately $0.1\%$ (ignoring the even moments $\Exp[a_1^{2k}]=0$).
By contrast, the best agreement one finds using the moments of any of the other genus 2 Sato-Tate groups is worse than $40\%$.
In particular, the corresponding moments of $\USp(4)$ are dramatically different: $\Exp[a_1^8]$ is only 84, rather than 1540, for example.

Histograms of $a_2(\mathfrak{p})$ values for $\norm{\mathfrak{p}}\le N=2^k$, with $k=12, 14, 16, \ldots, 30$ are shown in Figure \ref{figure:Ta2hist}, together with the $a_2$ density function for $\ST_{\Jac(C)}=T$.
One can see the histogram data steadily converging toward the density function.
Indeed, when $N=2^{30}$ the histogram data matches the density function so closely that it is difficult to distinguish the two.

\begin{center}
\begin{figure}
%\includegraphics[scale=0.48]{T_a2_10}
%\quad
\includegraphics[scale=0.48]{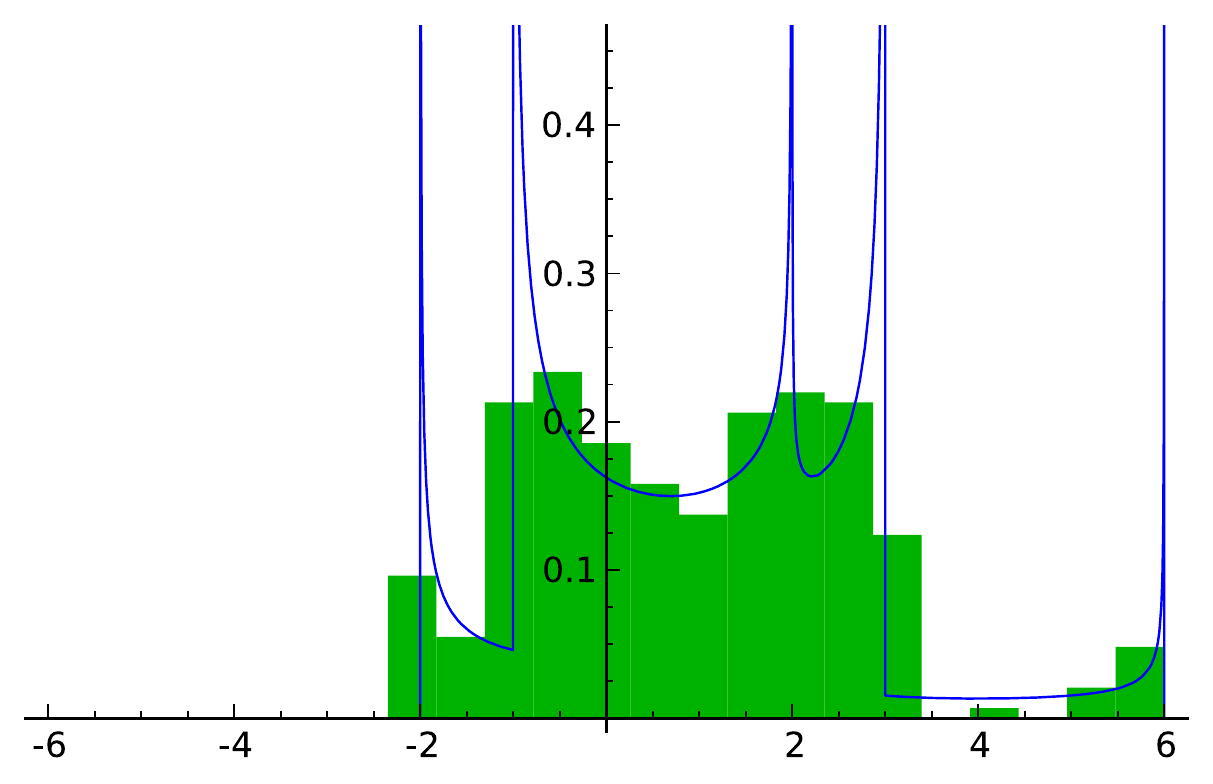}
\quad
\includegraphics[scale=0.48]{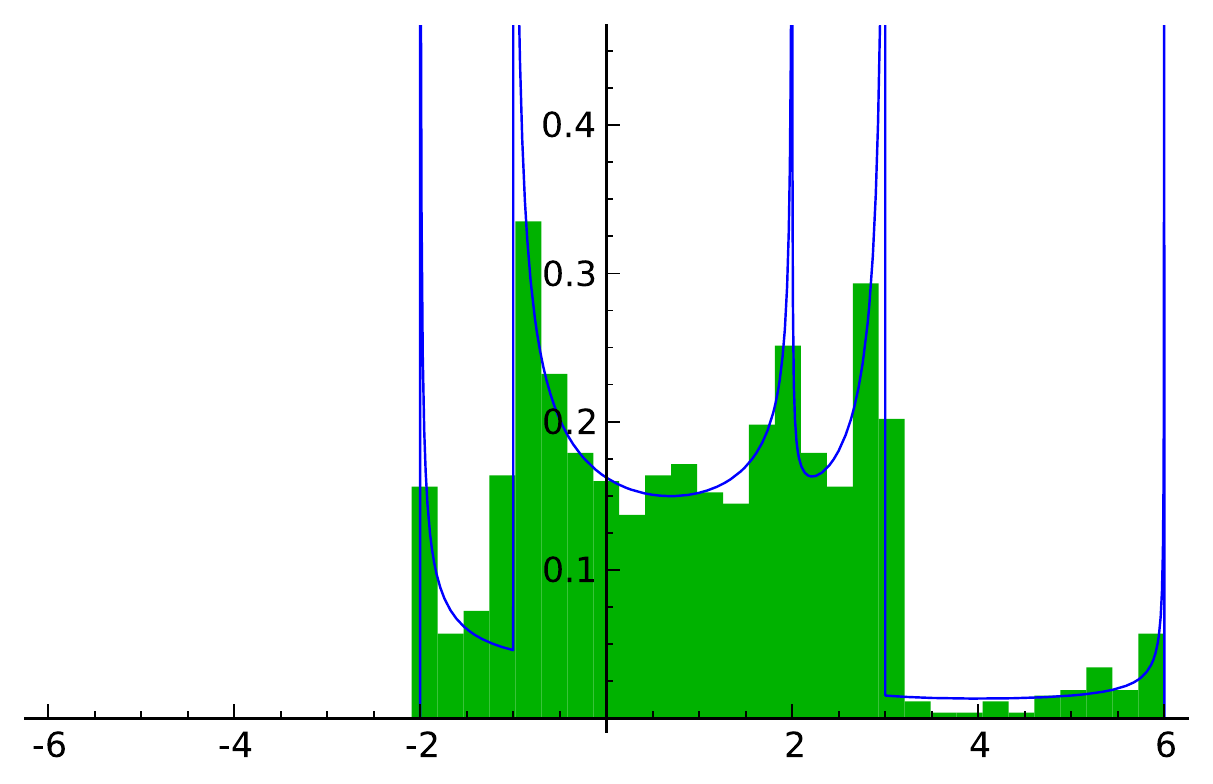}
\quad
\includegraphics[scale=0.48]{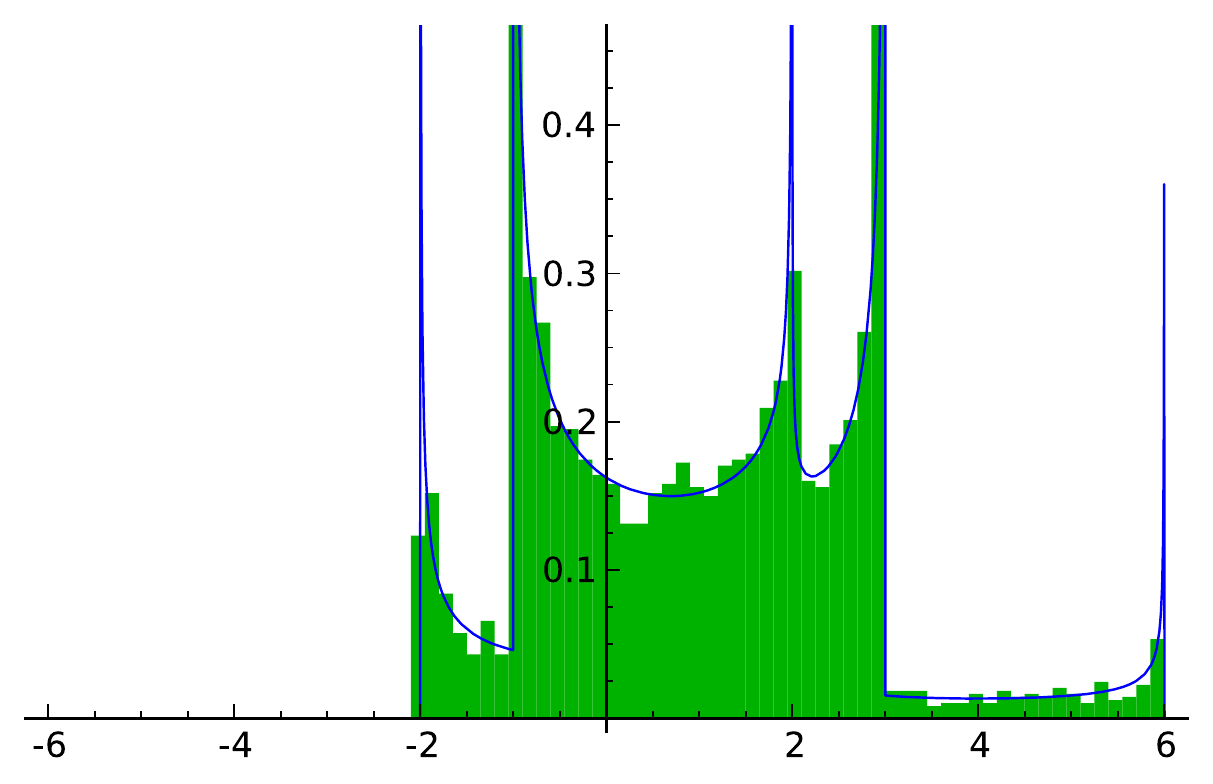}
\quad
\includegraphics[scale=0.48]{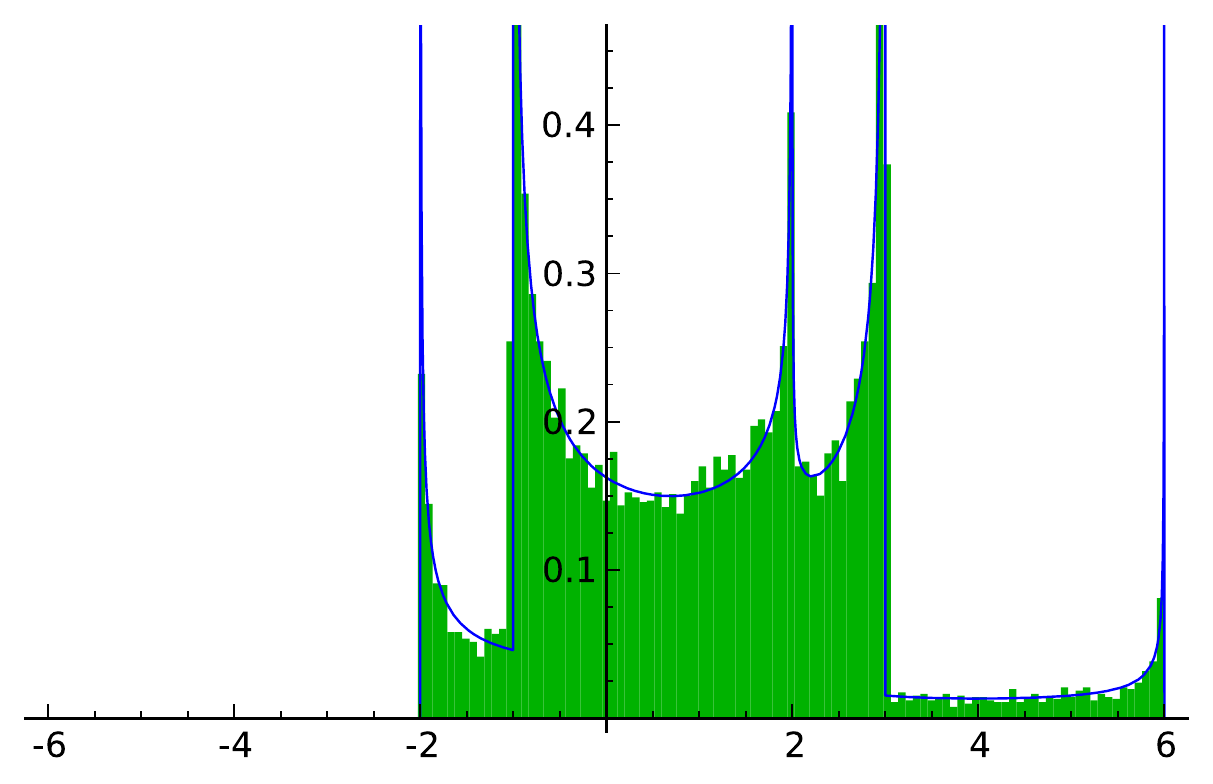}
\quad
\includegraphics[scale=0.48]{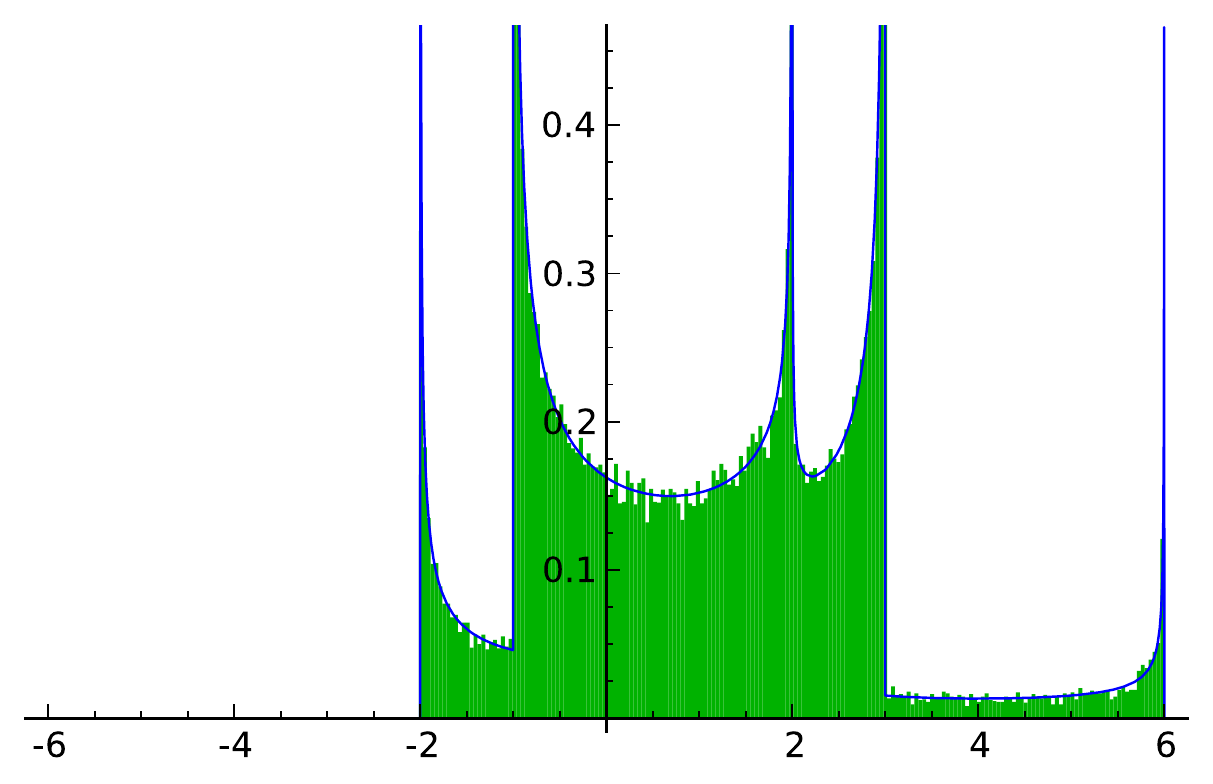}
\quad
\includegraphics[scale=0.48]{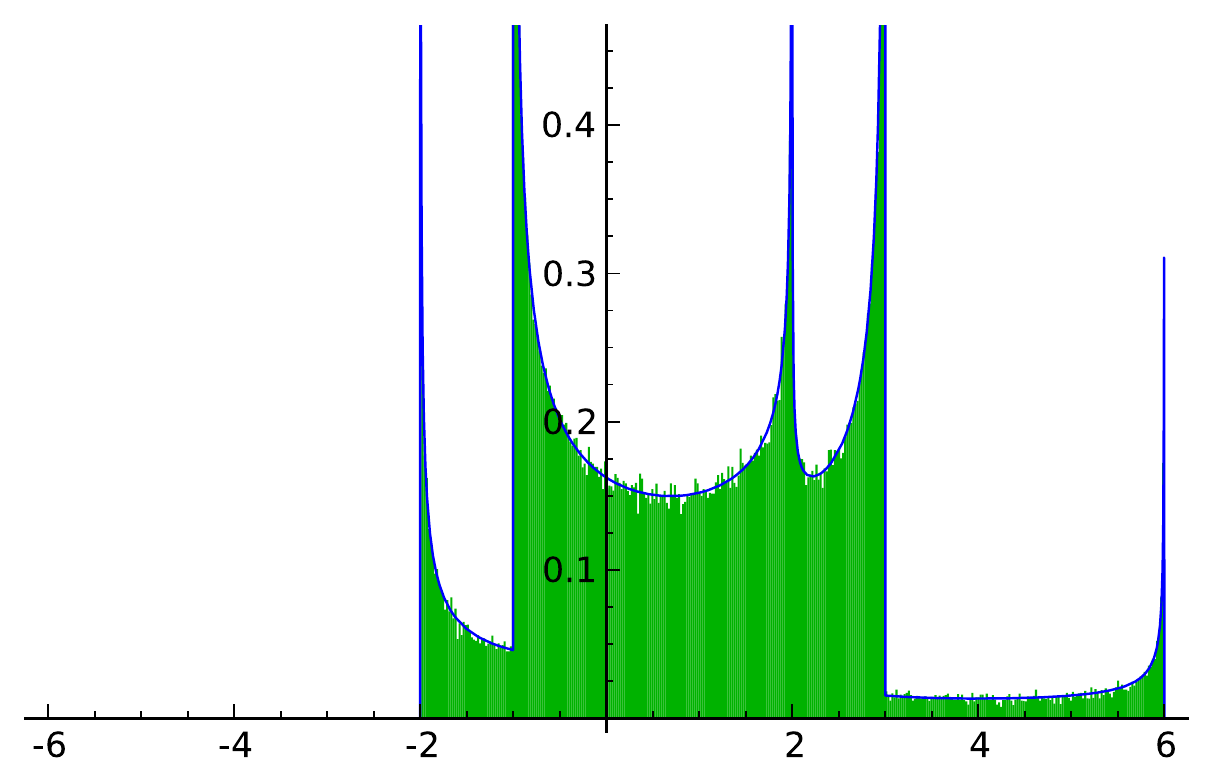}
\quad
\includegraphics[scale=0.48]{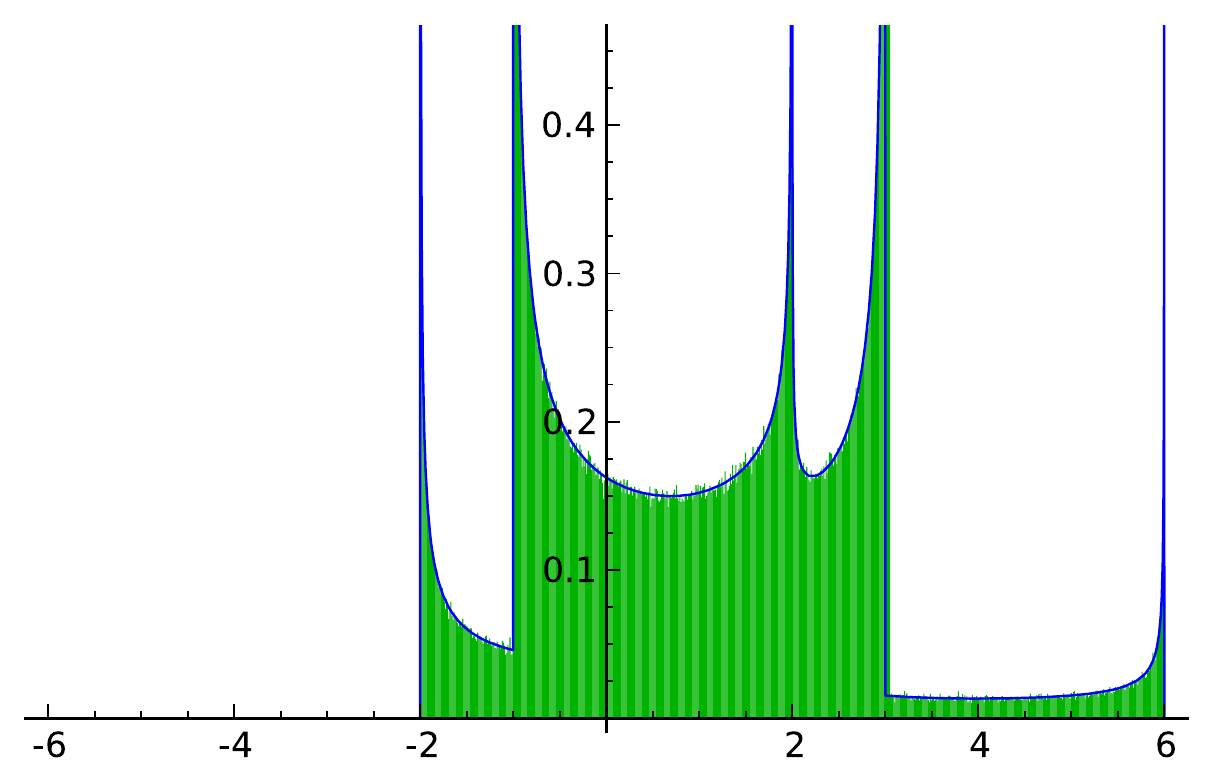}
\quad
\includegraphics[scale=0.48]{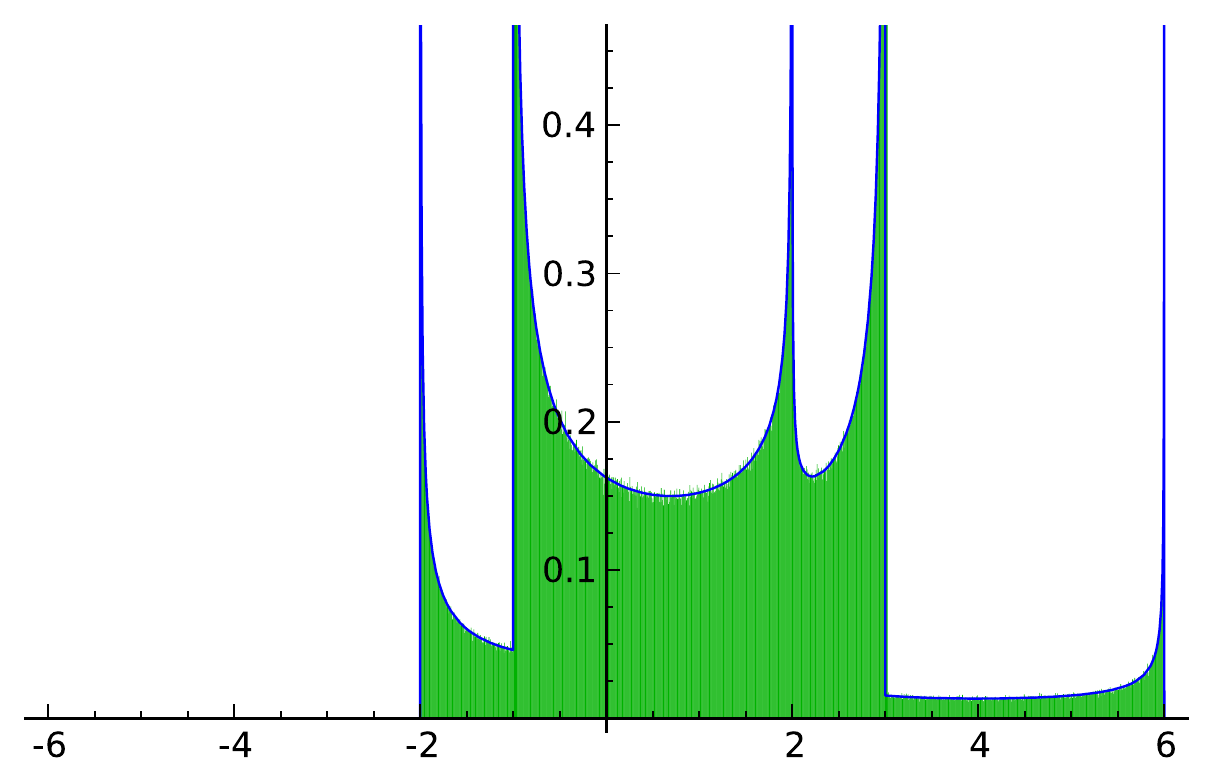}
\quad
\includegraphics[scale=0.48]{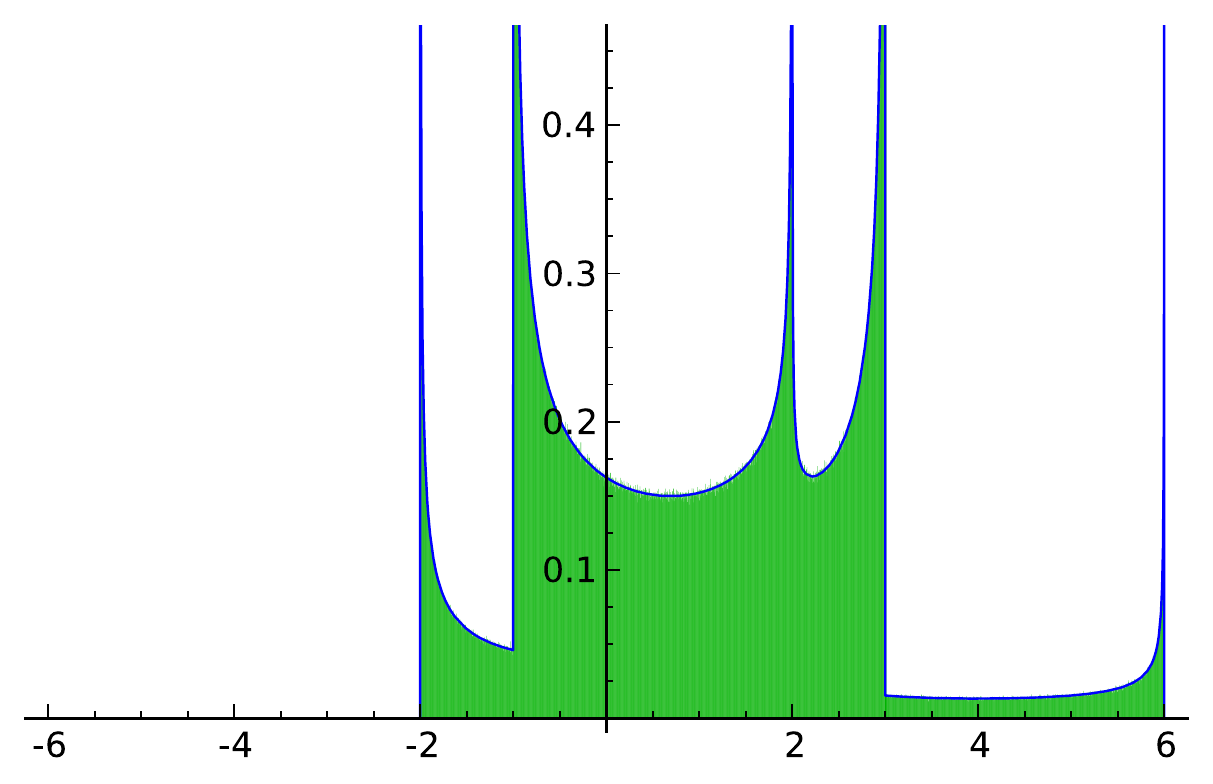}
\quad
\includegraphics[scale=0.48]{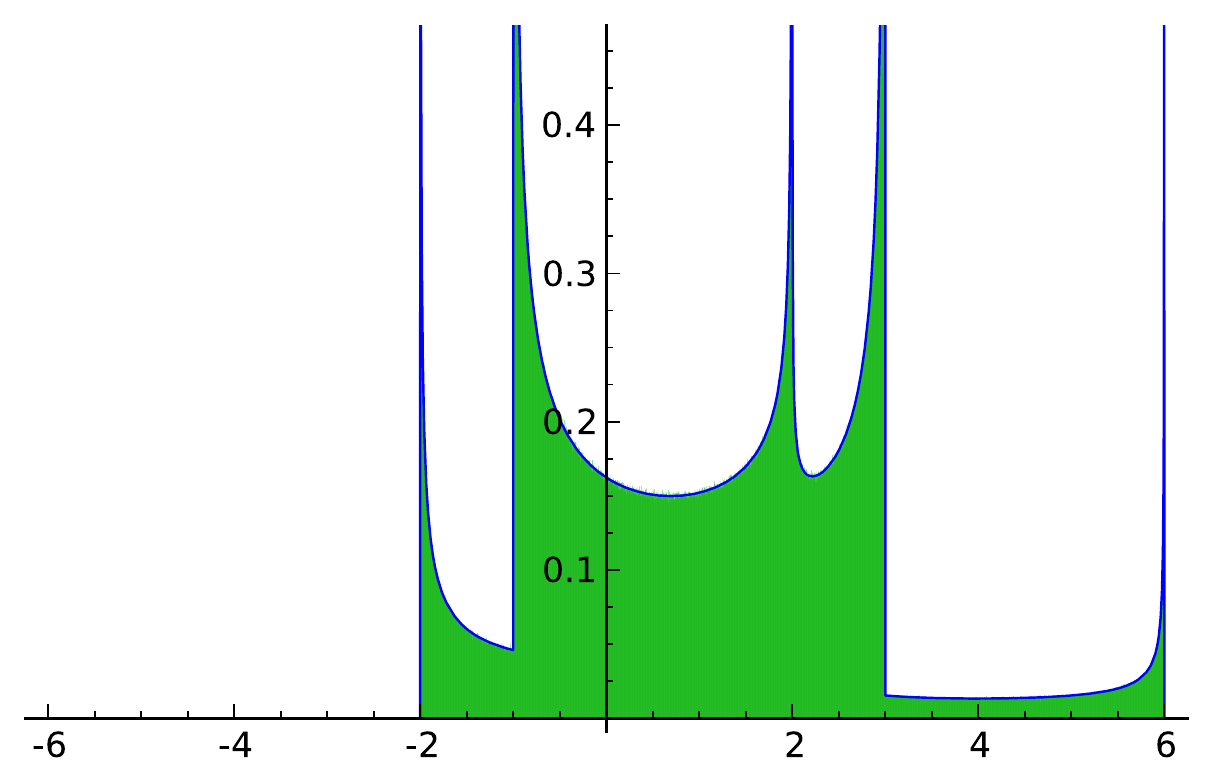}
\caption{$a_2$ density function for Sato-Tate group $T$ and $a_2(\mathfrak{p})$ histograms for the curve $y^2 = x^6 + 6x^5 - 20x^4 + 20x^3 - 20x^2 - 8x + 8$ over $\Q(\sqrt{-2})$ for $\norm{\mathfrak{p}}\le 2^N$, with $N=12,14,\ldots,30$.}\label{figure:Ta2hist}
\end{figure}
\end{center}

\begin{center}
\begin{figure}
\hspace{-30pt}
\includegraphics[scale=0.5]{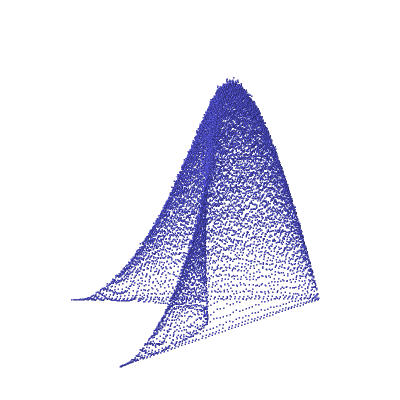}
\includegraphics[scale=0.5]{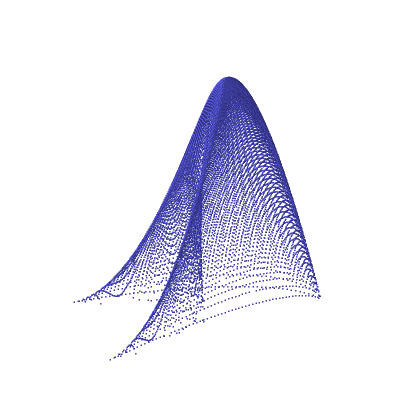}
\caption{Joint $a_1(\mathfrak{p})$ and $a_2(\mathfrak{p})$ statistics for the curve $y^2=x^5-x+1$ (left), and the joint density function for the Sato-Tate group $\USp(4)$ (right).
The vertical scale is exaggerated; the peak at $a_1=0$ and $a_2=2/3$ has height $8/\left(\pi^2\sqrt{27}\right)$.}\label{figure:USp4joint}

\end{figure}
\end{center}

We may also compare the joint statistics of $a_1(\mathfrak{p})$ and $a_2(\mathfrak{p})$ for a given curve~$C$ with the joint density function of $a_1$ and $a_2$ for the corresponding Sato-Tate group $\ST_{\Jac(C)}$.
Figure~\ref{figure:USp4joint} shows a plot of these joint statistics for the curve $y^2=x^5-x+1$, using the bound $\norm{\mathfrak{p}}\le 2^{30}$, alongside a plot of the joint density function for its Sato-Tate group $\USp(4)$, computed using (\ref{eq:rho_a1a2}) and plotted at the same number of points.

With $N=2^{30}$, we must compute $a_i(\mathfrak{p})$ for more than 54 million values of~$\mathfrak{p}$ (for each curve).
To make such computations practical, we employ the optimizations described in \cite{KS08}, as well as several further improvements recently incorporated in the \texttt{smalljac} software library \cite{S11b}; the most notable of these is the use of ideas in \cite{GHM08} to efficiently implement the group operation in the Jacobian of curves defined by a sextic equation.
For all but very small values of $\norm{\mathfrak{p}}$ we do not use (\ref{eq:a1a2curvecount}) to compute $a_i(\mathfrak{p})$, but instead apply
\begin{align}
a_1(\mathfrak{p}) &= q^{-1/2}\frac{\#\Jac(C)(\F_q)-\#\Jac(\tilde{C})(\F_q)}{2(q + 1)},\\
a_2(\mathfrak{p}) &= q^{-1}\frac{\#\Jac(C)(\F_q)+\#\Jac(\tilde{C})(\F_q)-2(q^2+1)}{2}.\notag
\end{align}
Here $\tilde{C}$ denotes a nonisomorphic quadratic twist of $C$ over $\F_q$.
The computations of the group orders $\#\Jac(C)$ and $\#\Jac(\tilde{C})$ are performed using generic group algorithms described in \cite{S07} and \cite{S11a}.
As discussed in \cite{KS08}, the asymptotically faster $p$-adic and $\ell$-adic methods available are not practically faster in genus 2 for the range of $N$ considered here.

In cases where $k\ne \Q$, we may take advantage of the fact that the moment statistics are essentially determined by the degree-1 primes $\mathfrak{p}$, allowing us to work entirely over prime fields.
We can also exploit the situation where $C/k$ is actually defined over $\Q$, in which case $a_i(\mathfrak{p})$ depends only on $p=\norm{\mathfrak{p}}$.
In this situation it suffices to compute $a_i(\mathfrak{p})$ for just one prime of norm $p$ and then weight it with the correct multiplicity, as determined by the number of linear factors of a defining polynomial for $k/\Q$ in $\F_p[x]$.

\subsection{An exhaustive search}\label{subsection:search}

The general methodology described above allows us to numerically test the Sato-Tate conjecture for individual curves.
Using more specialized techniques, we can efficiently analyze $L$-polynomial distributions for many curves at once.
This was originally done to provide an empirical conjecture for the classification of Sato-Tate groups;
it now serves as a partial check of the completeness of this classification.
Of course, one cannot really be convinced of the completeness without the proofs of the theorems;
after all, a similar search was described in \cite{KS09} and found considerably fewer groups.\footnote{The search in \cite{KS09} only looked at $a_1$-distributions of genus 2 curves over $\Q$, finding 23 of the 26 distributions identified here.}

To search for curves with exceptional $L$-polynomial distributions, we considered every nonsingular curve of the form $y^2=f(x)$ where $f$ is a monic polynomial of degree 5 or 6 whose coefficients lie in the interval $[-128,128)$.
This amounts to more than $2^{48}$ distinct curve equations, of which approximately $2^{47}$ are nonisomorphic.
This is much a larger range than was used in \cite{KS09}, which examined some $2^{35}$ curves, yet it actually required less computational effort.
Here we summarize some of the optimizations that made this possible.

Of the 34 Sato-Tate groups that can arise for a genus 2 curve over $\Q$, all but eight have a density $z_1/c \ge 1/2$ of zero traces.
The eight that do not, correspond to the first eight distributions listed in Tables 11 and 13 of \cite{KS09}, where one can already find representative curves with small coefficients.
Thus we chose to focus our search on curves with $z_1/c\ge 1/2$, allowing us to quickly discard curves that do not exhibit an abundance of zero traces at small primes.

For a suitably chosen bound $B$, we imposed the constraint
\[
\pi(B) - 2z(C,B)\le 3
\]
where $\pi(B)$ counts the primes $p\le B$ and $z(C,B)$ counts the primes $p\le B$ where $C$ has good reduction and $\#C(\F_p)=p+1$.
By initially checking this constraint with a small value of $B$, we can very quickly discard the vast majority of curves from further consideration.
This procedure causes us to ignore some curves with $z_1/c\ge 1/2$, but on average we expect to discard no more than half of the exceptional curves that we seek.

In order to distinguish exceptional curves, and to provisionally identify the Sato-Tate group $G$ for each curve $C$, we computed various statistics for $C$ up to a larger bound $B$ to obtain a ``signature" $\sigma(C,B)$, that can be compared to signatures $\sigma(G)$ derived from the group invariants defined in Section \ref{subsection:atlas}.
Let $\hat z_{i,j}$ denote the integer $48z_{i,j}/c$.  We define $\sigma(G)$ to be the tuple of integers
\[
\sigma(G) = \bigl(\hat z_{1,0}, \hat z_{2,-2}, \hat z_{2,-1}, \hat z_{2,0}, \hat z_{2,1}, \hat z_{2,2}, \Exp[a_1^2], \Exp[a_2^4], \Exp[a_2], \Exp[a_2^2], \Exp[a_2^3]\bigr),
\]
which suffices to uniquely distinguish all 55 of the Sato-Tate groups listed in Theorem~\ref{Sato-Tate axioms groups}.
Given a curve~$C$ and a bound~$B$, one can compute the analogous tuple $\sigma(C,B)$ by computing the corresponding statistics and rounding to the nearest integer.
We note that the number of components~$c$ is typically not known \emph{a priori}, but the ratios $z_{i,j}/c$ and the corresponding values of $\hat z_{i,j}$ can be computed without knowing $c$.

We now outline the search algorithm for exceptional curves $C$ of the form $y^2=f(x)$, where $f(x)=\sum f_ix^i$ is a monic sextic with $f_i\in I=[-R,R)$ and $f_5\ge 0$, using bounds $B_1$, $B_2$, and $B_3$.
For each combination of $f_2,f_3,f_4,f_5$ we perform the following three steps.

\begin{enumerate}
\item
For odd primes $p\le B_1$, count points on the curve $C/\F_p$ defined by $y^2=f(x)$ for every value of $f_0,f_1\in \F_p$, using the method of \cite[\S3]{KS08}.  Let $z_p(f_0,f_1)=1$ if $\#C(\F_p)=p+1$ and 0 otherwise.
For each $f_0,f_1\in I$, compute $z(C,B_1)=\sum_p z_p(f_0, f_1)$ for the curve $C/\Q$ defined by $y^2=f(x)$.
If $2z(C,B_1) < \pi(B_1)-3$, then reject $C$.

\item
For each remaining curve $C$, compute $z(C,B_2)$.\\
If $2z(C,B_2) < \pi(B_2)-3$ then reject $C$.

\item
For each remaining curve $C$, initialize $B$ to $B_3$ and compute $\sigma(C,B)$ using $L$-polynomial data for $C$ at primes $p\le B_3$.
Then increase $B$ by $50\%$ and repeat until $\sigma(C,B)$ is stable for three consecutive values of $B$.
\end{enumerate}

In our search, we used $R=128$ as the coefficient bound, and the prime bounds $B_1=83$, $B_2=1229$, and $B_3=2741$ in each of the 3 steps, values that were chosen after some initial performance testing.
Using $B_1=83$, fewer than~1 in 100,000 curves pass step 1, and the average time spent on each curve is very small: about 100 nanoseconds on a 3.0 GHz AMD Phenom II core.
With $B_2=1229$, fewer than 1 in 100 of the curves that pass step 1 also pass step~2.
Thus out of a total of $2^{48}$ curves, we only needed to compute signatures for some ten million curves.
On average, this takes 1--2 seconds per curve, although in particularly difficult cases, it may take as much as a minute.
Overall, we spent an average of less than 200 nanoseconds per curve.

The search found curves with matching signatures for all 26 of the 34 genus 2 Sato-Tate groups over $\Q$ that have $z/c \ge 1/2$.
Indeed, we found at least 3 curves for each group that are not isomorphic over $\Q$.
As can be seen in Table \ref{table:curves}, in each case we found a representative curve with integer coefficients of absolute value at most 60.
Thus \emph{a posteriori}, we see that we could have used $R=64$, rather than $R=128$, which would have reduced the search time dramatically.

\section{Tables}

In this section, we give the tables listing the Sato-Tate groups in genus 2 identified in \S\ref{section:STgroups}
(Table~\ref{table:STgroups}), the moments of $a_1$ and $a_2$ computed in \S\ref{subsection:moments}
(Tables~\ref{table:a1moments} and~\ref{table:a2moments}), the curves analyzed in
\S\ref{section:examples} realizing each Sato-Tate group
(Table~\ref{table:curves}),
and the automorphism data needed to verify these Sato-Tate groups
(Tables~\ref{table:automorphisms} and~\ref{table:auxpols}).

To make these tables more comprehensible, let us recall in each case what data is being tabulated.
In Table~\ref{table:STgroups}, each line corresponds to one of the 55 groups~$G$ named in Theorem~\ref{group classification}.
The quantities $d$ and $c$ indicate the dimension of $G$ and the order of the component group $G/G^0$, whose isomorphism
class is also given. To partially determine the Galois type, we list the $\R$-algebra $\End(A)_{\R}$, i.e., the fixed
subalgebra of $\End(A_K)_{\R}$ under the action of $\Gal(K/k)$ as determined using Proposition~\ref{Galois type from ST group}. (It is not necessary to list the fixed subalgebras under
subgroups of $\Gal(K/k)$, as this can be inferred from the rows of the table corresponding to those subgroups.)
We also list the label associated to the Galois type by Theorem~\ref{main-section4} (or * in the three cases that can not arise
from abelian surfaces).
The quantities $z_1$ and $z_2$ count components of $H$ on which $a_1$ and $a_2$ are constant; see
\S\ref{subsection:computations}. The quantities $M[a_1^2]$ and $M[a_2]$ are some initial terms of moment sequences
that are described more thoroughly in the tables that follow.

Tables~\ref{table:a1moments} and~\ref{table:a2moments} provide explicit formulas and initial terms for the~$a_1$ and~$a_2$
moment sequences associated to each group in Table~\ref{table:STgroups},
as computed using the methods of \S\ref{subsection:atlas}.
Note that the 55 groups only give rise to 37 distinct $a_1$ moment sequences, corresponding to 37 distinct Sato-Tate trace distributions, of which 26 can arise over $\Q$.
Each of these distributions has been assigned an identifier of the form $\#N$ consistent with the numbering used in \cite{KS09};
note that only the indices 1 to 23 correspond to distributions found in \cite{KS09}.
By contrast, the $a_2$ moment sequences in Table~\ref{table:a2moments} are all distinct with one exception: the groups $F_a$ and $F_{ab}$ have identical $a_2$ distributions
(but distinct $a_1$ distributions).

Table~\ref{table:curves} lists the example curves used in \S\ref{section:examples} to prove that there do exist 52 distinct
Galois types arising from abelian surfaces over general number fields, of which 34 do occur over $\Q$. For each curve,
we indicate the field of definition~$k$ and the minimal extension $K/k$ over which all endomorphisms of its Jacobian are defined.
It is proved in \S\ref{section:examples} that the field $K$ and the Galois type agree with the claimed values;
in most cases, the proof makes use of certain noncommuting automorphisms $\alpha$ and $\gamma$ of the curve.
These automorphisms are listed in Table~\ref{table:automorphisms}, together with the value of the intermediate field $M$
used in the alternate description of the Galois type in \S \ref{section:Galois}.
To make things more readable, some rather complicated polynomials appearing in the definitions of the automorphisms
have been moved to Table~\ref{table:auxpols}.
Note that in each formula in Tables~\ref{table:automorphisms} and~\ref{table:auxpols}, the symbols $a$ and $b$ represent elements
of $K$ as presented in the corresponding line of Table~\ref{table:curves}; consequently,
the meaning of these symbols varies from
line to line.
\newpage

\setlength{\evensidemargin}{0mm}
\setlength{\oddsidemargin}{8mm}
\setlength{\textwidth}{140mm}

\begin{table}
\begin{center}
\footnotesize
\setlength{\extrarowheight}{0.5pt}
\caption{Sato-Tate groups in genus 2} \label{table:STgroups}
\vspace{6pt}
\begin{tabular}{rrl|lll|rlll}
$d$ & $c$  & $G$          & $[G/G^0]$              & $\End(A)_{\R}$ & Galois type                         &$z_1$ & $z_2$       & $M[a_1^2]$  & $M[a_2]$\\\hline\vspace{-10pt}\\
$1$ & $1$  & $C_1$        & $\cyc{1}$              & $\M_2(\C)$     & $\bF[\cyc{1}]$                      &  $0$ & $0,0,0,0,0$ & $8,96,1280$ & $4,18,88$\\
$1$ & $2$  & $C_2$        & $\cyc{2}$              & $\C \times \C$ & $\bF[\cyc{2}]$                      &  $1$ & $0,0,0,0,0$ & $4,48,640$  & $2,10,44$\\
$1$ & $3$  & $C_3$        & $\cyc{3}$              & $\C \times \C$ & $\bF[\cyc{3}]$                      &  $0$ & $0,0,0,0,0$ & $4,36,440$  & $2,8,34$\\
$1$ & $4$  & $C_4$        & $\cyc{4}$              & $\C \times \C$ & $\bF[\cyc{4}]$                      &  $1$ & $0,0,0,0,0$ & $4,36,400$  & $2,8,32$\\
$1$ & $6$  & $C_6$        & $\cyc{6}$              & $\C \times \C$ & $\bF[\cyc{6}]$                      &  $1$ & $0,0,0,0,0$ & $4,36,400$  & $2,8,32$\\
$1$ & $4$  & $D_2$        & $\dih{2}$              & $\C$           & $\bF[\dih{2}]$ 		              &  $3$ & $0,0,0,0,0$ & $2,24,320$  & $1,6,22$\\
$1$ & $6$  & $D_3$        & $\dih{3}$              & $\C$           & $\bF[\dih{3}]$                      &  $3$ & $0,0,0,0,0$ & $2,18,220$  & $1,5,17$\\
$1$ & $8$  & $D_4$        & $\dih{4}$              & $\C$           & $\bF[\dih{4}]$                      &  $5$ & $0,0,0,0,0$ & $2,18,200$  & $1,5,16$\\
$1$ & $12$ & $D_6$        & $\dih{6}$              & $\C$           & $\bF[\dih{6}]$                      &  $7$ & $0,0,0,0,0$ & $2,18,200$  & $1,5,16$\\
$1$ & $12$ & $T$          & $\alt{4}$              & $\C$           & $\bF[\alt{4}]$                      &  $3$ & $0,0,0,0,0$ & $2,12,120$  & $1,4,12$\\
$1$ & $24$ & $O$          & $\sym{4}$              & $\C$           & $\bF[\sym{4}]$                      &  $9$ & $0,0,0,0,0$ & $2,12,100$  & $1,4,11$\\
$1$ & $2$  & $J(C_1)$     & $\cyc{2}$              & $\HH$          & $\bF[\cyc{2},\cyc{1},\HH]$          &  $1$ & $1,0,0,0,0$ & $4,48,640$  & $1,11,40$\\
$1$ & $4$  & $J(C_2)$     & $\dih{2}$              & $\C$           & $\bF[\dih{2},\cyc{2},\HH]$          &  $3$ & $1,0,0,0,1$ & $2,24,320$  & $1,7,22$\\
$1$ & $6$  & $J(C_3)$     & $\cyc{6}$              & $\C$           & $\bF[\cyc{6},\cyc{3},\HH]$          &  $3$ & $1,0,0,2,0$ & $2,18,220$  & $1,5,16$\\
$1$ & $8$  & $J(C_4)$     & $\cyc{4}\times\cyc{2}$ & $\C$           & $\bF[\cyc{4}\times\cyc{2},\cyc{4}]$ &  $5$ & $1,0,2,0,1$ & $2,18,200$  & $1,5,16$\\
$1$ & $12$ & $J(C_6)$     & $\cyc{6}\times\cyc{2}$ & $\C$           & $\bF[\cyc{6}\times\cyc{2},\cyc{6}]$ &  $7$ & $1,2,0,2,1$ & $2,18,200$  & $1,5,16$\\
$1$ & $8$  & $J(D_2)$     & $\dih{2}\times\cyc{2}$ & $\R$           & $\bF[\dih{2}\times\cyc{2},\dih{2}]$ &  $7$ & $1,0,0,0,3$ & $1,12,160$  & $1,5,13$\\
$1$ & $12$ & $J(D_3)$     & $\dih{6}$              & $\R$           & $\bF[\dih{6},\dih{3},\HH]$          &  $9$ & $1,0,0,2,3$ & $1,9,110$   & $1,4,10$\\
$1$ & $16$ & $J(D_4)$     & $\dih{4}\times\cyc{2}$ & $\R$           & $\bF[\dih{4}\times\cyc{2},\dih{4}]$ & $13$ & $1,0,2,0,5$ & $1,9,100$   & $1,4,10$\\
$1$ & $24$ & $J(D_6)$     & $\dih{6}\times\cyc{2}$ & $\R$           & $\bF[\dih{6}\times\cyc{2},\dih{6}]$ & $19$ & $1,2,0,2,7$ & $1,9,100$   & $1,4,10$\\
$1$ & $24$ & $J(T)$       & $\alt{4}\times\cyc{2}$ & $\R$           & $\bF[\alt{4}\times\cyc{2},\alt{4}]$ & $15$ & $1,0,0,8,3$ & $1,6,60$    & $1,3,7$\\
$1$ & $48$ & $J(O)$       & $\sym{4}\times\cyc{2}$ & $\R$           & $\bF[\sym{4}\times\cyc{2},\sym{4}]$ & $33$ & $1,0,6,8,9$ & $1,6,50$    & $1,3,7$\\
$1$ & $2$  & $C_{2,1}$    & $\cyc{2}$              & $\M_2(\R)$     & $\bF[\cyc{2},\cyc{1},\M_2(\R)]$     &  $1$ & $0,0,0,0,1$ & $4,48,640$  & $3,11,48$\\
$1$ & $4$  & $C_{4,1}$    & $\cyc{4}$              & $\C$           & $\bF[\cyc{4},\cyc{2}]$              &  $3$ & $0,0,2,0,0$ & $2,24,320$  & $1,5,22$\\
$1$ & $6$  & $C_{6,1}$    & $\cyc{6}$              & $\C$           & $\bF[\cyc{6},\cyc{3},\M_2(\R)]$     &  $3$ & $0,2,0,0,1$ & $2,18,220$  & $1,5,18$\\
$1$ & $4$  & $D_{2,1}$    & $\dih{2}$              & $\R \times \R$ & $\bF[\dih{2},\cyc{2},\M_2(\R)]$     &  $3$ & $0,0,0,0,2$ & $2,24,320$  & $2,7,26$\\
$1$ & $8$  & $D_{4,1}$    & $\dih{4}$              & $\R$           & $\bF[\dih{4},\dih{2}]$              &  $7$ & $0,0,2,0,2$ & $1,12,160$  & $1,4,13$\\
$1$ & $12$ & $D_{6,1}$    & $\dih{6}$              & $\R$           & $\bF[\dih{6},\dih{3},\M_2(\R)]$     &  $9$ & $0,2,0,0,4$ & $1,9,110$   & $1,4,11$\\
$1$ & $6$  & $D_{3,2}$    & $\dih{3}$              & $\R \times \R$ & $\bF[\dih{3},\cyc{3}]$              &  $3$ & $0,0,0,0,3$ & $2,18,220$  & $2,6,21$\\
$1$ & $8$  & $D_{4,2}$    & $\dih{4}$              & $\R \times \R$ & $\bF[\dih{4},\cyc{4}]$              &  $5$ & $0,0,0,0,4$ & $2,18,200$  & $2,6,20$\\
$1$ & $12$ & $D_{6,2}$    & $\dih{6}$              & $\R \times \R$ & $\bF[\dih{6},\cyc{6}]$              &  $7$ & $0,0,0,0,6$ & $2,18,200$  & $2,6,20$\\
$1$ & $24$ & $O_1$        & $\sym{4}$              & $\R$           & $\bF[\sym{4},\alt{4}]$              & $15$ & $0,0,6,0,6$ & $1,6,60$    & $1,3,8$\\
$3$ & $1$  & $E_1$        & $\cyc{1}$              & $\M_2(\R)$     & $\bE[\cyc{1}]$                      &  $0$ & $0,0,0,0,0$ & $4,32,320$  & $3,10,37$\\
$3$ & $2$  & $E_2$        & $\cyc{2}$              & $\C$           & $\bE[\cyc{2}, \C]$                  &  $1$ & $0,0,0,0,0$ & $2,16,160$  & $1,6,17$\\
$3$ & $3$  & $E_3$        & $\cyc{3}$              & $\C$           & $\bE[\cyc{3}]$                      &  $0$ & $0,0,0,0,0$ & $2,12,110$  & $1,4,13$\\
$3$ & $4$  & $E_4$        & $\cyc{4}$              & $\C$           & $\bE[\cyc{4}]$                      &  $1$ & $0,0,0,0,0$ & $2,12,100$  & $1,4,11$\\
$3$ & $6$  & $E_6$        & $\cyc{6}$              & $\C$           & $\bE[\cyc{6}]$                      &  $1$ & $0,0,0,0,0$ & $2,12,100$  & $1,4,11$\\
$3$ & $2$  & $J(E_1)$     & $\cyc{2}$              & $\R \times \R$ & $\bE[\cyc{2},\R\times\R]$           &  $1$ & $0,0,0,0,0$ & $2,16,160$  & $2,6,20$\\
$3$ & $4$  & $J(E_2)$     & $\dih{2}$              & $\R$           & $\bE[\dih{2}]$                      &  $3$ & $0,0,0,0,0$ & $1,8,80$    & $1,4,10$\\
$3$ & $6$  & $J(E_3)$     & $\dih{3}$              & $\R$           & $\bE[\dih{3}]$                      &  $3$ & $0,0,0,0,0$ & $1,6,55$    & $1,3,8$\\
$3$ & $8$  & $J(E_4)$     & $\dih{4}$              & $\R$           & $\bE[\dih{4}]$                      &  $5$ & $0,0,0,0,0$ & $1,6,50$    & $1,3,7$\\
$3$ & $12$ & $J(E_6)$     & $\dih{6}$              & $\R$           & $\bE[\dih{6}]$                      &  $7$ & $0,0,0,0,0$ & $1,6,50$    & $1,3,7$\\
$2$ & $1$  & $F_\nothing$ & $\cyc{1}$              & $\C \times \C$ & $\bD[\cyc{1}]$                      &  $0$ & $0,0,0,0,0$ & $4,36,400$  & $2,8,32$\\
$2$ & $2$  & $F_a$        & $\cyc{2}$              & $\R \times \C$ & $\bD[\cyc{2},\R\times\C]$           &  $0$ & $0,0,0,0,1$ & $3,21,210$  & $2,6,20$\\
$2$ & $2$  & $F_c$        & $\cyc{2}$              & *              & *                                   &  $1$ & $0,0,0,0,0$ & $2,18,200$  & $1,5,16$\\ % End(A)_{\R} = \C
$2$ & $2$  & $F_{ab}$     & $\cyc{2}$              & $\R \times \R$ & $\bD[\cyc{2},\R\times\R]$           &  $1$ & $0,0,0,0,1$ & $2,18,200$  & $2,6,20$\\
$2$ & $4$  & $F_{ac}$     & $\cyc{4}$              & $\R$           & $\bD[\cyc{4}]$                      &  $3$ & $0,0,2,0,1$ & $1,9,100$   & $1,3,10$\\
$2$ & $4$  & $F_{a,b}$    & $\dih{2}$              & $\R \times \R$ & $\bD[\dih{2}]$                      &  $1$ & $0,0,0,0,3$ & $2,12,110$  & $2,5,14$\\
$2$ & $4$  & $F_{ab,c}$   & $\dih{2}$              & *              & *                                   &  $3$ & $0,0,0,0,1$ & $1,9,100$   & $1,4,10$\\ % End(A)_{\R} = \R
$2$ & $8$  & $F_{a,b,c}$  & $\dih{4}$              & *              & *                                   &  $5$ & $0,0,2,0,3$ & $1,6,55$    & $1,3,7$\\  % End(A)_{\R} = \R
$4$ & $1$  & $G_{1,3}$    & $\cyc{1}$              & $\R \times \C$ & $\bC[\cyc{1}]$                      &  $0$ & $0,0,0,0,0$ & $3,20,175$  & $2,6,20$\\
$4$ & $2$  & $N(G_{1,3})$ & $\cyc{2}$              & $\R \times \R$ & $\bC[\cyc{2}]$                      &  $0$ & $0,0,0,0,1$ & $2,11,90$   & $2,5,14$\\
$6$ & $1$  & $G_{3,3}$    & $\cyc{1}$              & $\R \times \R$ & $\bB[\cyc{1}]$                      &  $0$ & $0,0,0,0,0$ & $2,10,70$   & $2,5,14$\\
$6$ & $2$  & $N(G_{3,3})$ & $\cyc{2}$              & $\R$           & $\bB[\cyc{2}]$                      &  $1$ & $0,0,0,0,0$ & $1,5,35$    & $1,3,7$\\
$10$& $1$  & $\USp(4)$    & $\cyc{1}$              & $\R$           & $\bA[\cyc{1}]$                      &  $0$ & $0,0,0,0,0$ & $1,3,14$    & $1,2,4$\\\hline
\end{tabular}
\end{center}
\end{table}

\begin{table}
\begin{center}
\footnotesize
\setlength{\extrarowheight}{0.5pt}
\caption{Moments of $a_1$ for Sato-Tate groups in genus 2}\label{table:a1moments}
\vspace{6pt}
\begin{tabular}{llrrrrrl}
$G$ & $M_n = \Exp[a_1^n]$ & $M_2$ & $M_4$ & $M_6$ & $M_8$ & $M_{10}$ & Type [KS]\\\hline\vspace{-8pt}\\
$C_1$ & $2^nb_n$ & $8$ & $96$ & $1280$ & $17920$ & $258048$ &\#27\\
$C_2$ & $\nicefrac{1}{2}(2^n+0^n)b_n$ & $4$ & $48$ & $640$ & $8960$ & $129024$ & \#13\\
$C_3$ & $\nicefrac{1}{3}(2^n+2)b_n$ & $4$ & $36$ & $440$ & $6020$ & $86184$ & \#28\\
$C_4$ & $\nicefrac{1}{4}(2^n+0^n+2\cdot 2^{n/2})b_n$ & $4$ & $36$ & $400$ & $5040$ & $68544$ & \#29\\
$C_6$ & $\nicefrac{1}{6}(2^n+0^n+2+2\cdot 3^{n/2})b_n$ & $4$ & $36$ & $400$ & $4900$ & $63504$ & \#30\\
$D_2$ & $\nicefrac{1}{4}(2^n+3\cdot 0^n)b_n$ & $2$ & $24$ & $320$ & $4480$ & $64512$& \#21\\
$D_3$ & $\nicefrac{1}{6}(2^n+3\cdot 0^n+2)b_n$ & $2$ & $18$ & $220$ & $3010$ & $43092$ & \#12\\
$D_4$ & $\nicefrac{1}{8}(2^n+5\cdot 0^n+2\cdot 2^{n/2})b_n$ & $2$ & $18$ & $200$ & $2520$ & $34272$ & \#17\\
$D_6$ & $\nicefrac{1}{12}(2^n+7\cdot 0^n+2+2\cdot 3^{n/2})b_n$ & $2$ & $18$ & $200$ & $2450$ & $31752$ & \#15\\
$T$ & $\nicefrac{1}{12}(2^n+3\cdot 0^n+8)b_n$ & $2$ & $12$ & $120$ & $1540$ & $21672$ & \#31 \\
$O$ & $\nicefrac{1}{24}(2^n+9\cdot 0^n+8+6\cdot 2^{n/2})b_n$ & $2$ & $12$ & $100$ & $1050$ & $12852$ & \#32\\
$J(C_1)$ & $\nicefrac{1}{2}(2^n+0^n)b_n$ & $4$ & $48$ & $640$ & $8960$ & $129024$ & \#13\\
$J(C_2)$ & $\nicefrac{1}{4}(2^n+3\cdot 0^n)b_n$ & $2$ & $24$ & $320$ & $4480$ & $64512$ & \#21\\
$J(C_3)$ & $\nicefrac{1}{6}(2^n+3\cdot 0^n+2)b_n$ & $2$ & $18$ & $220$ & $3010$ & $43092$ & \#12\\
$J(C_4)$ & $\nicefrac{1}{8}(2^n+5\cdot 0^n+2\cdot 2^{n/2})b_n$ & $2$ & $18$ & $200$ & $2520$ & $34272$ & \#17\\
$J(C_6)$ & $\nicefrac{1}{12}(2^n+7\cdot 0^n+2+2\cdot 3^{n/2})b_n$ & $2$ & $18$ & $200$ & $2450$ & $31752$ & \#15\\
$J(D_2)$ & $\nicefrac{1}{8}(2^n+7\cdot 0^n)b_n$ & $1$ & $12$ & $160$ & $2240$ & $32256$ & \#23\\
$J(D_3)$ & $\nicefrac{1}{12}(2^n+9\cdot 0^n+2)b_n$ & $1$ & $9$ & $110$ & $1505$ & $21546$ & \#20 \\
$J(D_4)$ & $\nicefrac{1}{16}(2^n+13\cdot 0^n+2\cdot 2^{n/2})b_n$ & $1$ & $9$ & $100$ & $1260$ & $17136$ & \#22\\
$J(D_6)$ & $\nicefrac{1}{24}(2^n+19\cdot 0^n+2+2\cdot 3^{n/2})b_n$ & $1$ & $9$ & $100$ & $1225$ & $15876$ & \#24\\
$J(T)$ & $\nicefrac{1}{24}(2^n+15\cdot 0^n+8)b_n$ & $1$ & $6$ & $60$ & $770$ & $10836$ & \#25\\
$J(O)$ & $\nicefrac{1}{48}(2^n+33\cdot 0^n+8+6\cdot 2^{n/2})b_n$ & $1$ & $6$ & $50$ & $525$ & $6426$ & \#26\\
$C_{2,1}$ & $\nicefrac{1}{2}(2^n+0^n)b_n$ & $4$ & $48$ & $640$ & $8960$ & $129024$ & \#13\\
$C_{4,1}$ & $\nicefrac{1}{4}(2^n+3\cdot 0^n)b_n$ & $2$ & $24$ & $320$ & $4480$ & $64512$ & \#21\\
$C_{6,1}$ & $\nicefrac{1}{6}(2^n+3\cdot 0^n+2)b_n$ & $2$ & $18$ & $220$ & $3010$ & $43092$ & \#12\\
$D_{2,1}$ & $\nicefrac{1}{4}(2^n+3\cdot 0^n)b_n$ & $2$ & $24$ & $320$ & $4480$ & $64512$ & \#21\\
$D_{4,1}$ & $\nicefrac{1}{8}(2^n+7\cdot 0^n)b_n$ & $1$ & $12$ & $160$ & $2240$ & $32256$ & \#23\\
$D_{6,1}$ & $\nicefrac{1}{12}(2^n+9\cdot 0^n+2)b_n$ & $1$ & $9$ & $110$ & $1505$ & $21546$ & \#20\\
$D_{3,2}$ & $\nicefrac{1}{6}(2^n+3\cdot 0^n+2)b_n$ & $2$ & $18$ & $220$ & $3010$ & $43092$ & \#12\\
$D_{4,2}$ & $\nicefrac{1}{8}(2^n+5\cdot 0^n+2\cdot 2^{n/2})b_n$ & $2$ & $18$ & $200$ & $2520$ & $34272$ & \#17\\
$D_{6,2}$ & $\nicefrac{1}{12}(2^n+7\cdot 0^n+2+2\cdot 3^{n/2})b_n$ & $2$ & $18$ & $200$ & $2450$ & $31752$ & \#15\\
$O_1$ & $\nicefrac{1}{24}(2^n+15\cdot 0^n+8)b_n$ & $1$ & $6$ & $60$ & $770$ & $10836$ & \#25\\
$E_1$ & $2^nc_n$ & $4$ & $32$ & $320$ & $3584$ & $43008$ & \#5\\
$E_2$ & $\nicefrac{1}{2}(2^n+0^n)c_n$ & $2$ & $16$ & $160$ & $1792$ & $21504$ & \#11\\
$E_3$ & $\nicefrac{1}{3}(2^n+2)c_n$ & $2$ & $12$ & $110$ & $1204$ & $14364$ & \#4\\
$E_4$ & $\nicefrac{1}{4}(2^n+0^n+2\cdot 2^{n/2})c_n$ & $2$ & $12$ & $100$ & $1008$ & $11424$ & \#7\\
$E_6$ & $\nicefrac{1}{6}(2^n+0^n+2+2\cdot 3^{n/2})c_n$ & $2$ & $12$ & $100$ & $980$ & $10584$ & \#6\\
$J(E_1)$ & $\nicefrac{1}{2}(2^n+0^n)c_n$ & $2$ & $16$ & $160$ & $1792$ & $21504$ & \#11\\
$J(E_2)$ & $\nicefrac{1}{4}(2^n+3\cdot 0^n)c_n$ & $1$ & $8$ & $80$ & $896$ & $10752$ & \#18\\
$J(E_3)$ & $\nicefrac{1}{6}(2^n+3\cdot 0^n+2)c_n$ & $1$ & $6$ & $55$ & $602$ & $7182$ & \#10\\
$J(E_4)$ & $\nicefrac{1}{8}(2^n+5\cdot 0^n+2\cdot 2^{n/2})c_n$ & $1$ & $6$ & $50$ & $504$ & $5712$ & \#16\\
$J(E_6)$ & $\nicefrac{1}{12}(2^n+7\cdot 0^n+2+2\cdot 3^{n/2})c_n$ & $1$ & $6$ & $50$ & $490$ & $5292$ & \#14\\
$F_\nothing$ & $b_n^2$ & $4$ & $36$ & $400$ & $4900$ & $63504$ & \#33\\
$F_a$ & $\nicefrac{1}{2}(b_n+b_n^2)$ & $3$ & $21$ & $210$ & $2485$ & $31878$ & \#34\\
$F_c$ & $\nicefrac{1}{2}(b_n^2+0^n)$ & $2$ & $18$ & $200$ & $2450$ & $31752$ & \#35\\
$F_{ab}$ & $\nicefrac{1}{2}(b_n^2+0^n)$ & $2$ & $18$ & $200$ & $2450$ & $31752$ & \#35\\
$F_{ac}$ & $\nicefrac{1}{4}(b_n^2+3\cdot 0^n)$ & $1$ & $9$ & $100$ & $1225$ & $15876$ & \#19\\
$F_{a,b}$ & $\nicefrac{1}{4}(b_n^2+2b_n+0^n)$ & $2$ & $12$ & $110$ & $1260$ & $16002$ & \#8\\
$F_{ab,c}$ & $\nicefrac{1}{4}(b_n^2+3\cdot 0^n)$ & $1$ & $9$ & $100$ & $1225$ & $15876$ & \#19 \\
$F_{a,b,c}$ & $\nicefrac{1}{8}(b_n^2+2b_n+5\cdot 0^n)$ & $1$ & $6$ & $55$ & $630$ & $8001$ & \#37 \\
$G_{1,3}$ & $c_nb_{n+2}/2$ & $3$ & $20$ & $175$ & $1764$ & $19404$ & \#36\\
$N(G_{1,3})$ & $\nicefrac{1}{2}(c_nb_{n+2}/2+c_n)$ & $2$ & $11$ & $90$ & $889$ & $9723$ & \#3\\
$G_{3,3}$ & $c_nc_{n+2}$ & $2$ & $10$ & $70$ & $588$ & $5544$ & \#2\\
$N(G_{3,3})$ & $\nicefrac{1}{2}(c_nc_{n+2}+0^n)$ & $1$ & $5$ & $35$ & $294$ & $2772$ & \#9\\\vspace{2pt}
$\USp(4)$ & $c_nc_{n+4}-c_{n+2}^2$ & $1$ & $3$ & $14$ & $84$ & $594$ & \#1\\\hline
\end{tabular}
\end{center}
\end{table}

\begin{table}
\begin{center}
\footnotesize
\setlength{\extrarowheight}{0.5pt}
\caption{Moments of $a_2$ for Sato-Tate groups in genus 2}\label{table:a2moments}
\vspace{6pt}
\begin{tabular}{@{\extracolsep{-3pt}}llrrrrrr}
$G$ & $M_n = \Exp[a_2^n]$ & $M_1$ & $M_2$ & $M_3$ & $M_4$ & $M_5$\\\hline\vspace{-8pt}\\
$C_1$ & $b_{4,n}$ & $4$ & $18$ & $88$ & $454$ & $2424$\\
$C_2$ & $\nicefrac{1}{2}(b_{4,n}+b_n)$ & $2$ & $10$ & $44$ & $230$ & $1212$\\
$C_3$ & $\nicefrac{1}{3}(b_{4,n}+2b_{1,n})$ & $2$ & $8$ & $34$ & $164$ & $842$\\
$C_4$ & $\nicefrac{1}{4}(b_{4,n}+b_n+2b_{2,n})$ & $2$ & $8$ & $32$ & $150$ & $732$\\
$C_6$ & $\nicefrac{1}{6}(b_{4,n}+b_n+2b_{1,n}+2b_{3,n})$ & $2$ & $8$ & $32$ & $148$ & $712$\\
$D_2$ & $\nicefrac{1}{4}(b_{4,n}+3b_n)$ &$1$ & $6$ & $22$ & $118$ & $606$\\
$D_3$ & $\nicefrac{1}{6}(b_{4,n}+3b_n+2b_{1,n})$ & $1$ & $5$ & $17$ & $85$ & $421$\\
$D_4$ & $\nicefrac{1}{8}(b_{4,n}+5b_n+2b_{2,n})$& $1$ & $5$ & $16$ & $78$ & $366$\\
$D_6$ & $\nicefrac{1}{12}(b_{4,n}+7b_n+2b_{1,n}+2b_{3,n})$ & $1$ & $5$ & $16$ & $77$ & $356$\\
$T$ & $\nicefrac{1}{12}(b_{4,n}+3b_n+8b_{1,n})$ & $1$ & $4$ & $12$ & $52$ & $236$\\
$O$ & $\nicefrac{1}{24}(b_{4,n}+9b_n+8b_{1,n}+6b_{2,n})$& $1$ & $4$ & $11$ & $45$ & $181$\\
$J(C_1)$ & $\nicefrac{1}{2}(b_{4,n}+(-2)^n)$ & $1$ & $11$ & $40$ & $235$ & $1196$\\
$J(C_2)$ & $\nicefrac{1}{4}(b_{4,n}+b_n+2^n+(-2)^n)$ & $1$ & $7$ & $22$ & $123$ & $606$\\
$J(C_3)$ & $\nicefrac{1}{6}(b_{4,n}+2(b_{1,n}+1)+(-2)^n)$ & $1$ & $5$ & $16$ & $85$ & $416$\\
$J(C_4)$ & $\nicefrac{1}{8}(b_{4,n}+b_n+2(b_{2,n}+0^n)+2^n+(-2)^n)$ & $1$ & $5$ & $16$ & $79$ & $366$\\
$J(C_6)$ & $\nicefrac{1}{12}(b_{4,n}+b_n+2(b_{1,n}+b_{3,n}+1+(-1)^n)+2^n+(-2)^n)$& $1$ & $5$ & $16$ & $77$ & $356$\\
$J(D_2)$ & $\nicefrac{1}{8}(b_{4,n}+3(b_n+2^n)+(-2)^n)$ & $1$ & $5$ & $13$ & $67$ & $311$\\
$J(D_3)$ & $\nicefrac{1}{12}(b_{4,n}+3(b_n+2^n)+2(b_{1,n}+1)+(-2)^n)$ & $1$ & $4$ & $10$ & $48$ & $216$\\
$J(D_4)$ & $\nicefrac{1}{16}(b_{4,n}+5(b_n+2^n)+2(b_{2,n}+0^n)+(-2)^n)$ & $1$ & $4$ & $10$ & $45$ & $191$\\
$J(D_6)$ & $\nicefrac{1}{24}(b_{4,n}+2(b_{1,n}+b_{3,n}+1+(-1)^n)+7(b_n+2^n)+(-2)^n)$ & $1$ & $4$ & $10$ & $44$ & $186$\\
$J(T)$ & $\nicefrac{1}{24}(b_{4,n}+3(b_n+2^n)+8(b_{1,n}+1)+(-2)^n)$& $1$ & $3$ & $7$ & $29$ & $121$\\
$J(O)$ & $\nicefrac{1}{48}(b_{4,n}+9(b_n+2^n)+8(b_{1,n}+1)+6(b_{2,n}+0^n)+(-2)^n)$ & $1$ & $3$ & $7$ & $26$ & $96$\\
$C_{2,1}$ & $\nicefrac{1}{2}(b_{4,n}+2^n)$ & $3$ & $11$ & $48$ & $235$ & $1228$\\
$C_{4,1}$ & $\nicefrac{1}{4}(b_{4,n}+b_n+2\cdot 0^n)$ & $1$ & $5$ & $22$ & $115$ & $606$\\
$C_{6,1}$ & $\nicefrac{1}{6}(b_{4,n}+2(b_{1,n}+(-1)^n)+2^n)$ & $1$ & $5$ & $18$ & $85$ & $426$\\
$D_{2,1}$ & $\nicefrac{1}{4}(b_{4,n}+b_n+2^{n+1})$& $2$ & $7$ & $26$ & $123$ & $622$\\
$D_{4,1}$ & $\nicefrac{1}{8}(b_{4,n}+3b_n+2(2^n+0^n))$ & $1$ & $4$ & $13$ & $63$ & $311$\\
$D_{6,1}$ & $\nicefrac{1}{12}(b_{4,n}+3b_n+2(b_{1,n}+(-1)^n)+2^{n+2})$ & $1$ & $4$ & $11$ & $48$ & $221$\\
$D_{3,2}$ & $\nicefrac{1}{6}(b_{4,n}+2b_{1,n}+3\cdot 2^n)$ & $2$ & $6$ & $21$ & $90$ & $437$\\
$D_{4,2}$ & $\nicefrac{1}{8}(b_{4,n}+b_n+2b_{2,n}+2^{n+2})$ & $2$ & $6$ & $20$ & $83$ & $382$\\
$D_{6,2}$ & $\nicefrac{1}{12}(b_{4,n}+b_n+2(b_{1,n}+b_{3,n})+6\cdot 2^n)$ & $2$ & $6$ & $20$ & $82$ & $372$\\
$O_1$ & $\nicefrac{1}{24}(b_{4,n}+3b_n+8b_{1,n}+6(2^n+0^n))$& $1$ & $3$ & $8$ & $30$ & $126$\\
$E_1$ & $d_{4,n}$ & $3$ & $10$ & $37$ & $150$ & $654$\\
$E_2$ & $\nicefrac{1}{2}(d_{4,n}+d_n)$ & $1$ & $6$ & $17$ & $78$ & $322$\\
$E_3$ & $\nicefrac{1}{3}(d_{4,n}+2d_{1,n})$& $1$ & $4$ & $13$ & $52$ & $222$\\
$E_4$ & $\nicefrac{1}{4}(d_{4,n}+d_n+2d_{2,n})$ & $1$ & $4$ & $11$ & $46$ & $182$\\
$E_6$ & $\nicefrac{1}{6}(d_{4,n}+d_n+2(d_{1,n}+d_{3,n}))$ & $1$ & $4$ & $11$ & $44$ & $172$\\
$J(E_1)$ & $\nicefrac{1}{2}(d_{4,n}+(-1)^nd_n)$ & $2$ & $6$ & $20$ & $78$ & $332$\\
$J(E_2)$ & $\nicefrac{1}{4}(d_{4,n}+d_n+2(-1)^nd_n)$& $1$ & $4$ & $10$ & $42$ & $166$\\
$J(E_3)$ & $\nicefrac{1}{6}(d_{4,n}+2d_{1,n}+3(-1)^nd_n)$ & $1$ & $3$ & $8$ & $29$ & $116$\\
$J(E_4)$ & $\nicefrac{1}{8}(d_{4,n}+d_n+2d_{2,n}+4(-1)^nd_n)$ & $1$ & $3$ & $7$ & $26$ & $96$\\
$J(E_6)$ & $\nicefrac{1}{12}(d_{4,n}+d_n+2(d_{1,n}+d_{3,n})+6(-1)^nd_n)$ & $1$ & $3$ & $7$ & $25$ & $91$\\
$F_\nothing$ & $\bbn$ & $2$ & $8$ & $32$ & $148$ & $712$\\
$F_a$ & $\nicefrac{1}{2}(\bbn+2^n)$& $2$ & $6$ & $20$ & $82$ & $372$\\
$F_c$ & $\nicefrac{1}{2}(\bbn+b_n)$ & $1$ & $5$ & $16$ & $77$ & $356$\\
$F_{ab}$ & $\nicefrac{1}{2}(\bbn+2^n)$& $2$ & $6$ & $20$ & $82$ & $372$\\
$F_{ac}$ & $\nicefrac{1}{4}(\bbn+2\cdot 0^n+2^n)$ & $1$ & $3$ & $10$ & $41$ & $186$\\
$F_{a,b}$ & $\nicefrac{1}{4}(\bbn+3\cdot 2^n)$ & $2$ & $5$ & $14$ & $49$ & $202$\\
$F_{ab,c}$ & $\nicefrac{1}{4}(\bbn+2b_n+2^n)$ & $1$ & $4$ & $10$ & $44$ & $186$\\
$F_{a,b,c}$ & $\nicefrac{1}{8}(\bbn+2(b_n+0^n) + 3\cdot 2^n)$ & $1$ & $3$ & $7$ & $26$ & $101$\\
$G_{1,3}$ & $\bcn$ & $2$ & $6$ & $20$ & $76$ & $312$\\
$N(G_{1,3})$ & $\nicefrac{1}{2}(\bcn+2^n)$ & $2$ & $5$ & $14$ & $46$ & $172$\\
$G_{3,3}$ & $\ccn$ & $2$ & $5$ & $14$ & $44$ & $152$\\
$N(G_{3,3})$ & $\nicefrac{1}{2}(\ccn+c_n)$ & $1$ & $3$ & $7$ & $23$ & $76$\\\vspace{2pt}
$\USp(4)$ & $\sum_k\binom{n}{k}2^{n-k}(c_kc_{k+2}-c_{k+1}^2)$ & $1$ & $2$ & $4$ & $10$ & $27$\\\hline
\end{tabular}
\end{center}
\end{table}

\begin{table}
\begin{center}
\footnotesize
\setlength{\extrarowheight}{0.5pt}
\caption{Genus 2 curves realizing Sato-Tate groups}\label{table:curves}
\vspace{6pt}
\begin{tabular}{llll}
$G$ & Curve $y^2 = f(x)$ & $k$ & $K$\\\hline\vspace{-8pt}\\
$C_1$     & $x^6+1$ & $\Q(\sqrt{-3})$ & $\Q(\sqrt{-3})$\\
$C_2$     & $x^5-x$ & $\Q(\sqrt{-2})$ & $\Q(i,\sqrt{2})$\\
$C_3$     & $x^6 + 4$ & $\Q(\sqrt{-3})$ & $\Q(\sqrt{-3},\sqrt[3]{2})$ \\
$C_4$     & $x^6 + x^5 - 5x^4 - 5x^2 - x + 1$ & $\Q(\sqrt{-2})$ & $\Q(\sqrt{-2},a)$; $a^4 + 17a^2 + 68=0$\\
$C_6$     & $x^6 + 2$ & $\Q(\sqrt{-3})$ & $\Q(\sqrt{-3},\sqrt[6]{2})$ \\
$D_2$     & $x^5 + 9x$ & $\Q(\sqrt{-2})$ & $\Q(i,\sqrt{2},\sqrt{3})$ \\
$D_3$     & $x^6 + 10x^3 - 2$ & $\Q(\sqrt{-2})$ & $\Q(\sqrt{-3},\sqrt[6]{-2})$ \\
$D_4$     & $x^5 + 3x$ & $\Q(\sqrt{-2})$ & $\Q(i,\sqrt{2},\sqrt[4]{3})$ \\
$D_6$     & $x^6 + 3x^5 + 10x^3 - 15x^2 + 15x - 6$ & $\Q(\sqrt{-3})$ & $\Q(i,\sqrt{2},\sqrt{3},a)$; $a^3+3a-2=0$ \\
$T$       & $x^6 + 6x^5 - 20x^4 + 20x^3 - 20x^2 - 8x + 8$ & $\Q(\sqrt{-2})$ & $\Q(\sqrt{-2},a,b)$; \\
&&& \hspace{6pt} $a^3-7a+7 = b^4+4b^2+8b+8 = 0$ \\
$O$       & $x^6 - 5x^4 + 10x^3 - 5x^2 + 2x - 1$ & $\Q(\sqrt{-2})$ & $\Q(\sqrt{-2},\sqrt{-11},a,b);$ \\ &&& \hspace{6pt} $a^3-4a+4 = b^4+22b+22=0$ \\
$J(C_1)$  & $x^5-x$ & $\Q(i)$ & $\Q(i,\sqrt{2})$ \\
$J(C_2)$  & $x^5 - x$ & $\Q$ & $\Q(i,\sqrt{2})$ \\
$J(C_3)$  & $x^6 + 10x^3 - 2$ & $\Q(\sqrt{-3})$ & $\Q(\sqrt{-3},\sqrt[6]{-2})$ \\
$J(C_4)$  & $x^6 + x^5 - 5x^4 - 5x^2 - x + 1$ & $\Q$ & see entry for $C_4$ \\
$J(C_6)$  & $x^6 - 15x^4 - 20x^3 + 6x + 1$ & $\Q$ & $\Q(i,\sqrt{3},a)$; $a^3+3a^2-1=0$ \\
$J(D_2)$  & $x^5 + 9x$ & $\Q$ & $\Q(i,\sqrt{2},\sqrt{3})$ \\
$J(D_3)$  & $x^6 + 10x^3 - 2$ & $\Q$ & $\Q(\sqrt{-3},\sqrt[6]{-2})$ \\
$J(D_4)$  & $x^5 + 3x$ & $\Q$ & $\Q(i,\sqrt{2},\sqrt[4]{3})$ \\
$J(D_6)$  & $x^6 + 3x^5 + 10x^3 - 15x^2 + 15x - 6$ & $\Q$ & see entry for $D_6$ \\
$J(T)$    & $x^6 + 6x^5 - 20x^4 + 20x^3 - 20x^2 - 8x + 8$ & $\Q$ & see entry for $T$ \\
$J(O)$    & $x^6 - 5x^4 + 10x^3 - 5x^2 + 2x - 1$ & $\Q$ & see entry for $O$ \\
$C_{2,1}$ & $x^6+1$ & $\Q$ & $\Q(\sqrt{-3})$\\
$C_{4,1}$ & $x^5 + 2x$ & $\Q(i)$ &$\Q(i,\sqrt[4]{2})$ \\
$C_{6,1}$ & $x^6 + 6x^5 - 30x^4 + 20x^3 + 15x^2 - 12x + 1$ & $\Q$ & $\Q(\sqrt{-3},a)$; $a^3-3a+1=0$ \\
$D_{2,1}$ & $x^5 + x$ & $\Q$ & $\Q(i,\sqrt{2})$ \\
$D_{4,1}$ & $x^5 + 2x$ & $\Q$ & $\Q(i,\sqrt[4]{2})$ \\
$D_{6,1}$ & $x^6 + 6x^5 - 30x^4 - 40x^3 + 60x^2 + 24x - 8$ & $\Q$ & $\Q(\sqrt{-2},\sqrt{-3},a)$; $a^3-9a+6 = 0$ \\
$D_{3,2}$   & $x^6 + 4$ & $\Q$ & $\Q(\sqrt{-3},\sqrt[3]{2})$ \\
$D_{4,2}$   & $x^6 + x^5 + 10x^3 + 5x^2 + x - 2$ & $\Q$ & $\Q(\sqrt{-2},a)$; $a^4 - 14a^2 + 28a - 14=0$ \\
$D_{6,2}$   & $x^6 + 2$ & $\Q$ & $\Q(\sqrt{-3},\sqrt[6]{2})$ \\
$O_1$     & $x^6 + 7x^5 + 10x^4 + 10x^3 + 15x^2 + 17x + 4$ & $\Q$ & $\Q(\sqrt{-2},a,b)$; \\ &&& \hspace{6pt} $a^3+5a+10=b^4+4b^2+8b+2=0$ \\
$F_\nothing$& $x^6 + 3x^4 + x^2 - 1$ & $\Q(i,\sqrt{2})$ & $\Q(i,\sqrt{2})$ \\
$F_a$     & $x^6 + 3x^4 + x^2 - 1$ & $\Q(i)$ & $\Q(i,\sqrt{2})$ \\
$F_{ab}$  & $x^6 + 3x^4 + x^2 - 1$ & $\Q(\sqrt{2})$ & $\Q(i,\sqrt{2})$ \\
$F_{ac}$  & $x^5 + 1$ & $\Q$ & $\Q(a)$; $a^4+5a^2+5 = 0$ \\
$F_{a,b}$ & $x^6 + 3x^4 + x^2 - 1$ & $\Q$ & $\Q(i,\sqrt{2})$ \\
$E_1$     & $x^6 + x^4 + x^2 + 1$ & $\Q$ & $\Q$ \\
$E_2$     & $x^6 + x^5 + 3x^4 + 3x^2 - x + 1$ & $\Q$ & $\Q(\sqrt{2})$ \\
$E_3$     & $x^5 + x^4 - 3x^3 - 4x^2 - x$ & $\Q$ & $\Q(a)$; $a^3-3a+1=0$ \\
$E_4$     & $x^5 + x^4 + x^2 - x$ & $\Q$ & $\Q(a)$; $a^4-5a^2+5=0$ \\
$E_6$     & $x^5 + 2x^4 - x^3 - 3x^2 - x$ & $\Q$ & $\Q(\sqrt{7},a)$; $a^3 - 7a - 7 = 0$ \\
$J(E_1)$  & $x^5 + x^3 + x$ & $\Q$ & $\Q(i)$\\
$J(E_2)$  & $x^5 + x^3 - x$ & $\Q$ & $\Q(i,\sqrt{2})$ \\
$J(E_3)$  & $x^6 + x^3 + 4$ & $\Q$ & $\Q(\sqrt{-3},\sqrt[3]{2})$ \\
$J(E_4)$  & $x^5 + x^3 + 2x$ & $\Q$ & $\Q(i,\sqrt[4]{2})$ \\
$J(E_6)$  & $x^6 + x^3 - 2$ & $\Q$ & $\Q(\sqrt{-3},\sqrt[6]{-2})$ \\
$G_{1,3}$   & $x^6 + 3x^4 - 2$ & $\Q(i)$ & $\Q(i)$ \\
$N(G_{1,3})$& $x^6 + 3x^4 - 2$ & $\Q$ & $\Q(i)$ \\
$G_{3,3}$   & $x^6 + x^2 + 1$ & $\Q$ & $\Q$ \\
$N(G_{3,3})$& $x^6 + x^5 + x - 1$ & $\Q$ & $\Q(i)$ \\
$\USp(4)$ & $x^5 - x + 1$ & $\Q$ & $\Q$ \\\hline
\end{tabular}
\end{center}
\end{table}

\begin{table}
\begin{center}
\footnotesize
\setlength{\extrarowheight}{0.5pt}
\caption{Some automorphisms of the curves in Table~\ref{table:curves}}
\label{table:automorphisms}
\vspace{6pt}
\begin{tabular}{lllll}
$G$  & $\alpha$ & $\gamma$ & $M$ %& $K$
\\\hline\vspace{-8pt}\\[4pt]
$C_1,\,C_{2,1}$      & $(-x,y)$ & $\left(\frac{1}{x},\frac{1}{x^3} y\right)$ & $\Q(\sqrt{-3})$ %& $\Q(\sqrt{-3})$
\\[4pt]
$C_2,\,J(C_1),\,J(C_2)$   & $\left(\frac{i}{x},\frac{\zeta_8^3}{x^3}y\right)$ & $(-x,iy)$ & $\Q(\sqrt{-2})$ %& $\Q(\sqrt{2},i)$
\\[4pt]
$C_3, D_{3,2}$  & $(-x,y)$ & $\left(\frac{4^{1/3}}{x}, \frac{2}{x^3}y \right)$ & $\Q(\sqrt{-3})$ %& $\Q(2^{1/3},\sqrt{-3})$
\\[4pt]
$C_4,\,J(C_4)$  & $\left(\frac{-(2a^2+13)x+1}{x+2a^2+13},\frac{Q_{\alpha,C_4}}{(x+2a^2+13)^3}y\right)$ & $\left(\frac{-x-1}{x-1},\frac{-2\sqrt{-2}}{(x-1)^3}y\right)$ & $\Q(\sqrt{-2})$ %&
%$\begin{cases}
%\Q(\sqrt{-2},a) \text{ with}\\
%a^4+34a^2+17=0
%\end{cases}$
\\[4pt]
$C_6,\,D_{6,2}$      & $(-x,y)$ & $\left(\frac{2^{1/3}}{x},\frac{2^{1/2}}{x^3} y\right)$ & $\Q(\sqrt{-3})$ %& $\Q(2^{1/6},\sqrt{-3})$
\\[4pt]
$D_2,\,J(D_2)$    & $\left(\frac{3}{x},\frac{3^{3/2}}{x^3} y\right)$ & $\left(-x,iy\right)$  & $\Q(\sqrt{-2})$ %& $\Q(\sqrt{3},i,\sqrt{-2})$
\\[4pt]
$D_3,\,J(D_3),\,J(C_3)$     & $\left(\frac{-2^{1/3}}{x},\frac{\sqrt{-2}}{x^3}y\right)$ & $\left(\frac{(1-\sqrt{-3})2^{1/3}}{2x},\frac{\sqrt{-2}}{x^3}y\right)$ & $\Q(\sqrt{-2})$ %&
%$\begin{cases}
%\Q(\sqrt{-3},a) \text{ with}\\
%a^6+2=0
%\end{cases}$
\\[4pt]
$D_4,\,J(D_4)$   & $\left(\frac{\sqrt 3}{x},\frac{3^{3/4}}{x^3}y\right)$ & $\left(-x,iy\right)$ &  $\Q(\sqrt{-2})$ %& $\Q(3^{1/4},i,\sqrt{-2})$
\\[4pt]
$D_6,\,J(D_6)$    & $\left(\frac{x+1}{x-1},\frac{2\sqrt{2}}{(x-1)^3}y\right)$ & $\left(\frac{P_{\gamma,D_6}}{2x+(a^2+3)},\frac{Q_{\gamma,D_6}}{(2x+(a^2+3))^3}y\right)$  & $\Q(\sqrt{-3})$ %&
%$\begin{cases}
%\Q(i,\sqrt{-2},\sqrt{-3},a)\text{ with}\\
%a^3+3a-2=0
%\end{cases}$
\\[4pt]
$T,\,J(T)$     & $\left( \frac{P_{\alpha,T}}{R_{\alpha,T}}, \frac{Q_{\alpha,T}}
{R_{\alpha,T}^3}y \right)$ & $\left(\frac{P_{\gamma,T}}{x+1-a},\frac{Q_{\gamma,T}}{(x+1-a)^3}y\right)$ & $\Q(\sqrt{-2})$ %&
%$\begin{cases}
%\Q(\sqrt{-2},a,b) \text{with}\\
%a^3 - 7a + 7 = 0 \text{ and}\\
%b^4 + 4b^2 + 8b + 8 = 0
%\end{cases}$
\\[4pt]

$O,\,J(O)$  & $\left( \frac{P_{\alpha,O}}{R_{\alpha,O}}, \frac{Q_{\alpha,O}}{R_{\alpha,O}^3}y \right)$ & $\left(\frac{ax+a^2-2}{2x-a},\frac{Q_{\gamma,O}}{(2x-a)^3}y\right)$ & $\Q(\sqrt{-2})$ %&
%$\begin{cases}
%\Q(\sqrt{-2},\sqrt{-11},a,b) \text{ with}\\
%a^3-4a+4=0 \text{ and}\\
%b^4+22b+22=0
%\end{cases}$
\\[4pt]
$J(C_6)$  & $\left(\frac{-x-2}{2x+1},\frac{3\sqrt{-3}}{(2x+1)^3}y\right)$ & $\left(\frac{-(a+1)x-a}{x+a+1},\frac{-3ia^2-3ia}{(x+a+1)^3}y\right)$ & $\Q(\sqrt{-3})$ %&
%$\begin{cases}
%\Q(i,\sqrt{-3},a) \text{ with}\\
%a^3+3a^2-1=0
%\end{cases}$
\\[4pt]

$C_{4,1},\,D_{4,1}$  & $\left(\frac{\sqrt 2}{x},\frac{2^{3/4}}{x^3}y\right)$ & $\left(-x,iy\right)$ &  $\Q(\sqrt{-2})$ %& $\Q(2^{1/4},i)$
\\[4pt]
$C_{6,1}$  & $\left(\frac{(1-a)x+a}{x+(a-1)},\frac{-3a^2+3a}{(x+(a-1))^3}y\right) $ & $\left(\frac{x-1}{x},\frac{-1}{x^3}y\right)$ & $\Q(\sqrt{-3})$ %& %$\begin{cases}
%\Q(a,\sqrt{-3})\text{ with}\\
%a^3-3a+1=0
%\end{cases}$
\\[4pt]
$D_{2,1}$  & $\left(\frac{1}{x},\frac{1}{x^3}y\right)$ & $(-x,iy)$ & $\Q(\sqrt{-2})$ %& $\Q(\sqrt 2,i)$
\\[4pt]
$D_{6,1}$ & $\left(\frac{-2}{x},\frac{-2\sqrt{-2}}{x^3}y\right)$ & $\left(\frac{(a-1)x+2}{x+1-a},\frac{Q_{\gamma,D_{6,1}}}{(x+1-a)^3}y\right)$ & $\Q(\sqrt{-3})$ %&
%$\begin{cases}
%\Q(\sqrt{-2},\sqrt{-3},a) \text{ with}\\
% a^3 - 9a + 6 = 0
%\end{cases}$
\\[4pt]
$D_{4,2}$  & $\left(\frac{P_{\alpha,D_{4,2}}}{R_{\alpha,D_{4,2}}},\frac{Q_{\alpha,D_{4,2}}}{R_{\alpha,D_{4,2}}^3}y\right)$ & $\left(\frac{P_{\gamma,D_{4,2}}}{x-\sqrt{-2}+1},\frac{-8}{(x-\sqrt{-2}+1)^3}y\right)$ & $\Q(\sqrt{-2})$ %&
%$\begin{cases}
%\Q(\sqrt{-2},a) \text{ with}\\
%a^4-14a^2+28a-14=0
%\end{cases}$
\\[4pt]
$O_1$  & $\left( \frac{P_{\alpha,O_1}}{R_{\alpha,O_1}}, \frac{Q_{\alpha,O_1}}{R_{\alpha,O_1}^3}y \right)$ & $\left(\frac{P_{\gamma,O_1}}{R_{\gamma,O_1}},\frac{Q_{\gamma,O_1}}{R_{\gamma,O_1}^3}y\right)$ & $\Q(\sqrt{-2})$ %&
%$\begin{cases} \Q(\sqrt{-2},a,b) \text{ with}\\
%a^3+5a+10=0\text{ and} \\
%b^4+4b^2+8b+2=0
%\end{cases}$
\\[4pt]

$E_1$   & $(-x,y)$ & $\left(\frac{-1}{x},\frac{1}{x^3} y\right)$ & $\Q$ %& $\Q$
\\[4pt]
$E_2$   & $\left(\frac{x+1}{x-1},\frac{-2\sqrt 2}{(x-1)^3}y\right)$ & $\left(\frac{-1}{x},\frac{1}{x^3}y\right)$ & $\Q$ %& $\Q(\sqrt{2})$
\\[4pt]
$E_3$     & $\left(\frac{-ax-a+1}{x+a},\frac{3a^2-3a}{(x+a)^3}y\right)$ & $\left(\frac{-x-1}{x},\frac{1}{x^3}y\right)$ & $\Q$ %&
%$\begin{cases}
%\Q(a) \text{ with}\\
%a^3-3a+1=0
%\end{cases}$
\\[4pt] % splitting field of a^3-3a+1
$E_4$      & $\left(\frac{(a^2-3)x+1}{x-a^2+3},\frac{-3a^3+10a}{(x-a^2+3)^3}y\right)$ & $\left(\frac{-1}{x},\frac{1}{x^3}y\right)$ & $\Q$ %&
%$\begin{cases}
%\Q(a) \text{ with}\\
%a^4-5a^2+5=0
%\end{cases}$
\\[4pt] % splitting field of a^4-5a+5
$E_6$    & $\left(\frac{P_{\alpha,E_6}}{R_{\alpha,E_6}},\frac{Q_{\alpha,E_6}}{R_{\alpha,E_6}^3}y\right)$ & $\left(\frac{-1}{x+1},\frac{1}{(x+1)^3}y\right)$ & $\Q$ %&
%$\begin{cases}
%\Q(\sqrt{7},a) \text{ with}\\
%a^3 - 7a - 7=0
%\end{cases}$
\\[4pt] % don't know splitting field poly
$J(E_1)$   & $\left(\frac{1}{x},\frac{1}{x^3}y\right)$ & $\left(-x,iy\right)$ & $\Q$ %& $\Q(i)$
\\[4pt]
$J(E_2)$   & $\left(\frac{-i}{x},\frac{i-1}{\sqrt 2 x^3}y\right)$ & $(-x,iy)$ & $\Q$ %& $\Q(i,\sqrt{2})$
\\[4pt]
$J(E_3)$  & $\left(\frac{4^{1/3}}{x},\frac{2}{x^3}y\right)$ & $(\zeta_3x,y)$ & $\Q$ %& $\Q(\sqrt{-3},\sqrt[3]{2})$
\\[4pt]
$J(E_4)$  & $\left(\frac{\sqrt 2}{x},\frac{2^{3/4}}{x^3}y\right)$ & $(-x,iy)$ & $\Q$ %& $\Q(i,\sqrt[4]{2})$
\\[4pt]
$J(E_6)$   & $\left(\frac{(-2)^{1/3}}{x},\frac{\sqrt 2}{x^3}y\right)$ & $(\zeta_3x,y)$ & $\Q$ %& $\Q(\sqrt{-3},\sqrt[6]{-2})$
\\\hline
\end{tabular}
\end{center}
\end{table}

\begin{table}
\begin{center}
\footnotesize
\setlength{\extrarowheight}{0.5pt}
\caption{Polynomials describing the automorphisms of Table~\ref{table:automorphisms}}\label{table:auxpols}
\vspace{6pt}
\begin{tabular}{lll}
\hline\\[-4pt]
$Q_{\alpha,C_4}$ & $=$ & $\sqrt{-2}(29a^3+187a)$\\[4pt]

$P_{\gamma,D_6}$ & $=$ & $-(a^2+3)x+2(a^2+2)$\\[4pt]

$Q_{\gamma,D_6}$ & $=$ & $-21ia^2-6ia-83i$\\[4pt]

$P_{\alpha,T}$ & $=$ & $\bigl((\frac{-a^2}{7} - \frac{a}{6} + \frac{2}{3})b^3 + (\frac{3a^2}{14} + \frac{a}{3} - \frac{7}{6})b^2 + (\frac{-10}{21}a^2- \frac{2}{3}a + \frac{7}{3})b + (\frac{-2}{21}a^2 + \frac{2}{3})\bigr)x $\\[4pt]

& & $+(\frac{1}{21}a^2 - \frac{1}{3})b^3 + (\frac{-5}{21}a^2 - \frac{1}{3}a + \frac{4}{3})b^2 + (\frac{-2}{7}a^2 - \frac{2}{3}a + \frac{2}{3})b + (\frac{10}{21}a^2 - \frac{8}{3})$ \\[4pt]

$Q_{\alpha,T}$ & $=$ & $\sqrt{-2}\bigl((\frac{10}{27}a^2 + \frac{11}{27}a - \frac{49}{27})b^3 + (\frac{8}{27}a^2 + \frac{22}{27}a - \frac{49}{27})b^2 $\\[4pt]

& &$+ (\frac{32}{27}a^2 + \frac{46}{27}a - \frac{182}{27})b+ (\frac{76}{27}a^2 +  \frac{110}{27}a - \frac{392}{27})\bigr)$ \\[4pt]

$R_{\alpha,T}$ & $=$ & $x + (\frac{a^2}{7} + \frac{a}{6} - \frac{2}{3})b^3 + (\frac{-3}{14}a^2 - \frac{a}{3} + \frac{7}{6})b^2 +(\frac{10}{21}a^2 + \frac{2}{3}a - \frac{7}{3})b + (\frac{2}{21}a^2 - \frac{2}{3})$ \\[4pt]

$P_{\gamma,T}$ & $=$ & $(a-1)x+2a^2+2a-8$\\[4pt]

$Q_{\gamma,T}$ & $=$ & $(-a^2 + \frac{3}{2}a)b^3 + (\frac{a^2}{2} - \frac{a}{2})b^2 + (-5a^2 + 12a - 7)b - 5a^2 + 8a$\\[4pt]

$P_{\alpha,O}$ & $=$ & $((\frac{19}{429}a^2 + \frac{23}{429}a - \frac{58}{429})b^3 + (\frac{-5}{78}a^2 - \frac{2}{39}a + \frac{3}{13})b^2 +(\frac{11}{78}a^2 + \frac{7}{39}a - \frac{4}{13})b $\\[4pt]

& & $+ (\frac{31}{78}a^2 + \frac{5}{13}a - \frac{35}{39}))x+(\frac{-23}{429}a^2 - \frac{6}{143}a + \frac{76}{429})b^3 + (\frac{2}{39}a^2 + \frac{1}{39}a -\frac{10}{39})b^2$\\[4pt]

& & $+ (\frac{-7}{39}a^2 - \frac{10}{39}a + \frac{22}{39})b + (\frac{-23}{26}a^2 - \frac{9}{13}a +\frac{101}{39})$\\[4pt]

$Q_{\alpha,O}$ & $=$ & $\sqrt{-2}\bigl((\frac{a^2}{39} + \frac{20}{351}a - \frac{22}{351})b^3 + (\frac{-11}{702}a^2 - \frac{44}{351}a - \frac{22}{117})b^2  $\\[4pt]

& & $+ (\frac{107}{351}a^2 + \frac{232}{351}a - \frac{146}{351})b+(\frac{11}{26}a^2 +\frac{110}{117}a - \frac{121}{117})\bigr)$\\[4pt]

$R_{\alpha,O}$ & $=$ & $x + (\frac{-19}{429}a^2 - \frac{23}{429}a + \frac{58}{429})b^3 + (\frac{5}{78}a^2 + \frac{2}{39}a -
   \frac{3}{13})b^2 $\\[4pt]

& & $+ (\frac{-11}{78}a^2 - \frac{7}{39}a + \frac{4}{13})b + (\frac{-31}{78}a^2 - \frac{5}{13}a + \frac{35}{39})$\\[4pt]

$Q_{\gamma,O}$ & $=$ & $8\sqrt{-11}\bigl((\frac{1}{286}a^2 + \frac{3}{143}a - \frac{7}{143})b^3 + (\frac{-7}{572}a^2 + \frac{5}{286}a + \frac{5}{143})b^2 $\\[4pt]

& & $+ (\frac{1}{13}a^2 - \frac{1}{26}a - \frac{1}{13})b + (\frac{3}{52}a^2 + \frac{9}{26}a - \frac{21}{26})\bigr)$\\[4pt]

$Q_{\gamma,D_{6,1}}$ & $=$ & $\sqrt{-3}(3a^2-6a+1) $\\[4pt]

$P_{\alpha,D_{4,2}}$ & $=$ & $(a^3+a^2-14a+17)x+(2a^3+2a^2-28a+29)$\\[4pt]

$Q_{\alpha,D_{4,2}}$ & $=$ & $-160a^3-168a^2+1904a-2184$\\[4pt]

$R_{\alpha,D_{4,2}}$ & $=$ & $5x-(a^3+a^2-14a+17)$\\[4pt]

$P_{\gamma,D_{4,2}}$ & $=$ & $-(\sqrt{-2}+1)x+1$\\[4pt]

$P_{\alpha,O_1}$ & $=$ & $\bigl((\frac{-3}{580}a^2 - \frac{13}{116}a - \frac{7}{58})b^3 + (\frac{-1}{1160}a^2 + \frac{15}{232}a + \frac{17}{116})b^2 +  (\frac{-17}{232}a^2 - \frac{117}{232}a - \frac{63}{116})b $\\[4pt]

& &  $+ (\frac{13}{290}a^2 - \frac{21}{58}a - \frac{18}{29})\bigr)x+(\frac{-7}{580}a^2 - \frac{11}{116}a + \frac{3}{58})b^3 + (\frac{23}{580}a^2 + \frac{3}{116}a + \frac{15}{58})b^2  $\\[4pt]

& & $+ (\frac{-15}{116}a^2 - \frac{35}{116}a - \frac{1}{58})b + (\frac{109}{580}a^2 - \frac{11}{116}a +   \frac{61}{58})$\\[4pt]

$Q_{\alpha,O_1}$ & $=$ & $(\frac{-2277}{24389}a^2 + \frac{7186}{24389}a - \frac{495}{24389})b^3 + (\frac{1287}{48778}a^2 -
   \frac{34813}{48778}a - \frac{13115}{24389})b^2 $\\[4pt]

& &  $+ (\frac{-26733}{48778}a^2 + \frac{83435}{48778}a + \frac{26255}{24389})b+(\frac{-12375}{24389}a^2 + \frac{8303}{24389}a - \frac{29200}{24389})$\\[4pt]

$R_{\alpha,O_1}$ & $=$ & $x + (\frac{3}{580}a^2 + \frac{13}{116}a + \frac{7}{58})b^3 + (\frac{1}{1160}a^2 - \frac{15}{232}a - \frac{17}{116})b^2 $ \\[4pt]

& & $+ (\frac{17}{232}a^2 + \frac{117}{232}a + \frac{63}{116})b + (\frac{-13}{290}a^2 + \frac{21}{58}a + \frac{18}{29})$\\[4pt]

$P_{\gamma,O_1}$ & $=$ & $(-a^2+a-8)x-a^2-a-6$\\[4pt]

$Q_{\gamma,O_1}$ & $=$ & $4\sqrt{-2}\bigl((-2a^2+4a-10)b^3+(-3a^2+4a-25)b^2$\\[4pt]

&& $+(-2a^2+10a)b-18a^2+32a-110\bigr)$\\[4pt]

$R_{\gamma,O_1}$ & $=$ & $4x+a^2-a+8$\\[4pt]

$P_{\alpha,E_6}$ & $=$ & $(a^2-a-5)x+a^2-a-4$\\[4pt]

$Q_{\alpha,E_6}$ & $=$ & $\sqrt 7(-a^2+2a+6)$\\[4pt]

$R_{\alpha,E_6}$ & $=$ & $x-a^2+a+5$\\[4pt]\hline

\end{tabular}
\end{center}
\end{table}

\end{document}